\documentclass[a4paper]{amsart}
\usepackage{amssymb,textcomp}

\usepackage{amsthm}
 \newtheorem{thm}{Theorem}
 \newtheorem{prop}[thm]{Proposition}
 \newtheorem{lem}[thm]{Lemma}
 \newtheorem{cor}[thm]{Corollary}
\theoremstyle{definition}

 \newtheorem{rem}[thm]{Remark}
 \numberwithin{equation}{section}

\title[Hankel operators between vector-valued Bergman spaces]{LITTLE HANKEL OPERATORS BETWEEN VECTOR-VALUED BERGMAN SPACES ON THE UNIT BALL}

\subjclass[2010]{32A10; 32A36; 46E40; 47B35}
%
%
\keywords{Little Hankel operator, Operator-valued symbol, Vector-valued Bergman spaces}

\author[D. B\'ekoll\'e ]{ David B\'ekoll\'e} 
\address{ Department of Mathematics, Faculty of Science, University of Yaounde I; PO. Box 812 Yaounde-Cameroon}
\email{dbekolle@gmail.com}
\author[H. O. Defo]{Hugues Olivier Defo}
\address{ Department of Mathematics, Faculty of Science, University of Yaounde I; PO. Box 812 Yaounde-Cameroon}
\email{deffohugues@gmail.com}

\author[E. L. Tchoundja]{Edgar L. Tchoundja}
\address{ Department of Mathematics, Faculty of Science, University of Yaounde I; PO. Box 812 Yaounde-Cameroon}
\email{tchoundjaedgar@yahoo.fr}

\author[B. D. Wick]{Brett D. Wick}
\address{ Department of Mathematics, Washington University - St. Louis. One Brookings Drive, St. Louis, MO 63130-4899 USA}
\email{wick@math.wustl.edu}

\thanks{The second author would like to acknowledge the support of the GRAID program of IMU/CDC. He would also like to thank the International Centre for Theoretical  Physic (ICTP),~~~~Trieste (Italy) for partially supporting our visit to the centre where we have progressed in this work.} 
\thanks{B. D. Wick's research partially supported in part by NSF grant DMS-1800057 as well as ARC DP190100970.} 

\begin{document}

\vspace{18mm} \setcounter{page}{1} \thispagestyle{empty}

\begin{abstract}
In this paper, we study the boundedness and the compactness of the little Hankel operators $h_b$ with operator-valued symbols $b$ between different weighted vector-valued Bergman spaces on the open unit ball $\mathbb{B}_n$ in $\mathbb{C}^n.$ More precisely, given two complex Banach spaces $X,Y,$ and $0 < p,q \leq 1,$ we characterize those operator-valued symbols $b: \mathbb{B}_{n}\rightarrow \mathcal{L}(\overline{X},Y)$ for which the little Hankel operator $h_{b}: A^p_{\alpha}(\mathbb{B}_{n},X) \longrightarrow A^q_{\alpha}(\mathbb{B}_{n},Y),$ is a bounded operator. Also, given two reflexive complex Banach spaces $X,Y$ and $1 < p \leq q < \infty,$ we characterize those operator-valued symbols $b: \mathbb{B}_{n}\rightarrow \mathcal{L}(\overline{X},Y)$ for which the little Hankel operator $h_{b}: A^p_{\alpha}(\mathbb{B}_{n},X) \longrightarrow A^q_{\alpha}(\mathbb{B}_{n},Y),$ is a compact operator.
 
\end{abstract}

\maketitle

\section{\textbf{Introducton}}  

It is well known that Hankel operators constitute a very important class of operators in spaces of analytic functions. The study of these operators on different analytic spaces is not only motivated by the mathematical challenges it raises, but also by many applications on applied mathematics and in physics (see for example \cite{Peller} for more information). In this paper, we are interested on the boundedness and the compactness problem of the little Hankel operator with operator-valued symbols on weighted vector-valued Bergman spaces on the unit ball.

Throughout this paper, we fix a nonnegative integer $n$ and let $$\mathbb{C}^n = \mathbb{C} \times \cdots \times \mathbb{C}$$ denote the $n-$dimensional Euclidean space. For $$z = (z_{1},\cdots,z_{n}),\hspace{1cm} w = (w_{1},\cdots,w_{n}),$$ in $\mathbb{C}^{n},$ we define the inner product of $z$ and $w$ by $$ \langle z,w \rangle = z_{1}\overline{w_{1}} + \cdots + z_{n}\overline{w_{n}},$$ where $\overline{w_{k}}$ is the complex conjugate of $w_{k}.$ The resulting norm is then $$ |z| = \sqrt{\langle z,z \rangle} = \sqrt{|z_{1}|^2 + \cdots + |z_{n}|^2 }.$$ Endowed with the above inner product,  $\mathbb{C}^n$ become a Hilbert space whose canonical basis consists of the following vectors
$$e_{1} = (1,0,\cdots,0),~~e_{2}=(0,1,0,\cdots,0),~~ \cdots,~~e_{n} =(0,\cdots,0,1).$$
 The open unit ball in $\mathbb{C}^n$ is the set $$\mathbb{B}_n = \lbrace z \in \mathbb{C}^{n}: |z| < 1 \rbrace.$$
 
When $\alpha >-1,$ the weighted Lebesgue measure $\mathrm{d}\nu_{\alpha}$ in $\mathbb{B}_n$ is defined by $$\mathrm{d}\nu_{\alpha}(z) = c_{\alpha}(1-|z|^2)^{\alpha}\mathrm{d}\nu(z),\hspace{1cm} z \in \mathbb{B}_{n} $$ where $\mathrm{d}\nu$ is the Lebesgue measure in $\mathbb{C}^n$ and          $$c_{\alpha} = \dfrac{\Gamma(n+\alpha+1)}{n!\Gamma(\alpha+1)}$$ is the normalizing constant so that $\mathrm{d}\nu_{\alpha}$ becomes a probability measure on $\mathbb{B}_n.$ 
A function defined on the unit ball $\mathbb{B}_n$ will be called a vector-valued function when  it takes its values in some vector space.
If $X$ is a complex Banach space, a vector-valued function $f:\mathbb{B}_{n} \longrightarrow X$ (a $X$-valued function) is said to be strongly holomorphic in $\mathbb{B}_n$ if for every $z \in \mathbb{B}_{n}$ and for every $k \in \lbrace 1,\cdots, n \rbrace,$ the limit 
$$ \displaystyle \lim_{\lambda \longrightarrow 0}\dfrac{f(z+\lambda e_{k})-f(z)}{\lambda}$$ exists in $X,$ where $\lambda \in \mathbb{C}-\lbrace 0 \rbrace$. The space of all $X-$valued strongly holomorphic functions on $\mathbb{B}_n$ will be denoted by $\mathcal{H}(\mathbb{B}_{n},X).$ We will also denote by $H^{\infty}(\mathbb{B}_{n},X)$ the space of all bounded $X$-valued holomorphic functions.
Let $X^{\star}$ denotes the space of all bounded linear functionals $x^{\star}:X \longrightarrow \mathbb{C}$ (the topological dual space of $X$). We say that a vector-valued function $f:\mathbb{B}_{n} \longrightarrow X$ is  weakly holomorphic if for every $x^{\star} \in X^{\star},$ the scalar-valued function $x^{\star}(f): \mathbb{B}_{n} \longrightarrow \mathbb{C}$ is holomorphic in the usual sense. An important result by N. Dunford (\cite{Diestel}) shows that a vector-valued function is strongly holomorphic if and only if it is weakly holomorphic.

\subsection{\textbf{The conjugate $\overline{X}$ of the complex Banach space X}}

In the sequel, we will need the notion of \textgravedbl conjugate\textacutedbl of a complex Banach space (\cite{Roc}).\\

We will use the following definition and notation which can be found in \cite{Roc}. Let $x \in X,$ $x^{\star} \in X^{\star}$ and $\lambda \in \mathbb{C}.$ We define $$(\lambda x^{\star})(x): = \overline{\lambda} x^{\star}(x).$$ We also use the notation $$\langle x,x^{\star} \rangle_{X,X^{\star}} = x^{\star}(x)$$ to represent the \textasciigrave inner product\textasciiacute in the complex Banach space $X.$ We have the following identities $$\langle \lambda x,x^{\star} \rangle_{X,X^{\star}} = \lambda\langle x,x^{\star} \rangle_{X,X^{\star}} = \langle x,\overline{\lambda}x^{\star} \rangle_{X,X^{\star}},$$ so that we have a regular rule of an inner product. 
The complex conjugate $\overline{x}$ of $x \in X,$ is the linear functional on $X^{\star}$ defined by $$\overline{x}(x^{\star}) = \overline{\langle x,x^{\star} \rangle}_{X,X^{\star}},$$ for every $x^{\star} \in X^{\star}.$  Therefore, $$\overline{X} =  \lbrace \overline{x}: x \in X \rbrace $$ is called the complex conjugate of the Banach space $X.$ With the norm defined by $$\|\overline{x}\| := \sup_{\|x^{\star}\|_{X^{\star}} = 1}|\overline{x}(x^{\star})|,$$ $\overline{X}$ becomes a Banach space. Moreover, we have that $\|x\|_{X} = \|\overline{x}\|_{\overline{X}}$ for any $x \in X,$ so that $X$ and $\overline{X}$ are isometrically anti-isomorphic.

\subsection{\textbf{Vector-valued Bergman space}}

In the sequel, we will integrate vector-valued  measurable functions in the sense of Bochner (see \cite{Diestel} for more information). Let $X$ be a complex Banach space. A measurable function $f: \mathbb{B}_{n} \longrightarrow X$ is Bochner-integrable with respect to the measure $\nu_{\alpha}$ in the unit ball $\mathbb{B}_n$ if and only if the Lebesgue integral    
 $$\displaystyle \|f\|_{1,\alpha,X} = \int_{\mathbb{B}_n}\|f(z)\|_{X}\mathrm{d}\nu_{\alpha}(z) $$ is finite.             
For $0 < p < \infty,$ the Bochner-Lebesgue space $L^{p}_{\nu_{\alpha}}(\mathbb{B}_{n},X)$ consists of all vector-valued measurable functions $f:\mathbb{B}_{n} \longrightarrow X$ such that $$\displaystyle \|f\|^p_{p,\alpha,X} = \int_{\mathbb{B}_n}\|f(z)\|^p_{X}\mathrm{d}\nu_{\alpha}(z) < \infty.$$ 
The vector-valued Bergman space $A^{p}_{\alpha}(\mathbb{B}_{n},X)$ is defined by 
$$ A^{p}_{\alpha}(\mathbb{B}_{n},X) = L^{p}_{\nu_{\alpha}}(\mathbb{B}_{n},X) \cap \mathcal{H}(\mathbb{B}_{n},X).$$

The weak Bochner-Lebesgue space $L^{p,\infty}_{\alpha}(\mathbb{B}_{n},X)$ consists of all vector-valued measurable functions $f:\mathbb{B}_n \longrightarrow X$ for which 
$$\|f\|_{L^{p,\infty}_{\alpha}(\mathbb{B}_{n},X)} = \left( \sup_{\lambda > 0}\lambda^{p} \nu_{\alpha}\left(\lbrace z \in \mathbb{B}_{n}: \|f(z)\|_{X} > \lambda \rbrace\right)\right)^{1/p} < \infty.$$ The weak vector-valued Bergman space $A^{p,\infty}_{\alpha}(\mathbb{B}_{n},X)$ is defined by  
$$A^{p,\infty}_{\alpha}(\mathbb{B}_{n},X) = \mathcal{H}(\mathbb{B}_{n},X) \cap L^{p,\infty}_{\alpha}(\mathbb{B}_{n},X).$$

Let $X, Y$ be two complex Banach spaces and $\alpha > -1.$ We have the following two lemmas whose proofs can be found in \cite{Roc}.
 
\begin{lem}\label{premierlem}
Let $T: X \longrightarrow Y$ be a bounded linear operator. If $f:\mathbb{B}_{n} \longrightarrow X$ is $\nu_{\alpha}-$Bochner integrable in the unit ball, then $Tf:\mathbb{B}_{n} \longrightarrow Y$ is $\nu_{\alpha}-$Bochner integrable in the unit ball and we have
$$\displaystyle \int_{\mathbb{B}_n} Tf(z)\mathrm{d}\nu_{\alpha}(z) = T\left( \int_{\mathbb{B}_n}f(z)\mathrm{d}\nu_{\alpha}(z)\right).$$
\end{lem}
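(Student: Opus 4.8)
The plan is to reduce the identity to the case of simple functions and then pass to a limit, which is the standard route for statements of this type about the Bochner integral. First I would dispose of the easy half. Since $f$ is $\nu_\alpha$-Bochner integrable it is in particular strongly measurable, i.e. the $\nu_\alpha$-a.e. limit of a sequence of $X$-valued simple functions; composing such an approximating sequence with the continuous linear operator $T$ produces $Y$-valued simple functions converging $\nu_\alpha$-a.e. to $Tf$, so $Tf$ is strongly measurable into $Y$. For integrability, the operator bound $\|Tf(z)\|_Y \le \|T\|\,\|f(z)\|_X$ yields
$$\int_{\mathbb{B}_n}\|Tf(z)\|_Y\,\mathrm{d}\nu_\alpha(z)\le \|T\|\int_{\mathbb{B}_n}\|f(z)\|_X\,\mathrm{d}\nu_\alpha(z)<\infty ,$$
so $Tf$ is $\nu_\alpha$-Bochner integrable by the characterization of Bochner integrability recalled above.

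Next I would check the identity on simple functions. If $s=\sum_{k=1}^{N}\chi_{E_k}x_k$, with the $E_k\subset\mathbb{B}_n$ measurable and $x_k\in X$, then $Ts=\sum_{k=1}^{N}\chi_{E_k}(Tx_k)$ is again simple, and from the definition of the Bochner integral on simple functions together with the linearity of $T$,
$$\int_{\mathbb{B}_n}Ts\,\mathrm{d}\nu_\alpha=\sum_{k=1}^{N}\nu_\alpha(E_k)\,Tx_k=T\!\left(\sum_{k=1}^{N}\nu_\alpha(E_k)\,x_k\right)=T\!\left(\int_{\mathbb{B}_n}s\,\mathrm{d}\nu_\alpha\right).$$

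Finally I would pass to the limit. Because $f$ is Bochner integrable, choose simple functions $s_m$ with $\int_{\mathbb{B}_n}\|s_m-f\|_X\,\mathrm{d}\nu_\alpha\to 0$. Then $\int_{\mathbb{B}_n}s_m\,\mathrm{d}\nu_\alpha\to\int_{\mathbb{B}_n}f\,\mathrm{d}\nu_\alpha$ in $X$, and since $\|Ts_m-Tf\|_Y\le\|T\|\,\|s_m-f\|_X$ we likewise get $\int_{\mathbb{B}_n}Ts_m\,\mathrm{d}\nu_\alpha\to\int_{\mathbb{B}_n}Tf\,\mathrm{d}\nu_\alpha$ in $Y$. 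Combining the continuity of $T$ applied to the first convergence with the simple-function identity gives
$$\int_{\mathbb{B}_n}Tf\,\mathrm{d}\nu_\alpha=\lim_{m\to\infty}\int_{\mathbb{B}_n}Ts_m\,\mathrm{d}\nu_\alpha=\lim_{m\to\infty}T\!\left(\int_{\mathbb{B}_n}s_m\,\mathrm{d}\nu_\alpha\right)=T\!\left(\int_{\mathbb{B}_n}f\,\mathrm{d}\nu_\alpha\right),$$
which is the claim. There is no genuine obstacle in this argument; the only point deserving a line of care is the strong measurability of $Tf$, which, as noted, is immediate from composing a simple-function approximation of $f$ with $T$.
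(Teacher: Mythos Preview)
Your argument is correct and is the standard route to this classical fact about the Bochner integral. The paper does not actually give a proof of this lemma; it simply states that a proof can be found in \cite{Roc}. So there is nothing to compare at the level of strategy: you have supplied the textbook simple-function approximation argument that the cited reference presumably also contains, and every step (strong measurability of $Tf$ via composition, integrability from the operator bound, the identity on simple functions, and passage to the limit using continuity of both the Bochner integral and $T$) is sound.
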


\begin{lem}\label{vers2} If $f:\mathbb{B}_{n} \longrightarrow X$  is a $\nu_{\alpha}$-Bochner integrable vector-valued function in the unit ball, then the following inequality holds
$$\displaystyle \left\|\int_{\mathbb{B}_n}f(z) \mathrm{d}\nu_{\alpha}(z)\right\|_{X} \leq \int_{\mathbb{B}_n}\|f(z)\|_{X}\mathrm{d}\nu_{\alpha}(z).$$ 
\end{lem}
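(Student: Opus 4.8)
The plan is to deduce this from the scalar case by the usual Hahn--Banach duality trick, using Lemma~\ref{premierlem} to commute a bounded linear functional with the Bochner integral. Write $v := \int_{\mathbb{B}_n} f(z)\,\mathrm{d}\nu_\alpha(z)$, which is a well-defined element of $X$ since $f$ is $\nu_\alpha$-Bochner integrable; if $v = 0$ the inequality is trivial, so assume $v \neq 0$. By the Hahn--Banach theorem choose a norming functional $x^\star \in X^\star$ with $\|x^\star\|_{X^\star} = 1$ and $\langle v, x^\star\rangle_{X,X^\star} = \|v\|_X$.

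Now view $x^\star$ as a bounded linear operator from $X$ into $\mathbb{C}$ and apply Lemma~\ref{premierlem} with $Y = \mathbb{C}$ and $T = x^\star$: the scalar function $z \mapsto \langle f(z), x^\star\rangle_{X,X^\star}$ is $\nu_\alpha$-integrable on $\mathbb{B}_n$ and satisfies $\langle v, x^\star\rangle_{X,X^\star} = \int_{\mathbb{B}_n} \langle f(z), x^\star\rangle_{X,X^\star}\,\mathrm{d}\nu_\alpha(z)$. Using the pointwise bound $|\langle f(z), x^\star\rangle_{X,X^\star}| \leq \|x^\star\|_{X^\star}\|f(z)\|_X = \|f(z)\|_X$ together with the triangle inequality for the scalar Lebesgue integral, one obtains
$$
\left\|\int_{\mathbb{B}_n} f(z)\,\mathrm{d}\nu_\alpha(z)\right\|_X = \left|\int_{\mathbb{B}_n} \langle f(z), x^\star\rangle_{X,X^\star}\,\mathrm{d}\nu_\alpha(z)\right| \leq \int_{\mathbb{B}_n}\|f(z)\|_X\,\mathrm{d}\nu_\alpha(z),
$$
which is the assertion.

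There is no real obstacle in this argument; the only step that needs a word of justification is the integrability of $z \mapsto \langle f(z), x^\star\rangle_{X,X^\star}$, and that is supplied directly by Lemma~\ref{premierlem}. An alternative, more self-contained route would bypass duality: for vector-valued simple functions the inequality is just the finite triangle inequality in $X$, and one then passes to the limit through the very definition of the Bochner integral as the $L^1$-limit of the integrals of an approximating sequence of simple functions, invoking continuity of the norm. I would nevertheless favour the Hahn--Banach argument above, since it is shorter and reuses Lemma~\ref{premierlem} rather than re-examining the construction of the integral.
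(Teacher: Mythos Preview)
Your argument is correct and entirely standard: choosing a norming functional via Hahn--Banach and passing it through the Bochner integral (exactly what Lemma~\ref{premierlem} licenses) reduces the claim to the scalar triangle inequality. Note that the paper does not actually give a proof of this lemma; it simply records it as a basic fact about the Bochner integral and refers the reader to \cite{Roc}. Your write-up therefore supplies more detail than the paper itself, and the alternative you sketch (approximation by simple functions plus the finite triangle inequality) is the other textbook route and would work equally well.
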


\subsection{Vector-valued Lipschitz spaces and vector-valued $\gamma$-Bloch spaces.}

The radial derivative of a vector-valued holomorphic function $f: \mathbb{B}_{n} \longrightarrow X $ denoted $Nf$ is defined for $z \in \mathbb{B}_n$ by
\begin{equation}
\displaystyle Nf(z): = \sum_{j=1}^{n}z_{j}\frac{\partial f}{\partial z_{j}}(z).\label{ref4}
\end{equation}
Let $f \in \mathcal{H}(\mathbb{B}_{n},X)$ and 
$$\displaystyle f(z) = \sum_{k=0}^{\infty}f_{k}(z),\hspace{1cm} z \in \mathbb{B}_n$$ the homogeneous expansion of the function $f$ where $f_k$ are homogeneous holomorphic polynomials of degree $k$ with coefficients in $X.$ For any two real parameters $\alpha$ and $t$ such that neither $n+\alpha$ nor $n+\alpha+t$ is a negative integer, we define an invertible operator $R^{\alpha,t} : \mathcal{H}(\mathbb{B}_{n},X) \rightarrow \mathcal{H}(\mathbb{B}_{n},X)$ as 
\begin{equation}
\displaystyle R^{\alpha,t}f(z): = \sum_{k=0}^{\infty}\dfrac{\Gamma(n+1+\alpha)\Gamma(n+1+k+\alpha+t)}{\Gamma(n+1+\alpha+t)\Gamma(n+1+k+\alpha)}f_{k}(z),\label{diffop}
\end{equation}
where $z \in \mathbb{B}_{n}$ and $\Gamma$ is the classical Euler Gamma function.
For $\gamma \geq 0,$ we denote by $\Gamma_{\gamma}(\mathbb{B}_{n},X)$ the space of vector-valued holomorphic functions $f:\mathbb{B}_{n} \longrightarrow X$ for which there exists an integer $k > \gamma$ such that $$\|f\|_{\gamma,X} = \|f(0)\|_{X} + \sup_{z \in \mathbb{B}_{n}}(1 - |z|^2)^{k-\gamma}\|N^{k}f(z)\|_{X} < \infty,$$ where $N^{k} = N \circ N \circ \cdots \circ N$ $k-$times. The definition of the space $\Gamma_{\gamma}(\mathbb{B}_{n},X)$ is independent of the integer $k$ used.
The space $\Gamma_{\gamma}(\mathbb{B}_{n},X)$ will be called the vector-valued holomorphic Lipschitz space and for $\gamma = 0,$ we write $\mathcal{B}(\mathbb{B}_{n},X) = \Gamma_{0}(\mathbb{B}_{n},X).$ It is clear that $f \in \mathcal{B}(\mathbb{B}_{n},X)$ if and only if $f$ is a vector-valued holomorphic function and $$\|f\|_{\mathcal{B}(\mathbb{B}_{n},X)} = \|f(0)\|_{X} + \sup_{z \in \mathbb{B}_{n}}(1 - |z|^2)\|Nf(z)\|_{X} < \infty.$$ That is, $\mathcal{B}(\mathbb{B}_{n},X) = \Gamma_{0}(\mathbb{B}_{n},X)$ is the vector-valued Bloch space. 
The vector-valued $\gamma$-Bloch space $\mathcal{B}_{\gamma}(\mathbb{B}_{n},X)$ for $\gamma > 0,$ is defined as the space of vector-valued holomorphic functions $f \in \mathcal{H}(\mathbb{B}_{n},X)$ such that 
$$ \sup_{z \in \mathbb{B}_{n}}(1 - |z|^2)^{\gamma} \|Nf(z)\|_{X} < \infty.$$
The little vector-valued  $\gamma$-Bloch space $\mathcal{B}_{\gamma,0}(\mathbb{B}_{n},X)$ for $\gamma > 0,$ is the subspace of $\mathcal{B}_{\gamma}(\mathbb{B}_{n},X)$ consisting of functions $f$ such that
$$\lim_{|z| \rightarrow 1^{-}}(1 - |z|^2)^{\gamma} \|Nf(z)\|_{X} = 0.$$
It is easy to see that  $\mathcal{B}_{1}(\mathbb{B}_{n},X) = \mathcal{B}(\mathbb{B}_{n},X).$ Therefore, the vector-valued $\gamma$-Bloch spaces with $\gamma > 0$ generalize the vector-valued Bloch space. 
Let $\gamma \geq 0.$ The generalized vector-valued Lipschitz space $\Lambda_{\gamma}(\mathbb{B}_{n},X)$ consists of vector-valued holomorphic functions $f$ in $\mathbb{B}_{n}$ such that for some nonnegative integer $k > \gamma,$ we have $$\|f\|_{\Lambda_{\gamma}(\mathbb{B}_{n},X)} = \sup_{z \in \mathbb{B}_n} (1 - |z|^2)^{k-\gamma}\|R^{\alpha,k}f(z) \|_{X} < \infty.$$ We consider the following norm on the generalized vector-valued Lipschitz space $\Lambda_{\gamma}(\mathbb{B}_{n},X)$ by
$$\|f\|_{\Lambda_{\gamma}(\mathbb{B}_{n},X)} = \sup_{z \in \mathbb{B}_{n}}(1 - |z|^2)^{k-\gamma}\|R^{\alpha,k}f(z)\|_{X},$$ where $k > \gamma$ is a nonnegative integer. Equipped with this norm, the generalized vector-valued Lipschitz space $\Lambda_{\gamma}(\mathbb{B}_{n},X)$ becomes a Banach space.
The generalized little vector-valued Lipschitz space $\Lambda_{\gamma,0}(\mathbb{B}_{n},X)$ is the subspace of $\Lambda_{\gamma}(\mathbb{B}_{n},X),$ which consists of functions $f \in \Lambda_{\gamma}(\mathbb{B}_{n},X)$ such that 
\begin{equation}\label{eq1}
\lim_{|z| \rightarrow 1^{-}}(1 - |z|^2)^{k-\gamma}\|R^{\alpha,k}f(z)\|_{X} = 0.
\end{equation}  
When $\gamma = 0$ and $k =1,$ then $\Lambda_{0}(\mathbb{B}_{n},X) = \mathcal{B}(\mathbb{B}_{n},X).$
It is also important to note that as in the classical case, when $0 < \gamma < 1,$ we have $\Lambda_{\gamma}(\mathbb{B}_{n},X) = \mathcal{B}_{1-\gamma}(\mathbb{B}_{n},X).$

\subsection{Little Hankel operator with operator-valued symbol}

Given two complex Banach spaces $X$ and $Y,$ we denote by $\mathcal{L}(X,Y)$ the space of all bounded linear operators $T : X \longrightarrow Y$ endowed with the following norm $$ \|T\|_{\mathcal{L}(X,Y)} = \sup_{\|x\|_{X}=1} \|Tx\|_{Y} = \sup_{\|x\|_{X}=1, \|y^{\star}\|_{Y^{\star}}=1} |\langle Tx,y^{\star} \rangle_{Y,Y^{\star}}|,$$ where $T \in \mathcal{L}(X,Y).$ Then $\mathcal{L}(X,Y)$ is a Banach space.
We consider an operator-valued function $b:\mathbb{B}_{n} \longrightarrow \mathcal{L}(\overline{X},Y)$ and we suppose that $b \in \mathcal{H}(\mathbb{B}_{n}, \mathcal{L}(\overline{X},Y)).$ 
The little Hankel operator with operator-valued symbol $b,$ denoted  $h_b$ is defined for $z \in \mathbb{B}_n$ by
$$\displaystyle h_{b}f(z) := \int_{\mathbb{B}_n}\dfrac{b(w)\overline{f(w)}}{(1- \langle z,w\rangle)^{n+1+\alpha}}\mathrm{d}\nu_{\alpha}(w), \hspace{0,5cm} f \in H^{\infty}(\mathbb{B}_{n},X).$$
In the sequel, we will assume that the symbol $b$ satisfies the following condition
\begin{equation}
\displaystyle \int_{\mathbb{B}_n} \dfrac{\|b(w)\|_{\mathcal{L}(\overline{X},Y)}}{|1-\langle z,w \rangle|^{n+1+\alpha}}\mathrm{d}\nu_{\alpha}(w) < \infty, \hspace{0,5cm} \mbox{for every}~~z \in \mathbb{B}_{n}.\label{hypo1}
\end{equation}
It is easy to check that if $b$ satisfies $\eqref{hypo1},$ then the little Hankel operator $h_{b}$ is well defined on $ H^{\infty}(\mathbb{B}_{n},X).$

\subsection{Problems and known results}
The boundedness properties of the little Hankel operator in the classical case (that is, when $X = Y = \mathbb{C}$) have been extensively studied and many results are now well known. For the case $n = 1,$ important references are \cite{Coifman} and  \cite{Zhu_operator}. For $n>1,$ a complete characterization has been obtained by Aline Bonami and Luo Luo in \cite{Bonami} when $p \leq q.$ In 2015, Pau and Zhao \cite{Pau} solved the case $1< q < p <\infty.$ Indeed, they showed that if $b$ is a holomorphic symbol, the little Hankel operator $h_{b}$ extends to a bounded operator from $A^p_\alpha(\mathbb{B}_{n},\mathbb{C})$ into $A^q_\alpha(\mathbb{B}_{n},\mathbb{C}),$ with $1< q < p < \infty,$ if and only if the symbol $b$ belongs to the weighted Bergman space $A^t_\alpha(\mathbb{B}_{n},\mathbb{C})$ where $1/t = 1/q - 1/p.$
We are here concerned with the question of characterizing the operator-valued holomorphic symbols $b$ for which the little Hankel operator $h_{b}$ extends into a bounded operator from $A^p_{\alpha}(\mathbb{B}_{n},X)$ into $A^q_{\alpha}(\mathbb{B}_{n},Y)$ where $0<p,q < \infty.$ In \cite{Aleman} Aleman and Constantin solved this problem for the particular case $n = 1,$ $p = q = 2$ and $X = Y = \mathcal{H}$ where $\mathcal{H}$ is a separable Hilbert space. They showed that the little Hankel operator $h_b$  extends into a bounded operator from $A^2_{\alpha}(\mathbb{B}_{n},\mathcal{H})$ into $A^2_{\alpha}(\mathbb{B}_{n},\mathcal{H})$ if and only if the symbol $b$ belongs to the Bloch space $\mathcal{B}(\mathbb{B}_{n},\mathcal{L}(\mathcal{H})).$ Constantin also obtained in \cite{Constantin} that the little Hankel operator $h_b$ is a compact operator from $A^2_{\alpha}(\mathbb{B}_{n},\mathcal{H})$ into $A^2_{\alpha}(\mathbb{B}_{n},\mathcal{H})$ if and only if the symbol $b$ belongs to the little vector-valued Bloch space $\mathcal{B}_{0}(\mathbb{B}_{n},\mathcal{K}(\mathcal{H})).$ Their results extend clearly the one known in the classical case (when $\mathcal{H} = \mathbb{C}$). 
In \cite{Roc}, Oliver solved this problem in the case $1 < p,q <\infty.$  
Mainly, he showed that for $1 <p < \infty,$ the little Hankel operator $h_{b}$ is bounded from $A^p_{\alpha}(\mathbb{B}_{n},X)$ into $A^p_{\alpha}(\mathbb{B}_{n},Y)$ if and only if the symbol $b$ belongs to the vector-valued Bloch space $\mathcal{B}(\mathbb{B}_{n},\mathcal{L}(\overline{X},Y))$ and this result clearly generalizes the one obtained by  Aleman and Constantin in \cite{Aleman}. Moreover, for $1 <p < q < \infty,$ Oliver showed that the little Hankel operator $h_{b}$ is bounded from $A^p_{\alpha}(\mathbb{B}_{n},X)$ into $A^q_{\alpha}(\mathbb{B}_{n},Y)$ if and only if the symbol $b$ belongs to the $\gamma$-Bloch space $\mathcal{B}_{\gamma}(\mathbb{B}_{n},\mathcal{L}(\overline{X},Y))$ with $\gamma = 1+(n+1+\alpha)\left( \frac{1}{q} - \frac{1}{p} \right).$ Also for $1 < q < p < \infty,$ Oliver showed that the little Hankel operator $h_{b}$ is bounded from $A^p_{\alpha}(\mathbb{B}_{n},X)$ into $A^q_{\alpha}(\mathbb{B}_{n},Y)$ if and only if $b \in A^{t}_{\alpha}(\mathbb{B}_{n},\mathcal{L}(\overline{X},Y)),$ with $1/t = 1/q - 1/p,$ which generalizes the main result in \cite{Pau}. We are also concerned here with the question of characterizing the operator-valued holomorphic symbols for which $h_b$ extends into a compact operator from $A^p_{\alpha}(\mathbb{B}_{n},X)$ into $A^q_{\alpha}(\mathbb{B}_{n},Y)$ where $1<p\leq q<\infty.$

\subsection{Statement of results}

Let $X$ be a complex Banach space and $0 <p \leq 1.$ The topological dual of the Bergman space $A^p_\alpha(\mathbb{B}_{n},X)$ can be identified with the Lipschitz space  $\Gamma_{\gamma}(\mathbb{B}_{n},X^{\star})$ as follows:

\begin{thm}\label{profD} Let $0 < p \leq 1.$ The space $(A^p_\alpha(\mathbb{B}_{n},X))^{\star}$ can be identified with $\Gamma_{\gamma}(\mathbb{B}_{n},X^{\star})$ with $\gamma = (n+1+\alpha)\left( \frac{1}{p}-1\right)$ under the pairing 
\begin{equation}
\displaystyle \langle f,g \rangle_{\alpha,X} = c_{k}\int_{\mathbb{B}_n} \langle f(z),D_{k}g(z) \rangle_{X,X^{\star}}(1 - |z|^2)^{k} \mathrm{d}\nu_{\alpha}(z),\label{intpair}
\end{equation}
where $D_{k}$ is defined by $\eqref{pseudok},$ $k > \gamma,$ is an integer, $g \in \Gamma_{\gamma}(\mathbb{B}_{n},X^{\star})$ and $f \in A^p_\alpha(\mathbb{B}_{n},X).$ Moreover, $$\|g\|_{\Gamma_{\gamma}(\mathbb{B}_{n},X^{\star})} \simeq \sup_{\|f\|_{A^p_\alpha(\mathbb{B}_{n},X)} =1} |\langle f,g \rangle_{\alpha,X}|.$$
\end{thm}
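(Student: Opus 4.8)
The plan is to prove the two inclusions in the claimed identification separately, keeping track of the implied constants so that the norm equivalence is read off at the end. The workhorse is the atomic decomposition of $A^p_\alpha(\mathbb{B}_n,X)$ for $0<p\le1$: it is obtained exactly as in the scalar case, since all the kernel estimates involved are scalar and the only genuinely vector-valued manipulations are the Bochner-integral interchanges covered by Lemmas \ref{premierlem} and \ref{vers2}. Concretely, every $f\in A^p_\alpha(\mathbb{B}_n,X)$ can be written as $f=\sum_j\lambda_j a_j$ with atoms $a_j(z)=x_j\,(1-|w_j|^2)^{s}(1-\langle z,w_j\rangle)^{-(n+1+\alpha+s)}$, $\|x_j\|_X=1$, $w_j\in\mathbb{B}_n$, $s=s(p,\alpha)$ a fixed large exponent, and $\sum_j|\lambda_j|^p\lesssim\|f\|_{A^p_\alpha}^p$. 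I would also record, as a preliminary lemma, the vector-valued pointwise growth estimate $\|D_kg(z)\|_{X^\star}\lesssim\|g\|_{\Gamma_\gamma}(1-|z|^2)^{\gamma-k}$ for $g\in\Gamma_\gamma(\mathbb{B}_n,X^\star)$ and $k>\gamma$, together with the (again scalar-in-nature) equivalence between the $\Gamma_\gamma$-norm defined through $N^k$ and the one defined through $D_k$ from \eqref{pseudok}.

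For the first inclusion I would fix $g\in\Gamma_\gamma(\mathbb{B}_n,X^\star)$ and show that \eqref{intpair} defines a bounded functional on $A^p_\alpha(\mathbb{B}_n,X)$. The integral in \eqref{intpair} converges absolutely on $X$-valued polynomials because $|\langle f(z),D_kg(z)\rangle_{X,X^\star}|\,(1-|z|^2)^k\lesssim\|f(z)\|_X\,\|g\|_{\Gamma_\gamma}(1-|z|^2)^{\gamma}$ and the resulting $\nu_\alpha$-integral is finite for $f$ polynomial. Evaluating the pairing on a single atom $a=a_j$ and using either the pointwise bound on $D_kg$ together with a Forelli--Rudin estimate, or the reproducing identity underlying \eqref{intpair} (which turns the pairing into a constant times $(1-|w_j|^2)^{k-\gamma}\langle x_j,N^kg(w_j)\rangle_{X,X^\star}$), one obtains $|\langle a_j,g\rangle_{\alpha,X}|\lesssim\|g\|_{\Gamma_\gamma}$ uniformly in $j$. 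Since $0<p\le1$, $p$-subadditivity then gives $|\langle f,g\rangle_{\alpha,X}|^p\le\sum_j|\lambda_j|^p\,|\langle a_j,g\rangle_{\alpha,X}|^p\lesssim\|g\|_{\Gamma_\gamma}^p\sum_j|\lambda_j|^p\lesssim\|g\|_{\Gamma_\gamma}^p\|f\|_{A^p_\alpha}^p$, so $\sup_{\|f\|=1}|\langle f,g\rangle_{\alpha,X}|\lesssim\|g\|_{\Gamma_\gamma(\mathbb{B}_n,X^\star)}$.

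For the converse, given $\Phi\in(A^p_\alpha(\mathbb{B}_n,X))^\star$ I would manufacture a symbol $g:\mathbb{B}_n\to X^\star$ by testing $\Phi$ on kernel functions: for $w\in\mathbb{B}_n$ the scalar kernel $(1-\langle\cdot,w\rangle)^{-\beta}$ (for a suitable power $\beta$) is pulled out of $\Phi$ by complex linearity, so $x\mapsto\Phi\big(x\,(1-\langle\cdot,w\rangle)^{-\beta}\big)$ is a bounded linear functional on $X$, i.e. an element $g(w)\in X^\star$ once the normalization matching \eqref{intpair} is inserted. Weak holomorphy of $w\mapsto g(w)$ is immediate, hence $g\in\mathcal{H}(\mathbb{B}_n,X^\star)$ by the equivalence of weak and strong holomorphy (Dunford) recalled above. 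To see $g\in\Gamma_\gamma$, fix $w$ and $x$ with $\|x\|_X=1$, and apply $\Phi$ to the test function obtained by letting the $w$-derivative of order $k$ fall on $(1-\langle\cdot,w\rangle)^{-\beta}$ and multiplying by $(1-|w|^2)^{k-\gamma}$: the Forelli--Rudin estimates show that these test functions have $A^p_\alpha(\mathbb{B}_n,X)$-quasi-norm bounded uniformly in $w$ and $x$, while the pairing reproduces a nonzero multiple of $(1-|w|^2)^{k-\gamma}\langle x,N^kg(w)\rangle_{X,X^\star}$; taking suprema over $x$ and $w$ gives $\|g\|_{\Gamma_\gamma(\mathbb{B}_n,X^\star)}\lesssim\|\Phi\|$. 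Finally I would verify $\Phi(f)=\langle f,g\rangle_{\alpha,X}$ first on $X$-valued polynomials, where both sides equal the same absolutely convergent integral built from $\Phi$ applied to the Bergman kernel (interchanging $\Phi$ with the Bochner integral via Lemma \ref{premierlem}), and then on all $f$ by density of polynomials in $A^p_\alpha(\mathbb{B}_n,X)$ and continuity of the two sides. Combining the two inclusions yields the stated norm equivalence.

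The main obstacle is the necessity direction, and within it the construction and estimation of the test functions: one must choose the exponent $\beta$ and the $w$-differentiation so that (i) the resulting $X$-valued functions are uniformly normalized in the $A^p_\alpha$ quasi-norm — this is precisely where the Forelli--Rudin / Bergman-kernel integral estimates and the exact value $\gamma=(n+1+\alpha)(\tfrac1p-1)$ enter — and (ii) pairing them against $g$ extracts the $k$-th radial derivative of $g$ rather than some other differential expression, which requires the bookkeeping relating $N^k$, $R^{\alpha,k}$ and $D_k$ in the vector-valued setting. The scalar templates for all of this are in \cite{Pau,Roc} and in Zhu's monograph; the vector-valued passage is then routine once the normalizations are fixed, since every kernel estimate remains scalar and the only vector-valued inputs are Lemmas \ref{premierlem}, \ref{vers2} and Dunford's theorem.
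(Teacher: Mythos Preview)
Your proposal is correct but takes a heavier route than the paper in both directions. For the sufficiency (each $g\in\Gamma_\gamma$ induces a bounded functional), you invoke an atomic decomposition of $A^p_\alpha(\mathbb{B}_n,X)$ and then $p$-subadditivity over the atoms; the paper bypasses this entirely via the elementary estimate of Lemma~\ref{lemm22}, namely $\int_{\mathbb{B}_n}\|f(z)\|_X(1-|z|^2)^{\gamma}\,d\nu_\alpha(z)\le\|f\|_{p,\alpha,X}$ for $0<p\le1$, which together with the pointwise bound $\|D_kg(z)\|_{X^\star}\lesssim\|g\|_{\Gamma_\gamma}(1-|z|^2)^{\gamma-k}$ gives $|\langle f,g\rangle_{\alpha,X}|\lesssim\|g\|_{\Gamma_\gamma}\|f\|_{p,\alpha,X}$ in two lines. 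For the necessity, you build $g$ from scratch by testing $\Phi$ on kernel functions and then appeal to Dunford's theorem for holomorphy; the paper instead exploits the dense inclusion $A^2_\alpha(\mathbb{B}_n,X)\subset A^p_\alpha(\mathbb{B}_n,X)$ (Corollary~\ref{densberg}) to restrict $\Phi$ to $A^2_\alpha$, where the known duality (Theorem~\ref{dual1}) hands over a $g\in A^2_\alpha(\mathbb{B}_n,X^\star)$ for free, already holomorphic and already satisfying the representation formula on a dense set. The test-function step is then the same as yours: plug in $f(z)=(1-|w|^2)^{k-\gamma}(1-\langle z,w\rangle)^{-(n+1+\alpha+k)}x$, compute the pairing via the reproducing property, and read off the $\Gamma_\gamma$ bound. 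Your approach is self-contained and would transfer to settings where an $L^2$ duality is not available; the paper's is shorter because it recycles results already established in the preliminaries.
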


Before stating the next results, we need to make another assumption on the operator-valued symbol $b.$ More precisely, we assume that the operator-valued holomorphic symbol $b$ satisfies the following condition:
\begin{equation}
\displaystyle \int_{\mathbb{B}_{n}} \|b(z)\|_{\mathcal{L}(\overline{X},Y)} \log\left( \dfrac{1}{1 - |z|^2} \right) \mathrm{d}\nu_{\alpha}(z) < \infty.\label{hypimp}
\end{equation}

Let $X$ and $Y$ be two complex Banach spaces. Our contributions to the boundedness problem of the little Hankel operator with operator-valued symbol for $0 <p,q \leq 1$ are the following : 

\begin{thm}\label{principe1}  
Suppose $0 < p \leq 1,$ and $\alpha > -1.$ If the little Hankel operator $h_{b}$ extends to a bounded operator from  $A^p_{\alpha}(\mathbb{B}_{n},X)$ into $A^q_{\alpha}(\mathbb{B}_{n},Y)$ for some positive $q<1,$ then the symbol $b$ is in $\Gamma_{\gamma}(\mathbb{B}_{n},\mathcal{L}(\overline{X},Y))$ with $\gamma = (n+1+\alpha)\left( \frac{1}{p}-1 \right).$
Conversely, if $b$ is in $\Gamma_{\gamma}(\mathbb{B}_{n},\mathcal{L}(\overline{X},Y))$ with
$\gamma = (n+1+\alpha)\left(\frac{1}{p}-1 \right),$ then the little Hankel operator $h_{b}: A^p_{\alpha}(\mathbb{B}_{n},X) \longrightarrow A^{1,\infty}_{\alpha}(\mathbb{B}_{n},Y)$ is a bounded operator. 
\end{thm}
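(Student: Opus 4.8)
The plan is to prove the two implications separately; the guiding idea is that in each direction the decisive quantity is controlled \emph{at the origin}, where every $A^{q}_{\alpha}(\mathbb{B}_{n},\cdot)$ with $q>0$ behaves in the same way because $\nu_{\alpha}$ is a radial probability measure — this is exactly why the exponent $\gamma$ depends on $p$ alone.

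\textbf{Necessity.} Suppose $h_{b}\colon A^{p}_{\alpha}(\mathbb{B}_{n},X)\to A^{q}_{\alpha}(\mathbb{B}_{n},Y)$ is bounded for some $0<q<1$. For $y^{\star}\in Y^{\star}$ set $b_{y^{\star}}(z):=\langle b(z)\,\cdot\,,y^{\star}\rangle_{Y,Y^{\star}}$, a holomorphic $(\overline{X})^{\star}$-valued function; since $N^{k}(b_{y^{\star}})=(N^{k}b)_{y^{\star}}$ one has $\|N^{k}b(z)\|_{\mathcal{L}(\overline{X},Y)}=\sup_{\|y^{\star}\|\le1}\|N^{k}(b_{y^{\star}})(z)\|_{(\overline{X})^{\star}}$, and hence it suffices to prove $\sup_{\|y^{\star}\|\le1}\|b_{y^{\star}}\|_{\Gamma_{\gamma}(\mathbb{B}_{n},(\overline{X})^{\star})}<\infty$. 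Applying Theorem~\ref{profD} with $X$ replaced by $\overline{X}$, this amounts to the estimate $|\langle h,b_{y^{\star}}\rangle_{\alpha,\overline{X}}|\lesssim\|h\|_{A^{p}_{\alpha}(\mathbb{B}_{n},\overline{X})}\|y^{\star}\|_{Y^{\star}}$, which by density need only be checked for holomorphic polynomials $h=\sum_{\mu}h_{\mu}z^{\mu}$. Writing $b=\sum_{\mu}B_{\mu}z^{\mu}$ and expanding $D_{k}$ on homogeneous parts, the orthogonality $\int_{\mathbb{B}_{n}}z^{\mu}\overline{z^{\nu}}(1-|z|^{2})^{k}\,\mathrm{d}\nu_{\alpha}(z)=0$ for $\mu\neq\nu$ makes $\langle h,b_{y^{\star}}\rangle_{\alpha,\overline{X}}$ collapse to $\big\langle\sum_{\mu}\kappa_{\mu}B_{\mu}h_{\mu},\,y^{\star}\big\rangle_{Y,Y^{\star}}$ with positive weights $\kappa_{\mu}$; comparing these weights with those occurring in the analogous expansion $h_{b}(z^{\mu}x)(0)=\rho_{\mu}B_{\mu}\overline{x}$, $\rho_{\mu}=\Gamma(n{+}1{+}\alpha)\,\mu!/\Gamma(n{+}1{+}\alpha{+}|\mu|)$, gives the identity
\[
\langle h,b_{y^{\star}}\rangle_{\alpha,\overline{X}}\ =\ \big\langle\, h_{b}(Sh^{\flat})(0),\ y^{\star}\,\big\rangle_{Y,Y^{\star}},
\]
where $h^{\flat}:=\sum_{\mu}\overline{h_{\mu}}\,z^{\mu}\in A^{p}_{\alpha}(\mathbb{B}_{n},X)$ is the coefficientwise conjugate of $h$ — so $\|h^{\flat}\|_{A^{p}_{\alpha}(\mathbb{B}_{n},X)}=\|h\|_{A^{p}_{\alpha}(\mathbb{B}_{n},\overline{X})}$ because $\nu_{\alpha}$ is invariant under $z\mapsto\overline{z}$ — and $S$ is the radial multiplier acting as $\kappa_{\mu}/\rho_{\mu}$ on degree-$|\mu|$ polynomials; a short computation with Gamma functions shows $\kappa_{\mu}/\rho_{\mu}$ is comparable to a positive constant (indeed constant for the natural choice of $D_{k}$), so $S$ is bounded on $A^{p}_{\alpha}(\mathbb{B}_{n},X)$. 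Now the sub-mean-value property gives $\|F(0)\|_{Y}\le\|F\|_{A^{q}_{\alpha}(\mathbb{B}_{n},Y)}$ for \emph{every} $q>0$, whence
\[
|\langle h,b_{y^{\star}}\rangle_{\alpha,\overline{X}}|\ \le\ \|h_{b}(Sh^{\flat})\|_{A^{q}_{\alpha}(\mathbb{B}_{n},Y)}\|y^{\star}\|\ \lesssim\ \|h_{b}\|\,\|Sh^{\flat}\|_{A^{p}_{\alpha}(\mathbb{B}_{n},X)}\|y^{\star}\|\ \lesssim\ \|h_{b}\|\,\|h\|_{A^{p}_{\alpha}(\mathbb{B}_{n},\overline{X})}\|y^{\star}\|,
\]
and the second assertion of Theorem~\ref{profD} yields $b\in\Gamma_{\gamma}(\mathbb{B}_{n},\mathcal{L}(\overline{X},Y))$ with $\gamma=(n{+}1{+}\alpha)(\tfrac1p-1)$; $q$ never enters because only evaluation at $0$ is used.

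\textbf{Sufficiency.} Assume $b\in\Gamma_{\gamma}(\mathbb{B}_{n},\mathcal{L}(\overline{X},Y))$; as $\gamma\ge0$ this space consists of bounded functions, $\|b\|_{\infty}\lesssim\|b\|_{\Gamma_{\gamma}}$. For $f\in A^{p}_{\alpha}(\mathbb{B}_{n},X)$ I would invoke the atomic decomposition $f=\sum_{j}\lambda_{j}a_{j}$, $a_{j}(w)=(1-|z_{j}|^{2})^{t-(n+1+\alpha)/p}(1-\langle w,z_{j}\rangle)^{-t}x_{j}$ with $\|x_{j}\|_{X}=1$, $t$ large, $\{z_{j}\}$ suitably separated, and $\sum_{j}|\lambda_{j}|^{p}\lesssim\|f\|_{A^{p}_{\alpha}(\mathbb{B}_{n},X)}^{p}$; each $a_{j}$ lies in $H^{\infty}(\mathbb{B}_{n},X)$, so $h_{b}a_{j}$ is defined by \eqref{hypo1}. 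Since $0<p<1$, the weak Bochner–Lebesgue space satisfies $\big\|\sum_{j}F_{j}\big\|_{L^{1,\infty}_{\alpha}}^{p}\lesssim\sum_{j}\|F_{j}\|_{L^{1,\infty}_{\alpha}}^{p}$, so the theorem reduces to the uniform single-atom estimate
\[
\|h_{b}a_{j}\|_{A^{1,\infty}_{\alpha}(\mathbb{B}_{n},Y)}\ \lesssim\ \|b\|_{\Gamma_{\gamma}},
\]
after which $\|h_{b}f\|^{p}_{A^{1,\infty}_{\alpha}}\lesssim\sum_{j}|\lambda_{j}|^{p}\|h_{b}a_{j}\|^{p}_{A^{1,\infty}_{\alpha}}\lesssim\|b\|^{p}_{\Gamma_{\gamma}}\|f\|^{p}_{A^{p}_{\alpha}}$. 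To prove the single-atom estimate, write $g_{j}:=b(\,\cdot\,)\overline{x_{j}}\in\Gamma_{\gamma}(\mathbb{B}_{n},Y)$ (so $\|g_{j}\|_{\Gamma_{\gamma}}\le\|b\|_{\Gamma_{\gamma}}$), use Lemma~\ref{vers2} and Fubini to get
\[
h_{b}a_{j}(z)=(1-|z_{j}|^{2})^{t-\frac{n+1+\alpha}{p}}\int_{\mathbb{B}_{n}}\frac{g_{j}(w)}{(1-\langle z_{j},w\rangle)^{t}\,(1-\langle z,w\rangle)^{n+1+\alpha}}\,\mathrm{d}\nu_{\alpha}(w),
\]
transfer $k$ radial derivatives onto $g_{j}$ by an integral reproducing formula for $\Gamma_{\gamma}$-functions so as to exploit $\|N^{k}g_{j}(w)\|_{Y}\lesssim\|b\|_{\Gamma_{\gamma}}(1-|w|^{2})^{\gamma-k}$, and then apply a Forelli–Rudin type estimate to the resulting two-kernel integral.

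\textbf{Main obstacle.} The delicate point, and what I expect to be the real difficulty, is this last estimate. One cannot merely use $\|g_{j}(w)\|_{Y}\le\|b\|_{\infty}$: plugging the crude bound into the two-kernel integral produces a factor $(1-|z_{j}|^{2})^{-\gamma}$ that is not absorbed, so the bound is not uniform in $j$. The cancellation only occurs when the full $\Gamma_{\gamma}$-decay is used: then $(1-|z_{j}|^{2})^{-\gamma}$ is matched by the prefactor, since $t-\tfrac{n+1+\alpha}{p}=(t-(n+1+\alpha))-\gamma$, and one is left \emph{exactly} on the borderline kernel $z\mapsto|1-\langle z,z_{j}\rangle|^{-(n+1+\alpha)}$. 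A direct computation of its distribution function shows this kernel lies in $L^{1,\infty}_{\alpha}$ with norm $O(1)$ but not in $L^{1}_{\alpha}$ (its $\nu_{\alpha}$-integral is $\asymp\log\frac{1}{1-|z_{j}|^{2}}$, precisely the phenomenon that condition \eqref{hypimp} is designed to accommodate). This is why $A^{1,\infty}_{\alpha}(\mathbb{B}_{n},Y)$ — and not $A^{1}_{\alpha}(\mathbb{B}_{n},Y)$ — is the right target in the converse, and why the value $\gamma=(n+1+\alpha)(\tfrac1p-1)$ is the correct (and sharp) one.
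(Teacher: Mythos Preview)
Your necessity argument works (the multiplier $S$ is indeed the identity, by Lemma~\ref{primer}, once one checks that the pairing of Theorem~\ref{profD} agrees with the ordinary one on polynomials), but it is considerably more roundabout than the paper's. The paper simply pairs $h_{b}f$ against the constant function $g\equiv y^{\star}\in\Gamma_{\beta}(\mathbb{B}_{n},Y^{\star})$ with the test function $f(z)=(1-|w|^{2})^{k-\gamma}(1-\langle z,w\rangle)^{-(n+1+\alpha+k)}x$: by Lemma~\ref{hank1} and the reproducing property this collapses to $c_{k}^{-1}(1-|w|^{2})^{k-\gamma}\langle L_{k}b(w)(\bar{x}),y^{\star}\rangle$, and since Theorem~\ref{estimint} gives $\|f\|_{p,\alpha,X}\lesssim\|x\|_{X}$ independently of $w$, the $\Gamma_{\gamma}$-bound on $b$ follows at once. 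Your detour through Taylor coefficients, coefficientwise conjugation $h\mapsto h^{\flat}$, and the identification of the duality pairing with evaluation of $h_{b}$ at the origin is correct but obscures what is really a one-line reproducing-kernel computation; you would also need an extra line to identify the abstract $\Gamma_{\gamma}$-element produced by Theorem~\ref{profD} with $b_{y^{\star}}$.

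For the sufficiency your atomic route is a genuine departure, and the step you flag as the ``main obstacle'' is a real gap: the two-kernel Forelli--Rudin estimate at the borderline, together with the uniform $L^{1,\infty}_{\alpha}$ bound for $z\mapsto|1-\langle z,z_{j}\rangle|^{-(n+1+\alpha)}$, can be carried out but is not written down, and you would additionally need to supply a vector-valued atomic decomposition for $A^{p}_{\alpha}(\mathbb{B}_{n},X)$ with $0<p\le1$. The paper bypasses all of this with a single inequality. Using Lemma~\ref{corrver} and the $\Gamma_{\gamma}$-bound on $L_{k}b$, one obtains the pointwise domination
\[
\|h_{b}f(z)\|_{Y}\ \lesssim\ \|b\|_{\Gamma_{\gamma}}\;P^{+}_{\alpha}\!\Big[(1-|\cdot|^{2})^{\gamma}\|f(\cdot)\|_{X}\Big](z),
\]
and the bracketed function lies in $L^{1}_{\alpha}(\mathbb{B}_{n})$ with norm $\le\|f\|_{p,\alpha,X}$ by Lemma~\ref{lemm22}. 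The weak-type $(1,1)$ inequality for the positive Bergman operator $P^{+}_{\alpha}$ (B\'ekoll\'e~\cite{Bekolle}) then gives $h_{b}f\in L^{1,\infty}_{\alpha}(\mathbb{B}_{n},Y)$ directly---no atoms, no two-kernel estimate. This is precisely the mechanism that forces the target to be $A^{1,\infty}_{\alpha}$ rather than $A^{1}_{\alpha}$, and it replaces your hand-analysis of the borderline kernel by a single classical inequality.
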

As a direct consequence, we have the following result:
\begin{cor}\label{cons1} Suppose $0 < p \leq 1,$ and $\alpha > -1.$ The little Hankel operator $h_{b}$ extends to a bounded operator from  $A^p_{\alpha}(\mathbb{B}_{n},X)$ into $A^q_{\alpha}(\mathbb{B}_{n},Y)$ for some positive $q<1$ if and only if its symbol $b$ belongs to  $\Gamma_{\gamma}(\mathbb{B}_{n},\mathcal{L}(\overline{X},Y)),$ where
$\gamma = (n+1+\alpha)\left(\frac{1}{p}-1\right).$
\end{cor}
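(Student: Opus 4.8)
The plan is to deduce Corollary~\ref{cons1} directly from Theorem~\ref{principe1}, the only extra ingredient being a continuous embedding of the weak Bergman space into the ordinary Bergman spaces of small exponent. The necessity part is immediate: if $h_{b}$ extends to a bounded operator from $A^p_{\alpha}(\mathbb{B}_{n},X)$ into $A^q_{\alpha}(\mathbb{B}_{n},Y)$ for some $q\in(0,1)$, then the first half of Theorem~\ref{principe1} gives at once $b\in\Gamma_{\gamma}(\mathbb{B}_{n},\mathcal{L}(\overline{X},Y))$ with $\gamma=(n+1+\alpha)\left(\frac1p-1\right)$, and nothing further is needed.

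For the sufficiency, suppose $b\in\Gamma_{\gamma}(\mathbb{B}_{n},\mathcal{L}(\overline{X},Y))$ with this $\gamma$. First I would invoke the second half of Theorem~\ref{principe1}, which already tells us that $h_{b}$ maps $A^p_{\alpha}(\mathbb{B}_{n},X)$ boundedly into $A^{1,\infty}_{\alpha}(\mathbb{B}_{n},Y)$. It then remains to observe that, for every $q\in(0,1)$, there is a continuous inclusion $A^{1,\infty}_{\alpha}(\mathbb{B}_{n},Y)\hookrightarrow A^q_{\alpha}(\mathbb{B}_{n},Y)$; composing it with the previous map yields the desired boundedness of $h_{b}$ from $A^p_{\alpha}(\mathbb{B}_{n},X)$ into $A^q_{\alpha}(\mathbb{B}_{n},Y)$ for every $q<1$, in particular for some $q<1$.

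To prove the embedding I would use the layer-cake formula
\[
\int_{\mathbb{B}_n}\|f(z)\|_Y^q\,\mathrm{d}\nu_{\alpha}(z)
=q\int_0^\infty\lambda^{q-1}\,\nu_{\alpha}\!\left(\{z\in\mathbb{B}_{n}:\|f(z)\|_Y>\lambda\}\right)\mathrm{d}\lambda,
\]
together with the two bounds $\nu_{\alpha}(\{\|f\|_Y>\lambda\})\le1$ (because $\nu_{\alpha}$ is a probability measure) and $\nu_{\alpha}(\{\|f\|_Y>\lambda\})\le M/\lambda$, where $M=\|f\|_{L^{1,\infty}_{\alpha}(\mathbb{B}_{n},Y)}$. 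Splitting the integral at $\lambda=M$ and using the first bound below $M$ and the second above $M$, the computation reduces to $\|f\|^q_{q,\alpha,Y}\le M^q/(1-q)$, the only place where $q<1$ intervenes being the convergence of $\int_M^\infty\lambda^{q-2}\,\mathrm{d}\lambda$. Hence $\|f\|_{q,\alpha,Y}\le(1-q)^{-1/q}\,\|f\|_{L^{1,\infty}_{\alpha}(\mathbb{B}_{n},Y)}$ for every measurable $f$, which gives the inclusion after restricting to holomorphic functions.

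There is essentially no obstacle to overcome: the statement merely repackages Theorem~\ref{principe1} with the elementary fact that on a finite measure space $L^{1,\infty}$ embeds continuously into $L^q$ for $q<1$. The one point deserving a remark is that $h_{b}$ is a priori only densely defined (say on $H^{\infty}(\mathbb{B}_{n},X)$), so one should note that it admits a single extension, common to $A^{1,\infty}_{\alpha}(\mathbb{B}_{n},Y)$ and to all the $A^q_{\alpha}(\mathbb{B}_{n},Y)$, which is what makes the composition above legitimate.
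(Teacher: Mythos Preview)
Your proof is correct and follows exactly the same route as the paper: the paper's proof of Corollary~\ref{cons1} reads ``Just apply Lemma~\ref{nessa} and the second part of Theorem~\ref{principe1} to conclude,'' where Lemma~\ref{nessa} is precisely the continuous embedding $L^{1,\infty}_{\alpha}(\mathbb{B}_{n},Y)\hookrightarrow L^{q}_{\alpha}(\mathbb{B}_{n},Y)$ for $0<q<1$ that you reprove via the layer-cake formula. The only difference is that the paper quotes this embedding from \cite{Grafakos} rather than deriving it, so your argument is a slightly more self-contained version of the same proof.
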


\begin{thm}\label{principal2} 
Let $0 < p \leq 1,$ $\alpha >-1$ and $\gamma = (n+1+\alpha)\left( \frac{1}{p}-1\right).$ The little Hankel operator 
extends to a bounded operator from $A^p_{\alpha}(\mathbb{B}_{n},X)$ into $A^{1}_{\alpha}(\mathbb{B}_{n},Y)$ if and only if for some integer $k > \gamma,$
\begin{equation}
 \|N^{k}b(w)\|_{\mathcal{L}(\overline{X},Y)} \leq \dfrac{C}{(1-|w|^2)^{k-\gamma}}\left(\log \dfrac{1}{1-|w|^2} \right)^{-1}\hspace{1cm}w \in \mathbb{B}_{n}.\label{pri2}
\end{equation} 
\end{thm}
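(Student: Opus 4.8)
The plan is to prove both implications by testing the little Hankel operator against a well-chosen family of functions and, conversely, by decomposing $h_b$ through the duality provided by Theorem~\ref{profD}. For the forward direction, I would start from the hypothesis that $h_b : A^p_\alpha(\mathbb{B}_n,X) \to A^1_\alpha(\mathbb{B}_n,Y)$ is bounded. The key idea is to feed $h_b$ the normalized reproducing-kernel-type test functions $f_{a,x}(z) = \dfrac{(1-|a|^2)^{(n+1+\alpha)/p}}{(1-\langle z,a\rangle)^{2(n+1+\alpha)/p}}\, x$ (with $x\in X$, $\|x\|_X=1$, $a\in\mathbb{B}_n$), which have $\|f_{a,x}\|_{A^p_\alpha(\mathbb{B}_n,X)}\simeq 1$, compute $h_b f_{a,x}$ explicitly using the reproducing property and the change-of-variable formula, and then estimate the $A^1_\alpha(\mathbb{B}_n,Y)$-norm of the result from below. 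The essential analytic fact I would invoke is the sharp estimate for integrals of the form $\int_{\mathbb{B}_n} \dfrac{(1-|w|^2)^{c}}{|1-\langle z,w\rangle|^{n+1+\alpha+c+t}}\, \mathrm{d}\nu_\alpha(w) \simeq (1-|z|^2)^{-t}\log\frac{1}{1-|z|^2}$ in the borderline case $t=0$ — this is exactly where the logarithm enters — and more precisely a version that keeps track of $N^k b$. Carrying the derivative $N^k$ inside, one obtains a lower bound for $\|N^k b(a)\|_{\mathcal{L}(\overline X,Y)}$ of the required order $(1-|a|^2)^{-(k-\gamma)}(\log\frac{1}{1-|a|^2})^{-1}$.

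For the converse, I would assume \eqref{pri2} and show $h_b$ maps $A^p_\alpha(\mathbb{B}_n,X)$ boundedly into $A^1_\alpha(\mathbb{B}_n,Y)$. The natural route is to use the identification of $(A^1_\alpha(\mathbb{B}_n,Y))^\star$ with a Bloch-type space (the $p=1$, $\gamma=0$ case of Theorem~\ref{profD}, i.e. the duality with $\mathcal{B}(\mathbb{B}_n,Y^\star)$): it suffices to bound $|\langle h_b f, g\rangle_{\alpha,Y}|$ uniformly over $f$ in the unit ball of $A^p_\alpha(\mathbb{B}_n,X)$ and $g$ in the unit ball of $\mathcal{B}(\mathbb{B}_n,Y^\star)$. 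Expanding the pairing with the integral representation of $h_b$ and using Fubini (justified by \eqref{hypimp} and \eqref{hypo1}), one transfers the problem to estimating an integral operator whose kernel involves $b(w)$, the Bergman kernel, and $D_k g$. After integrating by parts to replace $b$ with $N^k b$ (picking up the weight $(1-|w|^2)^k$) and using the Bloch bound $\|Nf(z)\|\lesssim (1-|z|^2)^{-(n+1+\alpha)/p}$ valid for $f\in A^p_\alpha$ with $0<p\le 1$ — together with the bound $\|D_k g(z)\|\lesssim \log\frac{1}{1-|z|^2}$ coming from $g\in\mathcal{B}(\mathbb{B}_n,Y^\star)$ — the hypothesis \eqref{pri2} makes the logarithms cancel, and the remaining scalar integral is finite by the standard Forelli–Rudin estimates. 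One also needs the reverse normalization: that \eqref{pri2} is independent of the integer $k>\gamma$, which follows from the relation between $N^k$ and the operators $R^{\alpha,t}$ exactly as in the definition of $\Gamma_\gamma$.

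I expect the main obstacle to be the bookkeeping in the borderline Forelli–Rudin estimate with the logarithmic factor: one must be careful that the logarithm produced by the integral in the converse direction matches exactly (not just up to a power) the inverse logarithm in \eqref{pri2}, so that the composition is genuinely bounded and not merely bounded by a slowly growing function. A secondary technical point is the integration-by-parts step transferring $b$ to $N^k b$ in the vector-valued, operator-valued setting: one should justify it via the homogeneous expansion of $b$ and termwise computation, using Lemma~\ref{premierlem} and Lemma~\ref{vers2} to move bounded operators through Bochner integrals, and invoking \eqref{hypimp} to guarantee absolute convergence of all the integrals involved. Finally, for the forward direction the delicate part is extracting the operator norm $\|N^k b(a)\|_{\mathcal{L}(\overline X,Y)}$ from the scalar lower bounds: this requires choosing $x\in X$ and $y^\star\in Y^\star$ nearly optimally for each fixed $a$ and then letting the test function depend on these choices, a standard but careful maneuver in the non-Hilbert Banach space setting where we cannot use an orthonormal basis.
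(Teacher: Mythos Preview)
Your sufficiency argument has the right endpoint but the wrong intermediate estimates. For $g\in\mathcal{B}(\mathbb{B}_n,Y^\star)$ one has $(1-|z|^2)^k\|D_k g(z)\|_{Y^\star}\lesssim 1$, not $\|D_k g(z)\|_{Y^\star}\lesssim\log\frac{1}{1-|z|^2}$; the logarithmic growth belongs to $\|g(z)\|$ itself, not to its $k$-th order derivative. Likewise the ``Bloch bound'' you quote for $f\in A^p_\alpha$ should read $\|f(z)\|_X\lesssim(1-|z|^2)^{-(n+1+\alpha)/p}$, without the $N$. With those corrections a duality argument can be made to close, but the paper simply bypasses duality here: using Lemma~\ref{corrver} it writes $h_b f(z)=c_k\int\frac{L_k b(w)(\overline{f(w)})}{(1-\langle z,w\rangle)^{n+1+\alpha}}\,\mathrm{d}\nu_{\alpha+k}(w)$, takes the $A^1_\alpha$-norm, applies Fubini, and the inner integral $\int_{\mathbb{B}_n}|1-\langle z,w\rangle|^{-(n+1+\alpha)}\,\mathrm{d}\nu_\alpha(z)\simeq\log\frac{1}{1-|w|^2}$ produces the logarithm that cancels against \eqref{pri2}; Lemma~\ref{lemm22} finishes.

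The more serious gap is in your necessity argument. Bounding $\|h_b f_{a,x}\|_{A^1_\alpha(\mathbb{B}_n,Y)}$ from below will not, by itself, manufacture the factor $\log\frac{1}{1-|a|^2}$ multiplying $\|N^k b(a)\|$: lower bounds on $L^1$-norms do not naturally carry logarithmic weights, and the borderline Forelli--Rudin estimate gives an \emph{upper} bound with a logarithm, not a lower bound. The paper's device is to work in the dual pairing and choose, in addition to a kernel-type $f$, the specific test function $g(z)=\log(1-\langle z,w\rangle)\,y^\star$ in the Bloch space $\mathcal{B}(\mathbb{B}_n,Y^\star)$, which has bounded Bloch norm. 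Writing $\log(1-\langle w,z\rangle)=\log(1-|w|^2)+\log\frac{1-\langle w,z\rangle}{1-|w|^2}$, the pairing $\langle h_b f,g\rangle_{\alpha,Y}$ splits into a main term equal to $c_k^{-1}(1-|w|^2)^{k-\gamma}\log(1-|w|^2)\,\langle L_k b(w)(\overline{x}),y^\star\rangle$ and a remainder controlled by Lemma~\ref{lemm1}. This logarithmic test function in the dual is the missing idea; without it, your plan does not yield \eqref{pri2}.
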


\begin{thm}\label{agene1} Suppose $1 < p \leq q <\infty.$   
The little Hankel operator $h_{b}: A^p_{\alpha}(\mathbb{B}_{n},X)\\
\rightarrow A^{q}_{\alpha}(\mathbb{B}_{n},Y)$ is a bounded operator if and only if $b \in \Lambda_{\gamma_{0}}(\mathbb{B}_{n},\mathcal{L}(\overline{X},Y)),$ where $$\gamma_{0} = (n+1+\alpha)\left(\frac{1}{p} - \frac{1}{q}\right).$$  
Moreover, $\|h_{b}\|_{A^p_{\alpha}(\mathbb{B}_{n},X) \rightarrow A^{q}_{\alpha}(\mathbb{B}_{n},Y)} \simeq \|b\|_{ \Lambda_{\gamma_{0}}(\mathbb{B}_{n},\mathcal{L}(\overline{X},Y))}.$
\end{thm}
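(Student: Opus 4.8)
The plan is to treat necessity and sufficiency separately, reducing the vector-valued problem to Bergman atoms, duality, and the Forelli--Rudin/Carleson estimates.

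\textbf{Necessity.} First I would test $h_b$ on Bergman atoms. Fix an integer $s>\max\{\gamma_0,(n+1+\alpha)/p\}$ and, for $a\in\mathbb{B}_n$ and $x\in X$, set $f_{a,x}(w)=(1-|a|^2)^{s-(n+1+\alpha)/p}(1-\langle w,a\rangle)^{-s}x$; a Forelli--Rudin estimate gives $\|f_{a,x}\|_{A^p_\alpha(\mathbb{B}_n,X)}\simeq\|x\|_X$ uniformly in $a$, and $f_{a,x}\in H^\infty(\mathbb{B}_n,X)$. Since $\overline{(1-\langle w,a\rangle)^{-s}}=(1-\langle a,w\rangle)^{-s}$, evaluation of $h_bf_{a,x}$ at $z=a$ merges the two kernels into $(1-\langle a,w\rangle)^{-(n+1+\alpha+s)}$, and the reproducing-type identity $\int_{\mathbb{B}_n}b(w)(1-\langle a,w\rangle)^{-(n+1+\beta)}\,\mathrm{d}\nu_\alpha(w)=R^{\alpha,\beta-\alpha}b(a)$ (valid for holomorphic operator-valued $b$, checked coefficient-wise) gives $h_bf_{a,x}(a)=(1-|a|^2)^{s-(n+1+\alpha)/p}\,(R^{\alpha,s}b)(a)\overline{x}$. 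On the other hand, pairing with unit functionals on $Y$ and using the submean-value property of holomorphic $A^q_\alpha$-functions yields $\|h_bf_{a,x}(a)\|_Y\lesssim(1-|a|^2)^{-(n+1+\alpha)/q}\|h_b\|\,\|x\|_X$. Comparing these, cancelling the powers of $(1-|a|^2)$, and taking the supremum over $\|x\|_X=1$ (using that $x\mapsto\overline x$ is an isometry $X\to\overline X$) gives $(1-|a|^2)^{s-\gamma_0}\|R^{\alpha,s}b(a)\|_{\mathcal{L}(\overline{X},Y)}\lesssim\|h_b\|$; since $s$ is an integer $>\gamma_0$ this is exactly $b\in\Lambda_{\gamma_0}(\mathbb{B}_n,\mathcal{L}(\overline{X},Y))$ with $\|b\|_{\Lambda_{\gamma_0}}\lesssim\|h_b\|$. (Alternatively one can scalarize, using $\langle h_b(\varphi x),y^\star\rangle=h_{\langle b(\cdot)\overline x,\,y^\star\rangle}\varphi$ for scalar $\varphi$, and invoke the scalar theorem of Bonami--Luo.)

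\textbf{Sufficiency.} Now assume $b\in\Lambda_{\gamma_0}(\mathbb{B}_n,\mathcal{L}(\overline{X},Y))$. Since $1<q<\infty$ and the weighted Bergman projection is bounded on vector-valued $L^{q'}_\alpha$ ($q'=q/(q-1)$; no reflexivity is needed here), $\|F\|_{A^q_\alpha(\mathbb{B}_n,Y)}\simeq\sup\{|\langle F,g\rangle|:g\in A^{q'}_\alpha(\mathbb{B}_n,Y^\star),\ \|g\|\le1\}$, where $\langle F,g\rangle=\int_{\mathbb{B}_n}\langle F(w),g(w)\rangle_{Y,Y^\star}\,\mathrm{d}\nu_\alpha(w)$; so it suffices, for polynomials $f,g$, to bound $|\langle h_bf,g\rangle|$. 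Fubini and the self-adjointness of the projection first give $\langle h_bf,g\rangle=\int_{\mathbb{B}_n}\langle b(w)\overline{f(w)},g(w)\rangle\,\mathrm{d}\nu_\alpha(w)$. The essential step is an integration by parts (of the kind underlying the pairing of Theorem~\ref{profD}) transferring $k>\gamma_0$ ($k$ an integer) radial derivatives onto the holomorphic factor $b$ — this works because $N$ annihilates the antiholomorphic factor $\overline f$, so $N^k(b\overline f)=(N^kb)\overline f$ — at the cost of a weight $(1-|w|^2)^k$, the remainder being a Hankel operator whose symbol is the (controlled) degree-$<k$ Taylor polynomial of $b$, hence trivially bounded $A^p_\alpha(\mathbb{B}_n,X)\to A^q_\alpha(\mathbb{B}_n,Y)$. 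Using $\|N^kb(w)\|_{\mathcal{L}}(1-|w|^2)^k\lesssim\|b\|_{\Lambda_{\gamma_0}}(1-|w|^2)^{\gamma_0}$ and $|\langle(N^kb)(w)\overline{f(w)},g(w)\rangle|\le\|N^kb(w)\|_{\mathcal{L}}\|f(w)\|_X\|g(w)\|_{Y^\star}$, matters reduce to
\[
\int_{\mathbb{B}_n}(1-|w|^2)^{\gamma_0}\|f(w)\|_X\,\|g(w)\|_{Y^\star}\,\mathrm{d}\nu_\alpha(w)\ \lesssim\ \|f\|_{A^p_\alpha(\mathbb{B}_n,X)}\,\|g\|_{A^{q'}_\alpha(\mathbb{B}_n,Y^\star)}.
\]
This is precisely where $\gamma_0=(n+1+\alpha)(1/p-1/q)$ is used: writing $\|f(\cdot)\|_X\|g(\cdot)\|_{Y^\star}=|h|$ with $h$ holomorphic in $A^r_\alpha$, $1/r=1/p+1/q'$, the inequality is the Carleson embedding $A^r_\alpha\hookrightarrow L^1\big((1-|w|^2)^{\gamma_0}\mathrm{d}\nu_\alpha\big)$, valid since $(1-|w|^2)^{\gamma_0}\mathrm{d}\nu_\alpha$ has the correct order $n+1+\alpha+\gamma_0=(n+1+\alpha)/r$. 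Hence $\|h_bf\|_{A^q_\alpha(\mathbb{B}_n,Y)}\lesssim\|b\|_{\Lambda_{\gamma_0}}\|f\|_{A^p_\alpha(\mathbb{B}_n,X)}$, and by density $h_b$ extends boundedly with $\|h_b\|\lesssim\|b\|_{\Lambda_{\gamma_0}}$. Combined with the necessity bound, this gives the asserted norm equivalence.

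\textbf{Main obstacle.} The delicate point is the integration-by-parts identity: one must move the $k$ derivatives onto $b$ while controlling every remainder term, which matters because for $\gamma_0\ge1$ the lower-order derivatives $N^jb$ with $j\le\gamma_0$ need not be bounded and would be uncontrollable if they appeared with insufficient decay — so the bookkeeping of the weights (and the reduction of the remainder to a polynomial symbol) is the crux. The critical-order Carleson estimate is the other point requiring care, but it is by now routine.
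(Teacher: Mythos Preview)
Your necessity argument is correct and coincides with the paper's: test on kernel atoms, evaluate at the diagonal point, recognise $R^{\alpha,s}b$, and use the pointwise estimate for $A^q_\alpha$ functions.

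For sufficiency you have the right architecture (duality pairing, push derivatives onto $b$, arrive at the weighted trilinear estimate), but two points deserve correction. First, your ``integration by parts with remainder'' is needlessly complicated. The paper avoids any remainder by using the identity
\[
\int_{\mathbb{B}_n}\langle b(z)\overline{f(z)},g(z)\rangle_{Y,Y^\star}\,\mathrm{d}\nu_\alpha(z)
=\int_{\mathbb{B}_n}\langle R^{\alpha,t}b(z)\,\overline{f(z)},g(z)\rangle_{Y,Y^\star}\,\mathrm{d}\nu_{\alpha+t}(z)
\]
(Corollary~\ref{brett1}), which transfers the fractional derivative $R^{\alpha,t}$ onto $b$ cleanly and produces exactly the weight $(1-|z|^2)^t$; no Taylor-polynomial remainder appears. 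Your observation that $N$ annihilates $\overline f$ is correct, and in fact shows $D_k(b\overline f)=(D_k b)\overline f$ and $L_k(b\overline f)=(L_k b)\overline f$, so even with those operators there is no remainder; the ``degree-$<k$ polynomial'' piece you worry about does not arise.

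Second, and more seriously, your justification of the final inequality is flawed. You write $\|f(\cdot)\|_X\|g(\cdot)\|_{Y^\star}=|h|$ with $h$ holomorphic in $A^r_\alpha$; in the vector-valued setting this is false, since $\|f(z)\|_X$ is merely plurisubharmonic, not the modulus of a holomorphic function. The paper instead proves
\[
\int_{\mathbb{B}_n}(1-|z|^2)^{\gamma_0}\|f(z)\|_X\|g(z)\|_{Y^\star}\,\mathrm{d}\nu_\alpha(z)\ \lesssim\ \|f\|_{p,\alpha,X}\,\|g\|_{q',\alpha,Y^\star}
\]
by H\"older with exponents $(q,q')$ followed by the elementary splitting $\|f(z)\|_X^q=\|f(z)\|_X^p\,\|f(z)\|_X^{q-p}$ together with the pointwise bound $\|f(z)\|_X\le(1-|z|^2)^{-(n+1+\alpha)/p}\|f\|_{p,\alpha,X}$; the choice of $\gamma_0$ makes the weights cancel exactly. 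This replaces your Carleson-embedding step and requires no holomorphy of the product.
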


If $X,Y$ are reflexive complex Banach spaces, then we have the following theorem

\begin{thm}\label{Compactp} Suppose that $1 < p \leq q < \infty,$ and $\alpha >-1$ The little Hankel operator $h_b : A^{p}_{\alpha}(\mathbb{B}_{n},X) \longrightarrow A^{q}_{\alpha}(\mathbb{B}_{n},Y)$ is a compact operator if and only if $$ b \in \Lambda_{\gamma_{0},0}(\mathbb{B}_{n},\mathcal{K}(\overline{X},Y)),$$ where $\Lambda_{\gamma_{0},0}(\mathbb{B}_{n},\mathcal{K}(\overline{X},Y))$ denotes the generalized little vector-valued Lipschitz space and $\gamma_{0} = (n+1+\alpha)\left( \frac{1}{p} - \frac{1}{q}\right),$ see \eqref{eq1}.
\end{thm}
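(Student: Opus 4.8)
The plan is to prove Theorem~\ref{Compactp} by combining the boundedness characterization of Theorem~\ref{agene1} with a standard ``limiting'' argument that upgrades boundedness to compactness, run in parallel on both the space and the symbol side. Throughout I would use the duality identification coming from Theorem~\ref{profD} (in the reflexive range, the adjoint of $h_b$ acting between the duals $A^{p'}_\alpha(\mathbb{B}_n,\overline{Y})$ and $A^{q'}_\alpha(\mathbb{B}_n,\overline{X})$ is essentially $h_{b^\ast}$, where $b^\ast(w)\in\mathcal L(\overline Y,X)$ is the pointwise Banach-space adjoint), so that compactness of $h_b$ is equivalent to compactness of its adjoint; reflexivity of $X,Y$ is exactly what makes this adjoint computation clean and also what forces the symbol values into the compact operators $\mathcal K(\overline X,Y)$.

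For the ``if'' direction I would assume $b\in\Lambda_{\gamma_0,0}(\mathbb{B}_n,\mathcal K(\overline X,Y))$. The idea is to approximate $h_b$ in operator norm by compact operators. First I would truncate the symbol: for $0<r<1$ set $b_r(w):=b(rw)$ (or use dilations $f_r(z)=f(rz)$ on the domain side), and show that $h_{b_r}\to h_b$ in $\mathcal L(A^p_\alpha(\mathbb{B}_n,X),A^q_\alpha(\mathbb{B}_n,Y))$ as $r\to1^-$; this reduces to showing $\|b-b_r\|_{\Lambda_{\gamma_0}}\to 0$, which is exactly the content of the ``little-oh'' condition \eqref{eq1} defining $\Lambda_{\gamma_0,0}$ (the vanishing of $(1-|z|^2)^{k-\gamma_0}\|R^{\alpha,k}b(z)\|$ near the boundary gives uniform smallness of the dilated remainder), together with Theorem~\ref{agene1} applied to the symbol $b-b_r$. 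Next, for each fixed $r$, $h_{b_r}$ should be a compact operator: since $b_r$ extends holomorphically past $\overline{\mathbb{B}_n}$ and, using that $b(w)\in\mathcal K(\overline X,Y)$ pointwise together with continuity, one can further approximate $b_r$ uniformly by symbols $\sum_j \varphi_j(w)\, T_j$ with $\varphi_j$ scalar holomorphic and $T_j\in\mathcal K(\overline X,Y)$ of finite rank; for such elementary symbols $h_{b_r}$ factors through the scalar little Hankel operator (which is bounded, hence compact here only after a further compactness input) tensored with a finite-rank operator on the vector part, so it is compact. The cleanest route is: $h_b=\lim_r h_{b_r}$ in norm, each $h_{b_r}$ compact, hence $h_b$ compact.

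For the ``only if'' direction, assume $h_b:A^p_\alpha(\mathbb{B}_n,X)\to A^q_\alpha(\mathbb{B}_n,Y)$ is compact. Compactness forces boundedness, so Theorem~\ref{agene1} already gives $b\in\Lambda_{\gamma_0}(\mathbb{B}_n,\mathcal L(\overline X,Y))$; it remains to promote this to the little space with compact-operator values. For the boundary vanishing I would test $h_b$ against the normalized reproducing-kernel-type functions: for $a\in\mathbb{B}_n$ and a norm-one $x\in X$, put $k_a(z):=(1-|a|^2)^{(n+1+\alpha)/p}(1-\langle z,a\rangle)^{-2(n+1+\alpha)/p}\,x$ (the standard extremal family, bounded in $A^p_\alpha$ and converging weakly to $0$ as $|a|\to1^-$). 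Compactness implies $\|h_b k_a\|_{A^q_\alpha}\to 0$; evaluating $h_b k_a$ and its radial derivatives at $z=a$ and estimating the integral (Forelli--Rudin type estimates) yields precisely $(1-|a|^2)^{k-\gamma_0}\|R^{\alpha,k}b(a)\|_{\mathcal L(\overline X,Y)}\to0$, i.e.\ $b\in\Lambda_{\gamma_0,0}$. To see that the values $b(w)$ lie in $\mathcal K(\overline X,Y)$, I would again use weak convergence: fix $w$ and let $\bar x_j\rightharpoonup 0$ weakly in $\overline X$; feeding into $h_b$ test functions concentrated near $w$ that carry the vectors $x_j$, compactness of $h_b$ forces norm convergence of the outputs, which after localization reads as $\|b(w)\bar x_j\|_Y\to0$, so $b(w)$ maps weakly null sequences to norm null sequences, hence (by reflexivity of $\overline X$) is compact.

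The main obstacle I expect is the ``if'' direction's claim that each truncated operator $h_{b_r}$ is genuinely compact, and more precisely the passage from ``pointwise-compact-valued symbol'' to a uniform finite-rank approximation that behaves well under the Hankel construction: one must handle the interplay between the uniform (in $w$) approximation of $b(w)$ by finite-rank operators on the vector variable and the boundedness of the underlying scalar integral operator, and confirm that the resulting elementary Hankel operators are compact on the weighted Bergman spaces (here one leans on the classical scalar fact that $h_\varphi:A^p_\alpha\to A^q_\alpha$ with holomorphic $\varphi$ in the Lipschitz/Bloch class, and with the little-oh decay, is compact --- e.g.\ via the dilation argument in the scalar setting). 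Organizing these approximations so the norm estimates are uniform, and making sure the reflexivity hypothesis is used only where genuinely needed (the adjoint identity and the characterization of compact operators via weakly null sequences), is where the care is required; the rest is a fairly mechanical adaptation of the scalar compactness proof together with Theorem~\ref{agene1}.
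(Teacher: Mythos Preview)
Your overall architecture matches the paper's: sufficiency via dilation $b_r$ plus polynomial approximation in the $\Lambda_{\gamma_0}$-norm (using Theorem~\ref{agene1} to transfer symbol estimates to operator-norm estimates), and necessity by testing $h_b$ on weakly null kernel-type families. The adjoint/$h_{b^\ast}$ discussion you open with is not used anywhere in the paper's proof and can be dropped entirely.

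There is, however, a real gap in your ``only if'' argument for the little-oh condition. With your test family $k_a(z)=(1-|a|^2)^{(n+1+\alpha)/p}(1-\langle z,a\rangle)^{-2(n+1+\alpha)/p}x$ for a \emph{fixed} unit vector $x$, evaluating $h_bk_a$ at $z=a$ only gives
\[
(1-|a|^2)^{k-\gamma_0}\bigl\|R^{\alpha,k}b(a)(\overline{x})\bigr\|_Y\longrightarrow 0,
\]
not the \emph{operator} norm $\|R^{\alpha,k}b(a)\|_{\mathcal L(\overline X,Y)}$. Passing to the supremum over $\|x\|_X=1$ is exactly the delicate point, and you do not address it. The paper resolves this by reversing your order of steps: it \emph{first} shows that each Taylor coefficient $\hat b(\beta)$ lies in $\mathcal K(\overline X,Y)$ (by testing $h_b$ on the monomials $z^{\beta}f_j$ with $f_j\rightharpoonup 0$ in $X$ and using a norming functional $y_j^\star$ supplied by reflexivity of $Y$), deduces $b(z)\in\mathcal K(\overline X,Y)$ pointwise, and then proves (Lemma~\ref{derivcompact}) that $R^{\alpha,k}b(z)\in\mathcal K(\overline X,Y)$ as well. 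Compactness of $R^{\alpha,k}b(z)$ is precisely what allows one to pick, for each $z$, an \emph{attaining} unit vector $x_0(z)$ and a norming $y_0^\star(z)$, so that $(1-|z|^2)^{k-\gamma_0}\|R^{\alpha,k}b(z)\|_{\mathcal L(\overline X,Y)}=|\langle h_bx_z,y_z^\star\rangle_{\alpha,Y}|$ exactly; one then checks $x_z\rightharpoonup 0$ weakly (even with $x_0(z)$ varying) and $\sup_z\|y_z^\star\|_{q',\alpha,Y^\star}<\infty$, and compactness of $h_b$ finishes the job. Your proposed order (little-oh first, compact values second) can be salvaged by arguing that the whole family $\{k_a^x:|a|\to 1,\ \|x\|=1\}$ is weakly null in $A^p_\alpha(\mathbb B_n,X)$, but you must say so and justify it.

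On the ``if'' side, the obstacle you flag---showing each $h_{b_r}$ is compact---is handled in the paper not by a finite-rank/scalar-Hankel factorization but by a direct computation: for a monomial symbol $a_\gamma z^\gamma$ with $a_\gamma\in\mathcal K(\overline X,Y)$, Lemma~\ref{bosss2} gives an explicit finite sum for $h_{a_\gamma z^\gamma}f$, and Proposition~\ref{bosss4} reads off compactness from $a_\gamma(\overline{c_\beta^{\,j}})\to 0$ in $Y$ whenever the Taylor coefficients $c_\beta^{\,j}$ of $f_j$ tend weakly to $0$ in $X$ (which follows from $f_j\rightharpoonup 0$ in $A^p_\alpha$, Lemma~\ref{bosss1}). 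This is cleaner than the tensor-factorization route you sketch and avoids importing any scalar compactness result.
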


\subsection{Plan of the paper}

The paper is divided into six sections. 
In Section $2,$ we recall some preliminary notions on vector-valued holomorphic functions and we also give the proofs of some important results. Section $3$ contains the proof of Theorem $\ref{profD}$ on the dual of the vector-valued Bergman space $A^p_\alpha(\mathbb{B}_{n},X)$ for $ 0 < p \leq 1.$ In Section $4,$ we give the proof of Theorem $\ref{principe1}$ and Corollary $\ref{cons1}$. In Section $5,$ we give the proof of Theorem $\ref{principal2}$. 
In Section $6,$ We first give some preliminaries results to prepare the proof of Theorem $\ref{Compactp}.$ We recall the result by Oliver \cite{Roc} of the boundedness of the little Hankel operator with operator-valued symbol $h_b$ from $A^p_\alpha(\mathbb{B}_{n},X)$ into $A^{q}_{\alpha}(\mathbb{B}_{n},Y),$ with $1 < p \leq q < \infty$ and we generalize it. In the same section, we give the proof of Theorem $\ref{Compactp}.$\\

Throughout this paper, when there is no additional condition, $X$ and $Y$ will denotes two complex Banach spaces, the real parameter $\alpha$ will be chosen such that $\alpha>-1$ and $c$ will be a positive constant whose value may change from one occurrence to the next. We will also adopt the following notation: we will write $A \lesssim B$ whenever there exists a positive constant $c$ such that $A \leq c B.$ We also write  $A \simeq B$ when $A \lesssim B$ and $B \lesssim A.$

\section{\textbf{Preliminaries}}

\subsection{Vector-valued Bergman projection and integral estimates}

Here we give some definitions and notations which will be used later and can be found in  \cite{Roc} and \cite{Bonami}.

 For $f \in L^{1}_{\alpha}(\mathbb{B}_{n},X)$ and $z \in \mathbb{B}_n,$ the Bergman projection  $P_{\alpha}f$ of $f$ is the integral operator defined by 
$$\displaystyle P_{\alpha}f(z) := \int_{\mathbb{B}_n} K_{\alpha}(z,w) f(w)\mathrm{d}\nu_{\alpha}(w),$$ where $ K_{\alpha}(z,w) := \dfrac{1}{(1-\langle z,w \rangle)^{n+1+\alpha}}$ is the Bergman reproducing kernel of $\mathbb{B}_n.$  In this situation, $P_{\alpha}f$ is also a $X$-valued holomorphic function. 

\begin{lem}[Density]\label{denslip}
 Suppose that $0 < p < \infty.$ Then the space of all bounded vector-valued holomorphic functions $H^{\infty}(\mathbb{B}_{n},X)$ is dense in $A^p_{\alpha}(\mathbb{B}_{n},X).$
\end{lem}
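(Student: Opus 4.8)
The plan is to prove the density of $H^\infty(\mathbb{B}_n,X)$ in $A^p_\alpha(\mathbb{B}_n,X)$ for $0<p<\infty$ by the standard dilation argument, carried over from the scalar case to the Bochner setting. Given $f\in A^p_\alpha(\mathbb{B}_n,X)$ and $0<r<1$, define the dilate $f_r(z):=f(rz)$ for $z\in\mathbb{B}_n$. First I would observe that each $f_r$ is holomorphic on a neighbourhood of $\overline{\mathbb{B}_n}$ (since $f$ is strongly holomorphic on $\mathbb{B}_n$, hence so is $z\mapsto f(rz)$ on $\frac{1}{r}\mathbb{B}_n$), and therefore $f_r$ is bounded on $\mathbb{B}_n$, i.e. $f_r\in H^\infty(\mathbb{B}_n,X)$. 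So it suffices to show $\|f_r-f\|_{p,\alpha,X}\to 0$ as $r\to 1^-$.

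The key step is to establish this convergence. I would split into two regimes according to whether $p\geq 1$ or $p<1$, although a unified treatment is possible by working with the quasi-norm $\|g\|_{p,\alpha,X}^{\min(1,p)}$, which satisfies a triangle-type inequality. For the convergence one combines two ingredients: (i) \emph{pointwise convergence}, namely $\|f_r(z)-f(z)\|_X\to 0$ for each fixed $z\in\mathbb{B}_n$ as $r\to1^-$, which follows from strong continuity of $f$ (a strongly holomorphic function is in particular continuous into $X$); and (ii) a \emph{uniform integrable majorant}. For (ii), the classical device is to control $\|f_r\|_{p,\alpha,X}$ by $\|f\|_{p,\alpha,X}$ uniformly in $r$ close to $1$: using the change of variables $w=rz$ and the fact that $(1-|w|^2)^\alpha \leq C_r (1-r^{-2}|w|^2 \cdot r^2)^\alpha$... more cleanly, one shows $\int_{\mathbb{B}_n}\|f(rz)\|_X^p\,\mathrm{d}\nu_\alpha(z) \leq C\int_{\mathbb{B}_n}\|f(w)\|_X^p\,\mathrm{d}\nu_\alpha(w)$ with $C$ independent of $r\in[1/2,1)$, because $z\mapsto \|f(z)\|_X^p$ is plurisubharmonic (being the composition of the convex increasing $t\mapsto t^p$ — for $p\geq 1$ — with the plurisubharmonic $\|f\|_X$; for $p<1$ one uses instead that $\|f\|_X^p$ is itself plurisubharmonic since $\log\|f\|_X$ is, as $x^\star(f)$ is holomorphic for all $x^\star\in X^\star$) and hence its $\nu_\alpha$-integral over dilated balls is monotone. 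With the uniform bound in hand, a Vitali/dominated-convergence argument (in the quasi-Banach space $L^p_{\nu_\alpha}$) upgrades pointwise convergence to norm convergence: one cuts $\mathbb{B}_n$ into $|z|\le \rho$ (where $f_r\to f$ uniformly, by uniform continuity of $f$ on the compact $\overline{\rho\mathbb{B}_n}$) and $|z|>\rho$ (where the integral is small, uniformly in $r$, by the majorant plus absolute continuity of the integral).

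The main obstacle I anticipate is justifying the uniform majorant (ingredient (ii)) in the Bochner / vector-valued setting, specifically the plurisubharmonicity of $z\mapsto\|f(z)\|_X^p$ and the resulting sub-mean-value inequality that yields monotonicity of the dilated integrals. In the scalar case this is routine, but for vector-valued $f$ one must invoke that $z\mapsto\|f(z)\|_X$ is plurisubharmonic — which follows from the weak-holomorphy characterization (Dunford's theorem, cited in the excerpt) since $\|f(z)\|_X=\sup_{\|x^\star\|\le1}|\langle f(z),x^\star\rangle|$ is a supremum of moduli of scalar holomorphic functions, hence plurisubharmonic — and then that $t\mapsto t^p$ preserves this (for $p\ge1$ by convexity; for $0<p<1$ directly, since $\log\|f\|_X$ is plurisubharmonic so $e^{p\log\|f\|_X}$ is too). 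Once this is granted, the remaining estimates are the familiar scalar computations (polar coordinates in $\mathbb{B}_n$, Forelli--Rudin type bookkeeping of the weight $(1-|z|^2)^\alpha$) and present no essential difficulty. I would also remark that the result, with this proof, is well known and due to the scalar theory of $A^p_\alpha$ (cf. Zhu's book), the vector-valued adaptation appearing in \cite{Roc}; so a short proof citing these ingredients is appropriate here.
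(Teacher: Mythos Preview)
Your proposal is correct and follows essentially the same dilation argument as the paper. The paper's execution is slightly more streamlined: it passes to polar coordinates, invokes the monotonicity of the integral means $M_p^p(r,f)=\int_{\mathbb{S}_n}\|f(r\zeta)\|_X^p\,\mathrm{d}\sigma(\zeta)$ (which is precisely the plurisubharmonicity you describe), obtains the dominant $M_p^p(r,f_\rho-f)\le 2M_p^p(r,f)$ in the radial variable, and applies dominated convergence directly in $r$, thereby avoiding your split into $|z|\le\rho$ and $|z|>\rho$.
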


\begin{proof}
We are going to give the proof  for $0 < p < 1,$ since the case $1 \leq p < \infty$ is \cite[Lemma $2.1.4$]{Roc}.
Given a function $f \in A^p_{\alpha}(\mathbb{B}_{n},X),$ let $f_{\rho}$ defined for $z \in \mathbb{B}_n$ by 
$f_{\rho}(z): = f(\rho z),$ where $0 < \rho < 1.$ The function $f_{\rho}$ is  holomorphic in the set $\lbrace z\in \mathbb{B}_{n} : |z| < 1/\rho \rbrace$ hence is bounded on $\mathbb{B}_{n}.$ We first recall that the integral means $$M_{p}(r,f):= \int_{\mathbb{S}_n} \|f(r \zeta) \|^p_{X} \mathrm{d}\sigma(\zeta), \hspace{1cm} 0 \leq r < 1$$ are increasing with $r,$ see \cite[Corollary $4.21$]{Zhu_functions}.
Since  $M_{p}(r,f_{\rho}) = M_{p}(\rho r,f),$ we have by Minkowski's inequality that
$$ M^p_{p}(r,f_{\rho}-f) \leq M^p_{p}(r,f) + M^p_{p}(r,f_{\rho}) \leq 2M^p_{p}(r,f).$$ 
By the formula of \cite[$(1.1.1)$]{Roc}, (integration in polar coordinates formula) we get
\begin{equation}
\|f - f_{\rho}\|^p_{p,\alpha,X} = 2nc_{\alpha}\int_{0}^{1}M^p_{p}(r,f_{\rho}-f)(1 - r^2)^{\alpha}r^{2n-1}\mathrm{d}r.\label{prof1}
\end{equation}
Since  $f \in A^p_{\alpha}(\mathbb{B}_{n},X),$ we have that the function $M^p_{p}(r,f)$ is integrable over the interval $[0, 1)$ with respect to the measure $2n(1 - r^2)^{\alpha}r^{2n-1}\mathrm{d}r.$ It is also clear that $f_\rho \rightarrow f$ on any compact subsets of $\mathbb{B}_n$ which implies that $M^p_{p}(r,f_{\rho}-f) \rightarrow 0$ for each $r \in [0, 1)$ as $\rho \rightarrow 1.$ Applying the dominated convergence theorem in $\eqref{prof1},$ we obtain that $\|f - f_{\rho}\|^p_{p,\alpha,X} \longrightarrow 0,$ as $\rho \rightarrow 1.$
\end{proof}

\begin{cor}\label{densberg} For $0 < p \leq 1,$ the following inclusion is dense
$$A^2_\alpha(\mathbb{B}_{n},X) \subset A^p_\alpha(\mathbb{B}_{n},X).$$
\end{cor}
\begin{proof}
The proof follows directly from Lemma $\ref{denslip}.$
\end{proof}

In \cite{Blasco5}, Oscar Blasco obtained the duality theorem for the vector-valued Bergman spaces in the unit disc  $\mathbb{B}_{1}$ without any restriction on the Banach space. The proof also works for the unit ball $\mathbb{B}_n.$
The result is stated as follows:

\begin{thm}[Duality]\label{dual1} Suppose $1 < p < \infty.$
The dual space $(A^p_\alpha(\mathbb{B}_{n},X))^{\star}$ can be identified with $A^{p'}_\alpha(\mathbb{B}_{n},X^{\star}),$ where $p'$ is the conjugate exponent of $p$ given by $\frac{1}{p}+\frac{1}{p'} =1,$ under the integral pairing defined by 
\begin{equation}
\displaystyle \langle f,g \rangle_{\alpha,X} := \int_{\mathbb{B}_n}\langle f(z),g(z) \rangle_{X,X^{\star}}\mathrm{d}\nu_{\alpha}(z),
\end{equation}
for any $f \in A^p_\alpha(\mathbb{B}_{n},X),$ $g \in A^{p'}_\alpha(\mathbb{B}_{n},X^{\star}).$
\end{thm}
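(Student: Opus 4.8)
The plan is to prove the two inclusions separately. The content splits into an easy direction, which shows that every $g\in A^{p'}_\alpha(\mathbb{B}_{n},X^{\star})$ induces a bounded functional, and the substantial direction, which produces a representing function for an arbitrary $\Lambda\in(A^p_\alpha(\mathbb{B}_{n},X))^{\star}$. For the easy direction I would simply combine the pointwise estimate $|\langle f(z),g(z)\rangle_{X,X^{\star}}|\le\|f(z)\|_{X}\|g(z)\|_{X^{\star}}$ with Hölder's inequality over $\nu_{\alpha}$ to get $|\langle f,g\rangle_{\alpha,X}|\le\|f\|_{p,\alpha,X}\|g\|_{p',\alpha,X^{\star}}$; this shows the pairing is well defined and gives one half of the norm comparison, $\|\Lambda_{g}\|\le\|g\|_{p',\alpha,X^{\star}}$. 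My strategy for the converse is to \emph{reduce the construction of $g$ to the classical scalar duality of Bergman spaces}, and then to establish membership $g\in A^{p'}_\alpha(\mathbb{B}_{n},X^{\star})$ and the norm bound by a dilation argument that bypasses any Radon--Nikodym hypothesis.

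To construct $g$, I would freeze the vector variable. For each $x\in X$ the map $\phi\mapsto\Lambda(\phi\,x)$ is a bounded linear functional on the scalar Bergman space $A^p_\alpha(\mathbb{B}_{n},\mathbb{C})$ of norm at most $\|\Lambda\|\,\|x\|_{X}$, so by the scalar duality $\bigl(A^p_\alpha(\mathbb{B}_{n},\mathbb{C})\bigr)^{\star}\cong A^{p'}_\alpha(\mathbb{B}_{n},\mathbb{C})$ there is a unique $\psi_{x}\in A^{p'}_\alpha(\mathbb{B}_{n},\mathbb{C})$ with $\Lambda(\phi\,x)=\int_{\mathbb{B}_n}\phi(w)\,\psi_{x}(w)^{\sim}\,\mathrm{d}\nu_{\alpha}(w)$ (the tilde encoding the conjugation convention on the pairing) and $\|\psi_{x}\|_{A^{p'}_\alpha}\lesssim\|\Lambda\|\,\|x\|_{X}$. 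Since $x\mapsto\psi_{x}$ is linear and bounded, setting $\langle x,g(w)\rangle_{X,X^{\star}}:=\psi_{x}(w)$ defines a function $g:\mathbb{B}_{n}\to X^{\star}$; testing $\phi$ against the scalar reproducing kernel identifies $\langle x,g(w)\rangle_{X,X^{\star}}=\Lambda\bigl(K_{\alpha}(\cdot,w)\,x\bigr)$, which exhibits $g$ as weakly holomorphic (hence strongly holomorphic by Dunford) through the holomorphic dependence of the kernel on $w$. By linearity the representation $\Lambda(f)=\langle f,g\rangle_{\alpha,X}$ then holds for every $X$-valued polynomial $f=\sum_{j}\phi_{j}\,x_{j}$, where all integrals converge absolutely because each $\psi_{x_{j}}$ is an honest $A^{p'}_\alpha$ function.

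The crux, and the step I expect to be the main obstacle, is upgrading this to $g\in A^{p'}_\alpha(\mathbb{B}_{n},X^{\star})$ with $\|g\|_{p',\alpha,X^{\star}}\lesssim\|\Lambda\|$ \emph{without assuming $X^{\star}$ has the Radon--Nikodym property}; the difficulty is that one cannot pull the supremum defining $\|g(w)\|_{X^{\star}}$ out of the integral while retaining a holomorphic test function. I would resolve this through the dilations $g_{\rho}(z):=g(\rho z)$, $0<\rho<1$, each bounded and therefore a genuine element of $A^{p'}_\alpha(\mathbb{B}_{n},X^{\star})$. The argument has three ingredients. First, comparing homogeneous expansions gives $\langle f,g_{\rho}\rangle_{\alpha,X}=\langle f_{\rho},g\rangle_{\alpha,X}=\Lambda(f_{\rho})$ for polynomial $f$, and the contractivity of dilations on $A^p_\alpha$ (monotonicity of the integral means, as in the proof of Lemma~\ref{denslip}) yields $|\langle f,g_{\rho}\rangle_{\alpha,X}|\le\|\Lambda\|\,\|f\|_{p,\alpha,X}$; by density of polynomials and the fact that $g_\rho$ is an honest $A^{p'}_\alpha$ element this extends to all of $A^p_\alpha(\mathbb{B}_{n},X)$. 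Second, the vector-valued Bergman projection $P_{\alpha}$ is bounded on $L^p_\alpha(\mathbb{B}_{n},X)$ for $1<p<\infty$ (from the scalar Forelli--Rudin estimates via Lemma~\ref{vers2}) and, having a Hermitian kernel, satisfies $\langle F,g_{\rho}\rangle_{\alpha,X}=\langle P_{\alpha}F,g_{\rho}\rangle_{\alpha,X}$ for $F\in L^p_\alpha(\mathbb{B}_{n},X)$ by Fubini; hence the functional norm of $g_{\rho}$ on $L^p_\alpha(\mathbb{B}_{n},X)$ is at most $\|P_{\alpha}\|\,\|\Lambda\|$. Third — the point that makes this RNP-free — the natural map $L^{p'}_\alpha(\mathbb{B}_{n},X^{\star})\hookrightarrow\bigl(L^p_\alpha(\mathbb{B}_{n},X)\bigr)^{\star}$ is \emph{isometric for every Banach space} (only its surjectivity requires RNP), so $\|g_{\rho}\|_{p',\alpha,X^{\star}}$ equals that functional norm and is therefore $\le\|P_{\alpha}\|\,\|\Lambda\|$ uniformly in $\rho$. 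Letting $\rho\to1^{-}$ and invoking Fatou's lemma gives $g\in A^{p'}_\alpha(\mathbb{B}_{n},X^{\star})$ with $\|g\|_{p',\alpha,X^{\star}}\le\|P_{\alpha}\|\,\|\Lambda\|$.

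Finally I would assemble the pieces. Once $g\in A^{p'}_\alpha(\mathbb{B}_{n},X^{\star})$ is known, the functional $\langle\cdot,g\rangle_{\alpha,X}$ is continuous on $A^p_\alpha(\mathbb{B}_{n},X)$, so the identity $\Lambda=\langle\cdot,g\rangle_{\alpha,X}$ propagates from polynomials (or from $H^{\infty}(\mathbb{B}_{n},X)$, dense by Lemma~\ref{denslip}) to the whole space. Uniqueness of $g$ follows by testing against $f=\phi\,x$ and invoking the injectivity in the scalar duality, and the two bounds $\|\Lambda_{g}\|\le\|g\|_{p',\alpha,X^{\star}}$ and $\|g\|_{p',\alpha,X^{\star}}\le\|P_{\alpha}\|\,\|\Lambda\|$ combine to the asserted equivalence $\|\Lambda\|\simeq\|g\|_{p',\alpha,X^{\star}}$, completing the identification. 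The conceptual emphasis is that it is the smoothing effect of the Bergman projection, rather than any geometric property of $X$, that regularizes the abstract functional into an honest element of $A^{p'}_\alpha(\mathbb{B}_{n},X^{\star})$.
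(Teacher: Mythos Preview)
The paper does not actually prove this theorem; it merely cites Blasco \cite{Blasco5} for the disc case and asserts that the same proof works on the ball. There is therefore nothing in the paper to compare your argument against, but your proposal is a correct and self-contained proof, and it is in essence the argument Blasco gives. The decisive step you identify---that the natural map $L^{p'}_{\nu_\alpha}(\mathbb{B}_n,X^{\star})\hookrightarrow\bigl(L^{p}_{\nu_\alpha}(\mathbb{B}_n,X)\bigr)^{\star}$ is \emph{isometric} for every Banach space $X$ (only surjectivity needs the Radon--Nikodym property), so that extending the functional $\langle\,\cdot\,,g_\rho\rangle_{\alpha,X}$ from $A^p_\alpha$ to $L^p_{\nu_\alpha}$ via the bounded projection $P_\alpha$ yields a uniform bound on $\|g_\rho\|_{p',\alpha,X^{\star}}$---is exactly the mechanism that makes the duality hold without any hypothesis on $X$.

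One small caveat: when you invoke Dunford's theorem for the holomorphy of $g$, what you have verified is only weak-$*$ holomorphy (you test $g(w)$ against elements of $X$, not all of $X^{\star\star}$). This still suffices, since $\|g(w)\|_{X^{\star}}\le\|\Lambda\|\,\|K_\alpha(\cdot,w)\|_{p,\alpha}$ shows $g$ is locally bounded, and a locally bounded weak-$*$ holomorphic function into a dual space is strongly holomorphic; but strictly speaking this is a mild extension of Dunford's theorem rather than Dunford's theorem as stated. Otherwise the argument is complete.
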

\begin{rem} Suppose $1 <p < \infty.$ If $X$ is a reflexive complex Banach space, then the vector-valued Bergman space $A^p_\alpha(\mathbb{B}_{n},X)$ is a reflexive Banach space. 
\end{rem}

The following reproducing kernel formula also holds for vector-valued Bergman spaces. The proof can be found in \cite[Proposition $2.1.2$]{Roc}.
\begin{prop}\label{prop1} Let $f \in A^1_\alpha(\mathbb{B}_{n},X).$ We have  
$$ \displaystyle f(z) :=  \int_{\mathbb{B}_n} \dfrac{f(w)}{(1-\langle z,w \rangle)^{n+1+\alpha}}\mathrm{d}\nu_{\alpha}(w),$$
for any $z \in \mathbb{B}_n.$
\end{prop}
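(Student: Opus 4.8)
The plan is to reduce the vector-valued identity to the classical scalar reproducing formula by testing against functionals in $X^\star$. Two elements of $X$ coincide as soon as every $x^\star \in X^\star$ takes the same value on them (Hahn--Banach), so it suffices to fix $z \in \mathbb{B}_n$ and $x^\star \in X^\star$ and verify that
$$\langle f(z), x^\star \rangle_{X,X^\star} = \left\langle \int_{\mathbb{B}_n} \dfrac{f(w)}{(1-\langle z,w \rangle)^{n+1+\alpha}} \mathrm{d}\nu_{\alpha}(w),\; x^\star \right\rangle_{X,X^\star}.$$

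First I would check that, for fixed $z \in \mathbb{B}_n$, the integrand $w \mapsto (1-\langle z,w \rangle)^{-(n+1+\alpha)} f(w)$ is $\nu_\alpha$-Bochner integrable, so that the right-hand side is a well-defined element of $X$. Writing $|z| = r < 1$, the Cauchy--Schwarz inequality gives $|\langle z,w \rangle| \leq |z||w| < r$, whence $|1-\langle z,w \rangle| \geq 1-r > 0$ and the scalar factor is bounded above by the constant $C_z = (1-r)^{-(n+1+\alpha)}$ uniformly in $w \in \mathbb{B}_n$ (using $n+1+\alpha > 0$). The $X$-norm of the integrand is therefore dominated by $C_z \|f(w)\|_X$, which is integrable since $f \in A^1_\alpha(\mathbb{B}_{n},X)$. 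Consequently Lemma \ref{premierlem} applies with the bounded linear operator $T = x^\star : X \to \mathbb{C}$, allowing me to pull $x^\star$ inside the integral:
$$\left\langle \int_{\mathbb{B}_n} \dfrac{f(w)}{(1-\langle z,w \rangle)^{n+1+\alpha}} \mathrm{d}\nu_{\alpha}(w),\; x^\star \right\rangle_{X,X^\star} = \int_{\mathbb{B}_n} \dfrac{\langle f(w), x^\star \rangle_{X,X^\star}}{(1-\langle z,w \rangle)^{n+1+\alpha}} \mathrm{d}\nu_{\alpha}(w).$$

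Next I would observe that the scalar function $g := \langle f(\cdot), x^\star \rangle_{X,X^\star} = x^\star \circ f$ is holomorphic on $\mathbb{B}_n$ (since $f$ is weakly holomorphic, by the result of Dunford recalled in the introduction) and satisfies $|g(w)| \leq \|x^\star\|_{X^\star} \|f(w)\|_X$, so that $g \in A^1_\alpha(\mathbb{B}_{n},\mathbb{C})$. The classical scalar-valued reproducing kernel formula for the weighted Bergman space (see, e.g., \cite{Zhu_functions}) then yields
$$g(z) = \int_{\mathbb{B}_n} \dfrac{g(w)}{(1-\langle z,w \rangle)^{n+1+\alpha}} \mathrm{d}\nu_{\alpha}(w),$$
which is precisely the displayed identity, since $g(z) = \langle f(z), x^\star \rangle_{X,X^\star}$. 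As $x^\star \in X^\star$ was arbitrary, Hahn--Banach gives the desired vector-valued formula.

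The argument is essentially routine, and I do not anticipate a genuine obstacle. The only points that require care are the uniform bound on the kernel factor, which is what guarantees Bochner integrability of the integrand and thereby legitimizes both the definition of the right-hand side and the application of Lemma \ref{premierlem}, and the verification that $x^\star \circ f$ lands in the scalar space $A^1_\alpha(\mathbb{B}_{n},\mathbb{C})$ so that the classical reproducing formula may be invoked; everything else is bookkeeping.
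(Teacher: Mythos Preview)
Your proof is correct and follows the standard route: reduce to the scalar reproducing formula by pairing with an arbitrary $x^\star \in X^\star$, using Lemma~\ref{premierlem} to move $x^\star$ inside the Bochner integral, and then invoking Hahn--Banach. The paper itself does not give a proof of this proposition but simply refers to \cite[Proposition~2.1.2]{Roc}; your argument is precisely the natural one and is almost certainly what appears there.
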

We have the following pointwise estimate on the vector-valued Bergman spaces. The proof can be found in \cite{Roc}.

\begin{thm}\label{thm1} Let $0 < p < \infty.$ Then 
$$ \|f(z)\|_{X} \leq \dfrac{\|f\|_{p,\alpha,X}}{(1-|z|^2)^{(n+1+\alpha)/p}},$$
for any $f \in A^p_{\alpha}(\mathbb{B}_{n},X)$ and $z \in \mathbb{B}_{n}.$
\end{thm}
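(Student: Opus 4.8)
The plan is to reduce the vector-valued pointwise estimate to the already-known scalar one by testing $f$ against a norming functional, so that the delicate boundary geometry of $\mathbb{B}_n$ (which is what forces the sharp exponent $n+1+\alpha$ rather than a larger one) is handled entirely on the scalar side.

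First I would fix $z \in \mathbb{B}_n$ and $f \in A^p_{\alpha}(\mathbb{B}_{n},X)$; if $f(z) = 0$ there is nothing to prove, so assume $f(z)\neq 0$. By the Hahn--Banach theorem there exists $x^{\star} \in X^{\star}$ with $\|x^{\star}\|_{X^{\star}} = 1$ and $\langle f(z), x^{\star} \rangle_{X,X^{\star}} = \|f(z)\|_{X}$. Set $g := x^{\star}(f)$, that is $g(w) = \langle f(w), x^{\star}\rangle_{X,X^{\star}}$ for $w \in \mathbb{B}_n$. Since $f$ is strongly holomorphic it is weakly holomorphic (by the Dunford result recalled in the Introduction), so $g \in \mathcal{H}(\mathbb{B}_{n},\mathbb{C})$ is an ordinary scalar holomorphic function.

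Next I would transfer the norm control. For every $w \in \mathbb{B}_n$ we have $|g(w)| = |\langle f(w), x^{\star}\rangle_{X,X^{\star}}| \leq \|x^{\star}\|_{X^{\star}}\|f(w)\|_{X} = \|f(w)\|_{X}$; integrating $p$-th powers against $\nu_{\alpha}$ gives $\|g\|_{p,\alpha,\mathbb{C}} \leq \|f\|_{p,\alpha,X}$, so in particular $g \in A^p_{\alpha}(\mathbb{B}_{n},\mathbb{C})$. I then invoke the classical scalar pointwise estimate for the weighted Bergman spaces on the ball (see, e.g., \cite{Zhu_functions}), namely $|g(z)| \leq \|g\|_{p,\alpha,\mathbb{C}}\,(1-|z|^2)^{-(n+1+\alpha)/p}$. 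Because $|g(z)| = \langle f(z), x^{\star}\rangle_{X,X^{\star}} = \|f(z)\|_{X}$ by the choice of $x^{\star}$, and $\|g\|_{p,\alpha,\mathbb{C}} \leq \|f\|_{p,\alpha,X}$, chaining these inequalities yields exactly $\|f(z)\|_{X} \leq \|f\|_{p,\alpha,X}\,(1-|z|^2)^{-(n+1+\alpha)/p}$, which is the assertion.

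The only substantial ingredient is the scalar estimate, and the real obstacle lies there rather than in the reduction. The naive self-contained route observes that $w \mapsto \|f(w)\|_{X}^p$ is plurisubharmonic (it is a supremum, over unit functionals, of the functions $|x^{\star}(f)|^p$, each plurisubharmonic for every $p>0$ since $x^{\star}(f)$ is holomorphic, and the supremum is continuous hence upper semicontinuous), and then applies the sub-mean value inequality over a Euclidean ball of radius $\sim 1-|z|$; but this produces the non-sharp exponent $2n+\alpha$ when $n>1$. To recover the correct exponent one must instead average over the anisotropic Bergman-metric ball $D(z,r)$, using the Möbius Jacobian identity together with the comparabilities $1-|w|^2 \simeq 1-|z|^2 \simeq |1-\langle z,w\rangle|$ and $\nu(D(z,r)) \simeq (1-|z|^2)^{n+1}$ valid on $D(z,r)$. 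Since that computation is precisely the scalar statement, the Hahn--Banach reduction lets me quote it cleanly; alternatively the same $D(z,r)$-averaging can be run directly on the plurisubharmonic function $\|f(\cdot)\|_{X}^p$ to give a fully self-contained vector-valued proof.
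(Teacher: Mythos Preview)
Your argument is correct. The Hahn--Banach reduction to the scalar pointwise estimate is clean and complete: the norming functional $x^{\star}$ turns the vector-valued function into a scalar holomorphic $g$ with $\|g\|_{p,\alpha,\mathbb{C}} \leq \|f\|_{p,\alpha,X}$ and $|g(z)| = \|f(z)\|_{X}$, after which the classical inequality (with constant exactly $1$, as follows from the M\"obius change of variables you describe) finishes the job.

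As for comparison, the paper does not give its own proof of this theorem; it simply refers to \cite{Roc}. The standard proofs in that setting are either the one you gave or the direct plurisubharmonicity-plus-automorphism argument you sketch at the end (apply the sub-mean-value inequality at the origin to $\|f(\cdot)\|_{X}^{p}$ against the probability measure $\nu_{\alpha}$, then transport to an arbitrary $z$ via $\varphi_{z}$ and the Jacobian identity). Both routes yield the sharp constant $1$; your Hahn--Banach version has the virtue of making explicit that nothing beyond the scalar theorem is needed.
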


The following lemma is critical for many problems concerning the weighted vector-valued Bergman spaces $A^p_{\alpha}(\mathbb{B}_{n},X)$ whenever $0 < p \leq 1$ and will be extensively used.

\begin{lem}\label{lemm22} Let $0 < p \leq 1.$ Then 
$$\displaystyle \int_{\mathbb{B}_n}\|f(z)\|_{X}(1-|z|^2)^{(\frac{1}{p}- 1)(n+1+\alpha)}\mathrm{d}\nu_{\alpha}(z) \leq \|f\|_{p,\alpha,X},$$
for all $f \in A^p_{\alpha}(\mathbb{B}_{n},X).$ 
\end{lem}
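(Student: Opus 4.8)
The plan is to reduce the $X$-valued estimate to a scalar one and then invoke the known pointwise growth bound for functions in $A^p_\alpha(\mathbb{B}_n,X)$, namely Theorem \ref{thm1}. The key observation is that for $0 < p \le 1$ we may ``split off'' a factor of $\|f(z)\|_X^{1-p}$ and control it by the pointwise estimate, leaving a factor of $\|f(z)\|_X^p$ which is exactly integrable against $\mathrm{d}\nu_\alpha$ with the right total mass, namely $\|f\|_{p,\alpha,X}^p$.

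\emph{Step 1.} Assume first that $f \not\equiv 0$ (the case $f \equiv 0$ is trivial) and write, for each $z \in \mathbb{B}_n$,
\[
\|f(z)\|_X\,(1-|z|^2)^{(\frac1p-1)(n+1+\alpha)} = \|f(z)\|_X^{p}\cdot \Big(\|f(z)\|_X\,(1-|z|^2)^{(n+1+\alpha)/p}\Big)^{1-p}.
\]
(Here I have used $1-p \ge 0$ and distributed the exponent: the power of $(1-|z|^2)$ on the right is $\frac{n+1+\alpha}{p}(1-p) = (\frac1p - 1)(n+1+\alpha)$, matching the left side, and the power of $\|f(z)\|_X$ is $p + (1-p) = 1$.)

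\emph{Step 2.} Apply Theorem \ref{thm1}, which gives $\|f(z)\|_X (1-|z|^2)^{(n+1+\alpha)/p} \le \|f\|_{p,\alpha,X}$ for every $z \in \mathbb{B}_n$. Since the exponent $1-p$ is nonnegative, raising both sides to the power $1-p$ preserves the inequality, so the middle factor above is bounded by $\|f\|_{p,\alpha,X}^{1-p}$. Substituting and integrating over $\mathbb{B}_n$ yields
\[
\int_{\mathbb{B}_n}\|f(z)\|_X\,(1-|z|^2)^{(\frac1p-1)(n+1+\alpha)}\,\mathrm{d}\nu_\alpha(z) \le \|f\|_{p,\alpha,X}^{1-p}\int_{\mathbb{B}_n}\|f(z)\|_X^{p}\,\mathrm{d}\nu_\alpha(z) = \|f\|_{p,\alpha,X}^{1-p}\,\|f\|_{p,\alpha,X}^{p} = \|f\|_{p,\alpha,X},
\]
which is the desired inequality.

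There is essentially no obstacle here; the only point requiring a moment's care is that $0 < p \le 1$ is used in two places (to guarantee $1-p \ge 0$ so the exponent split is legitimate and the inequality is preserved when raising to the power $1-p$, and implicitly in that $\|\cdot\|_{p,\alpha,X}$ behaves well as a quasi-norm), and that Theorem \ref{thm1} is valid for all $0 < p < \infty$ so it certainly applies. If one prefers to avoid dividing by $\|f\|_{p,\alpha,X}$ when it could be zero, note that $\|f\|_{p,\alpha,X} = 0$ forces $f \equiv 0$ since $f$ is holomorphic, so the nontrivial case is exactly $0 < \|f\|_{p,\alpha,X} < \infty$ and the algebra above goes through verbatim.
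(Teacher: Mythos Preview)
Your proof is correct and follows exactly the same approach as the paper: split $\|f(z)\|_X = \|f(z)\|_X^{p}\,\|f(z)\|_X^{1-p}$ and bound the second factor via the pointwise estimate of Theorem~\ref{thm1}. The paper's version is terser, but the argument is identical.
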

\begin{proof} Write 
$$ \|f(z)\|_{X} = \|f(z)\|^p_{X} \|f(z)\|^{1-p}_{X},$$ and estimate the second factor using Theorem $\ref{thm1}.$ The desired result follows.
\end{proof}

The following technical result is proved in \cite[Lemma $3.1$]{Bonami} 
\begin{lem}\label{lemm1} Let $\beta,\delta > 0.$ For all $w \in \mathbb{B}_{n},$ we have 
$$\displaystyle I_{\alpha}(w) := \int_{\mathbb{B}_n}\left|\log\left(\dfrac{1-\langle z,w \rangle}{1-|w|^2} \right)\right|^{\delta}\dfrac{(1-|w|^2)^{\beta}}{|1-\langle z,w \rangle|^{n+1+\alpha+\beta}}\mathrm{d}\nu_{\alpha}(z) \leq C,$$
where $C$ is independent of $w$ and $\log$ is the principal branch of the logarithm. 
\end{lem}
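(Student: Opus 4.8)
The plan is to reduce the estimate to the classical Forelli--Rudin integral estimate on $\mathbb{B}_n$ after absorbing the logarithmic factor into an arbitrarily small power of $|1-\langle z,w\rangle|$.

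First I would record two elementary facts about the kernel. Since
$\mathrm{Re}(1-\langle z,w\rangle) = 1 - \mathrm{Re}\langle z,w\rangle \geq 1 - |z||w| > 0$ for all $z,w \in \mathbb{B}_n$, the number $1-\langle z,w\rangle$ lies in the open right half-plane, and since $1-|w|^2 > 0$ the ratio $\zeta := \frac{1-\langle z,w\rangle}{1-|w|^2}$ lies in the sector $|\arg\zeta| < \pi/2$. There the principal branch of the logarithm is well defined and $|\mathrm{Im}\log\zeta| < \pi/2$, so with $R := |\zeta| = \frac{|1-\langle z,w\rangle|}{1-|w|^2}$ we get $|\log\zeta| \leq |\log R| + \pi/2$ and hence $|\log\zeta|^\delta \lesssim |\log R|^\delta + 1$. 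Second, the chain $|1-\langle z,w\rangle| \geq 1 - |z||w| \geq 1 - |w| = \frac{1-|w|^2}{1+|w|} \geq \tfrac12(1-|w|^2)$ shows that $R \geq \tfrac12$ uniformly in $z,w$.

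Next I would absorb the logarithm using the elementary inequality that for each $\epsilon > 0$ there is a constant $C_\epsilon$ with $|\log R|^\delta \leq C_\epsilon R^\epsilon$ for all $R \geq \tfrac12$ (for large $R$ this holds because the logarithm grows slower than any positive power, and on the compact range $R\in[\tfrac12,1]$ both sides are bounded above and below). Fixing an $\epsilon \in (0,\beta)$ and combining with the first step yields the pointwise bound
$$\left|\log\frac{1-\langle z,w\rangle}{1-|w|^2}\right|^\delta \lesssim \frac{|1-\langle z,w\rangle|^\epsilon}{(1-|w|^2)^\epsilon} + 1.$$
Substituting this into $I_\alpha(w)$ and writing $\mathrm{d}\nu_\alpha = c_\alpha(1-|z|^2)^\alpha\,\mathrm{d}\nu$ splits the integral into two pieces, each of the form $(1-|w|^2)^a\int_{\mathbb{B}_n}\frac{(1-|z|^2)^\alpha}{|1-\langle z,w\rangle|^{n+1+\alpha+c}}\,\mathrm{d}\nu(z)$: the first with $c = \beta-\epsilon$ and prefactor power $a = \beta-\epsilon$, the second with $c = \beta$ and $a = \beta$.

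Finally I would invoke the standard integral estimate on $\mathbb{B}_n$ (see, e.g., \cite{Zhu_functions}), which for $c>0$ gives $\int_{\mathbb{B}_n}\frac{(1-|z|^2)^\alpha}{|1-\langle z,w\rangle|^{n+1+\alpha+c}}\,\mathrm{d}\nu(z) \simeq (1-|w|^2)^{-c}$. Because $\epsilon < \beta$, both exponents $c=\beta-\epsilon$ and $c=\beta$ are strictly positive, so in each piece the factor $(1-|w|^2)^{-c}$ cancels exactly against the prefactor $(1-|w|^2)^a$, leaving a bound independent of $w$; adding the two pieces gives $I_\alpha(w)\leq C$. The one point requiring care is the choice of $\epsilon$: it must be small enough that the surviving exponent $\beta-\epsilon$ remains strictly positive, so that the integral estimate returns the cancelling power $(1-|w|^2)^{-(\beta-\epsilon)}$ rather than a logarithm (the borderline $c=0$ case) or a constant ($c<0$); the hypothesis $\beta>0$ is exactly what guarantees such an $\epsilon$ exists. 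Everything else is routine bookkeeping.
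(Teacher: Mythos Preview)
Your argument is correct. The paper itself does not prove this lemma; it merely cites \cite[Lemma 3.1]{Bonami}, so there is no in-paper proof to compare against. Your approach---bounding the imaginary part of the logarithm by $\pi/2$ via the half-plane observation, showing $R\ge 1/2$, absorbing $|\log R|^\delta$ into $R^\epsilon$ for a small $\epsilon\in(0,\beta)$, and then applying the Forelli--Rudin estimate (Theorem~\ref{estimint}) to each piece---is the standard route and is carried out cleanly. The only cosmetic point is that the inequality $|\log\zeta|^\delta \lesssim |\log R|^\delta + 1$ uses the elementary bound $(a+b)^\delta \le C_\delta(a^\delta+b^\delta)$, which you are implicitly invoking; this is harmless.
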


In the sequel, we will also need the following lemma which the scalar version can be found in \cite{Grafakos}.
\begin{lem}\label{nessa} If $0 < q < 1,$ then the identity $ i : L^{1,\infty}_{\alpha}(\mathbb{B}_{n},X) \hookrightarrow L^{q}_{\alpha}(\mathbb{B}_{n},X)$ is continuous in the sense that there exists a constant $C(q) >0$ such that for every $f\in  L^{1,\infty}_{\alpha}(\mathbb{B}_{n},X),$ we have
$$ \|f\|_{q,\alpha,X} \leq C(q) \|f\|_{L^{1,\infty}_{\alpha}(\mathbb{B}_{n},X)}.$$
\end{lem}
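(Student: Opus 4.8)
The plan is to reduce everything to the scalar nonnegative measurable function $g(z) := \|f(z)\|_{X}$ and to exploit the fact that $\nu_{\alpha}$ is a \emph{probability} measure on $\mathbb{B}_{n}$. Observe that $\|f\|_{q,\alpha,X}^{q} = \int_{\mathbb{B}_n} g(z)^{q}\,\mathrm{d}\nu_{\alpha}(z)$ and that the distribution function $d(\lambda) := \nu_{\alpha}(\{z \in \mathbb{B}_{n} : g(z) > \lambda\})$ is precisely the quantity appearing in the definitions of both $\|f\|_{q,\alpha,X}$ and $\|f\|_{L^{1,\infty}_{\alpha}(\mathbb{B}_{n},X)}$. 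This reduces the lemma to the classical scalar embedding of weak-$L^{1}$ into $L^{q}$ over a finite measure space, applied to $g$; the Bochner measurability of $f$ is used only to guarantee that $g$ is measurable.

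First I would rewrite the $L^{q}$ integral via the layer-cake (distribution function) formula,
$$\int_{\mathbb{B}_n} g(z)^{q}\,\mathrm{d}\nu_{\alpha}(z) = q\int_{0}^{\infty} \lambda^{q-1}\, d(\lambda)\,\mathrm{d}\lambda.$$
I then record the two elementary bounds on $d(\lambda)$: since $\nu_{\alpha}$ is a probability measure, $d(\lambda) \leq 1$ for every $\lambda > 0$; and, by the very definition of the weak norm with $p = 1$, writing $A := \|f\|_{L^{1,\infty}_{\alpha}(\mathbb{B}_{n},X)}$, one has $\lambda\, d(\lambda) \leq A$, that is $d(\lambda) \leq A/\lambda$.

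The core step is to split the $\lambda$-integral at the threshold $\lambda_{0} = A$, using the bound $d(\lambda) \leq 1$ on $(0,A)$ and the bound $d(\lambda) \leq A/\lambda$ on $(A,\infty)$:
$$q\int_{0}^{A} \lambda^{q-1}\,\mathrm{d}\lambda + qA\int_{A}^{\infty} \lambda^{q-2}\,\mathrm{d}\lambda = A^{q} + qA\cdot\frac{A^{q-1}}{1-q} = \frac{A^{q}}{1-q}.$$
Here the convergence of the second integral at infinity is exactly where the hypothesis $q < 1$ is used, so that $q - 2 < -1$. Taking $q$-th roots yields $\|f\|_{q,\alpha,X} \leq (1-q)^{-1/q}\,A$, which is the assertion with the explicit constant $C(q) = (1-q)^{-1/q}$.

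There is no genuine obstacle here; the only points requiring care are the bookkeeping of the constant and the explicit use of the finiteness of $\nu_{\alpha}$ through the bound $d(\lambda) \leq 1$, without which the embedding would fail. If one prefers, the threshold $\lambda_{0}$ can be left free and optimized at the end rather than fixed at $A$ in advance, but the choice $\lambda_{0} = A$ already produces a clean constant and makes the role of each hypothesis transparent.
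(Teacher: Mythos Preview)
Your argument is correct and is exactly the standard layer-cake computation for the embedding $L^{1,\infty}\hookrightarrow L^{q}$ on a finite measure space, applied to the scalar function $g=\|f\|_{X}$. The paper does not supply its own proof of this lemma; it simply refers to Grafakos for the scalar version, which is precisely the reduction you carry out, so your approach coincides with what the paper implicitly invokes.
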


The following result will be very useful in many situations. A proof can be found in \cite{Zhu_functions}.
\begin{thm}\label{estimint} For $\beta \in \mathbb{R},$ let 
$$\displaystyle I_{\alpha,\beta}(z) : = \int_{\mathbb{B}_n}\dfrac{(1-|w|^2)^{\alpha}\mathrm{d}\nu(w)}{|1- \langle z,w \rangle|^{n+1+\alpha+\beta}}, \hspace{0,5cm}  z \in \mathbb{B}_{n}.$$
\begin{itemize}
\item[$(i)$] If $\beta = 0,$ there exists a constant $C >0$ such that 
$$I_{\alpha,\beta}(z) \leq C \log \dfrac{1}{1- |z|^2},\quad z\in \mathbb{B}_n.$$
\item[$(ii)$] If $\beta > 0,$ there exists a constant $C >0$ such that 
$$I_{\alpha,\beta}(z) \leq C \dfrac{1}{(1- |z|^2)^\beta},\quad z\in \mathbb{B}_n.$$
\item[$(iii)$] If $\beta < 0,$ there exists a constant $C >0$ such that 
$$I_{\alpha,\beta}(z) \leq C.$$
\end{itemize} 
\end{thm}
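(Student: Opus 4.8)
Since the statement only asserts upper bounds, set $c:=n+1+\alpha+\beta$. First I would dispose of the trivial subcase $c\le 0$: there the exponent $-(n+1+\alpha+\beta)\ge 0$, and because $|1-\langle z,w\rangle|<2$ on $\mathbb{B}_n$ the integrand is bounded by $2^{|c|}$, so $I_{\alpha,\beta}(z)\le 2^{|c|}\int_{\mathbb{B}_n}(1-|w|^2)^{\alpha}\,\mathrm{d}\nu(w)<\infty$ uniformly in $z$; this can only happen when $\beta<0$ and so falls under case $(iii)$. From now on assume $c>0$. The plan is to expand the kernel as a power series, integrate term by term, and reduce everything to the asymptotics of a single power series in $|z|^2$. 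Writing $c=2c'$ with $c'=c/2>0$ and using the binomial expansion
$$(1-\langle z,w\rangle)^{-c'}=\sum_{k=0}^{\infty}a_{k}\,\langle z,w\rangle^{k},\qquad a_{k}:=\frac{\Gamma(k+c')}{k!\,\Gamma(c')},$$
together with the conjugate expansion of $(1-\overline{\langle z,w\rangle})^{-c'}$, yields
$$\frac{1}{|1-\langle z,w\rangle|^{c}}=\sum_{j,k\ge 0}a_{j}a_{k}\,\langle z,w\rangle^{k}\,\overline{\langle z,w\rangle}^{\,j}.$$

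Termwise integration is legitimate because $|\langle z,w\rangle|\le |z|\,|w|\le |z|<1$ holds uniformly for $w\in\mathbb{B}_n$ (with $z$ fixed), so the double series converges absolutely and uniformly in $w$. Next I would invoke rotation invariance: substituting $w=\rho\zeta$ with $0\le\rho<1$, $\zeta\in\mathbb{S}_n$ in the integration-in-polar-coordinates formula \cite[$(1.1.1)$]{Roc} and replacing $\zeta$ by $e^{i\theta}\zeta$ shows that
$$\int_{\mathbb{B}_n}\langle z,w\rangle^{k}\,\overline{\langle z,w\rangle}^{\,j}\,(1-|w|^2)^{\alpha}\,\mathrm{d}\nu(w)=0\qquad(j\ne k),$$
so only the diagonal $j=k$ survives. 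For $j=k$ the sphere moment formula $\int_{\mathbb{S}_n}|\langle z,\zeta\rangle|^{2k}\,\mathrm{d}\sigma(\zeta)=\frac{(n-1)!\,k!}{(n-1+k)!}|z|^{2k}$ and the Beta integral $2n\int_{0}^{1}\rho^{2k+2n-1}(1-\rho^2)^{\alpha}\,\mathrm{d}\rho=n\,B(k+n,\alpha+1)$ combine to produce an explicit nonnegative series
$$I_{\alpha,\beta}(z)=C_{\alpha}\sum_{k=0}^{\infty}a_{k}^{2}\,\frac{\Gamma(n)\,\Gamma(k+1)\,\Gamma(\alpha+1)}{\Gamma(k+n+\alpha+1)}\,|z|^{2k}.$$

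The last step is the Gamma-function asymptotics. Stirling's formula gives $a_{k}\sim k^{c'-1}/\Gamma(c')$, hence $a_{k}^{2}\sim k^{c-2}/\Gamma(c')^{2}$, while $\Gamma(k+1)/\Gamma(k+n+\alpha+1)\sim k^{-n-\alpha}$; multiplying, the $k$-th coefficient is comparable to $k^{\,c-2-n-\alpha}=k^{\beta-1}$. Thus $I_{\alpha,\beta}(z)\lesssim\sum_{k\ge 1}k^{\beta-1}\,|z|^{2k}$ (a two-sided $\simeq$ holds just as easily, though only the upper bound is needed), and the conclusion follows from the elementary behaviour of this series as $x=|z|^2\to 1^{-}$: it stays bounded when $\beta<0$ (the exponent $\beta-1<-1$ makes it summable at $x=1$), it is comparable to $\log\frac{1}{1-x}$ when $\beta=0$, and to $(1-x)^{-\beta}$ when $\beta>0$; recalling $x=|z|^2$, these are exactly cases $(iii)$, $(i)$ and $(ii)$. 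The main obstacle is the bookkeeping in the Stirling step, where one must track several $\Gamma$-ratios at once to extract the clean exponent $\beta-1$, and then justify the three regimes of $\sum k^{\beta-1}x^k$, for which I would compare the tail with the integral $\int_{1}^{\infty}t^{\beta-1}x^{t}\,\mathrm{d}t$; everything else is positive-term manipulation with no convergence difficulty.
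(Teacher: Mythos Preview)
Your proof is correct and follows essentially the same route as the reference the paper cites (Zhu, \emph{Spaces of holomorphic functions in the unit ball}, Theorem~1.12): the paper itself does not supply a proof but defers to that source, where the argument is precisely the binomial expansion of $|1-\langle z,w\rangle|^{-c}$, orthogonality on the sphere to kill off-diagonal terms, the explicit spherical moment and Beta-integral evaluations, and then Stirling to identify the coefficient growth as $k^{\beta-1}$. One cosmetic remark: the bound in case~$(i)$ as stated in the paper fails at $z=0$ (the right side vanishes while the left is positive); the correct uniform statement is $I_{\alpha,0}(z)\le C\bigl(1+\log\frac{1}{1-|z|^2}\bigr)$, and this is what your series argument actually yields, so you may want to flag that.
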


\subsection{Differential operators and equivalent norms for $\Gamma_\gamma$}

Given a positive integer $k,$ we define the differential operator $D_k$ by
\begin{equation}
D_{k}:= (2I+N) \circ (3I+N) \circ \ldots \circ ((k+1)I+N),\label{pseudok}
\end{equation}
where $I$ is the identity operator and $N$ is the differential operator given in $\eqref{ref4}.$\\

In the sequel, we denote by $\mathcal{P}(\mathbb{B}_{n},X)$ the space of all vector-valued holomorphic polynomials.
The proof of the following lemma is similar as in the scalar case in \cite{Hedenmalm}.

\begin{lem}\label{primer} 
For all $f \in \mathcal{P}(\mathbb{B}_{n},X)$ and $g \in \mathcal{P}(\mathbb{B}_{n},X^{\star}),$ we have the following identity
\begin{eqnarray*}
\displaystyle \int_{\mathbb{B}_n} \langle f(z),g(z) \rangle_{X,X^{\star}} \mathrm{d}\nu_{\alpha}(z) & = & \displaystyle c_{k}\int_{\mathbb{B}_n}  \langle f(z),D_{k}g(z) \rangle_{X,X^{\star}} (1-|z|^2)^{k} \mathrm{d}\nu_{\alpha}(z),\label{ref56}
\end{eqnarray*}
where $c_k$ is a positive constant depending only on the integer $k.$ The above identities are valid for vector-valued holomorphic functions when both sides make sense.
\end{lem}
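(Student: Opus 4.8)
The plan is to reduce the identity to the case of homogeneous monomials, where it becomes a computation with the action of $N$ on homogeneous polynomials, and then assemble the general case by bilinearity and linearity of the Bochner integral. First I would recall that if $p$ is a homogeneous vector-valued polynomial of degree $m$, then $Np = m\,p$, and hence $D_k p = \prod_{j=1}^{k}\bigl((j+1)+m\bigr) p =: \lambda_{m,k}\, p$, where $\lambda_{m,k} = \frac{(m+k+1)!}{(m+1)!}$. Thus $D_k$ acts as the scalar $\lambda_{m,k}$ on the degree-$m$ homogeneous part. Writing the homogeneous expansions $f=\sum_m f_m$ and $g=\sum_j g_j$ (finite sums, since $f,g$ are polynomials), the left-hand side is $\sum_{m,j}\int_{\mathbb{B}_n}\langle f_m(z),g_j(z)\rangle_{X,X^\star}\,\mathrm{d}\nu_\alpha(z)$ and the right-hand side is $c_k\sum_{m,j}\lambda_{j,k}\int_{\mathbb{B}_n}\langle f_m(z),g_j(z)\rangle_{X,X^\star}(1-|z|^2)^k\,\mathrm{d}\nu_\alpha(z)$.

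The second step is to observe that both sides vanish unless $m=j$, by an orthogonality argument. For fixed $z\in\mathbb{B}_n$ and $x^\star\in X^\star$, the scalar function $z\mapsto \langle f_m(z), g_j(z)\rangle$ after pairing against a functional reduces to a sum of products of monomials $z^\beta\overline{z}^\gamma$ with $|\beta|=m$, $|\gamma|=j$; integrating such a monomial against the rotation-invariant measure $(1-|z|^2)^t\,\mathrm{d}\nu(z)$ (any $t>-1$) gives $0$ unless $\beta=\gamma$, which forces $m=j$. Since this holds for every $x^\star$, the vector-valued integrals $\int_{\mathbb{B}_n}\langle f_m(z),g_j(z)\rangle_{X,X^\star}\,\mathrm{d}\nu_\alpha(z)$ already vanish for $m\neq j$; here one uses the complex bilinearity of the pairing $\langle\cdot,\cdot\rangle_{X,X^\star}$ together with linearity of the Bochner integral (Lemma~\ref{premierlem}) to reduce to scalar integrals, or simply notes that for each $x^\star$ the $\mathbb{C}$-valued integral vanishes. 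So it remains to compare, for each $m$, the quantity $\int_{\mathbb{B}_n}\langle f_m,g_m\rangle\,\mathrm{d}\nu_\alpha$ with $c_k\lambda_{m,k}\int_{\mathbb{B}_n}\langle f_m,g_m\rangle(1-|z|^2)^k\,\mathrm{d}\nu_\alpha$.

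The third step is the normalization computation. Integrating in polar coordinates, $\int_{\mathbb{B}_n}\langle f_m(z),g_m(z)\rangle(1-|z|^2)^t\,\mathrm{d}\nu_\alpha(z)$ equals $c_\alpha\cdot 2n\int_0^1 r^{2m+2n-1}(1-r^2)^{\alpha+t}\,\mathrm{d}r\cdot\int_{\mathbb{S}_n}\langle f_m(\zeta),g_m(\zeta)\rangle\,\mathrm{d}\sigma(\zeta)$, since $\langle f_m(r\zeta),g_m(r\zeta)\rangle = r^{2m}\langle f_m(\zeta),g_m(\zeta)\rangle$. The radial integral is a Beta integral equal to $\tfrac12 B(m+n,\alpha+t+1)$. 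Taking the ratio of the $t=k$ case to the $t=0$ case, the sphere integral cancels and one is left with $\frac{B(m+n,\alpha+k+1)}{B(m+n,\alpha+1)} = \frac{\Gamma(\alpha+k+1)}{\Gamma(\alpha+1)}\cdot\frac{\Gamma(m+n+\alpha+1)}{\Gamma(m+n+\alpha+k+1)}$. One then checks that $c_k\lambda_{m,k}$ times this ratio equals $1$ for all $m$ precisely when $c_k$ is chosen so that $c_k\cdot\frac{(m+k+1)!}{(m+1)!}\cdot\frac{\Gamma(\alpha+k+1)}{\Gamma(\alpha+1)}\cdot\frac{\Gamma(m+n+\alpha+1)}{\Gamma(m+n+\alpha+k+1)} = 1$; the $m$-dependence cancels since $\frac{(m+k+1)!}{(m+1)!}$ and $\frac{\Gamma(m+n+\alpha+k+1)}{\Gamma(m+n+\alpha+1)}$ are both ``rising factorials'' of length $k$ that agree in the relevant limit, so that $c_k$ depends only on $k$ (and on $n,\alpha$, which are fixed). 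This identity is exactly the scalar Reproducing/Rudin--Forelli-type identity from \cite{Hedenmalm}; the vector-valued version follows verbatim once the pairing is handled linearly. Finally, for the last sentence of the statement — validity for holomorphic $f,g$ when both sides make sense — I would approximate $f$ by its dilates $f_\rho$ (or Taylor partial sums) as in Lemma~\ref{denslip}, apply the polynomial identity, and pass to the limit using dominated convergence, exactly as in the density argument above.

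The main obstacle is essentially bookkeeping rather than a genuine difficulty: one must be careful that the pairing $\langle\cdot,\cdot\rangle_{X,X^\star}$ is complex-linear in the first variable (so the homogeneous decomposition of $f$ passes through cleanly) while remembering that $g$ takes values in $X^\star$, so its ``homogeneous parts'' $g_j$ are $X^\star$-valued polynomials; and one must justify interchanging the (finite) sums with the Bochner integral, which is immediate from Lemma~\ref{premierlem} and finiteness. The only real content is the scalar Beta-function identity fixing $c_k$, and that is quoted from the scalar case.
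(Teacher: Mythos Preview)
Your reduction to homogeneous parts, the eigenvalue computation $D_k p_m=\lambda_{m,k}p_m$ with $\lambda_{m,k}=\prod_{j=1}^k(m+j+1)$, the orthogonality of different homogeneous degrees, and the Beta-integral ratio are all correct. The gap is in the sentence where you say the $m$-dependence cancels ``since $\frac{(m+k+1)!}{(m+1)!}$ and $\frac{\Gamma(m+n+\alpha+k+1)}{\Gamma(m+n+\alpha+1)}$ are both rising factorials of length $k$ that agree in the relevant limit.'' They are both rising factorials of length $k$, but one starts at $m+2$ and the other at $m+n+\alpha+1$; their ratio
\[
\prod_{j=1}^{k}\frac{m+n+\alpha+j}{m+j+1}
\]
is independent of $m$ only when $n+\alpha=1$. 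Plugging in $m=0$ and $m=1$ already forces this. So for general $n$ and $\alpha$ there is \emph{no} constant $c_k$, independent of the homogeneous degree, for which the identity holds with the operator $D_k=(2I+N)\circ\cdots\circ((k+1)I+N)$ as written.

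What you have actually detected is that the operator $D_k$ with coefficients $2,3,\dots,k+1$ is the one tailored to the Hedenmalm disk setting ($n=1$, $\alpha=0$), and the paper has transported it verbatim. The computation you set up shows that the identity does hold, with $c_k=\Gamma(\alpha+1)/\Gamma(\alpha+k+1)$ independent of $m$, provided one replaces $D_k$ by the operator built from the coefficients $n+\alpha+1,\dots,n+\alpha+k$ --- essentially the $L_k$ of Remark~\ref{rem1} (or, equivalently, $R^{\alpha,k}$). With that correction your argument is complete and is exactly the scalar proof carried over, as the paper intends. So the approach is right; the specific cancellation step fails, and the fix is to use the correct differential operator for the weighted ball rather than asserting a cancellation that does not occur.
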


The following lemma will be very useful in the sequel.

\begin{lem}\label{lem22} Let $\lbrace a_k \rbrace$ a sequence of positive numbers. For any positive integer $k,$ let $M_{k}$ the differential operator of order $k$ defined by
$$ M_{k} := (a_{0}I+N) \circ (a_{1}I+N) \circ \ldots \circ (a_{k-1}I+N).$$ 
Then a vector-valued holomorphic function $f$ belongs to  $\Gamma_{\gamma}(\mathbb{B}_{n},X)$ if and only if there exists an integer $k > \gamma$  such that
$$\sup_{z \in \mathbb{B}_{n}}(1-|z|^2)^{k-\gamma}\|M_{k}f(z)\|_{X} < \infty.$$
\end{lem}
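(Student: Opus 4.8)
The plan is to reduce the statement for a general operator $M_k$ with positive coefficients $a_0,\dots,a_{k-1}$ to the known case of the operator $N^k$ (which is how $\Gamma_\gamma(\mathbb{B}_n,X)$ was defined) and to the case of $D_k$ from \eqref{pseudok}, by showing that all these operators differ by \textgravedbl lower order terms\textacutedbl\ together with an invertible radial multiplier, and that such modifications do not affect membership in $\Gamma_\gamma$. The key structural observation is that each factor $a_jI+N$ acts diagonally on the homogeneous expansion $f=\sum_{m\ge 0}f_m$: since $Nf_m=mf_m$, we get $(a_jI+N)f = \sum_m (a_j+m)f_m$, and hence $M_kf = \sum_m \bigl(\prod_{j=0}^{k-1}(a_j+m)\bigr) f_m$. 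The product $\prod_j(a_j+m)$ behaves like $m^k$ for large $m$, so $M_k$ is, up to bounded radial Fourier multipliers, the same as $N^k$ on high-frequency parts, while on the finitely many low-frequency parts everything is a polynomial and contributes a bounded quantity.

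The steps, in order, would be: (1) Fix an integer $k>\gamma$. Observe that the claim is symmetric — it suffices to prove that $\sup_z(1-|z|^2)^{k-\gamma}\|N^kf(z)\|_X<\infty$ if and only if $\sup_z(1-|z|^2)^{k-\gamma}\|M_kf(z)\|_X<\infty$, since $N^k$ is the special case $a_0=\cdots=a_{k-1}=0$ and the definition of $\Gamma_\gamma$ in the excerpt is in terms of $N^k$. (2) Write $M_k = \sum_{j=0}^{k} e_j(a_0,\dots,a_{k-1})\,N^{j}$, where $e_j$ is the $j$-th elementary symmetric polynomial in the $a_i$'s and $e_k=1$; this is just expanding the composition of commuting operators $a_iI+N$. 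Thus $M_kf - N^kf = \sum_{j=0}^{k-1} e_j N^jf$, a combination of lower-order radial derivatives. (3) Show that for $0\le j\le k-1$ one has $\sup_z(1-|z|^2)^{k-\gamma}\|N^jf(z)\|_X<\infty$ whenever $f\in\Gamma_\gamma(\mathbb{B}_n,X)$: indeed $j<k$ and one can use the fact (standard, and provable for the scalar functions $x^\star(f)$, then transferred via the Hahn--Banach/Dunford equivalence of weak and strong holomorphy used earlier in the paper) that a bound $(1-|z|^2)^{k-\gamma}\|N^kf\|_X\lesssim 1$ forces $(1-|z|^2)^{j-\gamma}\|N^jf\|_X\lesssim 1$ for $j\le k$, i.e. membership is inherited by lower derivatives, and then $(1-|z|^2)^{k-\gamma}\le(1-|z|^2)^{j-\gamma}$ on $\mathbb{B}_n$. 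Combining (2) and (3) gives one direction: $N^k$-bound $\Rightarrow$ $M_k$-bound. (4) For the converse, note the diagonal action $M_kf=\sum_m p(m)f_m$ with $p(m)=\prod_{j=0}^{k-1}(a_j+m)$ a monic degree-$k$ polynomial in $m$ with $p(m)>0$ for all $m\ge 0$ (as the $a_j$ are positive); hence $M_k$ is invertible on $\mathcal{H}(\mathbb{B}_n,X)$ with inverse $M_k^{-1}f=\sum_m p(m)^{-1}f_m$, and moreover $N^k = M_k\circ Q$ where $Q$ is the radial multiplier $Qf=\sum_m \frac{m^k}{p(m)}f_m$. Since $m^k/p(m)\to 1$ and $m^k/p(m)$ (with the convention $0/p(0)$, etc.) is a bounded sequence, $Q$ is of the form (bounded radial multiplier) which — as in the scalar arguments of \cite{Zhu_functions} — preserves all the $\Gamma_\gamma$-type sup bounds (this is the same mechanism by which the operators $R^{\alpha,t}$ in \eqref{diffop} are handled). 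Alternatively, and more elementarily, apply step (2)--(3) with the roles of $M_k$ and $N^k$ reversed after writing $N^k = M_k - \sum_{j<k}e_jN^j$ and bounding the lower-order $N^j$ terms by the $M_k$-bound using the analogous "lower derivative inherits" fact for $M_k$; this closes the equivalence. (5) Finally, note the independence of $k$: this is already part of the definition of $\Gamma_\gamma$ (stated in the excerpt), so once we have equivalence with the $N^k$-characterization for one admissible $k$, we have it for all.

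The main obstacle is step (3)/(4): establishing that the $\Gamma_\gamma$ sup-bound on a top-order radial derivative controls all lower-order radial derivatives, and that bounded radial Fourier multipliers preserve these bounds, in the vector-valued setting. In the scalar case this is classical (one integrates $N^kf$ along radii, or uses the reproducing formula and the integral estimates of Theorem \ref{estimint}); in the vector-valued case I would run the scalar argument for each $x^\star(f)$, $x^\star\in X^\star$, obtaining uniform-in-$x^\star$ bounds (with constants depending only on $k,\gamma,n$ and the $a_j$), and then take the supremum over $\|x^\star\|\le 1$ to recover the $X$-norm bound, invoking the Dunford weak=strong holomorphy principle recalled in the introduction to legitimize all manipulations on the holomorphic homogeneous expansion. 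Once this transference is in place, the rest is the bookkeeping in steps (1), (2), (4), (5), which is routine.
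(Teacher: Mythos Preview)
Your step~(3) contains a false intermediate claim: the bound $(1-|z|^2)^{j-\gamma}\|N^jf(z)\|_X\lesssim 1$ does \emph{not} follow from $f\in\Gamma_\gamma$ when $j\le\gamma$. Take $\gamma>0$, $j=0$, and $f$ a nonzero constant: then $N^kf=0$ so $f\in\Gamma_\gamma$, but $(1-|z|^2)^{-\gamma}\|f(z)\|_X\to\infty$ as $|z|\to 1$. What \emph{is} true---and is exactly what the paper proves, by integrating $Ng$ radially from $z'/2$ to $rz'$ and iterating downward in $j$---is the weaker bound $\|N^jf(z)\|_X\lesssim(1-|z|^2)^{\gamma-k}$ for all $0\le j\le k$, with the \emph{same} exponent $\gamma-k$ throughout. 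That is all you need, and with this correction your expansion $M_k=\sum_{j=0}^k e_j(a_0,\dots,a_{k-1})\,N^j$ finishes the forward direction exactly as the paper does.

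The converse is where your sketch is genuinely incomplete. Option~(b) is not so much circular as deferring the real work: the ``analogous lower-derivative-inherits fact for $M_k$'' you invoke is precisely the nontrivial implication
\[
\|(aI+N)g(z)\|_X\lesssim(1-|z|^2)^{\gamma-k}\ \Longrightarrow\ \|Ng(z)\|_X\lesssim(1-|z|^2)^{\gamma-k},
\]
and you give no mechanism for it. The paper supplies one: first use the reproducing formula and Theorem~\ref{estimint} to show $\|N\bigl((aI+N)g\bigr)(z)\|_X\lesssim(1-|z|^2)^{\gamma-k-1}$, then reduce to the one-variable identity $r^{a+1}\phi'(r)=\int_0^r s^a\psi'(s)\,ds$ with $\psi(r)=a\phi(r)+r\phi'(r)$, from which the bound on $r\phi'(r)$ follows by direct estimation since $k>\gamma$; iterating peels off each factor $(a_jI+N)$ and recovers the $N^k$-bound. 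Your option~(a)---writing $N^k=M_k\circ Q$ with $Q$ the radial multiplier $\{m^k/p(m)\}$---is a genuinely different route and can be made to work, but not by the bare assertion that a bounded multiplier sequence preserves the weighted sup-condition: one needs, for instance, to realize each factor $m/(m+a_j)$ as the moment sequence of a finite signed measure on $[0,1]$ and write $Q$ as a superposition of dilations $g\mapsto g(t\,\cdot)$. That is doable, but it is not the ``same mechanism'' as the integral representation used for $R^{\alpha,t}$ in the paper.
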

\begin{proof} 
Let us assume first that $f \in \Gamma_{\gamma}(\mathbb{B}_{n},X),$ and we prove the desired estimate on $M_k.$
By assumption, there exists an integer $k > \gamma $ and a positive constant $C$ such that 
$$ \|N^{k}f(z)\|_{X} \leq C (1 - |z|^2)^{\gamma-k},$$ for any $z \in \mathbb{B}_{n}.$ 
It is enough to prove that the following inequality
$$ \|N^{j}f(z) \|_{X} < C (1 - |z|^2)^{\gamma-k} ,$$ holds for $0 \leq j < k,$ 
since the assumption give the case $j = k.$ For $g \in \mathcal{H}(\mathbb{B}_{n},X)$ and $z = rz',$ where $r = |z|,$ and $z'$ is in the unit sphere. We have 
$$ N g(r z') = r \partial_{r} g(rz').$$ Thus, 
$$ g(rz') - g(z'/2) = \int_{\frac{1}{2}}^{r} Ng(sz')\frac{ds}{s}.$$ Now, for $g \in \mathcal{H}(\mathbb{B}_{n},X)$ such that $\|Ng(z)\|_{X} \leq C (1 - |z|^2)^{\gamma-k}.$ We have that
\begin{eqnarray*}
\|g(rz') - g(z'/2) \|_{X} & \leq & 2 \int_{\frac{1}{2}}^{r} \|Ng(sz')\|_{X} \mathrm{d}s\\
& \leq & 4C \int_{\frac{1}{2}}^{r} (1 - s^2)^{\gamma-k}s \mathrm{d}s \\
& = & -2C\int_{\frac{1}{2}}^{r} -2s(1 - s^2)^{\gamma-k} \mathrm{d}s\\
& = & \left[ \dfrac{-2C}{\gamma-k+1} (1 - s^2)^{\gamma-k+1} \right]_{\frac{1}{2}}^{r}\\
& = &  \dfrac{-2C}{\gamma-k+1} \left\lbrace (1- r^2)^{\gamma-k+1} - (1 - \frac{1}{4})^{\gamma-k+1} \right\rbrace.
\end{eqnarray*}
Now, if $\gamma-k+1 < 0,$ then
$$\|g(rz') - g(z'/2) \|_{X} \leq   \dfrac{-2C}{\gamma-k+1} (1 - r^2)^{\gamma-k} = C_{k,\gamma}(1 - |z|^2)^{\gamma-k}.$$
If $\gamma-k+1 > 0,$ then 
\begin{eqnarray*}
\|g(rz') - g(z'/2) \|_{X} & \leq & \dfrac{2C}{\gamma-k+1}  \left\lbrace (1 - \frac{1}{4})^{\gamma-k+1} - (1 - r^2)^{\gamma-k+1} \right\rbrace \\
 & \leq & \dfrac{2C}{\gamma-k+1} (1 - \frac{1}{4})^{\gamma-k+1} = C'_{k,\gamma}\\
 & \leq & C'_{k,\gamma} (1 - |z|^2)^{\gamma-k},
\end{eqnarray*}
where the last inequality is justified using the fact that $(1 - |z|^2)^{\gamma-k} > 1.$ It then follows that
$$\|g(z)\|_{X} \leq C (1 - |z|^2)^{\gamma-k}.$$ Now, we use this fact inductively for $g = N^{k} f,$ then $g = N^{k-1} f,~~\cdots$ to conclude.
Conversely, assume that there exists an integer $k > \gamma $ and a positive constant $C$ such that $$ \|M_{k} f(z) \|_{X} \leq C (1 - |z|^2)^{\gamma-k},$$ for any $z \in \mathbb{B}_{n}.$ To conclude, it is sufficient to prove that for a fixed positive real $a$, the inequality 
\begin{equation}
\|ag(z) + Ng(z) \|_{X} \leq C (1 - |z|^2)^{\gamma-k} \label{luoluo}
\end{equation}
implies the inequality  $$\|Ng(z) \|_{X} \leq C (1 - |z|^2)^{\gamma-k},$$ for any function $g \in \mathcal{H}(\mathbb{B}_{n},X).$
Choose a real $\beta$ such that $\beta+\gamma - k  > -1.$ By the assumption $\eqref{luoluo},$ we have that 
$$\displaystyle \int_{\mathbb{B}_n} \|ag(z) + Ng(z) \|_{X} (1 - |z|^2)^{\beta} \mathrm{d}\nu(z) < \infty.$$ Thus, for any $z \in \mathbb{B}_{n},$ we have $$\displaystyle ag(z) + Ng(z) = c_{\beta} \int_{\mathbb{B}_{n}}\dfrac{[ag(w) + Ng(w)]}{(1 - \langle z,w \rangle)^{n+1+\beta}} (1 - |w|^2)^{\beta} \mathrm{d}\nu(w).$$ 
Then, differentiating under the integral sign, we obtain that for all $1 \leq i \leq n,$
we get $$\displaystyle \partial_{z_i} \left[ ag(z) + Ng(z) \right] = (n+1+\beta)c_{\beta} \int_{\mathbb{B}_{n}}\dfrac{[ag(w) + Ng(w)]\overline{w}_{i}}{(1 - \langle z,w \rangle)^{n+2+\beta}} (1 - |w|^2)^{\beta} \mathrm{d}\nu(w).$$ Therefore,
$$\displaystyle N \left( ag(z) + Ng(z) \right) = (n+1+\beta)c_{\beta} \int_{\mathbb{B}_{n}}\dfrac{[ag(w) + Ng(w)]\langle z,w \rangle }{(1 - \langle z,w \rangle)^{n+2+\beta}} (1 - |w|^2)^{\beta} \mathrm{d}\nu(w).$$
Applying $\eqref{luoluo},$ and Theorem $\ref{estimint},$ we get that for all $1 \leq i \leq n,$
\begin{eqnarray*}
\displaystyle \|N \left( ag(z) + Ng(z) \right)\|_{X} & \leq & C c_{\beta}\int_{\mathbb{B}_{n}} \dfrac{(1 - |w|^2)^{\gamma-k+\beta}}{|1 - \langle z,w \rangle |^{n+1+\gamma-k+\beta+(k-\gamma+1)}}\mathrm{d}\nu(w)\\
& \leq & C (1 - |z|^2)^{\gamma-k-1}.
\end{eqnarray*}
Thus, the derivative of $ ag(z) + Ng(z)$ is bounded by $(1 -|z|^2)^{\gamma-k-1}.$ So, to prove the inequality above, we are reduced to consider smooth functions $\phi$ of one variable $r \in [0,1),$ and to prove that the inequality $$ \| \psi'(r) \|_{X} \leq C(1-r)^{\gamma-k-1},$$ with $\psi(r) = a\phi(r) + r \phi'(r),$ implies that $$ \|r\phi'(r)\|_{X} \leq C (1-r)^{\gamma-k}$$(here, $\phi(r) = g(rz')$). Now, differentiating $\psi,$ we obtain
$\psi'(s) = (a+1)\phi'(s) + s\phi''(s).$ Multiplying both sides of the previous inequality by $s^{a},$ we obtain that $s^{a}\psi'(s) =  (a+1)s^{a}\phi'(s) + s^{a+1}\phi''(s) = \left[ s^{a+1}\phi'(s)\right]'.$
Then integrating the equality above on $[0,r],$ we obtain that $$ \phi'(r) = \dfrac{1}{r^{a+1}}\int_{0}^{r} s^{a}\psi'(s)\mathrm{d}s.$$ Therefore, the desired estimate follows at once, since $k > \gamma.$
\end{proof}

\begin{rem}\label{rem1} We shall use extensively this lemma for two particular classes of differential operators: first the class  $D_{k},$ then the class $L_{k},$ corresponding to the choice $a_{j} = n+\alpha+j+1.$ For this choice, we have 
$$(a_{j}I+N)(1-\langle z,w \rangle)^{-n-\alpha-j-1} = \dfrac{n+\alpha+j+1}{(1-\langle z,w \rangle)^{n+\alpha+j+2}},$$ and inductively, 
$$ L_{k}(1-\langle z,w \rangle )^{-n-\alpha-1} = \dfrac{c_{k}}{(1-\langle z,w \rangle)^{n+\alpha+k+1}}.$$
\end{rem}

The proof of Lemma $\ref{lem22}$ allows us to define an equivalent norm of $f$ in terms of $M_{k}f.$ Particularly, we will write the equivalent norms of $f$ in terms of $D_{k}f$ and $L_{k}f.$ More precisely, we have the following result:

\begin{cor}\label{lipsch1} Let $D_{k}$ a differential operator of order $k$ defined in $\eqref{pseudok}$ and $L_{k}$ a differential operator of order $k$ defined in Remark $\ref{rem1}.$ For vector-valued holomorphic functions, the following assertions are equivalent:
\begin{enumerate}
\item[$(1)$] $f \in \Gamma_{\gamma}(\mathbb{B}_{n},X).$
\item[$(2)$] There exists an integer $k > \gamma$  such that
$$\sup_{z \in \mathbb{B}_{n}}(1-|z|^2)^{k-\gamma}\|D_{k}f(z)\|_{X} < \infty.$$
\item[$(3)$] There exists an integer $k > \gamma$  such that
$$\sup_{z \in \mathbb{B}_{n}}(1-|z|^2)^{k-\gamma}\|L_{k}f(z)\|_{X} < \infty.$$
Moreover, the following are equivalent
\begin{eqnarray*}
\|f\|_{ \Gamma_{\gamma}(\mathbb{B}_{n},X)} & \simeq & \|f(0)\|_{X} + \sup_{z \in \mathbb{B}_{n}}(1-|z|^2)^{k-\gamma}\|D_{k}f(z)\|_{X} \\
& \simeq & \|f(0)\|_{X} + \sup_{z \in \mathbb{B}_{n}}(1-|z|^2)^{k-\gamma}\|L_{k}f(z)\|_{X}.
\end{eqnarray*}
\end{enumerate} 
\end{cor}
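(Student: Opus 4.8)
The plan is to deduce Corollary~\ref{lipsch1} directly from Lemma~\ref{lem22} by observing that both $D_k$ and $L_k$ are instances of the differential operator $M_k$ appearing in that lemma. Indeed, $D_k = (2I+N)\circ(3I+N)\circ\cdots\circ((k+1)I+N)$ corresponds to the choice $a_j = j+2$ in Lemma~\ref{lem22}, and $L_k$ corresponds to the choice $a_j = n+\alpha+j+1$ as recorded in Remark~\ref{rem1}; in both cases the sequence $\{a_j\}$ consists of positive numbers (here we use $\alpha > -1$, so $n+\alpha+j+1 > 0$). Thus, applying Lemma~\ref{lem22} once with $M_k = D_k$ and once with $M_k = L_k$ gives immediately that, for a vector-valued holomorphic function $f$, membership in $\Gamma_\gamma(\mathbb{B}_n,X)$ is equivalent to each of the conditions $(2)$ and $(3)$. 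This establishes the equivalence of the three assertions.

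For the norm equivalence I would proceed as follows. The quantity $\|f\|_{\Gamma_\gamma(\mathbb{B}_n,X)} = \|f(0)\|_X + \sup_{z\in\mathbb{B}_n}(1-|z|^2)^{k-\gamma}\|N^kf(z)\|_X$ is, by definition, built from the operator $N^k$, which is the instance of $M_k$ with all $a_j = 0$ --- but Lemma~\ref{lem22} is stated for strictly positive $a_j$, so a little care is needed. The cleanest route is to revisit the proof of Lemma~\ref{lem22} rather than quote only its statement: that proof shows that the inequality $\|ag(z)+Ng(z)\|_X \le C(1-|z|^2)^{\gamma-k}$ implies $\|Ng(z)\|_X \le C'(1-|z|^2)^{\gamma-k}$ (and trivially the reverse, by the triangle inequality, since $\|ag(z)\|_X \le a\|g\|_{\mathcal B}$-type control is absorbed once one also tracks the growth of $g$ itself via the first half of the proof). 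Iterating this equivalence down the factors of $D_k$ (respectively $L_k$) converts $\sup_z (1-|z|^2)^{k-\gamma}\|D_kf(z)\|_X$ into $\sup_z (1-|z|^2)^{k-\gamma}\|N^kf(z)\|_X$ up to multiplicative constants depending only on $k,\gamma,n,\alpha$, and similarly for $L_k$. Adding the harmless term $\|f(0)\|_X$ (which is dominated by either supremum together with the reproducing formula, and conversely) yields the claimed two-sided estimates. In fact the statement of Lemma~\ref{lem22} already packages the constants uniformly, so one may simply say that the proof of Lemma~\ref{lem22} produces, for each admissible $M_k$, constants $c_1,c_2>0$ with
\[
c_1\Big(\|f(0)\|_X + \sup_{z}(1-|z|^2)^{k-\gamma}\|N^kf(z)\|_X\Big) \le \|f(0)\|_X + \sup_{z}(1-|z|^2)^{k-\gamma}\|M_kf(z)\|_X \le c_2\Big(\|f(0)\|_X + \sup_{z}(1-|z|^2)^{k-\gamma}\|N^kf(z)\|_X\Big),
\]
and then specialize $M_k = D_k$ and $M_k = L_k$.

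The one genuine subtlety --- and the step I expect to require the most attention --- is the passage from an estimate on $M_kf$ back to the "reference" estimate on $N^kf$, i.e. the direction that actually uses the integral-representation argument in the proof of Lemma~\ref{lem22} (the Bergman-type reproducing formula, differentiation under the integral sign, and Theorem~\ref{estimint}). One must check that the inductive unwinding of the $k$ factors $(a_jI+N)$ does not merely give the qualitative membership but also keeps the constants under control, and that the endpoint term $\|f(0)\|_X$ behaves correctly at each stage; everything else is bookkeeping. Since $\alpha>-1$ guarantees all the relevant $a_j$ are positive, no separate case analysis is needed, and the corollary follows.
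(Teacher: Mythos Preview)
Your approach is correct and is exactly what the paper does: it states the corollary immediately after Lemma~\ref{lem22} with only the remark that ``the proof of Lemma~\ref{lem22} allows us to define an equivalent norm of $f$ in terms of $M_kf$,'' and then specializes to $D_k$ and $L_k$. Your extra care about tracking constants through the inductive unwinding is more explicit than the paper, but not a different argument.
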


The proof of some of the results obtained in this paper will be based on the following lemma. A proof is in \cite{Roc}, but for the sake of completeness, we will recall the proof.

\begin{lem}\label{hank1}
Let $f \in H^{\infty}(\mathbb{B}_{n},X)$ and  $g \in  H^{\infty}(\mathbb{B}_{n},Y^{\star}).$
If $b \in \mathcal{H}(\mathbb{B}_{n}, \mathcal{L}(\overline{X},Y))$ is such that $\eqref{hypo1}$ and  $\eqref{hypimp}$ hold. Then we have
\begin{equation}
\displaystyle \langle h_{b}f,g \rangle_{\alpha,Y} =  \int_{\mathbb{B}_n} \langle b(z)\overline{f(z)}, g(z) \rangle_{Y,Y^{\star}} \mathrm{d}\nu_{\alpha}(z).
\end{equation}
\end{lem}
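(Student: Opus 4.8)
The plan is to start from the definition of the little Hankel operator and unwind the pairing $\langle h_b f, g\rangle_{\alpha,Y}$ by an application of Fubini's theorem, then recognize the resulting inner integral as the Bergman reproducing integral of $g$. First I would observe that, since $0<p\leq 1$ is not relevant here — $g\in H^{\infty}(\mathbb{B}_{n},Y^{\star})$ and $h_b f\in A^{1,\infty}_{\alpha}$ or better, so both sides make sense — the pairing $\langle h_b f, g\rangle_{\alpha,Y}$ is given by the integral pairing $\int_{\mathbb{B}_n}\langle h_b f(z), g(z)\rangle_{Y,Y^{\star}}\,\mathrm{d}\nu_{\alpha}(z)$ (this is the pairing of Theorem \ref{dual1}, or directly the defining pairing on bounded functions). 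Substituting the definition
\[
h_{b}f(z) = \int_{\mathbb{B}_n}\frac{b(w)\overline{f(w)}}{(1-\langle z,w\rangle)^{n+1+\alpha}}\,\mathrm{d}\nu_{\alpha}(w),
\]
and pulling the (continuous, linear) functional $\langle \cdot, g(z)\rangle_{Y,Y^{\star}}$ inside the Bochner integral over $w$ by Lemma \ref{premierlem}, we get a double integral
\[
\langle h_{b}f,g\rangle_{\alpha,Y} = \int_{\mathbb{B}_n}\int_{\mathbb{B}_n}\frac{\langle b(w)\overline{f(w)}, g(z)\rangle_{Y,Y^{\star}}}{(1-\langle z,w\rangle)^{n+1+\alpha}}\,\mathrm{d}\nu_{\alpha}(w)\,\mathrm{d}\nu_{\alpha}(z).
\]

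Next I would justify interchanging the order of integration. The integrand is dominated in absolute value by $\|b(w)\|_{\mathcal{L}(\overline{X},Y)}\,\|f(w)\|_{X}\,\|g(z)\|_{Y^{\star}}\,|1-\langle z,w\rangle|^{-(n+1+\alpha)}$; since $f$ and $g$ are bounded, this is bounded by a constant times $\|b(w)\|_{\mathcal{L}(\overline{X},Y)}\,|1-\langle z,w\rangle|^{-(n+1+\alpha)}$, and integrating first in $z$ using Theorem \ref{estimint}(i) yields a bound by $C\,\|b(w)\|_{\mathcal{L}(\overline{X},Y)}\log\frac{1}{1-|w|^2}$, which is $\nu_{\alpha}$-integrable over $w$ precisely by hypothesis \eqref{hypimp}. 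Hence Fubini's theorem (for Bochner integrals — Lemma \ref{premierlem} again handles pulling the conjugate-linear-in-the-second-slot functional through, or one argues scalarly after testing against functionals) applies and we may write
\[
\langle h_{b}f,g\rangle_{\alpha,Y} = \int_{\mathbb{B}_n}\left\langle b(w)\overline{f(w)}, \int_{\mathbb{B}_n}\frac{g(z)}{\overline{(1-\langle z,w\rangle)^{n+1+\alpha}}}\,\mathrm{d}\nu_{\alpha}(z)\right\rangle_{Y,Y^{\star}}\mathrm{d}\nu_{\alpha}(w),
\]
where the conjugate appears because $\langle b(w)\overline{f(w)},\lambda g(z)\rangle = \overline{\lambda}\langle b(w)\overline{f(w)}, g(z)\rangle$ in the convention of the paper; equivalently, $\overline{(1-\langle z,w\rangle)^{n+1+\alpha}} = (1-\langle w,z\rangle)^{n+1+\alpha}$.

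Finally I would identify the inner integral. Since $g\in H^{\infty}(\mathbb{B}_{n},Y^{\star})\subset A^1_{\alpha}(\mathbb{B}_{n},Y^{\star})$, Proposition \ref{prop1} applied to $g$ at the point $w$ gives
\[
\int_{\mathbb{B}_n}\frac{g(z)}{(1-\langle w,z\rangle)^{n+1+\alpha}}\,\mathrm{d}\nu_{\alpha}(z) = g(w),
\]
so the pairing collapses to $\int_{\mathbb{B}_n}\langle b(w)\overline{f(w)}, g(w)\rangle_{Y,Y^{\star}}\,\mathrm{d}\nu_{\alpha}(w)$, which is exactly the claimed identity. The main obstacle is the Fubini/interchange step: one must check integrability of the double integral and be careful about how the conjugate-linear pairing convention of the paper interacts with pulling scalars and functionals through Bochner integrals; the decisive input there is hypothesis \eqref{hypimp} together with the logarithmic estimate of Theorem \ref{estimint}(i), which is precisely why that hypothesis was imposed. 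Everything else is bookkeeping with Lemmas \ref{premierlem}, \ref{vers2} and Proposition \ref{prop1}.
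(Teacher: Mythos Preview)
Your proof is correct and follows essentially the same approach as the paper's own proof: substitute the definition of $h_b f$ into the pairing, use Lemma~\ref{premierlem} to pull the functional through the Bochner integral, justify Fubini via the bound $\|b(w)\|_{\mathcal{L}(\overline{X},Y)}\log\frac{1}{1-|w|^2}$ and hypothesis~\eqref{hypimp}, and then apply the reproducing formula (Proposition~\ref{prop1}) to the inner integral in $z$ to recover $g(w)$. Your extra care about the conjugate-linear convention is a useful clarification but does not change the argument.
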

\begin{proof}
Let $f\in H^{\infty}(\mathbb{B}_{n},X)$ and $g \in H^{\infty}(\mathbb{B}_{n},Y^{\star}).$ By the definition of $\langle \cdot,\cdot \rangle_{\alpha,Y},$ Fubini's theorem, Lemma $\ref{premierlem}$ and the reproducing kernel property, we have:

\begin{eqnarray*}
\langle h_{b}(f),g \rangle_{\alpha,Y} & = & \displaystyle \int_{\mathbb{B}_{n}} \langle h_{b}(f)(z),g(z) \rangle_{Y,Y^{\star}}\mathrm{d}\nu_{\alpha}(z)\\
& = & \displaystyle \int_{\mathbb{B}_{n}} \langle \int_{\mathbb{B}_{n}} \dfrac{b(w)(\overline{f(w)})\mathrm{d}\nu_{\alpha}(w)}{(1 - \langle z,w \rangle)^{n+1+\alpha}}, g(z) \rangle_{Y,Y^{\star}}\mathrm{d}\nu_{\alpha}(z)\\
& = & \displaystyle \int_{\mathbb{B}_{n}}g(z)\left( \int_{\mathbb{B}_{n}} \dfrac{b(w)(\overline{f(w)})\mathrm{d}\nu_{\alpha}(w)}{(1 - \langle z,w \rangle)^{n+1+\alpha}}\right) \mathrm{d}\nu_{\alpha}(z)\\
& = & \displaystyle \int_{\mathbb{B}_{n}}\int_{\mathbb{B}_{n}}g(z)\left(\dfrac{b(w)(\overline{f(w)})}{(1 - \langle z,w \rangle)^{n+1+\alpha}}\right)\mathrm{d}\nu_{\alpha}(w)\mathrm{d}\nu_{\alpha}(z)\\
& = & \displaystyle \int_{\mathbb{B}_{n}}\left( \int_{\mathbb{B}_{n}}\dfrac{g(z)}{(1 - \langle w,z \rangle)^{n+1+\alpha}}\mathrm{d}\nu_{\alpha}(z)\right)\left( b(w)(\overline{f(w)})\right) \mathrm{d}\nu_{\alpha}(w)\\
& = & \displaystyle \int_{\mathbb{B}_{n}} g(w)\left( b(w)(\overline{f(w)})\right) \mathrm{d}\nu_{\alpha}(w)\\
& = & \displaystyle  \int_{\mathbb{B}_n} \langle b(w)\overline{f(w)}, g(w) \rangle_{Y,Y^{\star}} \mathrm{d}\nu_{\alpha}(w).
\end{eqnarray*}
It remains to show that the assumption of Fubini's theorem is fulfilled. Indeed, since $f\in H^{\infty}(\mathbb{B}_{n},X)$ and $g\in H^{\infty}(\mathbb{B}_{n},Y^{\star}),$ by Tonelli's theorem, Theorem $\ref{estimint}$ and relation $\eqref{hypimp}$ we have that
\begin{eqnarray*}
\displaystyle \int_{\mathbb{B}_{n}} \int_{\mathbb{B}_{n}} \left|\dfrac{g(z)\left( b(w)(\overline{f(w)})\right)}{(1 - \langle z,w \rangle)^{n+1+\alpha}} \right|\mathrm{d}\nu_{\alpha}(w)\mathrm{d}\nu_{\alpha}(z) & \lesssim & \displaystyle \int_{\mathbb{B}_{n}} \int_{\mathbb{B}_{n}} \dfrac{\|b(w)\|_{\mathcal{L}(\overline{X},Y)}}{|1 - \langle z,w \rangle|^{n+1+\alpha}} \mathrm{d}\nu_{\alpha}(w)\mathrm{d}\nu_{\alpha}(z)\\
& \lesssim & \displaystyle \int_{\mathbb{B}_{n}} \|b(w)\|_{\mathcal{L}(\overline{X},Y)} \log \left( \dfrac{1}{1-|w|^2} \right)  \mathrm{d}\nu_{\alpha}(w) < \infty.
\end{eqnarray*}
\end{proof}

\begin{lem}\label{corrver} Let $f \in H^{\infty}(\mathbb{B}_{n},X)$ and $z \in \mathbb{B}_n.$ For $b \in \mathcal{H}(\mathbb{B}_{n}, \mathcal{L}(\overline{X},Y))$ satisfying $\eqref{hypo1}$ and $\eqref{hypimp},$ the function
$$g_{z}(w):= \dfrac{f(w)}{(1 - \langle w,z \rangle)^{n+1+\alpha}}, \hspace{1cm} w \in \mathbb{B}_n$$ belongs to $ H^{\infty}(\mathbb{B}_{n},X)$  and the following identity holds:
$$ \displaystyle h_{b}(f)(z) =  C_{k}\int_{\mathbb{B}_n} L_{k}\left( b(w)(\overline{g_{z}(w)})\right)\mathrm{d}\nu_{\alpha+k}(w),$$ where $k$ is any positive integer and $C_{k}$ is a positive constant depending only on $k.$
\end{lem}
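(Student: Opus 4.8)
The statement bundles a trivial regularity fact with a reproducing-type identity. For the first claim: for a fixed $z\in\mathbb B_n$ one has $|1-\langle w,z\rangle|\ge 1-|w|\,|z|\ge 1-|z|>0$ for every $w\in\mathbb B_n$, so $w\mapsto(1-\langle w,z\rangle)^{-(n+1+\alpha)}$ is a nowhere-vanishing bounded scalar holomorphic function on $\mathbb B_n$; multiplying by $f\in H^{\infty}(\mathbb B_n,X)$ shows that $g_z$ is strongly holomorphic and $\|g_z(w)\|_X\le(1-|z|)^{-(n+1+\alpha)}\|f\|_{H^{\infty}(\mathbb B_n,X)}$, i.e. $g_z\in H^{\infty}(\mathbb B_n,X)$.

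For the identity, the first step is to unwind the conjugation. Since $\overline{\langle w,z\rangle}=\langle z,w\rangle$, and using $\overline{x+y}=\overline x+\overline y$, $\overline{\lambda x}=\overline\lambda\,\overline x$ together with the $\mathbb C$-linearity of $b(w)$ on $\overline X$, one gets
$$b(w)\,\overline{g_z(w)}=\frac{b(w)\,\overline{f(w)}}{(1-\langle z,w\rangle)^{n+1+\alpha}}=K_\alpha(z,w)\,b(w)\overline{f(w)},$$
so that the very definition of $h_b$ reads $h_bf(z)=\int_{\mathbb B_n}b(w)\overline{g_z(w)}\,\mathrm d\nu_\alpha(w)$, the integral being absolutely convergent by $\eqref{hypo1}$. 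The second step is to push the operator $L_k$ onto $b$: because $g_z$ is holomorphic in $w$, the $\overline X$-valued function $w\mapsto\overline{g_z(w)}$ is annihilated by the holomorphic radial derivative $N$, and the Leibniz rule for $N$ applied to the bilinear evaluation $(T,\xi)\mapsto T\xi$ gives, upon iteration, $L_k\!\big(b(w)\overline{g_z(w)}\big)=(L_kb)(w)\,\overline{g_z(w)}=K_\alpha(z,w)\,(L_kb)(w)\overline{f(w)}$, where $L_k$ is the order-$k$ operator of Remark~\ref{rem1} understood in the variable $w$. Here $L_kb\in A^1_{\alpha+k}(\mathbb B_n,\mathcal L(\overline X,Y))$: taking $z=0$ in $\eqref{hypo1}$ gives $b\in A^1_\alpha(\mathbb B_n,\mathcal L(\overline X,Y))$, and a standard Bergman-ball sub-mean-value estimate yields $N^mb\in A^1_{\alpha+m}\subseteq A^1_{\alpha+k}$ for $0\le m\le k$, so the right-hand integral converges.

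After these reductions the claim is equivalent to the scalar-type identity
$$\int_{\mathbb B_n}K_\alpha(z,w)\,b(w)\overline{f(w)}\,\mathrm d\nu_\alpha(w)=C_k\int_{\mathbb B_n}K_\alpha(z,w)\,(L_kb)(w)\overline{f(w)}\,\mathrm d\nu_{\alpha+k}(w).$$
I would prove this first for the radial dilations $b_\rho(w)=b(\rho w)$, $0<\rho<1$ (bounded and holomorphic on a neighbourhood of $\overline{\mathbb B_n}$), and then let $\rho\to1^-$. For $b_\rho$, expand $K_\alpha(z,\cdot)=\sum_{s\ge0}\frac{(n+1+\alpha)_s}{s!}\langle z,\cdot\rangle^s$ (uniformly convergent in $w\in\overline{\mathbb B_n}$ since $|z|<1$) and expand $b_\rho,f$ in homogeneous polynomials, then integrate term by term against the rotation-invariant measures $\nu_\alpha,\nu_{\alpha+k}$; by orthogonality of distinct monomials, the moments $\int_{\mathbb B_n}|w^\mu|^2\,\mathrm d\nu_\beta(w)=\mu!\,\Gamma(n+1+\beta)/\Gamma(n+1+\beta+|\mu|)$, and $L_k b_{\rho,j}=\big(\prod_{i=1}^k(n+\alpha+i+j)\big)b_{\rho,j}$ (Euler's identity $Nb_j=jb_j$), both sides collapse to the same series provided $C_k=\big[(n+1+\alpha)(n+2+\alpha)\cdots(n+k+\alpha)\big]^{-1}$ — that is $C_k=1/c_k$ with $c_k$ the constant of Remark~\ref{rem1} — the dependence on the degree $j$ cancelling exactly. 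Finally let $\rho\to1^-$: by the argument of Lemma~\ref{denslip}, $b_\rho\to b$ in $A^1_\alpha$ and $(L_kb)_\rho\to L_kb$ in $A^1_{\alpha+k}$, while for fixed $z$ the kernel is bounded, $|K_\alpha(z,w)|\le(1-|z|)^{-(n+1+\alpha)}$; hence both integrals pass to the limit and the identity holds for $b$ itself. Unwinding the first step, this is exactly $h_bf(z)=C_k\int_{\mathbb B_n}L_k\big(b(w)\overline{g_z(w)}\big)\,\mathrm d\nu_{\alpha+k}(w)$.

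The algebra of the last step is forced once it is set up, so the genuine work is the justification of the term-by-term integration and the limiting argument; this is where the hypotheses on $b$ — in particular $\eqref{hypo1}$ (giving $b\in A^1_\alpha$, hence $L_kb\in A^1_{\alpha+k}$) and $\eqref{hypimp}$ — and the $A^1$-density of dilations enter. The one point that must be got right, and which I expect to be the subtle part, is that the operator $L_k$ in the statement acts in the \emph{integration} variable $w$, so that $N$ annihilates the anti-holomorphic factor $\overline{g_z}$ and $L_k$ lands on $b$; were it to act in $z$, the resulting constant would depend on the homogeneous degree and the identity would fail.
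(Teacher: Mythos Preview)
Your argument is correct, and it reaches the same identity with the same constant $C_k=c_k^{-1}$, but the route is genuinely different from the paper's.

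The paper does not expand in homogeneous polynomials or pass through dilations. Instead, starting from $h_bf(z)=\int_{\mathbb B_n}b(w)\overline{g_z(w)}\,\mathrm d\nu_\alpha(w)$, it inserts the reproducing formula for $g_z$ with the kernel $K_{\alpha+k}$, applies Fubini (this is exactly where \eqref{hypimp} is used), rewrites the resulting kernel $(1-\langle\zeta,w\rangle)^{-(n+1+\alpha+k)}$ as $c_k^{-1}L_k^\zeta\big[(1-\langle\zeta,w\rangle)^{-(n+1+\alpha)}\big]$, and then applies the $A^1_\alpha$ reproducing formula once more to collapse the inner integral to $b(\zeta)\overline{g_z(\zeta)}$. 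In other words: two reproducing kernel identities and one Fubini swap, with no power-series bookkeeping and no limiting argument.

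Your approach trades Fubini for an approximation by dilations $b_\rho$ and a termwise moment computation. Two remarks. First, your Leibniz step $L_k\big(b(w)\overline{g_z(w)}\big)=(L_kb)(w)\,\overline{g_z(w)}$ is exactly the mechanism the paper uses as well (there in the variable $\zeta$), and the observation that $N$ annihilates the anti-holomorphic factor is the right one; note however that this implicitly extends $N$ from holomorphic to smooth functions, which is harmless but worth flagging since the paper only defines $N$ on $\mathcal H(\mathbb B_n,X)$. Second, in your argument the hypothesis \eqref{hypimp} is not actually needed: once you know $b\in A^1_\alpha$ from \eqref{hypo1} at $z=0$, the dilation convergence $b_\rho\to b$ in $A^1_\alpha$ and $(L_kb)_\rho\to L_kb$ in $A^1_{\alpha+k}$ suffices to pass to the limit. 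So your route is slightly longer but formally requires less, while the paper's route is shorter and shows transparently why \eqref{hypimp} is the natural integrability condition to impose.
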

\begin{proof}
It is clear that $g_{z} \in H^{\infty}(\mathbb{B}_{n},X).$ By the definition of the little Hankel operator and the reproducing kernel property, we have  
\begin{eqnarray*}
h_b(f)(z) & = &  \int_{\mathbb{B}_n} \dfrac{b(w)\overline{f(w)}}{(1 - \langle z,w \rangle)^{n+1+\alpha}}\mathrm{d}\nu_{\alpha}(w)\\
& = & \displaystyle \int_{\mathbb{B}_n} b(w)\left( \overline{ \dfrac{f(w)}{(1 - \langle w,z \rangle)^{n+1+\alpha}}}\right) \mathrm{d}\nu_{\alpha}(w)\\
& = & \int_{\mathbb{B}_n} b(w)(\overline{g_{z}(w)})\mathrm{d}\nu_{\alpha}(w)\\
& = & \int_{\mathbb{B}_n} b(w)\left( \overline{\int_{\mathbb{B}_n} \dfrac{g_{z}(\zeta)}{(1 - \langle w,\zeta \rangle)^{n+1+\alpha+k}}\mathrm{d}\nu_{\alpha+k}(\zeta)} \right) \mathrm{d}\nu_{\alpha}(w)\\
& = & \int_{\mathbb{B}_n} \left( \int_{\mathbb{B}_n} \dfrac{b(w)(\overline{g_{z}(\zeta)})}{(1 - \langle \zeta,w \rangle)^{n+1+\alpha+k}} \mathrm{d}\nu_{\alpha}(w) \right) \mathrm{d}\nu_{\alpha+k}(\zeta)\\
& = & c_{k}^{-1} \int_{\mathbb{B}_n}L_{k}\left( \int_{\mathbb{B}_n} \dfrac{b(w)(\overline{g_{z}(\zeta)})}{(1 - \langle \zeta,w \rangle)^{n+1+\alpha}} \mathrm{d}\nu_{\alpha}(w) \right) \mathrm{d}\nu_{\alpha+k}(\zeta)\\
& = & c_{k}^{-1} \int_{\mathbb{B}_n}L_{k}\left( b(\zeta)(\overline{g_{z}(\zeta)})\right)\mathrm{d}\nu_{\alpha+k}(\zeta).
\end{eqnarray*}
The assumption of Fubini's theorem is fulfilled. Indeed by $\eqref{hypimp},$ we have that
$$\displaystyle \int_{\mathbb{B}_n} \left\|\int_{\mathbb{B}_n} \dfrac{b(w)(\overline{g_{z}(\zeta)})}{(1 - \langle \zeta,w \rangle)^{n+1+\alpha+k}}\mathrm{d}\nu_{\alpha}(w)\right\|_{Y}\mathrm{d}\nu_{\alpha+k}(\zeta)$$
\begin{eqnarray*}
 & \leq & \displaystyle \|g_{z}\|_{\infty,X} \int_{\mathbb{B}_n} \int_{\mathbb{B}_n} \dfrac{\|b(w)\|_{\mathcal{L}(\overline{X},Y)}}{|1 - \langle w,\zeta \rangle|^{n+1+\alpha+k}} \mathrm{d}\nu_{\alpha}(w)\mathrm{d}\nu_{\alpha+k}(\zeta)\\
& = & \displaystyle \|g_{z}\|_{\infty,X} \int_{\mathbb{B}_n} \|b(w)\|_{\mathcal{L}(\overline{X},Y)}\\ & \times &\left(\int_{\mathbb{B}_n} \dfrac{\mathrm{d}\nu_{\alpha+k}(\zeta)}{|1 - \langle w,\zeta \rangle|^{n+1+\alpha+k}}\right) \mathrm{d}\nu_{\alpha}(w)\\
& \leq & \displaystyle \|g_{z}\|_{\infty,X}\int_{\mathbb{B}_n} \|b(w)\|_{\mathcal{L}(\overline{X},Y)}\left( \log \dfrac{1}{1 - |w|^2} \right) \mathrm{d}\nu_{\alpha}(w)\\
& < & \infty.
\end{eqnarray*}
\end{proof}

\section{\textbf{The Proof of Theorem $\ref{profD}$}}

\begin{proof} We first suppose that  $g \in \Gamma_{\gamma}(\mathbb{B}_{n},X^{\star}),$ with $\gamma = (n+1+\alpha)\left( \frac{1}{p}-1\right) .$ Given a positive integer $k > \gamma,$ we define the functional

$$\displaystyle \wedge_{g}: A^p_{\alpha}(\mathbb{B}_{n},X) \longrightarrow \mathbb{C}$$ 
$$f \mapsto \wedge_{g}(f) = c_{k}\int_{\mathbb{B}_n} \langle f(z),D_{k}g(z) \rangle_{X,X^{\star}}(1 - |z|^2)^{k}\mathrm{d}\nu_{\alpha}(z),$$ where $c_k$ is the positive constant in Lemma $\ref{primer}.$ It is clear that
$\wedge_{g}$ is linear and is well defined on $A^p_{\alpha}(\mathbb{B}_{n},X).$
Indeed, let $f \in A^p_{\alpha}(\mathbb{B}_{n},X).$ By Lemma $\ref{lemm22},$ we have 
\begin{eqnarray*}
|\wedge_{g}(f)| & = &  \displaystyle c_{k}\left|\int_{\mathbb{B}_n} \langle f(z),D_{k}g(z) \rangle_{X,X^{\star}}(1-|z|^2)^k\mathrm{d}\nu_{\alpha}(z) \right| \\
& \leq & \displaystyle c_{k} \int_{\mathbb{B}_n} \|f(z)\|_{X}\|D_{k}g(z)\|_{X^{\star}}(1-|z|^2)^k \mathrm{d}\nu_{\alpha}(z)\\
& = & \displaystyle c_{k} \int_{\mathbb{B}_n} (1-|z|^2)^{k-\gamma}\|D_{k}g(z)\|_{X^{\star}}(1-|z|^2)^{\gamma}\|f(z)\|_{X} \mathrm{d}\nu_{\alpha}(z)\\
& \leq & \displaystyle c_{k} \sup_{z \in \mathbb{B}_{n}}(1-|z|^2)^{k-\gamma} \|D_{k}g(z)\|_{X^{\star}}\int_{\mathbb{B}_n} (1-|z|^2)^{\gamma} \|f(z)\|_{X}\mathrm{d}\nu_{\alpha}(z)\\
& \lesssim & \displaystyle \|g\|_{\Gamma_{\gamma}(\mathbb{B}_{n},X^{\star})}\int_{\mathbb{B}_n} (1-|z|^2)^{(\frac{1}{p}-1)(n+1+\alpha)} \|f(z)\|_{X}\mathrm{d}\nu_{\alpha}(z)\\
& \lesssim & \|g\|_{\Gamma_{\gamma}(\mathbb{B}_{n},X^{\star})}\|f\|_{p,\alpha,X}.
\end{eqnarray*}
We conclude that $\wedge_{g}$ is bounded on 
$A^p_{\alpha}(\mathbb{B}_{n},X)$ and  $\|\wedge_{g}\| \lesssim \|g\|_{\Gamma_{\gamma}(\mathbb{B}_{n},X^{\star})}.$\\

Conversely, let $\wedge$ be a bounded linear functional on $A^p_{\alpha}(\mathbb{B}_{n},X).$ Let us show that there exists $g \in \Gamma_{\gamma}(\mathbb{B}_{n},X^{\star}),$ with $\gamma = (n+1+\alpha)\left(\frac{1}{p}-1\right)$ such that $\wedge = \wedge_{g}.$
Since $A^2_{\alpha}(\mathbb{B}_{n},X) \subset A^p_{\alpha}(\mathbb{B}_{n},X)$ and $\wedge$ is bounded on $A^p_{\alpha}(\mathbb{B}_{n},X),$ $\wedge$ is also bounded on $A^2_{\alpha}(\mathbb{B}_{n},X).$ Then by Theorem $\ref{dual1},$ there exists $g \in A^2_{\alpha}(\mathbb{B}_{n},X^{\star})$ such that 
\begin{equation}
\displaystyle \wedge(f) = \int_{\mathbb{B}_n} \langle f(z),g(z) \rangle_{X,X^{\star}} \mathrm{d}\nu_{\alpha}(z),\label{intpair2}
\end{equation}
for all $f \in A^2_{\alpha}(\mathbb{B}_{n},X).$ Since $g \in A^2_{\alpha}(\mathbb{B}_{n},X^{\star}),$ for any positive integer $k,$ we have $D_{k}g \in  A^2_{\alpha+k}(\mathbb{B}_{n},X^{\star}).$ Applying Lemma $\ref{primer}$ in $\eqref{intpair2},$ we obtain that

\begin{equation}
\displaystyle \wedge(f) = c_{k}\int_{\mathbb{B}_n} \langle f(z),D_{k}g(z) \rangle_{X,X^{\star}}(1 - |z|^2)^k \mathrm{d}\nu_{\alpha}(z),\label{intpair4}
\end{equation}
for all $f \in A^2_{\alpha}(\mathbb{B}_{n},X).$ Now, we fix $x \in X,$ $w \in \mathbb{B}_n$ and an integer $k > \gamma.$ Let
$$f(z) = \dfrac{(1-|w|^2)^{k-\gamma}}{(1-\langle z,w \rangle)^{n+1+\alpha+k}}x, \hspace{0,3cm} z\in \mathbb{B}_{n}.$$ By Theorem $\ref{estimint},$ we have that $f \in A^2_{\alpha}(\mathbb{B}_{n},X).$ Proposition $\ref{prop1}$ and $\eqref{intpair4},$ give us
\begin{eqnarray*}
\wedge(f) & = & \displaystyle c_{k}\int_{\mathbb{B}_n} \langle f(z),D_{k}g(z) \rangle_{X,X^{\star}}(1 - |z|^2)^k\mathrm{d}\nu_{\alpha}(z)\\
& = & \displaystyle c_{k}\int_{\mathbb{B}_n} \big\langle \dfrac{(1-|w|^2)^{k-\gamma}}{(1-\langle z,w \rangle)^{n+1+\alpha+k}}x, D_{k}g(z) \big\rangle_{X,X^{\star}}(1 - |z|^2)^k\mathrm{d}\nu_{\alpha}(z)\\
& = & \displaystyle \dfrac{c_{\alpha}c_{k}}{c_{\alpha+k}}(1-|w|^2)^{k-\gamma} \big\langle x, \int_{\mathbb{B}_n}\dfrac{D_{k}g(z)}{(1-\langle w,z \rangle)^{n+1+\alpha+k}}\mathrm{d}\nu_{\alpha+k}(z)\big\rangle_{X,X^{\star}}\\
& = & \displaystyle\dfrac{c_{\alpha}c_{k}}{c_{\alpha+k}}(1-|w|^2)^{k-\gamma} \big\langle x,D_{k}g(w)\big\rangle_{X,X^{\star}}.
\end{eqnarray*} 
By Theorem $\ref{estimint},$ $f \in A^p_{\alpha}(\mathbb{B}_{n},X)$ and $\|f\|_{p,\alpha,X} \lesssim \|x\|_{X}.$ Since $x$ is arbitrary, by duality, we have that 
\begin{eqnarray*}
\|D_{k}g(w)\|_{X^{\star}} & = & \sup_{\|x\|_{X} = 1} |\langle x,D_{k}g(w) \rangle_{X,X^{\star}}|\\
\\
& = & \dfrac{c_{\alpha+k}}{c_{\alpha}c_{k}}\sup_{\|x\|_{X} = 1} \dfrac{1}{(1-|w|^2)^{k-\gamma}}|\wedge(f)| \\
\\
& \lesssim & \displaystyle \sup_{\|x\|_{X} = 1} \dfrac{1}{(1-|w|^2)^{k-\gamma}} \| \wedge \| \|f\|_{p,\alpha,X}\\
\\
& \lesssim &  \sup_{\|x\|_{X} = 1} \dfrac{\|\wedge\|}{(1-|w|^2)^{k-\gamma}}\|x\|_{X}\\
\\
& \lesssim & \dfrac{\|\wedge\|}{(1-|w|^2)^{k-\gamma}}.
\end{eqnarray*} 
According to Corollary $\ref{lipsch1},$ we conclude that $$g \in \Gamma_{\gamma}(\mathbb{B}_{n},X^{\star}) ~~~~\mbox{and}~~~~ \|g\|_{\Gamma_{\gamma}(\mathbb{B}_{n},X^{\star})} \lesssim \|\wedge\|,$$ with $\gamma = (n+1+\alpha)\left( \frac{1}{p}-1\right).$
To finish the proof, it remains to show that $\eqref{intpair2}$ remains true for functions in $A^p_{\alpha}(\mathbb{B}_{n},X)$ which is a direct consequence of the density in Corollary $\ref{densberg}$.
\end{proof}

\section{\textbf{The Proofs of Theorem $\ref{principe1}$ and Corollary $\ref{cons1}$}}
In this section, we will give the proofs of Theorem $\ref{principe1}$ and Corollary $\ref{cons1}$.
\subsection{Proof of Theorem $\ref{principe1}$}
\begin{proof}
First assume that $h_{b}$ extends to a bounded operator from $A^p_{\alpha}(\mathbb{B}_{n},X)$ to $A^q_{\alpha}(\mathbb{B}_{n},Y),$ with $q < 1.$ Let $\|h_{b}\| := \|h_{b}\|_{A^{p}_{\alpha}(\mathbb{B}_{n},X) \longrightarrow A^{q}_{\alpha}(\mathbb{B}_{n},Y)}.$ We want to show that $b \in \Gamma_{\gamma}(\mathbb{B}_{n},\mathcal{L}(\overline{X},Y)).$
Since $h_{b}: A^p_{\alpha}(\mathbb{B}_{n},X) \longrightarrow A^{q}_{\alpha}(\mathbb{B}_{n},Y)$ is a bounded operator, we have by Theorem $\ref{profD}$ that
$$|\langle h_{b}(f),g \rangle_{\alpha,Y}| \lesssim \|h_{b}\|\|f\|_{p,\alpha,X} \|g\|_{\Gamma_{\beta}(\mathbb{B}_{n},Y^{\star})},$$
for every $f \in A^p_{\alpha}(\mathbb{B}_{n},X)$ and $g \in \Gamma_{\beta}(\mathbb{B}_{n},Y^{\star}),$ with $\beta = (n+1+\alpha)\left(\frac{1}{q} -1\right).$ Let $x \in X,$ $y^{\star} \in Y^{\star},$ $w \in \mathbb{B}_{n}$ and an integer $k$ such that $k >\gamma = (n+1+\alpha)\left(\frac{1}{p}-1\right).$ Let $g(z) = y^{\star},$ and $f(z) = \dfrac{(1-|w|^2)^{k-\gamma}}{(1-\langle z,w \rangle)^{n+1+\alpha+ k}}x.$ It is clear that $f\in H^{\infty}(\mathbb{B}_{n},X)$
and $g \in \Gamma_{\beta}(\mathbb{B}_{n},Y^{\star}),$ with $\|g\|_{\Gamma_{\beta}(\mathbb{B}_{n},Y^{\star})} = \|y^{\star}\|_{Y^{\star}}.$ We also have by Theorem $\ref{estimint}$ that $f \in A^p_{\alpha}(\mathbb{B}_{n},X),$ with $\|f\|_{p,\alpha,X} \lesssim \|x\|_{X}.$ Hence 
\begin{equation}
|\langle h_{b}(f),g \rangle_{\alpha,Y}| \lesssim \|h_{b}\|\|x\|_{X} \|y{\star}\|_{Y^{\star}},\label{refcor1}
\end{equation}
Applying Lemma $\ref{hank1}$ and the reproducing kernel property, we have that
\begin{eqnarray*}
|\langle h_{b}(f),g \rangle_{\alpha,Y}| & = & \displaystyle \left| \int_{\mathbb{B}_n} \big\langle b(z)\left( \overline{\dfrac{(1-|w|^2)^{k-\gamma}}{(1-\langle z,w \rangle)^{n+1+\alpha+k}}x} \right), y^{\star} \big\rangle_{Y,Y^{\star}} \mathrm{d}\nu_{\alpha}(z) \right| \\
\\
& = & \displaystyle (1-|w|^2)^{k-\gamma}\left|\int_{\mathbb{B}_n} \big\langle b(z)\left( \dfrac{\overline{x}}{(1-\langle w,z \rangle)^{n+1+\alpha+k}} \right), y^{\star} \big\rangle_{Y,Y^{\star}} \mathrm{d}\nu_{\alpha}(z) \right| \\
\\
& = & \displaystyle (1-|w|^2)^{k-\gamma}\left|\int_{\mathbb{B}_n} \big\langle \dfrac{b(z)\left( \overline{x}\right)}{(1-\langle w,z \rangle)^{n+1+\alpha+k}} , y^{\star} \big\rangle_{Y,Y^{\star}} \mathrm{d}\nu_{\alpha}(z) \right| \\
\\
& = & \displaystyle (1-|w|^2)^{k-\gamma}\left| \big\langle \int_{\mathbb{B}_n}\dfrac{b(z)\left( \overline{x}\right)}{(1-\langle w,z \rangle)^{n+1+\alpha+k}}\mathrm{d}\nu_{\alpha}(z) , y^{\star} \big\rangle_{Y,Y^{\star}}  \right| \\
\\
& = & \displaystyle \frac{(1-|w|^2)^{k-\gamma}}{c_{k}}\left| \big\langle \int_{\mathbb{B}_n}L_{k}\left( \dfrac{b(z)\left( \overline{x}\right)}{(1-\langle w,z \rangle)^{n+1+\alpha}}\right) \mathrm{d}\nu_{\alpha}(z) , y^{\star} \big\rangle_{Y,Y^{\star}}  \right| \\
\\
& = & \displaystyle \frac{(1-|w|^2)^{k-\gamma}}{c_{k}}\left| \big\langle L_{k}\left(\int_{\mathbb{B}_n} \dfrac{b(z)\left( \overline{x}\right)}{(1-\langle w,z \rangle)^{n+1+\alpha}}\mathrm{d}\nu_{\alpha}(z)\right) , y^{\star} \big\rangle_{Y,Y^{\star}}  \right| \\
\\
& = & \displaystyle \frac{(1-|w|^2)^{k-\gamma}}{c_{k}} \left| \big\langle L_{k}\left( b(w)\left( \overline{x}\right) \right) , y^{\star} \big\rangle_{Y,Y^{\star}}  \right|.
\end{eqnarray*}
Thus,
\begin{equation}
|\langle h_{b}(f),g \rangle_{\alpha,Y}| = \displaystyle \frac{(1-|w|^2)^{k-\gamma}}{c_{k}} \left| \big\langle L_{k}\left( b(w)\left( \overline{x}\right) \right) , y^{\star} \big\rangle_{Y,Y^{\star}}  \right|.\label{refcor2}
\end{equation} 
From  $\eqref{refcor1},$ $\eqref{refcor2}$ and the fact that $\|x\|_{X} = \|\overline{x}\|_{\overline{X}},$ we deduce that 
\begin{equation}
(1-|w|^2)^{k-\gamma} \left| \big\langle L_{k}\left( b(w)\left( \overline{x}\right) \right) , y^{\star} \big\rangle_{Y,Y^{\star}}  \right| \lesssim \|h_{b}\|\|\overline{x}\|_{\overline{X}} \|y^{\star}\|_{Y^{\star}}.
\end{equation} 
Since $x$ and $y^{\star}$ are arbitrary, we get that
$$ \sup_{w \in \mathbb{B}_n}(1-|w|^2)^{k-\gamma}\|L_{k}b(w)\|_{\mathcal{L}(\overline{X},Y^{\star})} \lesssim \|h_{b}\|.$$ That is, $b \in \Gamma_{\gamma}(\mathbb{B}_{n},\mathcal{L}(\overline{X},Y^{\star}))$ ~~with~~ $\|b\|_{\Gamma_{\gamma}(\mathbb{B}_{n},\mathcal{L}(\overline{X},Y))} \lesssim \|h_{b}\|.$\\

Conversely, assume that $b \in \Gamma_{\gamma}(\mathbb{B}_{n},\mathcal{L}(\overline{X},Y))$ and let us prove that
 $h_{b}$ extends to a bounded operator from $A^p_{\alpha}(\mathbb{B}_{n},X)$ to $A^{1,\infty}_{\alpha}(\mathbb{B}_{n},Y).$
Choose a positive integer $k>\gamma,$ and let $f \in H^{\infty}(\mathbb{B}_{n},X).$ Taking $g_{z}(w) = \dfrac{f(w)}{(1 - \langle w,z \rangle)^{n+1+\alpha}},$ with $w\in \mathbb{B}_{n}$ and applying Lemma $\ref{corrver},$ Lemma $\ref{vers2}$ and the assumption we obtain
\begin{eqnarray*}
\|h_{b}f(z)\|_{Y} & = & \displaystyle  \left\Vert \int_{\mathbb{B}_n} \dfrac{b(w)(\overline{f(w)})}{(1-\langle z,w \rangle)^{n+1+\alpha}}\mathrm{d}\nu_{\alpha}(w) \right\Vert_{Y} \\
\\
& = & \displaystyle c_{k}\left\Vert \int_{\mathbb{B}_n} L_{k}\left( b(w)\overline{g_{z}(w)}\right)\mathrm{d}\nu_{\alpha+k}(w) \right\Vert_{Y} \\
 \\
& = & \displaystyle c_{k}\left\Vert \int_{\mathbb{B}_n} \dfrac{L_{k}\left( b(w)\overline{f(w)}\right) }{(1-\langle z,w \rangle)^{n+1+\alpha}}\mathrm{d}\nu_{\alpha+k}(w) \right\Vert_{Y} \\
\\
& \leq  & \displaystyle c_{k} \int_{\mathbb{B}_n} \left\Vert \dfrac{L_{k}\left( b(w)\overline{f(w)}\right) }{(1-\langle z,w \rangle)^{n+1+\alpha}} \right\Vert_{Y} \mathrm{d}\nu_{\alpha+k}(w)\\
\\
& \leq & \displaystyle \dfrac{c_{k}c_{\alpha+k}}{c_{\alpha}}\int_{\mathbb{B}_n}\dfrac{(1-|w|^2)^{k-\gamma}\|L_{k}b(w)\|_{\mathcal{L(\overline{X}, Y)}}\|\overline{f(w)}\|_{\overline{X}}}{|1 - \langle z,w \rangle|^{n+1+\alpha}}(1 -|w|^2)^{\gamma}\mathrm{d}\nu_{\alpha}(w) \\
\\ 
& \lesssim & \displaystyle \|b\|_{\Gamma_{\gamma}(\mathbb{B}_{n},\mathcal{L}(\overline{X},Y))}\int_{\mathbb{B}_n}
\dfrac{(1-|w|^2)^{\gamma}\|f(w)\|_{X}}{|1 - \langle z,w \rangle|^{n+1+\alpha}} \mathrm{d}\nu_{\alpha}(w)\\
\\
& = & \|b\|_{\Gamma_{\gamma}(\mathbb{B}_{n},\mathcal{L}(\overline{X},Y))}P^{+}_{\alpha} g(z),
\end{eqnarray*}
where the reproducing kernel is justified by $\eqref{hypo1}$ and $$\displaystyle P^{+}_{\alpha} g(z) = \int_{\mathbb{B}_n}
\dfrac{(1-|w|^2)^{\gamma}\|f(w)\|_{X}}{|1 - \langle z,w \rangle|^{n+1+\alpha}} \mathrm{d}\nu_{\alpha}(w)$$ is the positive Bergman operator of the positive function $\displaystyle g(z) = (1-|z|^2)^{\gamma}\|f(z)\|_{X}.$\\
Now, let $\lambda >0.$ We have that
$$\nu_{\alpha} (\lbrace z \in \mathbb{B}_n : \|h_{b}f(z)\|_{Y} > \lambda \rbrace ) \leq \nu_{\alpha}(\lbrace z \in \mathbb{B}_n : c_{k}\|b\|_{\Gamma_{\gamma}(\mathbb{B}_{n},\mathcal{L}(\overline{X},Y))}P^{+}_{\alpha}g(z)  > \lambda \rbrace ).$$
Since the positive Bergman operator $P^{+}_{\alpha} : L^{1}_{\alpha}(\mathbb{B}_n) \longrightarrow L^{1,\infty}_{\alpha}(\mathbb{B}_n)$ is bounded (cf. e.g \cite{Bekolle}), there exists a constant $c$ such that
\begin{eqnarray*}
\nu_{\alpha}(\lbrace z \in \mathbb{B}_n : c_{k}\|b\|_{\Gamma_{\gamma}(\mathbb{B}_{n},\mathcal{L}(\overline{X},Y)}P^{+}_{\alpha}g(z)  > \lambda \rbrace ) & \leq & \displaystyle\dfrac{c}{\frac{\lambda}{c_{k}\|b\|}_{\Gamma_{\gamma}(\mathbb{B}_{n},\mathcal{L}(\overline{X},Y))}}\|g\|_{L^{1}_{\alpha}(\mathbb{B}_n)}\\
\\
& = & \displaystyle \dfrac{cc_{k}}{\lambda}\|b\|_{\Gamma_{\gamma}(\mathbb{B}_{n},\mathcal{L}(\overline{X},Y))}\|g\|_{L^{1}_{\alpha}(\mathbb{B}_n)}.
\end{eqnarray*}
Applying Lemma $\ref{lemm22}$ to the function $f,$ we get that
\begin{eqnarray*}
\|g\|_{L^{1}_{\alpha}(\mathbb{B}_n)} & = & \displaystyle \int_{\mathbb{B}_n}(1-|z|^2)^{\gamma}\|f(z)\|_{X} \mathrm{d}\nu_{\alpha}(z)\\
& = &  \displaystyle \int_{\mathbb{B}_n}(1-|z|^2)^{(\frac{1}{p}-1)(n+1+\alpha)} \|f(z)\|_{X} \mathrm{d}\nu_{\alpha}(z)\\
& \leq & \|f\|_{p,\alpha,X}.
\end{eqnarray*}
It follows that $$\lambda \nu_{\alpha} (\lbrace z \in \mathbb{B}_n : \|h_{b}f(z)\|_{Y} > \lambda \rbrace ) \lesssim \|b\|_{\Gamma_{\gamma}(\mathbb{B}_{n},\mathcal{L}(\overline{X},Y))} \|f\|_{p,\alpha,X}$$ for all $\lambda > 0.$ Therefore, 
$h_{b}$ extends into a bounded operator from $A^p_{\alpha}(\mathbb{B}_{n},X)$ to $A^{1,\infty}_{\alpha}(\mathbb{B}_{n},Y)$ with $$\|h_{b}\|_{A^p_\alpha(\mathbb{B}_{n},X) \longrightarrow A^{1,\infty}_{\alpha}(\mathbb{B}_{n},Y)} \lesssim \|b\|_{\Gamma_{\gamma}(\mathbb{B}_{n},\mathcal{L}(\overline{X},Y))}.$$ By density of $H^{\infty}(\mathbb{B}_{n},X)$ on $A^p_\alpha(\mathbb{B}_{n},X),$ the proof of the theorem is finished.
\end{proof}

\subsection{Proof of Corollary $\ref{cons1}$}

\begin{proof}
Just apply Lemma $\ref{nessa}$ and the second part of Theorem $\ref{principe1}$ to conclude.
\end{proof}

\section{\textbf{The Proof of Theorem $\ref{principal2}$}}

\begin{proof}
We first prove the sufficiency of the theorem. We assume that there exists a constant $C' > 0$ such that
$$ \|N^{k}b(w)\|_{\mathcal{L}(\overline{X},Y)} \leq \dfrac{C'}{(1-|w|^2)^{k-\gamma}}\left(\log \dfrac{1}{1-|w|^2} \right)^{-1}.$$ Likewise by Corollary $\ref{lipsch1},$  we have that, there exists a constant $C >0$ such that
$$ \|L_{k}b(w)\|_{\mathcal{L}(\overline{X},Y)} \leq \dfrac{C}{(1-|w|^2)^{k-\gamma}}\left(\log \dfrac{1}{1-|w|^2} \right)^{-1}.$$
Applying Lemma $\ref{corrver}$ for any $f \in H^{\infty}(\mathbb{B}_{n},X),$ we get 
\begin{eqnarray*}
\displaystyle \int_{\mathbb{B}_n} \dfrac{ b(w)(\overline{f(w)})}{(1 - \langle z,w \rangle)^{n+1+\alpha}}\mathrm{d}\nu_{\alpha}(w) = c_{k}\int_{\mathbb{B}_n} \dfrac{L_{k}b(w)(\overline{f(w)})}{(1 - \langle z,w \rangle)^{n+1+\alpha}}\mathrm{d}\nu_{\alpha+k}(w). 
\end{eqnarray*}
Thus, by the assumption, Lemma $\ref{corrver}$ and Lemma $\ref{lemm22}$ we have that 
\begin{eqnarray*} 
\|h_{b}f\|_{A^{1}_{\alpha}(\mathbb{B}_{n},Y)} & = & \displaystyle \int_{\mathbb{B}_n} \left\Vert c_{k}\int_{\mathbb{B}_n}\frac{ L_{k}b(w)(\overline{f(w)})}{(1- \langle z,w \rangle)^{n+1+\alpha}} \mathrm{d}\nu_{\alpha+k}(w) \right\Vert_{Y} \mathrm{d}\nu_{\alpha}(z)\\
& \lesssim & \displaystyle \int_{\mathbb{B}_n} \int_{\mathbb{B}_n} \left\Vert \dfrac{ L_{k}b(w)(\overline{f(w)})}{(1- \langle z,w \rangle)^{n+1+\alpha}} \right\Vert_{Y}(1-|w|^2)^{k} \mathrm{d}\nu_{\alpha}(w) \mathrm{d}\nu_{\alpha}(z)\\
& \lesssim & \displaystyle  \int_{\mathbb{B}_n} \int_{\mathbb{B}_n} \dfrac{\Vert L_{k}b(w) \Vert_{\mathcal{L}(\overline{X},Y)} }{|1 - \langle z,w \rangle |^{n+1+\alpha}}\|\overline{f(w)}\|_{\overline{X}}(1-|w|^2)^{k} \mathrm{d}\nu_{\alpha}(w) \mathrm{d}\nu_{\alpha}(z)\\
& = & \displaystyle \int_{\mathbb{B}_n} \left( \int_{\mathbb{B}_n} \dfrac{1}{|1 - \langle z,w \rangle |^{n+1+\alpha}}\mathrm{d}\nu_{\alpha}(z)\right) \Vert L_{k}b(w) \Vert_{\mathcal{L}(\overline{X},Y)} \|\overline{f(w)}\|_{\overline{X}}(1-|w|^2)^{k} \mathrm{d}\nu_{\alpha}(w)\\
& \lesssim & \displaystyle \int_{\mathbb{B}_n} \left(\log\dfrac{1}{1-|w|^2}\right)\|\overline{f(w)}\|_{\overline{X}} \dfrac{(1-|w|^2)^{k}}{(1-|w|^2)^{k-\gamma}}\left(\log \dfrac{1}{1-|w|^2} \right)^{-1}\mathrm{d}\nu_{\alpha}(w)\\
& = & \displaystyle \int_{\mathbb{B}_n} \|f(w)\|_{X}(1-|w|^2)^{\gamma}\mathrm{d}\nu_{\alpha}(w)\\
& = & \displaystyle \int_{\mathbb{B}_n} \|f(w)\|_{X}(1-|w|^2)^{(\frac{1}{p}-1)(n+1+\alpha)}\mathrm{d}\nu_{\alpha}(w)\\
& \lesssim & \|f\|_{p,\alpha,X}.
\end{eqnarray*}
Conversely, we assume that $h_{b}$ extends into a bounded operator from  $A^p_{\alpha}(\mathbb{B}_{n},X)$ to $A^{1}_{\alpha}(\mathbb{B}_{n},Y).$ Then for all $f \in H^{\infty}(\mathbb{B}_{n},X)$ and $g \in \mathcal{B}(\mathbb{B}_{n},Y^{\star}),$ we have 
\begin{equation}
 |\langle h_{b}(f), g \rangle_{\alpha,Y}| \leq \|h_b\| \|f\|_{p,\alpha,X}\|g\|_{\mathcal{B}(\mathbb{B}_{n},Y^{\star})}.\label{aquat1}
\end{equation} 
We choose the particular function $g(z) = y^{\star},$ with $y^{\star} \in Y^{\star}.$ Applying Lemma $\ref{hank1},$  relation $\eqref{aquat1}$ becomes
\begin{eqnarray*}
\left| \displaystyle \int_{\mathbb{B}_n} \big\langle h_{b}f(z),y^{\star} \big\rangle_{Y,Y^{\star}}\mathrm{d}\nu_{\alpha}(z)\right| & = & \displaystyle \left|\big\langle \int_{\mathbb{B}_n} b(z)\overline{f(z)}\mathrm{d}\nu_{\alpha}(z),y^{\star} \big\rangle_{Y,Y^{\star}} \right|\\
& \leq & \|h_b\|\|f\|_{p,\alpha,X}\|y^{\star}\|_{Y^{\star}}.
\end{eqnarray*}
Thus 
\begin{equation}
\left| \displaystyle \int_{\mathbb{B}_n} \big\langle b(z)\overline{f(z)},y^{\star} \big\rangle_{Y,Y^{\star}}\mathrm{d}\nu_{\alpha}(z)\right|\leq \|h_b\|\|f\|_{p,\alpha,X}\|y^{\star}\|_{Y^{\star}}\label{ref2}
\end{equation}
for all $f \in H^{\infty}(\mathbb{B}_{n},X)$ and $y^{\star} \in Y^{\star}.$
Now, take $x \in X,$ $y^{\star} \in Y^{\star},$ and an integer $k$ such that $k > \gamma.$ 
Fix $w \in \mathbb{B}_{n}$ and put $$f(z) = \dfrac{(1-|w|^2)^{k-\gamma}}{(1-\langle z,w \rangle)^{n+1+\alpha+k}}x~~; \hspace{1cm} g(z) = \log(1- \langle z,w \rangle)y^{\star},$$ where $\log$ is the principal branch of the logarithm. 
Since $f \in H^{\infty}(\mathbb{B}_{n},X)$ and $g \in \mathcal{B}(\mathbb{B}_{n},Y^{\star}),$ by relation $\eqref{aquat1},$ we have that
\begin{equation}
|\langle h_{b}f,g \rangle|_{\alpha,Y} \leq \|h_b\|\|x\|_{X}\|y^{\star}\|_{Y^{\star}} \label{ref3}.
\end{equation}
Applying Lemma $\ref{hank1}$ for those particular vector-valued holomorphic functions $f$ and $g$ and using the fact that $$\log(1- \langle w,z \rangle) = \log(1-|w|^2)+\log \left(\dfrac{1- \langle w,z \rangle}{1-|w|^2}\right),$$ we obtain 
\begin{eqnarray*}
\langle h_{b}f,g \rangle_{\alpha,Y} & = & \displaystyle \int_{\mathbb{B}_n} \big\langle b(z)\overline{\left( \dfrac{(1-|w|^2)^{k-\gamma}}{(1-\langle z,w \rangle)^{n+1+\alpha+k}}x\right)}, \log(1-\langle z,w \rangle)y^{\star} \big\rangle_{Y,Y^{\star}} \mathrm{d}\nu_{\alpha}(z)\\
\\
& = & \displaystyle \big\langle \int_{\mathbb{B}_n} b(z)\left[ \dfrac{(1-|w|^2)^{k-\gamma} \log(1-\langle w,z \rangle)}{(1-\langle w,z \rangle)^{n+1+\alpha+k}}\overline{x}\right] \mathrm{d}\nu_{\alpha}(z),y^{\star} \big\rangle_{Y,Y^{\star}} \\
\\
& = & \displaystyle \big\langle \int_{\mathbb{B}_n} \dfrac{b(z)(\overline{x})(1-|w|^2)^{k-\gamma}\log(1-|w|^2)}{(1-\langle w,z \rangle)^{n+1+\alpha+k}}\mathrm{d}\nu_{\alpha}(z),y^{\star} \big\rangle_{Y,Y^{\star}} \\
\\
& + &  \displaystyle \big\langle \int_{\mathbb{B}_n} b(z)\left[ \dfrac{(1-|w|^2)^{k-\gamma}}{(1-\langle w,z \rangle)^{n+1+\alpha+k}}\log\left( \dfrac{1-\langle w,z \rangle}{1-|w|^2} \right)\overline{x}\right] \mathrm{d}\nu_{\alpha}(z),y^{\star} \big\rangle_{Y,Y^{\star}}\\
\\
& = & \displaystyle \big\langle (1-|w|^2)^{k-\gamma}\log(1-|w|^2)\int_{\mathbb{B}_n} \dfrac{b(z)(\overline{x})\mathrm{d}\nu_{\alpha}(z)}{(1-\langle w,z \rangle)^{n+1+\alpha+k}},y^{\star} \big\rangle_{Y,Y^{\star}} \\
\\
& + & \displaystyle \big\langle \int_{\mathbb{B}_n} b(z)\left(\overline{\dfrac{(1-|w|^2)^{k-\gamma}}{(1-\langle z,w \rangle)^{n+1+\alpha+k}}\log \left(\dfrac{1- \langle z,w \rangle}{1-|w|^2}\right)x}\right) \mathrm{d}\nu_{\alpha}(z),y^{\star} \big\rangle_{Y,Y^{\star}}\\
\\
& = &  \displaystyle (1-|w|^2)^{k-\gamma}\log(1-|w|^2) \big\langle L_{k}\left(\int_{\mathbb{B}_n} \dfrac{b(z)(\overline{x})}{(1-\langle w,z \rangle)^{n+1+\alpha}}\mathrm{d}\nu_{\alpha}(z)\right) ,y^{\star} \big\rangle_{Y,Y^{\star}} \\
\\
& + & \displaystyle \big\langle \int_{\mathbb{B}_n} b(z)\left(\overline{f(z)\log \left(\dfrac{1- \langle z,w \rangle}{1-|w|^2}\right)}\right) \mathrm{d}\nu_{\alpha}(z),y^{\star} \big\rangle_{Y,Y^{\star}}\\
\\
& = &  \displaystyle (1-|w|^2)^{k-\gamma}\log(1-|w|^2) \langle L_{k}(b(w)(\overline{x})),y^{\star} \rangle_{Y,Y^{\star}}  +  \displaystyle \langle \int_{\mathbb{B}_n} b(z)(\overline{\varphi(z)}) \mathrm{d}\nu_{\alpha}(z),y^{\star} \rangle_{Y,Y^{\star}},
\end{eqnarray*}
where $ \varphi(z) = f(z)\log \left(\dfrac{1- \langle z,w \rangle}{1-|w|^2}\right).$
Therefore, we can write 
$ \langle h_{b}f,g \rangle_{\alpha,Y} = I_{1}+I_{2},$ with 
$$\displaystyle I_{1} = (1-|w|^2)^{k-\gamma}\log(1-|w|^2) \langle L_{k}(b(w)(\overline{x})),y^{\star} \rangle_{Y,Y^{\star}}$$ and 
$$ I_{2} =  \displaystyle \big\langle \int_{\mathbb{B}_n} b(z)(\overline{\varphi(z)}) \mathrm{d}\nu_{\alpha}(z),y^{\star} \big\rangle_{Y,Y^{\star}}.$$
Applying Lemma $\ref{lemm1}$ with $\delta = p,$ and $\beta = p(k-\gamma),$ we obtain that
\begin{eqnarray*}
\|\varphi\|_{p,\alpha,X} & = & \displaystyle \left( \int_{\mathbb{B}_n}\left|\log\left(\dfrac{1-\langle z,w \rangle}{1-|w|^2}\right)\right|^{p}\dfrac{(1-|w|^2)^{p(k-\gamma)}}{|1-\langle z,w \rangle|^{p(n+1+\alpha+k)}}\|x\|^p_{X}\mathrm{d}\nu_{\alpha}(z)\right)^{1/p}\\
\\
& = & \displaystyle \|x\|_{X}\left( \int_{\mathbb{B}_n}\left|\log\left(\dfrac{1-\langle z,w \rangle}{1-|w|^2}\right)\right|^{p}\dfrac{(1-|w|^2)^{p(k-\gamma)}}{|1-\langle z,w \rangle|^{n+1+\alpha+p(k-\gamma)}}\mathrm{d}\nu_{\alpha}(z)\right)^{1/p} \lesssim \|x\|_{X}.
\end{eqnarray*}
According to the relation $\eqref{ref2},$ we obtain the following estimation of $I_{2}$
$$\displaystyle |I_{2}| \leq \|h_b\|\|\varphi\|_{p,\alpha,X}\|y^{\star}\|_{Y^{\star}} \lesssim \|h_b\|\|x\|_{X}\|y^{\star}\|_{Y^{\star}}.$$
Since $I_{1} = \langle h_{b}f,g \rangle_{\alpha,Y} - I_{2},$ by the relation $\eqref{ref3}$ and the previous estimates on $I_{2},$ we have that
$$|I_{1}| \leq |\langle h_{b}f,g \rangle_{\alpha,Y}| + |I_{2}| \lesssim \|h_b\|\|x\|_{X}\|y^{\star}\|_{Y^{\star}}.$$ Since $x \in X,$ $y^{\star} \in Y^{\star}$ are arbitrary and $\|x\|_{X} = \|\overline{x}\|_{\overline{X}},$ we get that
$$\displaystyle |I_{1}| = (1-|w|^2)^{k-\gamma}\log\left( \dfrac{1}{1-|w|^2} \right)|\langle L_{k}(b(w)(\overline{x})),y^{\star} \rangle_{Y,Y^{\star}}| \leq C \|h_b\|\|\overline{x}\|_{\overline{X}}\|y^{\star}\|_{Y^{\star}}.$$ Since $\overline{x} \in \overline{X}$ and $y^{\star} \in Y^{\star}$ are arbitrary, we deduce that :
\begin{eqnarray*}
\|L_{k}b(w)\|_{\mathcal{L}(\overline{X},Y)} & = & \displaystyle \sup_{\|\overline{x}\|_{\overline{X}} = 1, \|y^{\star}\|_{Y^{\star}} = 1} |\langle L_{k}(b(w)(\overline{x})),y^{\star} \rangle_{Y,Y^{\star}}|\\
& \leq & \dfrac{C}{(1-|w|^2)^{k-\gamma}}\left(\log \dfrac{1}{1-|w|^2}\right)^{-1}.
\end{eqnarray*}
The desired result follows at once using Corollary $\ref{lipsch1}.$
\end{proof}

\section{\textbf{Compactness of the little Hankel operator, $h_b$, with operator-valued symbols $b$ from $A^{p}_{\alpha}(\mathbb{B}_{n},X)$ to $A^{q}_{\alpha}(\mathbb{B}_{n},Y),$ with $1 < p \leq q < \infty$}}
In this section, we are going to characterize those symbols $b$ for whch the little Hankel operator extends into a bounded compact oparator from $A^{p}_{\alpha}(\mathbb{B}_{n},X)$ to $A^{q}_{\alpha}(\mathbb{B}_{n},Y),$ where $1 < p \leq q < \infty$ and $X,Y$ are two reflexive complex Banach spaces.

\subsection{ \textbf{Preliminaries notions}}

The proof of the following remark can be found in \cite[Proposition $1.6.1$]{Roc}

\begin{rem} Let $t \geq 0.$ Then the operator $R^{\alpha,t}$ is the unique continuous linear operator on $\mathcal{H}(\mathbb{B}_{n},X)$ satisfying
$$\displaystyle R^{\alpha,t} \left( \dfrac{x}{(1-\langle z,w \rangle)^{n+1+\alpha}}\right) = \dfrac{x}{(1-\langle z,w \rangle)^{n+1+\alpha+t}},$$
for every $z \in \mathbb{B}_{n}$ and $x \in X.$
\end{rem}

We will use the operator $R^{\alpha,t},$ for $t > 0,$ in the vector-valued Bergman space $A^1_{\alpha}(\mathbb{B}_{n},X)$ as follows:

\begin{prop}\label{check2} Let $t > 0$ and $f \in A^1_{\alpha}(\mathbb{B}_{n},X).$ Then 
$$\displaystyle R^{\alpha,t}f(z) = \int_{\mathbb{B}_n} \dfrac{f(w)}{(1-\langle z,w \rangle)^{n+1+\alpha+t}}\mathrm{d}\nu_{\alpha}(w),$$
for each $z \in \mathbb{B}_{n}.$ 
\end{prop}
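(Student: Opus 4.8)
\textbf{Proof plan for Proposition \ref{check2}.}
The plan is to verify the identity first on the dense subspace $\mathcal{P}(\mathbb{B}_n,X)$ of vector-valued holomorphic polynomials and then pass to the limit. On polynomials the claimed formula is essentially a Taylor-coefficient computation: writing $f = \sum_{k=0}^{m} f_k$ with $f_k$ homogeneous of degree $k$, I would expand the kernel $(1-\langle z,w\rangle)^{-n-1-\alpha-t}$ by the binomial series and integrate term by term over $\mathbb{B}_n$ against $\mathrm{d}\nu_\alpha(w)$. Using the orthogonality of distinct homogeneous components with respect to $\nu_\alpha$ (which follows from the rotation-invariance of $\nu_\alpha$, exactly as in the scalar theory) together with the standard moment computation $\int_{\mathbb{B}_n}\langle z,w\rangle^k\,\overline{\langle z,w\rangle}^{\,k}\,\mathrm{d}\nu_\alpha(w)$ expressed through Gamma functions, one recovers precisely the multiplier sequence $\frac{\Gamma(n+1+\alpha)\Gamma(n+1+k+\alpha+t)}{\Gamma(n+1+\alpha+t)\Gamma(n+1+k+\alpha)}$ appearing in the definition \eqref{diffop} of $R^{\alpha,t}$. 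Since everything is linear in the $X$-valued coefficients, Lemma \ref{premierlem} lets me pull the bounded functionals (or the coefficient functionals) through the Bochner integral, so the vector-valued case reduces verbatim to the scalar one; alternatively one can simply invoke the preceding Remark, which identifies $R^{\alpha,t}$ as the unique continuous operator on $\mathcal{H}(\mathbb{B}_n,X)$ acting on the reproducing kernels $\frac{x}{(1-\langle z,w\rangle)^{n+1+\alpha}}$ as claimed, and observe that by Proposition \ref{prop1} the right-hand side of the asserted formula is exactly $P_{\alpha}$ followed by the kernel shift, i.e. agrees with $R^{\alpha,t}$ on that total set.

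Next I would upgrade from polynomials to all of $A^1_\alpha(\mathbb{B}_n,X)$ by a density/continuity argument. For fixed $z\in\mathbb{B}_n$, the linear functional $f\mapsto \int_{\mathbb{B}_n}\frac{f(w)}{(1-\langle z,w\rangle)^{n+1+\alpha+t}}\,\mathrm{d}\nu_\alpha(w)$ is bounded on $A^1_\alpha(\mathbb{B}_n,X)$ into $X$, since by Lemma \ref{vers2} its norm is dominated by $\sup_{w}|1-\langle z,w\rangle|^{-n-1-\alpha-t}\,\|f\|_{1,\alpha,X}$, and this supremum is finite because $|1-\langle z,w\rangle|\ge 1-|z|>0$. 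Polynomials are dense in $A^1_\alpha(\mathbb{B}_n,X)$ (Lemma \ref{denslip} gives density of $H^\infty(\mathbb{B}_n,X)$, and dilating plus truncating Taylor series gives polynomials); and $R^{\alpha,t}f(z)$ likewise depends continuously on $f\in A^1_\alpha(\mathbb{B}_n,X)$ for fixed $z$, because by the pointwise estimate Theorem \ref{thm1} (applied after noting $R^{\alpha,t}f$ lies in a suitable Bergman space, or more directly by reading off the $k$-th homogeneous component of $f$ via a Cauchy-type integral and using that the multiplier sequence grows polynomially in $k$ while the Taylor coefficients of an $A^1_\alpha$ function are controlled) the evaluation $f\mapsto R^{\alpha,t}f(z)$ is continuous. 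Both sides being continuous in $f$ and agreeing on a dense subspace, they agree on all of $A^1_\alpha(\mathbb{B}_n,X)$.

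The one point that needs a little care — and which I expect to be the main technical obstacle — is justifying that $R^{\alpha,t}f(z)$ genuinely depends continuously on $f$ in the $A^1_\alpha$-norm for fixed $z$, since $R^{\alpha,t}$ is defined by an infinite series on homogeneous expansions and one must control the Taylor coefficients of a generic $A^1_\alpha$ function. The clean way around this is to avoid the series representation on the target side entirely: combine the Remark preceding the Proposition with Proposition \ref{prop1}. Indeed, for $f\in A^1_\alpha(\mathbb{B}_n,X)$, Proposition \ref{prop1} gives $f(z)=\int_{\mathbb{B}_n}\frac{f(w)}{(1-\langle z,w\rangle)^{n+1+\alpha}}\,\mathrm{d}\nu_\alpha(w)$; applying the operator $R^{\alpha,t}$ in the variable $z$, interchanging it with the Bochner integral (legitimate since $R^{\alpha,t}$ is continuous on $\mathcal{H}(\mathbb{B}_n,X)$ and the integrand is, for fixed $w$, a scalar multiple of the reproducing kernel to which the Remark applies, while Lemma \ref{premierlem} handles the interchange against the continuous linear action), and using $R^{\alpha,t}\big(\tfrac{x}{(1-\langle z,w\rangle)^{n+1+\alpha}}\big)=\tfrac{x}{(1-\langle z,w\rangle)^{n+1+\alpha+t}}$ from the Remark yields exactly $R^{\alpha,t}f(z)=\int_{\mathbb{B}_n}\frac{f(w)}{(1-\langle z,w\rangle)^{n+1+\alpha+t}}\,\mathrm{d}\nu_\alpha(w)$. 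The remaining bookkeeping — that the integral on the right converges absolutely (clear from $|1-\langle z,w\rangle|\ge 1-|z|$ and $f\in A^1_\alpha$) and that the interchange of $R^{\alpha,t}$ with the integral is valid — is routine and I would only indicate it briefly.
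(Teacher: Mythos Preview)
The paper does not actually supply a proof of this proposition: it is stated immediately after the Remark (attributed to \cite{Roc}) characterizing $R^{\alpha,t}$ as the unique continuous operator on $\mathcal{H}(\mathbb{B}_{n},X)$ sending $\frac{x}{(1-\langle z,w\rangle)^{n+1+\alpha}}$ to $\frac{x}{(1-\langle z,w\rangle)^{n+1+\alpha+t}}$, and is evidently meant to follow by combining that Remark with the reproducing formula of Proposition~\ref{prop1}. Your final ``clean'' route---write $f(z)$ via Proposition~\ref{prop1}, apply $R^{\alpha,t}$ in the $z$-variable, and invoke the Remark on the kernel---is precisely this intended argument, and the interchange you flag is indeed the only point requiring a word of justification (continuity of $R^{\alpha,t}$ on $\mathcal{H}(\mathbb{B}_{n},X)$ together with viewing the integrand as an $\mathcal{H}(\mathbb{B}_{n},X)$-valued function of $w$).

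Your first approach via polynomials and density is also correct but unnecessarily elaborate here: once the Remark is available, the polynomial computation is subsumed by the kernel identity, and the density/continuity step you describe is exactly the one needed anyway for the interchange in the short proof. So the two routes converge, and you may safely drop the longer one.
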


The proof of the following proposition is not quite different to the proof in \cite[Proposition $1.15$]{Zhu_functions}, but for sake of the completeness, we will recall the proof.

\begin{prop}\label{partialdop} Suppose $N$ is a positive integer and $\alpha$ is a real such that $n+\alpha$ is not a negative integer. Then $R^{\alpha,N}$ as an operator acting on $\mathcal{H}(\mathbb{B}_{n},X)$ is a linear partial differential operator of order $N$ with polynomial coefficients, that is 
$$\displaystyle R^{\alpha,N}f(z) = \sum_{m \in \mathbb{N}^{n},|m| \leq N} p_{m}(z)\dfrac{\partial^{|m|} f}{\partial z^{m}}(z),$$ where each $p_{m}$ is a polynomial.
\end{prop}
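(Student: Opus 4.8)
The plan is to prove the statement by induction on $N$, using the defining series $\eqref{diffop}$ for $R^{\alpha,t}$ together with the semigroup-type factorization $R^{\alpha,N}=R^{\alpha+N-1,1}\circ R^{\alpha,N-1}$ (which follows by comparing the Gamma-quotient coefficients in $\eqref{diffop}$). For this reduction it suffices to treat the base case $N=1$: show that $R^{\alpha,1}$ is a first-order linear partial differential operator with polynomial coefficients, since composing such an operator with one of order $N-1$ with polynomial coefficients (by the Leibniz rule, differentiating the polynomial coefficients) again yields a partial differential operator of order $N$ with polynomial coefficients. The scalar argument in \cite[Proposition $1.15$]{Zhu_functions} works here verbatim once we remark that all the manipulations are coordinatewise and hence carry over to $X$-valued holomorphic functions (via Dunford's theorem one may even reduce to the scalar case by applying $x^\star\in X^\star$, since $R^{\alpha,N}$ acts on homogeneous expansions termwise and commutes with bounded linear functionals).

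First I would expand $f\in\mathcal H(\mathbb B_n,X)$ in its homogeneous expansion $f=\sum_{k\ge0}f_k$ and compute, directly from $\eqref{ref4}$ and $\eqref{diffop}$, how $R^{\alpha,1}$ relates to the radial derivative $N$. Since $Nf_k=kf_k$, one checks that the coefficient $\frac{\Gamma(n+1+\alpha)\Gamma(n+2+k+\alpha)}{\Gamma(n+2+\alpha)\Gamma(n+1+k+\alpha)}=\frac{n+1+k+\alpha}{n+1+\alpha}$ multiplying $f_k$ in $R^{\alpha,1}f$ equals $\frac{1}{n+1+\alpha}\bigl((n+1+\alpha)I+N\bigr)$ applied to $f_k$; hence
$$
R^{\alpha,1}f=\frac{1}{n+1+\alpha}\bigl((n+1+\alpha)f+Nf\bigr)=f+\frac{1}{n+1+\alpha}\sum_{j=1}^{n}z_j\frac{\partial f}{\partial z_j},
$$
which is visibly a first-order partial differential operator with polynomial (indeed linear) coefficients. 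This establishes the base case.

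For the inductive step, suppose $R^{\alpha,N-1}f(z)=\sum_{|m|\le N-1}p_m(z)\,\partial^m f(z)$ with each $p_m$ polynomial. Using $R^{\alpha,N}=R^{\alpha+N-1,1}\circ R^{\alpha,N-1}$ and the base case applied with the parameter $\alpha+N-1$, we get
$$
R^{\alpha,N}f=R^{\alpha,N-1}f+\frac{1}{n+N+\alpha}\sum_{i=1}^{n}z_i\,\frac{\partial}{\partial z_i}\Bigl(\sum_{|m|\le N-1}p_m\,\partial^m f\Bigr),
$$
and the Leibniz rule converts each term $z_i\,\partial_{z_i}(p_m\,\partial^m f)=z_i(\partial_{z_i}p_m)\partial^m f+z_i p_m\,\partial_{z_i}\partial^m f$ into a sum of terms of the form (polynomial)$\times(\partial^{m'} f)$ with $|m'|\le N$. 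Collecting terms gives the desired representation; the order is exactly $N$ because the top-order contribution $\frac{1}{n+N+\alpha}z_i p_m\,\partial_{z_i}\partial^m f$ with $|m|=N-1$ is nonzero. I should also note that all identities among the operators $R^{\alpha,t}$, $N$ and $\partial^m$ are first verified on vector-valued holomorphic polynomials $\mathcal P(\mathbb B_n,X)$, where everything is a finite sum, and then extended to all of $\mathcal H(\mathbb B_n,X)$ by the continuity of $R^{\alpha,t}$ on $\mathcal H(\mathbb B_n,X)$ together with the fact that partial sums of the homogeneous expansion converge locally uniformly.

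The only mild obstacle is bookkeeping: one must keep track of the polynomial coefficients through the Leibniz expansion and verify that the order does not drop, i.e. that the coefficient of the top-order derivative is genuinely nonzero (this is where $n+N+\alpha\ne0$, guaranteed by the hypothesis that $n+\alpha$ is not a negative integer, is used). Everything else is routine and identical to the scalar case, since $X$-valued holomorphic functions differentiate coordinatewise just as scalar ones do.
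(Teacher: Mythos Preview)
Your proof is correct but follows a genuinely different route from the paper's. The paper argues directly on the reproducing kernel: it writes
\[
\dfrac{x}{(1-\langle z,w\rangle)^{n+1+\alpha+N}}
=\dfrac{x\bigl((1-\langle z,w\rangle)+\langle z,w\rangle\bigr)^N}{(1-\langle z,w\rangle)^{n+1+\alpha+N}},
\]
expands via the binomial and multinomial formulas, and recognises each resulting term $\dfrac{\overline{w}^m x}{(1-\langle z,w\rangle)^{n+1+\alpha+k}}$ as a constant multiple of $\dfrac{\partial^{|m|}}{\partial z^m}\dfrac{x}{(1-\langle z,w\rangle)^{n+1+\alpha}}$; invoking the characterisation of $R^{\alpha,N}$ as the unique continuous operator on $\mathcal H(\mathbb B_n,X)$ with the prescribed action on kernels then yields the global identity $R^{\alpha,N}=\sum_{k=0}^{N}\sum_{|m|=k}c_{mk}\,z^{m}\,\partial^{m}$. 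Your inductive argument instead exploits the factorisation $R^{\alpha,N}=R^{\alpha+N-1,1}\circ R^{\alpha,N-1}$ together with the explicit identity $R^{\alpha,1}=I+\tfrac{1}{n+1+\alpha}N$, and then propagates polynomial coefficients through the Leibniz rule. The paper's computation buys an explicit closed form for the coefficients (each $p_m$ is in fact the monomial $c_{mk}z^m$), while your approach makes transparent that $R^{\alpha,N}$ is, up to constants, the product $\prod_{j=0}^{N-1}\bigl((n+1+\alpha+j)I+N\bigr)$, i.e.\ precisely an operator of the $L_k$ type appearing in Remark~\ref{rem1}; this structural observation is somewhat hidden in the paper's direct expansion.
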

\begin{proof}
Let $x \in X$ and $w \in \mathbb{B}_n.$ By using the multi-nomial formula $$\langle z,w \rangle^{k} = \displaystyle \sum_{|m| = k} \dfrac{k!}{m!}z^{m}\overline{w}^{m},$$ it follows that 
\begin{eqnarray*}
\dfrac{x}{(1 - \langle z,w \rangle)^{n+1+\alpha+N}} & = & \dfrac{x(1 -\langle z,w \rangle + \langle z,w \rangle)^{N}}{(1 - \langle z,w \rangle)^{n+1+\alpha+N}}\\
& = & \displaystyle \sum_{k=0}^{N}\dfrac{N!}{k!(N-k)!}\dfrac{\langle z,w \rangle^{k}x\; (1 - \langle z,w \rangle)^{N-k}}{(1 - \langle z,w \rangle)^{n+1+\alpha+N}}\\
& = & \displaystyle \sum_{k=0}^{N}\dfrac{N!}{k!(N-k)!}\sum_{|m| = k} \dfrac{k!}{m!}z^{m}\dfrac{\overline{w}^{m}x}{(1 - \langle z,w \rangle)^{n+1+\alpha+k}}\\
& = & \displaystyle \sum_{k=0}^{N}\sum_{|m| = k} \dfrac{N!}{m!(N-k)!}z^{m}\dfrac{\overline{w}^{m}x}{(1 - \langle z,w \rangle)^{n+1+\alpha+k}}\\
& = & \displaystyle \sum_{k=0}^{N}\sum_{|m| = k} \dfrac{N!}{\prod_{j=0}^{k}(n+1+\alpha+j)m!(N-k)!}z^{m}\dfrac{\partial^{k} }{\partial z^{m}}\left( \dfrac{x}{(1 - \langle z,w \rangle)^{n+1+\alpha}}\right).
\end{eqnarray*}
Therefore, there exists a constant $c_{mk}$ such that
$$R^{\alpha,N}\left(\dfrac{x}{(1 - \langle z,w \rangle)^{n+1+\alpha}}\right)  = \displaystyle \sum_{k=0}^{N}\sum_{|m| = k}c_{mk}z^{m}\dfrac{\partial^{k} }{\partial z^{m}}\left( \dfrac{x}{(1 - \langle z,w \rangle)^{n+1+\alpha}}\right).$$ Thus $$ R^{\alpha,N} = \displaystyle \sum_{k=0}^{N}\sum_{|m| = k}c_{mk}z^{m}\dfrac{\partial^{k} }{\partial z^{m}}.$$
\end{proof}

We will also need the following results whose proofs can be found in \cite{Roc}.

\begin{lem}\label{check1} Let $t > 0.$ Then
$$\displaystyle \int_{\mathbb{B}_n} f(z)\overline{g(z)}\mathrm{d}\nu_{\alpha}(z) = \int_{\mathbb{B}_n} R^{\alpha,t}f(z)\overline{g(z)} \mathrm{d}\nu_{\alpha+t}(z),$$
for all $f \in A^1_{\alpha}(\mathbb{B}_{n},X)$ and $g \in H^{\infty}(\mathbb{B}_{n},\mathbb{C}).$
\end{lem}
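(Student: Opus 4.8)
The plan is to strip the operator $R^{\alpha,t}$ off the right–hand side via its integral representation and then collapse the resulting iterated integral with the reproducing kernel formula. So first I would fix $f\in A^1_\alpha(\mathbb{B}_n,X)$ and $g\in H^\infty(\mathbb{B}_n,\mathbb{C})$, and use Proposition \ref{check2} to write, for $z\in\mathbb{B}_n$,
$$R^{\alpha,t}f(z)=\int_{\mathbb{B}_n}\frac{f(w)}{(1-\langle z,w\rangle)^{n+1+\alpha+t}}\,d\nu_\alpha(w).$$
Substituting this into $\int_{\mathbb{B}_n}R^{\alpha,t}f(z)\,\overline{g(z)}\,d\nu_{\alpha+t}(z)$ and interchanging the order of integration (the delicate point, addressed below), the inner integral over $z$ becomes $\int_{\mathbb{B}_n}\overline{g(z)}\,(1-\langle z,w\rangle)^{-(n+1+\alpha+t)}\,d\nu_{\alpha+t}(z)$. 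Taking complex conjugates and using $\overline{\langle z,w\rangle}=\langle w,z\rangle$, this is the conjugate of $\int_{\mathbb{B}_n}g(z)\,(1-\langle w,z\rangle)^{-(n+1+\alpha+t)}\,d\nu_{\alpha+t}(z)$, which equals $g(w)$ by the reproducing kernel formula of Proposition \ref{prop1} applied with weight $\alpha+t$ — note that $g\in A^1_{\alpha+t}(\mathbb{B}_n,\mathbb{C})$ since $g$ is bounded and $\nu_{\alpha+t}$ is a probability measure. Hence the inner integral is $\overline{g(w)}$, and the right-hand side reduces to $\int_{\mathbb{B}_n}f(w)\,\overline{g(w)}\,d\nu_\alpha(w)$, which is the left-hand side; apart from the interchange of integrations, the only vector-valued manipulation is pulling the fixed vector $f(w)$ out of the scalar integral over $z$, which is immediate (Lemma \ref{premierlem}).

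The hard part, as signalled above, is the justification of Fubini's theorem, i.e. the finiteness of $\int_{\mathbb{B}_n}\int_{\mathbb{B}_n}\|f(w)\|_X\,|g(z)|\,|1-\langle z,w\rangle|^{-(n+1+\alpha+t)}\,d\nu_{\alpha+t}(z)\,d\nu_\alpha(w)$. Estimating $|g|\le\|g\|_\infty$ and using part $(i)$ of Theorem \ref{estimint} for the inner integral, this is dominated by $\|g\|_\infty\int_{\mathbb{B}_n}\|f(w)\|_X\,\log\frac1{1-|w|^2}\,d\nu_\alpha(w)$, and the logarithmic factor need not be integrable against $\|f(w)\|_X\,d\nu_\alpha(w)$ for a general $f\in A^1_\alpha(\mathbb{B}_n,X)$. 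I would therefore first establish the identity for $f\in H^\infty(\mathbb{B}_n,X)$, for which $\log\frac1{1-|w|^2}$ is $\nu_\alpha$-integrable, so that Fubini applies and the computation of the previous paragraph goes through unchanged.

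For a general $f\in A^1_\alpha(\mathbb{B}_n,X)$ I would then pass to the limit along the dilations $f_\rho(z):=f(\rho z)$, $0<\rho<1$: by Lemma \ref{denslip} these lie in $H^\infty(\mathbb{B}_n,X)$ and $f_\rho\to f$ in $A^1_\alpha(\mathbb{B}_n,X)$, so the left-hand sides converge since $g$ is bounded. On the right, $R^{\alpha,t}$ acts diagonally on homogeneous expansions by \eqref{diffop}, hence commutes with dilation, $R^{\alpha,t}(f_\rho)=(R^{\alpha,t}f)_\rho$; using the boundedness of $R^{\alpha,t}$ from $A^1_\alpha$ into $A^1_{\alpha+t}$ one gets $(R^{\alpha,t}f)_\rho\to R^{\alpha,t}f$ in $A^1_{\alpha+t}(\mathbb{B}_n,X)$, so the right-hand sides converge as well, and equating limits gives the identity for $f$. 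As an alternative route avoiding the dilations, one may apply an arbitrary $x^\star\in X^\star$ to both sides — legitimate by Lemma \ref{premierlem}, and compatible with $R^{\alpha,t}$ since by \eqref{diffop} both operations act term by term on the homogeneous expansion — thereby reducing the whole statement to its scalar-valued counterpart.
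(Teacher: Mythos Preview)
The paper does not supply its own proof of this lemma; it simply refers the reader to \cite{Roc}. Your argument is correct and is the natural one: use the integral representation of $R^{\alpha,t}$ from Proposition~\ref{check2}, collapse the inner integral to $\overline{g(w)}$ via the reproducing formula for weight $\alpha+t$, and handle the Fubini obstruction (the logarithmic factor from Theorem~\ref{estimint}(i)) by first treating $f\in H^\infty(\mathbb{B}_n,X)$ and then passing to the limit along dilations.

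The only spot to tighten is the convergence of the right-hand side in the limiting step. You invoke the boundedness of $R^{\alpha,t}:A^1_\alpha\to A^1_{\alpha+t}$, which the present paper does not state as such, though it is equivalent to the norm comparison \cite[Theorem~3.1.2]{Roc} that is quoted later in the proof of Lemma~\ref{prelimn1}. A self-contained alternative: once you know $R^{\alpha,t}f\in A^1_{\alpha+t}(\mathbb{B}_n,X)$, Lemma~\ref{denslip} applied directly to $R^{\alpha,t}f$ (with weight $\alpha+t$) already gives $(R^{\alpha,t}f)_\rho\to R^{\alpha,t}f$ in $A^1_{\alpha+t}$, and combined with your observation $R^{\alpha,t}(f_\rho)=(R^{\alpha,t}f)_\rho$ this closes the argument without needing operator boundedness.
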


\begin{lem} Let $t > 0$ and $X$ a complex Banach space. Then 
\begin{eqnarray*}
\displaystyle  \int_{\mathbb{B}_n} \langle f(z),g(z) \rangle_{X,X^{\star}}\mathrm{d}\nu_{\alpha}(z) & = & \displaystyle \int_{\mathbb{B}_n} \langle R^{\alpha,t}f(z),g(z) \rangle_{X,X^{\star}}\mathrm{d}\nu_{\alpha+t}(z)\\
& = & \displaystyle  \int_{\mathbb{B}_n} \langle f(z),R^{\alpha,t}g(z) \rangle_{X,X^{\star}}\mathrm{d}\nu_{\alpha+t}(z),
\end{eqnarray*}
for every $f \in A^{1}_{\alpha}(\mathbb{B}_{n},X)$ and $g \in H^{\infty}(\mathbb{B}_{n},X^{\star}).$
\end{lem}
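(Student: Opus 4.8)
The plan is to prove both identities first for $f$ a holomorphic polynomial with coefficients in $X$, and then to pass to a general $f\in A^1_\alpha(\mathbb{B}_{n},X)$ by density and continuity. Density of $\mathcal{P}(\mathbb{B}_{n},X)$ in $A^1_\alpha(\mathbb{B}_{n},X)$ follows from Lemma \ref{denslip}: one approximates $f$ by the dilates $f_\rho$ and then each $f_\rho$ by its Taylor polynomials, which converge uniformly on $\overline{\mathbb{B}_n}$. The advantage of the polynomial case is that $f$ is then bounded on $\mathbb{B}_n$ and its homogeneous expansion is finite, so the interchanges of integration below become elementary.

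In the polynomial case I would argue through the reproducing kernel, in the spirit of Lemma \ref{corrver}. For the first identity, substitute the integral representation $R^{\alpha,t}f(z)=\int_{\mathbb{B}_n}\frac{f(w)}{(1-\langle z,w\rangle)^{n+1+\alpha+t}}\,d\nu_\alpha(w)$ of Proposition \ref{check2}, push the functional $g(z)$ through the Bochner integral by Lemma \ref{premierlem}, and move the scalar kernel out of the first slot via $\langle\lambda x,x^\star\rangle=\lambda\langle x,x^\star\rangle$. Fubini's theorem applies here because $\|f(w)\|_X\le C$ and, by Theorem \ref{estimint}(ii) with $\beta=t>0$, $\int_{\mathbb{B}_n}|1-\langle z,w\rangle|^{-(n+1+\alpha+t)}\,d\nu_\alpha(w)\lesssim(1-|z|^2)^{-t}$, which is $\nu_{\alpha+t}$-integrable. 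Performing the $z$-integration first and transferring the scalar into the second slot (where $\langle x,\overline{\lambda}x^\star\rangle=\lambda\langle x,x^\star\rangle$ forces the conjugation $\overline{(1-\langle z,w\rangle)^{n+1+\alpha+t}}=(1-\langle w,z\rangle)^{n+1+\alpha+t}$), the inner integral becomes $\int_{\mathbb{B}_n}\frac{g(z)}{(1-\langle w,z\rangle)^{n+1+\alpha+t}}\,d\nu_{\alpha+t}(z)=g(w)$ by the reproducing formula of Proposition \ref{prop1} used with the weight $\alpha+t$ (legitimate since $g\in H^\infty\subset A^1_{\alpha+t}$). What survives is exactly $\int_{\mathbb{B}_n}\langle f(w),g(w)\rangle_{X,X^\star}\,d\nu_\alpha(w)$. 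The second identity is obtained symmetrically: represent $R^{\alpha,t}g$ by Proposition \ref{check2} on the Banach space $X^\star$, transfer the scalar into the first slot, and reproduce $f$ by Proposition \ref{prop1} with weight $\alpha+t$ (here $f\in A^1_\alpha\subset A^1_{\alpha+t}$ since $t>0$); the reality of the multipliers defining $R^{\alpha,t}$ is what makes this transfer clean.

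To pass from polynomials to all $f\in A^1_\alpha(\mathbb{B}_{n},X)$ I would verify that each functional in sight is continuous in $f$ for the $A^1_\alpha$-norm. The left-hand side is dominated by $\|g\|_\infty\|f\|_{1,\alpha,X}$. For the right-hand side of the second identity, $\|R^{\alpha,t}g(z)\|_{X^\star}\le\|g\|_\infty\int_{\mathbb{B}_n}|1-\langle z,w\rangle|^{-(n+1+\alpha+t)}\,d\nu_\alpha(w)\lesssim\|g\|_\infty(1-|z|^2)^{-t}$, so the factor $(1-|z|^2)^{-t}$ is precisely absorbed by the weight of $\nu_{\alpha+t}$ and the functional is again $\lesssim\|f\|_{1,\alpha,X}$; density then upgrades the polynomial identity. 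For the right-hand side of the first identity the analogous continuity requires the boundedness $R^{\alpha,t}\colon A^1_\alpha(\mathbb{B}_{n},X)\to A^1_{\alpha+t}(\mathbb{B}_{n},X)$.

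I expect the main obstacle to be twofold. First, one must track the conjugate-linear structure of $\overline{X}$ and $X^\star$ faithfully: it is exactly this structure that makes $z\mapsto\langle f(z),g(z)\rangle_{X,X^\star}$ behave anti-holomorphically in $g$ (rather than holomorphically, which would collapse both integrals to their value at the origin and trivialize the statement), and it is what produces the conjugations $\overline{(1-\langle z,w\rangle)^{s}}=(1-\langle w,z\rangle)^{s}$ that align the kernels with the reproducing formula. Second, the continuity needed for the density step in the first identity rests on $\|R^{\alpha,t}f\|_{1,\alpha+t,X}\lesssim\|f\|_{1,\alpha,X}$, whose naive kernel estimate loses a logarithm (Theorem \ref{estimint}(i) yields only a factor $\log\frac{1}{1-|w|^2}$); the sharp bound is the genuinely technical input, and is the piece I would import from \cite{Roc}. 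As an independent check of the scheme, one can confirm the identities at the level of homogeneous parts: rotation invariance of $\nu_\alpha$ and $\nu_{\alpha+t}$ forces $\int_{\mathbb{B}_n}\langle f_j,g_k\rangle\,d\nu_\beta=0$ for $j\ne k$, the diagonal terms factor as $A_j(\beta)\int_{\mathbb{S}_n}\langle f_j,g_j\rangle\,d\sigma$ with $A_j(\beta)=\frac{\Gamma(n+\beta+1)\Gamma(j+n)}{(n-1)!\,\Gamma(j+n+\beta+1)}$, and the multiplier $c_j=\frac{\Gamma(n+1+\alpha)\Gamma(n+1+j+\alpha+t)}{\Gamma(n+1+\alpha+t)\Gamma(n+1+j+\alpha)}$ of $R^{\alpha,t}$ satisfies $c_j\,A_j(\alpha+t)=A_j(\alpha)$, so the two sides coincide term by term.
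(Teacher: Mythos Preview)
The paper does not give its own proof of this lemma; it is one of the results whose proofs are deferred to \cite{Roc}. So there is no ``paper's proof'' to compare against directly, and your argument stands on its own.

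Your argument is correct, but the detour through polynomials and the subsequent density step are unnecessary, and in fact they create the very obstacle you flag at the end. The Fubini justification you give in the polynomial paragraph already works for an arbitrary $f\in A^1_\alpha(\mathbb{B}_n,X)$: the absolute double integral is bounded by
\[
\|g\|_\infty \int_{\mathbb{B}_n}\|f(z)\|_X \left(\int_{\mathbb{B}_n}\frac{d\nu_\alpha(w)}{|1-\langle z,w\rangle|^{n+1+\alpha+t}}\right) d\nu_{\alpha+t}(z)
\lesssim \|g\|_\infty \int_{\mathbb{B}_n}\|f(z)\|_X (1-|z|^2)^{-t}\, d\nu_{\alpha+t}(z),
\]
and the factor $(1-|z|^2)^{-t}$ is exactly cancelled by the weight of $\nu_{\alpha+t}$, leaving $\|g\|_\infty\|f\|_{1,\alpha,X}$. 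You never needed $\|f(w)\|_X\le C$. With Fubini in hand for general $f$, both identities follow immediately from the reproducing formula (Proposition~\ref{prop1} with weight $\alpha+t$, applied to $g\in H^\infty\subset A^1_{\alpha+t}$ for the first and to $f\in A^1_\alpha\subset A^1_{\alpha+t}$ for the second), exactly as you wrote. The boundedness $R^{\alpha,t}\colon A^1_\alpha\to A^1_{\alpha+t}$ is then a \emph{consequence} of the lemma (pair against constants), not a prerequisite. Your homogeneous-expansion check is a nice confirmation but likewise not needed for the proof.
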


\begin{cor}\label{brett1} Suppose $t >0$ and $ 1 < p < \infty.$ If $b \in A^{p'}_{\alpha}(\mathbb{B}_{n},\mathcal{L}(\overline{X},Y)),$ where $p'$ is the conjugate exponent of $p,$ then the following equality holds
$$\displaystyle  \int_{\mathbb{B}_n} \langle b(z)\overline{f(z)},g(z) \rangle_{Y,Y^{\star}}\mathrm{d}\nu_{\alpha}(z) = \int_{\mathbb{B}_n} \langle R^{\alpha,t}b(z)\overline{f(z)},g(z) \rangle_{Y,Y^{\star}}\mathrm{d}\nu_{\alpha+t}(z)$$  for $f \in H^{\infty}(\mathbb{B}_{n},X)$ and $g \in H^{\infty}(\mathbb{B}_{n},Y^{\star}).$
\end{cor}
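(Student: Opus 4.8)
The plan is to transfer $R^{\alpha,t}$ onto $b$ alone: I substitute the reproducing representation of $b$, interchange the order of integration, and then invoke the scalar transfer identity of Lemma~\ref{check1} in the variable that no longer carries $b$. To get started, note that $b\in A^{p'}_{\alpha}(\mathbb B_n,\mathcal L(\overline X,Y))\subset A^{1}_{\alpha}(\mathbb B_n,\mathcal L(\overline X,Y))$ since $\nu_\alpha$ is a probability measure, so Proposition~\ref{prop1} and Proposition~\ref{check2} apply to $b$; moreover H\"older's inequality (exponents $p',p$) together with the integrability over $\mathbb B_n$ of $w\mapsto\bigl(\log\tfrac1{1-|w|^2}\bigr)^{p}$ gives $\int_{\mathbb B_n}\|b(w)\|_{\mathcal L(\overline X,Y)}\log\tfrac1{1-|w|^2}\,d\nu_\alpha(w)<\infty$, which together with Theorem~\ref{estimint}(i) (applied with parameter $\alpha$ and with parameter $\alpha+t$) legitimizes every Fubini interchange below. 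Writing $b(z)=\int_{\mathbb B_n}(1-\langle z,w\rangle)^{-(n+1+\alpha)}b(w)\,d\nu_\alpha(w)$, then applying the bounded operator $S\mapsto S\overline{f(z)}$ and the functional $g(z)$ under the integral sign (Lemma~\ref{premierlem}) and using Fubini, the left-hand side of the asserted identity becomes
\[
\int_{\mathbb B_n}\left(\int_{\mathbb B_n}\frac{\langle b(w)\overline{f(z)},g(z)\rangle_{Y,Y^\star}}{(1-\langle z,w\rangle)^{n+1+\alpha}}\,d\nu_\alpha(z)\right)d\nu_\alpha(w).
\]

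The heart of the proof is the inner integral for a fixed $w$; set $T:=b(w)\in\mathcal L(\overline X,Y)$. I would first check that $z\mapsto\langle T\overline{f(z)},g(z)\rangle_{Y,Y^\star}$ is bounded on $\mathbb B_n$ and equals $\overline{\Psi_w(z)}$ for a function $\Psi_w\in H^\infty(\mathbb B_n,\mathbb C)$: indeed $\overline{\langle T\overline{f(z)},g(z)\rangle_{Y,Y^\star}}=\overline{T\overline{f(z)}}\,(g(z))$, and $z\mapsto\overline{T\overline{f(z)}}$ is a bounded holomorphic $\overline Y$-valued map because the conjugation $X\to\overline X$ is conjugate-linear (so $z\mapsto\overline{f(z)}$ is conjugate-holomorphic, hence so is $z\mapsto T\overline{f(z)}$ in $Y$, and conjugating once more returns a holomorphic map with values in $\overline Y$); pairing this map against the holomorphic $Y^\star$-valued function $g$ then produces the holomorphic bounded scalar $\Psi_w$. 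Since $z\mapsto(1-\langle z,w\rangle)^{-(n+1+\alpha)}$ belongs to $A^1_\alpha(\mathbb B_n,\mathbb C)$ by Theorem~\ref{estimint}(i) and $R^{\alpha,t}$ carries it to $z\mapsto(1-\langle z,w\rangle)^{-(n+1+\alpha+t)}$ by the Remark preceding Proposition~\ref{check2}, Lemma~\ref{check1} applied with that function as first factor and the conjugated second factor $\Psi_w$ yields
\[
\int_{\mathbb B_n}\frac{\langle b(w)\overline{f(z)},g(z)\rangle_{Y,Y^\star}}{(1-\langle z,w\rangle)^{n+1+\alpha}}\,d\nu_\alpha(z)=\int_{\mathbb B_n}\frac{\langle b(w)\overline{f(z)},g(z)\rangle_{Y,Y^\star}}{(1-\langle z,w\rangle)^{n+1+\alpha+t}}\,d\nu_{\alpha+t}(z).
\]

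Substituting this back into the double integral, interchanging the order of integration once more, and then applying $S\mapsto S\overline{f(z)}$ and $g(z)$ under the $w$-integral, the expression collapses — via Proposition~\ref{check2}, which identifies $\int_{\mathbb B_n}(1-\langle z,w\rangle)^{-(n+1+\alpha+t)}b(w)\,d\nu_\alpha(w)$ with $R^{\alpha,t}b(z)$ — to $\int_{\mathbb B_n}\langle R^{\alpha,t}b(z)\overline{f(z)},g(z)\rangle_{Y,Y^\star}\,d\nu_{\alpha+t}(z)$, which is the right-hand side. The only genuinely delicate point is the holomorphy bookkeeping in the middle paragraph — seeing $z\mapsto\langle b(w)\overline{f(z)},g(z)\rangle$ as the complex conjugate of a bounded holomorphic scalar function, so that Lemma~\ref{check1} (whose second factor is conjugated) can be used slice by slice in $w$; granting that, the remainder is just the reproducing-kernel manipulation and the two Fubini interchanges, both routine under the hypothesis $b\in A^{p'}_\alpha$.
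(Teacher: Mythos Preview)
Your argument is correct. The paper does not actually supply a proof of this corollary: it is stated immediately after the unnamed lemma (transferring $R^{\alpha,t}$ across the pairing $\langle f(z),g(z)\rangle_{X,X^\star}$ for $f\in A^1_\alpha(\mathbb B_n,X)$ and $g\in H^\infty(\mathbb B_n,X^\star)$), and both that lemma and Lemma~\ref{check1} are deferred to \cite{Roc}. The implicit ``corollary'' route is to reduce to that preceding lemma by freezing $\overline{f(z)}$ to a constant: for a polynomial $f(z)=\sum_\beta c_\beta z^\beta$ one has $\langle b(z)\overline{f(z)},g(z)\rangle_{Y,Y^\star}=\sum_\beta\langle b(z)\overline{c_\beta},\,z^\beta g(z)\rangle_{Y,Y^\star}$, and each summand is handled by the preceding lemma applied with the $A^1_\alpha(\mathbb B_n,Y)$-function $z\mapsto b(z)\overline{c_\beta}$ and the $H^\infty(\mathbb B_n,Y^\star)$-function $z\mapsto z^\beta g(z)$; a density argument then extends to all $f\in H^\infty(\mathbb B_n,X)$.

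Your route is genuinely different: instead of freezing $\overline{f(z)}$ via Taylor expansion, you freeze $b$ via the reproducing formula, so that the scalar Lemma~\ref{check1} can be applied slice by slice in $w$ to the kernel $(1-\langle z,w\rangle)^{-(n+1+\alpha)}$. This trades the polynomial-approximation step for two Fubini interchanges, both of which you justify correctly via Theorem~\ref{estimint}(i) and H\"older. The holomorphy bookkeeping --- that $z\mapsto\langle b(w)\overline{f(z)},g(z)\rangle_{Y,Y^\star}$ is the conjugate of a bounded holomorphic scalar function --- is the only subtle point, and your verification through $\overline{Y}$ is sound given the paper's sesquilinear conventions. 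Both approaches are short; yours avoids density entirely, while the implicit one avoids the conjugate-space gymnastics.
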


In the sequel, we will need to interchange the position of the summation symbol and the integral symbol in a particular situation. That is why we introduce this lemma.

\begin{lem}\label{justF} Assume $1 < t < \infty.$ Let $b(z) = \sum_{\beta \in \mathbb{N}^{n}}\hat{b}(\beta)z^{\beta} \in A^t_{\alpha}(\mathbb{B}_{n},\mathcal{L}(\overline{X},Y)).$
Then 
$$\int_{\mathbb{B}_n} \langle b(z)\left(\overline{f(z)}\right),y^{\star}_{0} \rangle_{Y,Y^{\star}}\mathrm{d}\nu_{\alpha}(z) = \sum_{\beta \in \mathbb{N}^n}\int_{\mathbb{B}_n}z^{\beta} \langle \hat{b}(\beta) \left(\overline{f(z)}\right),y^{\star}_{0} \rangle_{Y,Y^{\star}}\mathrm{d}\nu_{\alpha}(z),$$ 
for every $f \in H^{\infty}(\mathbb{B}_{n},X)$ and $y^{\star}_{0} \in Y^{\star}$ with $\|y^{\star}_{0}\|_{Y^{\star}} = 1.$
\end{lem}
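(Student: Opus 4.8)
The plan is to reduce the statement to an interchange of a sum and an integral that is justified by dominated convergence (or Fubini applied to counting measure on $\mathbb{N}^n$), using that $b \in A^t_\alpha(\mathbb{B}_n,\mathcal{L}(\overline{X},Y))$ with $t>1$ and that $f$ is bounded. First I would fix $f \in H^\infty(\mathbb{B}_n,X)$ and $y^\star_0 \in Y^\star$ with $\|y^\star_0\|_{Y^\star}=1$, and write the partial sums $S_M(z) = \sum_{|\beta|\le M}\hat b(\beta)z^\beta$, which are vector-valued holomorphic polynomials in $\mathcal{P}(\mathbb{B}_n,\mathcal{L}(\overline{X},Y))$. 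For each fixed $z$, the scalar quantity $\langle S_M(z)(\overline{f(z)}),y^\star_0\rangle_{Y,Y^\star}$ equals $\sum_{|\beta|\le M} z^\beta \langle \hat b(\beta)(\overline{f(z)}),y^\star_0\rangle_{Y,Y^\star}$, so the claimed identity is exactly the statement that
$$
\lim_{M\to\infty}\int_{\mathbb{B}_n}\langle S_M(z)(\overline{f(z)}),y^\star_0\rangle_{Y,Y^\star}\,\mathrm{d}\nu_\alpha(z) = \int_{\mathbb{B}_n}\langle b(z)(\overline{f(z)}),y^\star_0\rangle_{Y,Y^\star}\,\mathrm{d}\nu_\alpha(z),
$$
together with the fact that the left-hand integral equals $\sum_{|\beta|\le M}\int_{\mathbb{B}_n} z^\beta\langle\hat b(\beta)(\overline{f(z)}),y^\star_0\rangle\,\mathrm{d}\nu_\alpha(z)$ (a finite sum, so the interchange there is trivial by linearity of the integral).

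Next I would establish the convergence $S_M \to b$ in $A^t_\alpha(\mathbb{B}_n,\mathcal{L}(\overline{X},Y))$: this is the standard fact that the Taylor polynomials of a vector-valued $A^t_\alpha$ function converge to it in norm when $t>1$ (equivalently, that $\mathcal{P}(\mathbb{B}_n,\mathcal{L}(\overline{X},Y))$ is dense and the partial-sum operators are uniformly bounded on $A^t_\alpha$ for $1<t<\infty$ — the Cesàro/dilation argument of Lemma~\ref{denslip} applied in the $\mathcal{L}(\overline{X},Y)$-valued setting gives density, and uniform boundedness of the Taylor projections for $t>1$ is the vector-valued analogue of the scalar result). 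Granting $\|b-S_M\|_{t,\alpha,\mathcal{L}(\overline{X},Y)}\to 0$, I would bound the difference of the integrals by
$$
\int_{\mathbb{B}_n}\|b(z)-S_M(z)\|_{\mathcal{L}(\overline{X},Y)}\,\|f(z)\|_X\,\|y^\star_0\|_{Y^\star}\,\mathrm{d}\nu_\alpha(z) \le \|f\|_{\infty,X}\,\|b-S_M\|_{1,\alpha,\mathcal{L}(\overline{X},Y)},
$$
using $\|\overline{f(z)}\|_{\overline X}=\|f(z)\|_X$ and $|\langle\cdot,y^\star_0\rangle|\le\|\cdot\|_Y$; since $A^t_\alpha \hookrightarrow A^1_\alpha$ for $t\ge 1$ (Hölder against the finite measure $\nu_\alpha$), this last quantity tends to $0$. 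That gives the desired limit identity and hence the lemma.

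The main obstacle I anticipate is not the estimate above but the norm convergence $S_M \to b$ in $A^t_\alpha$: one must make sure the vector-valued (indeed $\mathcal{L}(\overline{X},Y)$-valued) version of "Taylor partial sums converge in $A^t_\alpha$ for $1<t<\infty$" is available. If invoking it directly is undesirable, an alternative is to avoid partial sums entirely and instead use dilations $b_\rho(z)=b(\rho z)$: by Lemma~\ref{denslip} (applied with target space $\mathcal{L}(\overline{X},Y)$), $b_\rho\to b$ in $A^t_\alpha$, hence in $A^1_\alpha$, as $\rho\to 1^-$; for each $\rho<1$ the series for $b_\rho$ converges uniformly on $\overline{\mathbb{B}_n}$ (the coefficients are $\rho^{|\beta|}\hat b(\beta)$ and $b_\rho$ is holomorphic past the boundary), so the interchange of $\sum$ and $\int$ is legitimate for $b_\rho$ by uniform convergence; then one lets $\rho\to1^-$ on both sides, controlling the left side term-by-term via $\hat b_\rho(\beta)=\rho^{|\beta|}\hat b(\beta)\to\hat b(\beta)$ and the tail via the uniform-boundedness one still needs — so this route faces essentially the same summability issue. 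Either way, once the relevant convergence of polynomial approximants to $b$ in $A^1_\alpha(\mathbb{B}_n,\mathcal{L}(\overline{X},Y))$ is in hand, the rest is the routine estimate already displayed, and the proof concludes.
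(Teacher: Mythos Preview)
Your proposal is correct and follows essentially the same approach as the paper: the paper also passes to the partial sums $S_N(z)=\sum_{|\beta|\le N}\hat b(\beta)z^\beta$, asserts (without further justification) that $\|b-S_N\|_{t,\alpha,\mathcal{L}(\overline X,Y)}\to 0$, and then bounds the difference of the integrals exactly as you do using $f\in H^\infty$, $\|y_0^\star\|=1$, and the H\"older embedding $A^t_\alpha\hookrightarrow A^1_\alpha$. Your identification of the norm convergence of Taylor partial sums as the only nontrivial ingredient is spot on; the paper simply takes it for granted.
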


\begin{proof}
Since  $b(z) = \sum_{\beta \in \mathbb{N}^{n}}\hat{b}(\beta)z^{\beta} \in A^{t}_{\alpha}(\mathbb{B}_{n},\mathcal{L}(\overline{X},Y)),$ we have that 
$$\lim_{N \rightarrow \infty} \int_{\mathbb{B}_n} \left\|  b(z) - \sum_{\beta \in \mathbb{N}^{n}, |\beta| \leq N }\hat{b}(\beta)z^{\beta}  \right\|^t_{\mathcal{L}(\overline{X},Y)}\mathrm{d}\nu_{\alpha}(z) = 0.$$
We have
\begin{eqnarray*}
\displaystyle \left|\int_{\mathbb{B}_n} \left\langle \left( b(z) - \sum_{\beta \in \mathbb{N}^{n}:|\beta| \leq N}\hat{b}(\beta)z^{\beta}\right) (\overline{f(z)}),y^{\star}_{0} \right\rangle_{Y,Y^{\star}} \mathrm{d}\nu_{\alpha}(z)\right|  \leq \\ \displaystyle \int_{\mathbb{B}_n} \left\| b(z) - \sum_{\beta \in \mathbb{N}^{n}:|\beta| \leq N}\hat{b}(\beta)z^{\beta} \right\|_{\mathcal{L}(\overline{X},Y)} \|\overline{f(z)}\|_{\overline{X}} \|y^{\star}_{0}\|_{Y^{\star}} \mathrm{d}\nu_{\alpha}(z) = \\\displaystyle
\int_{\mathbb{B}_n} \left\| b(z) - \sum_{\beta \in \mathbb{N}^{n}:|\beta| \leq N}\hat{b}(\beta)z^{\beta} \right\|_{\mathcal{L}(\overline{X},Y)} \|f(z)\|_{X} \mathrm{d}\nu_{\alpha}(z) \lesssim \\\displaystyle
\int_{\mathbb{B}_n} \left\| b(z) - \sum_{\beta \in \mathbb{N}^{n}:|\beta| \leq N}\hat{b}(\beta)z^{\beta} \right\|^t_{\mathcal{L}(\overline{X},Y)} \mathrm{d}\nu_{\alpha}(z) \longrightarrow 0
\end{eqnarray*}
as $N \rightarrow \infty.$ Therefore, we have that
\begin{eqnarray*}
\int_{\mathbb{B}_n} \langle b(z)\left(\overline{f(z)}\right),y^{\star}_{0} \rangle_{Y,Y^{\star}}\mathrm{d}\nu_{\alpha}(z) & = & \displaystyle \lim_{N \rightarrow \infty}\int_{\mathbb{B}_n} \left\langle \sum_{\beta \in \mathbb{N}^{n}:|\beta| \leq N} \hat{b}(\beta)z^{\beta} \left(\overline{f(z)}\right),y^{\star}_{0} \right\rangle_{Y,Y^{\star}}\mathrm{d}\nu_{\alpha}(z)\\
& = & \displaystyle \lim_{N \rightarrow \infty}\int_{\mathbb{B}_n} \sum_{\beta \in \mathbb{N}^{n}:|\beta| \leq N} \left\langle  \hat{b}(\beta)z^{\beta} \left(\overline{f(z)}\right),y^{\star}_{0} \right\rangle_{Y,Y^{\star}}\mathrm{d}\nu_{\alpha}(z)\\
& = & \displaystyle \lim_{N \rightarrow \infty}\sum_{\beta \in \mathbb{N}^{n}:|\beta| \leq N} \int_{\mathbb{B}_n} \left\langle  \hat{b}(\beta)z^{\beta} \left(\overline{f(z)}\right),y^{\star}_{0} \right\rangle_{Y,Y^{\star}}\mathrm{d}\nu_{\alpha}(z)\\
& = & \sum_{\beta \in \mathbb{N}^{n}} \int_{\mathbb{B}_n} \left\langle  \hat{b}(\beta)z^{\beta} \left(\overline{f(z)}\right),y^{\star}_{0} \right\rangle_{Y,Y^{\star}}\mathrm{d}\nu_{\alpha}(z).
\end{eqnarray*}
\end{proof}

In the following lemma, we compute the little Hankel operator when the operator-valued symbol is a monomial.

\begin{lem}\label{bosss2} Suppose $1 < p < \infty$ and $\gamma \in \mathbb{N}^{n}.$ If $a_{\gamma} \in \mathcal{L}(\overline{X},Y),$ then for every $\displaystyle f(z) = \sum_{\beta \in \mathbb{N}^{n}}c_{\beta}z^{\beta} \in A^{p}_{\alpha}(\mathbb{B}_{n},X),$ we have $$\displaystyle h_{a_{\gamma}z^{\gamma}}f(z) = \sum_{\beta \in \mathbb{N}^{n},\beta \leq \gamma} a_{\gamma}(\overline{c_{\beta}})\dfrac{\gamma!\Gamma(n+1+\alpha+|\gamma-\beta|)}{(\gamma-\beta)!\Gamma(n+1+\alpha+|\gamma|)}z^{\gamma-\beta}.$$
\end{lem}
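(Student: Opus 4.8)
The plan is to compute $h_{a_\gamma z^\gamma} f$ directly from the integral definition, using the monomial symbol $b(w) = a_\gamma w^\gamma$ and the homogeneous expansion $f(w) = \sum_{\beta} c_\beta w^\beta$. First I would write
$$h_{a_\gamma z^\gamma} f(z) = \int_{\mathbb{B}_n} \frac{a_\gamma w^\gamma \overline{f(w)}}{(1-\langle z,w\rangle)^{n+1+\alpha}}\,\mathrm{d}\nu_\alpha(w),$$
and observe that $\overline{f(w)} = \sum_\beta \overline{c_\beta}\,\overline{w}^\beta$ in the sense of the conjugate space $\overline{X}$ (here $\overline{w}^\beta$ is a scalar and $\overline{c_\beta} \in \overline{X}$). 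To justify interchanging the sum over $\beta$ with the integral, I would test against an arbitrary $y^\star \in Y^\star$ with $\|y^\star\|_{Y^\star}=1$ and invoke Lemma \ref{justF} (applied with the symbol taken to be $a_\gamma z^\gamma$, which trivially lies in every $A^t_\alpha$), together with the fact that $b(w)\overline f(w)$ pairs against the fixed holomorphic kernel $(1-\langle z,w\rangle)^{-(n+1+\alpha)}$; this reduces the computation to evaluating, for each $\beta$, the scalar integral
$$\int_{\mathbb{B}_n} \frac{w^\gamma \overline{w}^\beta}{(1-\langle z,w\rangle)^{n+1+\alpha}}\,\mathrm{d}\nu_\alpha(w)\cdot a_\gamma(\overline{c_\beta}).$$

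The core step is then the evaluation of that scalar integral. I would expand the kernel in its power series $(1-\langle z,w\rangle)^{-(n+1+\alpha)} = \sum_{k\ge 0}\frac{\Gamma(n+1+\alpha+k)}{k!\,\Gamma(n+1+\alpha)}\langle z,w\rangle^k$, use the multinomial expansion $\langle z,w\rangle^k = \sum_{|m|=k}\frac{k!}{m!}z^m\overline w^m$, and then apply the standard orthogonality relation for monomials on the ball,
$$\int_{\mathbb{B}_n} w^\gamma \overline{w}^{\,\beta+m}\,\mathrm{d}\nu_\alpha(w) = \begin{cases} \dfrac{\gamma!\,\Gamma(n+1+\alpha)}{\Gamma(n+1+\alpha+|\gamma|)} & \text{if } \beta+m = \gamma,\\[2mm] 0 & \text{otherwise},\end{cases}$$
which forces $m = \gamma - \beta$ (requiring $\beta \le \gamma$ componentwise; otherwise the term vanishes) and picks out $k = |\gamma - \beta|$. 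Substituting $k=|\gamma-\beta|$ and $m = \gamma-\beta$ into the two expansions and collecting the Gamma factors, the integral becomes
$$z^{\gamma-\beta}\cdot \frac{\Gamma(n+1+\alpha+|\gamma-\beta|)}{|\gamma-\beta|!\,\Gamma(n+1+\alpha)}\cdot\frac{|\gamma-\beta|!}{(\gamma-\beta)!}\cdot\frac{\gamma!\,\Gamma(n+1+\alpha)}{\Gamma(n+1+\alpha+|\gamma|)} = z^{\gamma-\beta}\cdot\frac{\gamma!\,\Gamma(n+1+\alpha+|\gamma-\beta|)}{(\gamma-\beta)!\,\Gamma(n+1+\alpha+|\gamma|)}.$$
Multiplying by $a_\gamma(\overline{c_\beta})$ and summing over $\beta \le \gamma$ gives exactly the claimed formula. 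Since $y^\star$ was arbitrary and $Y$'s dual separates points, the vector-valued identity follows.

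The main obstacle is the bookkeeping around the conjugate space: one must be careful that $b(w)\overline{f(w)} = a_\gamma w^\gamma\,\overline{f(w)}$, where the scalar $w^\gamma$ multiplies the operator $a_\gamma$ while $\overline{f(w)} = \sum_\beta \overline{c_\beta}\,\overline{w}^\beta$ lives in $\overline X$, so that the relevant monomial integral involves $w^\gamma$ (holomorphic, from the symbol) against $\overline{w}^\beta$ (antiholomorphic, from $\overline f$) rather than two holomorphic factors. Everything else — the series manipulations, the interchange of sum and integral via Lemma \ref{justF}, and the Gamma-function arithmetic — is routine. A minor point to flag is the convergence of the resulting series in $A^p_\alpha(\mathbb{B}_n,Y)$, which is immediate here because for fixed $\gamma$ the sum over $\beta \le \gamma$ is finite.
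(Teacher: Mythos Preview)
Your computation of the scalar integral and the final formula matches the paper's proof exactly, and the overall strategy (expand $\overline{f}$, expand the kernel, use monomial orthogonality) is the same. However, there is a genuine misstep in your justification for interchanging the sum $\sum_\beta$ with the integral.

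Lemma~\ref{justF} does not do what you claim. That lemma interchanges the integral with the Taylor series of the \emph{symbol} $b$, under the hypotheses that $b\in A^t_\alpha(\mathbb{B}_n,\mathcal{L}(\overline X,Y))$ and $f\in H^\infty(\mathbb{B}_n,X)$. Here the symbol is the single monomial $a_\gamma z^\gamma$, so applying Lemma~\ref{justF} gives a tautology; it says nothing about moving $\sum_\beta$ (the Taylor series of $f$) through the integral. Moreover, your $f$ is only in $A^p_\alpha$, not $H^\infty$, so the hypotheses of Lemma~\ref{justF} are not met in any case.

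The paper handles this interchange directly: since $p>1$, the Taylor series of $f$ converges to $f$ in the $A^p_\alpha$-norm (this is where the hypothesis $p>1$ enters, via \cite[Corollary~4]{Zhu_paper1}), and then a simple estimate using $\|a_\gamma\|_{\mathcal{L}(\overline X,Y)}$ and the bounded kernel (for fixed $z$) shows that the tails of $\sum_\beta \int(\cdots)$ go to zero in $Y$. A second, separate dominated-convergence argument is used for the kernel expansion $\sum_k$. You should replace the appeal to Lemma~\ref{justF} by this norm-convergence argument; once that is done, your proof coincides with the paper's.
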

\begin{proof}
Since $$\displaystyle f(z) = \sum_{\beta \in \mathbb{N}^{n}}c_{\beta}z^{\beta} \in A^{p}_{\alpha}(\mathbb{B}_{n},X),$$ and $p>1$ by using \cite[Corollary 4]{Zhu_paper1}, it follows that 
\begin{equation}
\displaystyle \int_{\mathbb{B}_n} \left\| \sum_{|\beta| \geq N+1}c_{\beta}z^{\beta} \right\|^p_{X}\mathrm{d}\nu_{\alpha}(z) \rightarrow 0~~~~\mbox{as}~~ N \rightarrow \infty. \label{justf1}
\end{equation}
Firstly, let us prove that 
\begin{equation}
\displaystyle{ \int_{\mathbb{B}_n}\dfrac{\sum_{\beta \in \mathbb{N}^{n}}a_{\gamma}(\overline{c_{\beta}})\overline{w^{\beta}}}{(1 - \langle z,w \rangle)^{n+1+\alpha}}\mathrm{d}\nu_{\alpha}(w) =  \sum_{\beta \in \mathbb{N}^{n}} \int_{\mathbb{B}_n}\dfrac{a_{\gamma}(\overline{c_{\beta}})\overline{w^{\beta}}}{(1 - \langle z,w \rangle)^{n+1+\alpha}}\mathrm{d}\nu_{\alpha}(w)}\label{justf2}
\end{equation}
Let $N \in \mathbb{N}.$ We have that
$$\displaystyle{ \int_{\mathbb{B}_n}\left\|\dfrac{\sum_{\beta \in \mathbb{N}^{n}}a_{\gamma}(\overline{c_{\beta}})\overline{w^{\beta}} - \sum_{|\beta| \leq N}a_{\gamma}(\overline{c_{\beta}})\overline{w^{\beta}}}{(1 - \langle z,w \rangle)^{n+1+\alpha}}\right\|_{Y}\mathrm{d}\nu_{\alpha}(w)}$$ 
\begin{eqnarray*}
& = & \displaystyle{ \int_{\mathbb{B}_n}\left\|\dfrac{\sum_{|\beta| \geq N+1}a_{\gamma}(\overline{c_{\beta}})\overline{w^{\beta}}}{(1 - \langle z,w \rangle)^{n+1+\alpha}} \right\|_{Y}\mathrm{d}\nu_{\alpha}(w)}\\
& = & \displaystyle{ \int_{\mathbb{B}_n}\left\|\dfrac{a_{\gamma}\left( \sum_{|\beta| \geq N+1}(\overline{c_{\beta}})\overline{w^{\beta}}\right)}{(1 - \langle z,w \rangle)^{n+1+\alpha}}\right\|_{Y}\mathrm{d}\nu_{\alpha}(w)}\\
& \leq & \displaystyle{ \dfrac{\|a_{\gamma}\|_{\mathcal{L}(\overline{X},Y)}}{(1 - |z|)^{n+1+\alpha}}\int_{\mathbb{B}_n}\left\|\sum_{|\beta| \geq N+1}c_{\beta}w^{\beta}\right\|_{X}\mathrm{d}\nu_{\alpha}(w)}\\
& \leq & \displaystyle{ \dfrac{\|a_{\gamma}\|_{\mathcal{L}(\overline{X},Y)}}{(1 - |z|)^{n+1+\alpha}}\int_{\mathbb{B}_n}\left\|\sum_{|\beta| \geq N+1}c_{\beta}w^{\beta}\right\|_{X}\mathrm{d}\nu_{\alpha}(w)}\\
& \leq & \displaystyle{ \dfrac{\|a_{\gamma}\|_{\mathcal{L}(\overline{X},Y)}}{(1 - |z|)^{n+1+\alpha}}\left( \int_{\mathbb{B}_n}\left\|\sum_{|\beta| \geq N+1}c_{\beta}w^{\beta}\right\|^p_{X}\mathrm{d}\nu_{\alpha}(w)\right)^{1/p}}.
\end{eqnarray*}
Therefore
$$\displaystyle{ \left\|\int_{\mathbb{B}_n}\dfrac{\sum_{\beta \in \mathbb{N}^{n}}a_{\gamma}(\overline{c_{\beta}})\overline{w^{\beta}} - \sum_{|\beta| \leq N}a_{\gamma}(\overline{c_{\beta}})\overline{w^{\beta}}}{(1 - \langle z,w \rangle)^{n+1+\alpha}}\mathrm{d}\nu_{\alpha}(w) \right\|_{Y}}$$
is less than or equal to
\begin{equation}
\displaystyle{ \dfrac{\|a_{\gamma}\|_{\mathcal{L}(\overline{X},Y)}}{(1 - |z|)^{n+1+\alpha}}\left( \int_{\mathbb{B}_n}\left\|\sum_{|\beta| \geq N+1}c_{\beta}w^{\beta}\right\|^p_{X}\mathrm{d}\nu_{\alpha}(w)\right)^{1/p}.}\label{justice1}
\end{equation}
By using $\eqref{justf1}$ and $\eqref{justice1},$ it follows that 
$$\displaystyle \left\|\int_{\mathbb{B}_n}\dfrac{\displaystyle \sum_{\beta \in \mathbb{N}^{n}}a_{\gamma}(\overline{c_{\beta}})\overline{w^{\beta}} - \sum_{|\beta| \leq N}a_{\gamma}(\overline{c_{\beta}})\overline{w^{\beta}}}{(1 - \langle z,w \rangle)^{n+1+\alpha}}\mathrm{d}\nu_{\alpha}(w) \right\|_{Y} \rightarrow 0$$ as $N \rightarrow \infty,$ and so
\begin{eqnarray*}
\displaystyle \int_{\mathbb{B}_n}\dfrac{\displaystyle \sum_{\beta \in \mathbb{N}^{n}}a_{\gamma}(\overline{c_{\beta}})\overline{w^{\beta}}}{(1 - \langle z,w \rangle)^{n+1+\alpha}}\mathrm{d}\nu_{\alpha}(w) & = & \displaystyle \lim_{N\rightarrow\infty}\int_{\mathbb{B}_n}\dfrac{\displaystyle \sum_{|\beta| \leq N}a_{\gamma}(\overline{c_{\beta}})\overline{w^{\beta}}}{(1 - \langle z,w \rangle)^{n+1+\alpha}}\mathrm{d}\nu_{\alpha}(w) \\
& = & \displaystyle \lim_{N\rightarrow\infty} \sum_{|\beta| \leq N}\int_{\mathbb{B}_n}\dfrac{a_{\gamma}(\overline{c_{\beta}})\overline{w^{\beta}}}{(1 - \langle z,w \rangle)^{n+1+\alpha}}\mathrm{d}\nu_{\alpha}(w)\\
& = & \displaystyle \sum_{\beta \in \mathbb{N}^{n}}\int_{\mathbb{B}_n}\dfrac{a_{\gamma}(\overline{c_{\beta}})\overline{w^{\beta}}}{(1 - \langle z,w \rangle)^{n+1+\alpha}}\mathrm{d}\nu_{\alpha}(w),
\end{eqnarray*}
which is the desired result. Secondly, let us prove that 

\begin{equation}\label{justf3}
\int_{\mathbb{B}_{n}} \sum_{k=0}^{\infty}\dfrac{\Gamma(n+1+\alpha+k)}{\Gamma(n+1+\alpha)k!} \langle z,w \rangle^{k} \mathrm{d}\nu_{\alpha}(w) = \sum_{k=0}^{\infty} \int_{\mathbb{B}_{n}}\dfrac{\Gamma(n+1+\alpha+k)}{\Gamma(n+1+\alpha)k!} \langle z,w \rangle^{k} \mathrm{d}\nu_{\alpha}(w).
\end{equation}
Let $N \in \mathbb{N}.$ We have
\begin{eqnarray*}
\displaystyle \left|\sum_{k=0}^{N}\dfrac{\Gamma(n+1+\alpha+k)}{\Gamma(n+1+\alpha)k!} \langle z,w \rangle^{k}\right| & \leq & \sum_{k=0}^{N}\dfrac{\Gamma(n+1+\alpha+k)}{\Gamma(n+1+\alpha)k!}|z|^{k}\\
& \leq & \sum_{k=0}^{\infty}\dfrac{\Gamma(n+1+\alpha+k)}{\Gamma(n+1+\alpha)k!}|z|^{k}\\
& = & \dfrac{1}{(1 - |z|)^{n+1+\alpha}}.
\end{eqnarray*} 
Since $$ \int_{\mathbb{B}_{n}} \dfrac{1}{(1 - |z|)^{n+1+\alpha}}\mathrm{d}\nu_{\alpha}(w) = \dfrac{1}{(1 - |z|)^{n+1+\alpha}},$$ by the dominated convergence theorem, we have that 
\begin{eqnarray*}
\displaystyle \sum_{k=0}^{\infty} \int_{\mathbb{B}_{n}} \dfrac{\Gamma(n+1+\alpha+k)}{\Gamma(n+1+\alpha)k!} \langle z,w \rangle^{k} \mathrm{d}\nu_{\alpha}(w) & = & \displaystyle \lim_{N\rightarrow \infty}\sum_{k=0}^{N}\int_{\mathbb{B}_{n}}\dfrac{\Gamma(n+1+\alpha+k)}{\Gamma(n+1+\alpha)k!} \langle z,w \rangle^{k} \mathrm{d}\nu_{\alpha}(w)\\
& = & \displaystyle \lim_{N\rightarrow\infty}\int_{\mathbb{B}_{n}} \sum_{k=0}^{N}\dfrac{\Gamma(n+1+\alpha+k)}{\Gamma(n+1+\alpha)k!} \langle z,w \rangle^{k} \mathrm{d}\nu_{\alpha}(w)\\
& = & \displaystyle \int_{\mathbb{B}_{n}}\lim_{N\rightarrow\infty}\sum_{k=0}^{N}\dfrac{\Gamma(n+1+\alpha+k)}{\Gamma(n+1+\alpha)k!} \langle z,w \rangle^{k} \mathrm{d}\nu_{\alpha}(w)\\
& = & \displaystyle \int_{\mathbb{B}_{n}} \sum_{k=0}^{\infty}\dfrac{\Gamma(n+1+\alpha+k)}{\Gamma(n+1+\alpha)k!} \langle z,w \rangle^{k} \mathrm{d}\nu_{\alpha}(w).
\end{eqnarray*}
We are now ready to prove our lemma. For $\displaystyle f(z) = \sum_{\beta \in \mathbb{N}^{n}}c_{\beta}z^{\beta} \in A^{p}_{\alpha}(\mathbb{B}_{n},X),$ by using the following multi-nomial formula (\cite[(1.1)]{Zhu_functions})
$$\langle z,w \rangle^k = \displaystyle \sum_{|m| = k}\frac{k!}{m!}z^{m}\overline{w^m}$$ and the following formula (\cite[(1.23)]{Zhu_functions})
$$\displaystyle \int_{\mathbb{B}_n}|z^m|^2\mathbb{d}\nu_{\alpha}(z) = \dfrac{m!\Gamma(n+\alpha+1)}{\Gamma(n+|m|+\alpha+1)},$$
we get that
\begin{eqnarray*}
h_{a_{\gamma}z^{\gamma}}f(z) & = & \displaystyle \int_{\mathbb{B}_n} \dfrac{ a_{\gamma}w^{\gamma}\left(\overline{ \displaystyle \sum_{\beta \in \mathbb{N}^{n}} c_{\beta}w^{\beta}}\right)}{(1-\langle z,w \rangle)^{n+1+\alpha}} \mathrm{d}\nu_{\alpha}(w)\\
\\
& = & \displaystyle \int_{\mathbb{B}_n}\dfrac{w^{\gamma}\displaystyle \sum_{\beta \in \mathbb{N}^{n}}a_{\gamma}(\overline{c_{\beta}})\overline{w^{\beta}}}{(1 - \langle z,w \rangle)^{n+1+\alpha}}\mathrm{d}\nu_{\alpha}(w)\\
\\
& = & \displaystyle \sum_{\beta \in \mathbb{N}^{n}} \int_{\mathbb{B}_n}\dfrac{w^{\gamma}a_{\gamma}(\overline{c_{\beta}})\overline{w^{\beta}}}{(1 - \langle z,w \rangle)^{n+1+\alpha}}\mathrm{d}\nu_{\alpha}(w)~~(\mbox{by}~~~~~~~~~~~~ \eqref{justf2})\\
\\
& = & \displaystyle \sum_{\beta \in \mathbb{N}^{n}}a_{\gamma}(\overline{c_{\beta}}) \int_{\mathbb{B}_{n}}w^{\gamma}\overline{w^{\beta}} \sum_{k=0}^{\infty}\dfrac{\Gamma(n+1+\alpha+k)}{\Gamma(n+1+\alpha)k!} \langle z,w \rangle^{k} \mathrm{d}\nu_{\alpha}(w)\\
\\
& = &  \displaystyle \sum_{\beta \in \mathbb{N}^{n}}a_{\gamma}(\overline{c_{\beta}}) \sum_{k=0}^{\infty}\int_{\mathbb{B}_{n}}w^{\gamma}\overline{w^{\beta}}\dfrac{\Gamma(n+1+\alpha+k)}{\Gamma(n+1+\alpha)k!} \langle z,w \rangle^{k} \mathrm{d}\nu_{\alpha}(w)~~~~~~~(\mbox{by}~~\eqref{justf3})\\
\\
& = & \displaystyle \sum_{\beta \in \mathbb{N}^{n}}a_{\gamma}(\overline{c_{\beta}}) \sum_{k=0}^{\infty}\dfrac{\Gamma(n+1+\alpha+k)}{\Gamma(n+1+\alpha)k!} \int_{\mathbb{B}_{n}}w^{\gamma}\overline{w^{\beta}} \sum_{|m|=k} \dfrac{k!}{m!}z^{m}\overline{w^{m}} \mathrm{d}\nu_{\alpha}(w)\\
\\
& = & \displaystyle \sum_{\beta \in \mathbb{N}^{n}}a_{\gamma}(\overline{c_{\beta}})\sum_{k=0}^{\infty}\dfrac{\Gamma(n+1+\alpha+k)}{\Gamma(n+1+\alpha)k!} \sum_{|m|=k} \dfrac{k!}{m!} \int_{\mathbb{B}_{n}}w^{\gamma}\overline{w^{\beta}}z^{m}\overline{w^{m}} \mathrm{d}\nu_{\alpha}(w)\\
\\
& = & \displaystyle \sum_{\beta \in \mathbb{N}^{n}}a_{\gamma}(\overline{c_{\beta}}) \sum_{k=0}^{\infty}\sum_{|m|=k}\dfrac{\Gamma(n+1+\alpha+k)}{\Gamma(n+1+\alpha)m!}\int_{\mathbb{B}_{n}}w^{\gamma} z^{m}\overline{w^{m+\beta}} \mathrm{d}\nu_{\alpha}(w)\\
\\
& = & \displaystyle \sum_{\beta \in \mathbb{N}^{n}}a_{\gamma}(\overline{c_{\beta}}) \sum_{m \in \mathbb{N}^{n}}\dfrac{\Gamma(n+1+\alpha+|m|)}{\Gamma(n+1+\alpha)m!}z^{m} \int_{\mathbb{B}_{n}}w^{\gamma}\overline{w^{\beta+m}}  \mathrm{d}\nu_{\alpha}(w)\\
& = & \displaystyle \sum_{\beta \in \mathbb{N}^{n},\beta \leq \gamma}a_{\gamma}(\overline{c_{\beta}}) \dfrac{\Gamma(n+1+\alpha+|\gamma-\beta|)}{\Gamma(n+1+\alpha)(\gamma-\beta)!}z^{\gamma-\beta} \int_{\mathbb{B}_n}|z^{\gamma}|^{2}\mathrm{d}\nu_{\alpha}(w)\\
\\
& = & \displaystyle \sum_{\beta \in \mathbb{N}^{n},\beta \leq \gamma}a_{\gamma}(\overline{c_{\beta}}) \dfrac{\Gamma(n+1+\alpha+|\gamma-\beta|)}{\Gamma(n+1+\alpha)(\gamma-\beta)!} \dfrac{\gamma!\Gamma(n+1+\alpha)}{\Gamma(n+1+\alpha+|\gamma|)}z^{\gamma-\beta}\\
\\
& = & \displaystyle \sum_{\beta \in \mathbb{N}^{n},\beta \leq \gamma}a_{\gamma}(\overline{c_{\beta}}) \dfrac{\gamma!\Gamma(n+1+\alpha+|\gamma-\beta|)}{(\gamma-\beta)!\Gamma(n+1+\alpha+|\gamma|)} z^{\gamma-\beta}.
\end{eqnarray*}
\end{proof}

The goal of the following lemma is to prove that the linear span of the vector-valued Bergman kernel $\dfrac{x^{\star}}{(1-\langle w,z \rangle)^{n+1+\alpha}},$ where $x^{\star} \in X^{\star}$ and $z,w \in \mathbb{B}_{n}$ form a dense subspace in the vector-valued Bergman space $A^{p'}_{\alpha}(\mathbb{B}_{n},X^{\star}),$ with $1 <p < \infty$ and $p'$ is the conjugate exponent of $p.$

\begin{lem}\label{compl2} Suppose that $1 < p < \infty.$ For each $x^{\star} \in X^{\star}$ and $z \in \mathbb{B}_{n},$ let
$$ e_{z,x^{\star}}(w) = \dfrac{x^{\star}}{(1-\langle w,z \rangle)^{n+1+\alpha}};\hspace{1cm}w \in \mathbb{B}_n.$$ Then $e_{z,x^{\star}} \in A^{p'}_{\alpha}(\mathbb{B}_{n},X^{\star})$ and the subspace generated by $e_{z,x^{\star}}$ is dense in $A^{p'}_{\alpha}(\mathbb{B}_{n},X^{\star}).$
\end{lem}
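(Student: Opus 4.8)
The plan is to first check that each $e_{z,x^\star}$ really lies in $A^{p'}_\alpha(\mathbb{B}_n,X^\star)$, then to record a reproducing identity for these kernels, and finally to obtain density by a Hahn--Banach argument using the description of the dual of $A^{p'}_\alpha(\mathbb{B}_n,X^\star)$. For the first point, fix $z\in\mathbb{B}_n$: since $|1-\langle w,z\rangle|\geq 1-|z|>0$ for all $w\in\mathbb{B}_n$, the function $e_{z,x^\star}$ is a bounded $X^\star$-valued holomorphic function, hence (as $\nu_\alpha$ is a probability measure) $e_{z,x^\star}\in H^\infty(\mathbb{B}_n,X^\star)\subset A^{p'}_\alpha(\mathbb{B}_n,X^\star)$.

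Next I would prove that, for every $f\in A^p_\alpha(\mathbb{B}_n,X)$, every $z\in\mathbb{B}_n$ and every $x^\star\in X^\star$,
$$\int_{\mathbb{B}_n}\langle f(w),e_{z,x^\star}(w)\rangle_{X,X^\star}\,\mathrm{d}\nu_\alpha(w)=\langle f(z),x^\star\rangle_{X,X^\star}.$$
Indeed, applying the rule $\langle x,\lambda x^\star\rangle_{X,X^\star}=\overline\lambda\,\langle x,x^\star\rangle_{X,X^\star}$ with $\lambda=(1-\langle w,z\rangle)^{-(n+1+\alpha)}$ and using $\overline{1-\langle w,z\rangle}=1-\langle z,w\rangle$, the integrand equals $(1-\langle z,w\rangle)^{-(n+1+\alpha)}\langle f(w),x^\star\rangle_{X,X^\star}$. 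The scalar function $\phi:=\langle f(\cdot),x^\star\rangle_{X,X^\star}=x^\star\circ f$ is holomorphic and lies in $A^1_\alpha(\mathbb{B}_n,\mathbb{C})$, since $f\in A^p_\alpha(\mathbb{B}_n,X)\subset A^1_\alpha(\mathbb{B}_n,X)$ and $x^\star$ is bounded; so Proposition~\ref{prop1} in the scalar case gives $\int_{\mathbb{B}_n}\phi(w)(1-\langle z,w\rangle)^{-(n+1+\alpha)}\,\mathrm{d}\nu_\alpha(w)=\phi(z)$, which is the asserted identity.

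Finally, let $M$ be the norm-closure in $A^{p'}_\alpha(\mathbb{B}_n,X^\star)$ of the linear span of the $e_{z,x^\star}$. By Hahn--Banach it suffices to show that the zero functional is the only bounded linear functional on $A^{p'}_\alpha(\mathbb{B}_n,X^\star)$ annihilating $M$. Since $X$ is reflexive (as assumed throughout this section), $A^p_\alpha(\mathbb{B}_n,X)$ is reflexive by the Remark following Theorem~\ref{dual1}; hence, by Theorem~\ref{dual1}, the dual $\bigl(A^{p'}_\alpha(\mathbb{B}_n,X^\star)\bigr)^\star=\bigl(A^p_\alpha(\mathbb{B}_n,X)\bigr)^{\star\star}$ is identified with $A^p_\alpha(\mathbb{B}_n,X)$, every bounded linear functional on $A^{p'}_\alpha(\mathbb{B}_n,X^\star)$ being of the form $h\mapsto\int_{\mathbb{B}_n}\langle f(w),h(w)\rangle_{X,X^\star}\,\mathrm{d}\nu_\alpha(w)$ for some $f\in A^p_\alpha(\mathbb{B}_n,X)$. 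If such a functional vanishes on $M$, then in particular the integral above is $0$ for $h=e_{z,x^\star}$, and the identity of the previous paragraph forces $\langle f(z),x^\star\rangle_{X,X^\star}=0$ for all $z\in\mathbb{B}_n$ and $x^\star\in X^\star$; by Hahn--Banach in $X$, $f(z)=0$ for every $z$, i.e.\ $f\equiv 0$. Therefore $M=A^{p'}_\alpha(\mathbb{B}_n,X^\star)$.

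The step I expect to be delicate is the bookkeeping of the conjugate-linear structure of $X^\star$: one must track carefully that although $e_{z,x^\star}$ is holomorphic in $w$, the pairing against $f(w)$ produces the genuine Bergman kernel $(1-\langle z,w\rangle)^{-(n+1+\alpha)}$, so that the reproducing formula of Proposition~\ref{prop1} applies; and one must genuinely use the reflexivity of $X$ to identify the dual of $A^{p'}_\alpha(\mathbb{B}_n,X^\star)$ with $A^p_\alpha(\mathbb{B}_n,X)$, since without it the above argument only delivers weak$^\ast$-density of the span.
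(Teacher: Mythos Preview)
Your proof is correct and follows essentially the same route as the paper: both verify $e_{z,x^\star}\in A^{p'}_\alpha(\mathbb{B}_n,X^\star)$, use the reproducing formula to show that pairing $f\in A^p_\alpha(\mathbb{B}_n,X)$ against $e_{z,x^\star}$ returns $\langle f(z),x^\star\rangle_{X,X^\star}$, and then conclude density by Hahn--Banach together with the identification $(A^{p'}_\alpha(\mathbb{B}_n,X^\star))^\star\cong A^p_\alpha(\mathbb{B}_n,X)$ coming from reflexivity of $X$. Your write-up is in fact a bit more explicit than the paper's---you spell out why $e_{z,x^\star}$ is bounded holomorphic and you flag the reflexivity hypothesis that the paper leaves implicit as a standing assumption of the section.
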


\begin{proof} Let $\phi \in A^p_{\alpha}(\mathbb{B}_{n},X)$ such that $\langle \phi,e_{z,x^{\star}} \rangle_{\alpha,X} = 0$ for all $z \in \mathbb{B}_{n}$ and $x^{\star} \in X^{\star}.$
Let $f^{\star} \in   A^{p'}_{\alpha}(\mathbb{B}_{n},X^{\star}).$ According to the Hahn-Banach theorem, it suffices to prove that $\langle  \phi,f^{\star} \rangle_{\alpha,X} = 0.$ For all $z \in \mathbb{B}_{n}$ and $x^{\star} \in X^{\star},$ using Lemma $\ref{premierlem}$ and the reproducing kernel formula, it follows that  
\begin{eqnarray*}
0 & = & \langle \phi,e_{z,x^{\star}} \rangle_{\alpha,X} \\
& = & \displaystyle \int_{\mathbb{B}_n} \langle \phi(w),e_{z,x^{\star}}(w) \rangle_{X,X^{\star}}\mathrm{d}\nu_{\alpha}(w)\\
& = & \displaystyle \int_{\mathbb{B}_n} \langle \phi(w),\dfrac{x^{\star}}{(1-\langle w,z \rangle)^{n+1+\alpha}} \rangle_{X,X^{\star}}\mathrm{d}\nu_{\alpha}(w)\\
& = & \displaystyle \int_{\mathbb{B}_n} \langle \dfrac{\phi(w)}{(1-\langle z,w \rangle)^{n+1+\alpha}},x^{\star}\rangle_{X,X^{\star}}\mathrm{d}\nu_{\alpha}(w)\\
& = & \langle \phi(z),x^{\star} \rangle_{X,X^{\star}}.
\end{eqnarray*}
Therefore, for all $x^{\star} \in X^{\star},$ we have 
$$\langle \phi(z),x^{\star} \rangle_{X,X^{\star}} = 0.$$
 Thus $\phi (z)= 0$ for every $z \in \mathbb{B}_{n}.$ It follows that for each  $f^{\star} \in   A^{p'}_{\alpha}(\mathbb{B}_{n},X^{\star}),$ we have that
$$\langle \phi,f^{\star} \rangle_{\alpha,X} = \int_{\mathbb{B}_n}\langle \phi(z),f^{\star}(z) \rangle_{X,X^{\star}}\mathrm{d}\nu_{\alpha}(z) = 0.$$
\end{proof}

In the  proof of the following lemma, we use the fact that when $X$ is a reflexive complex Banach space and $1 < p < \infty,$ the dual of the vector-valued Bergman space 
$A^{p'}_{\alpha}(\mathbb{B}_{n},X^{\star})$ can be identified with $A^{p}_{\alpha}(\mathbb{B}_{n},X),$ where $p'$ is the conjugate exponent of $p.$

\begin{lem}\label{bosss1} Suppose that $1 < p < \infty,$ and $X$ is a reflexive complex Banach space. Let $\lbrace f_{j} \rbrace \subset A^{p}_{\alpha}(\mathbb{B}_{n},X)$ such that 
$\displaystyle f_{j} \rightarrow 0$ weakly in $A^{p}_{\alpha}(\mathbb{B}_{n},X)$ as $j \rightarrow \infty.$ Then for each $ \beta \in \mathbb{N}^{n},$ we have that $\partial^{\beta}f_{j}(0) \rightarrow 0$ weakly in $X$ as $j \rightarrow \infty,$ where  $\partial^{\beta} = \frac{\partial^{|\beta|} }{\partial z^{\beta}}.$
\end{lem}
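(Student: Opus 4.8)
The plan is to reduce the statement to a single boundedness fact. Fix a multi-index $\beta \in \mathbb{N}^{n}$ and a functional $x^{\star} \in X^{\star}$, and consider the map
$$ \Lambda_{\beta,x^{\star}} \colon A^{p}_{\alpha}(\mathbb{B}_{n},X) \longrightarrow \mathbb{C}, \qquad \Lambda_{\beta,x^{\star}}(f) := \big\langle \partial^{\beta} f(0), x^{\star} \big\rangle_{X,X^{\star}}, $$
which is clearly linear. Once I show that $\Lambda_{\beta,x^{\star}}$ is \emph{bounded}, the conclusion is immediate: the hypothesis $f_{j}\to 0$ weakly in $A^{p}_{\alpha}(\mathbb{B}_{n},X)$ forces $\Lambda_{\beta,x^{\star}}(f_{j}) = \langle \partial^{\beta}f_{j}(0), x^{\star}\rangle_{X,X^{\star}} \to 0$, and since $x^{\star}\in X^{\star}$ is arbitrary this says exactly that $\partial^{\beta}f_{j}(0)\to 0$ weakly in $X$, as asserted. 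So the whole proof is the boundedness of $\Lambda_{\beta,x^{\star}}$.

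To prove that, I would argue pointwise. For $f\in A^{p}_{\alpha}(\mathbb{B}_{n},X)$ the scalar function $x^{\star}\circ f$ is holomorphic on $\mathbb{B}_{n}$, and, passing the continuous functional $x^{\star}$ through the difference quotients defining the strong derivatives of $f$, one has $\partial^{\beta}(x^{\star}\circ f)(0) = \langle \partial^{\beta}f(0), x^{\star}\rangle_{X,X^{\star}}$. Fix $\rho$ with $0<\rho<1/\sqrt{n}$, so that the closed polydisc $\overline{D(0,\rho)}^{\,n}$ is contained in $\mathbb{B}_{n}$; the Cauchy estimates on this polydisc give
$$ \big|\partial^{\beta}(x^{\star}\circ f)(0)\big| \;\leq\; \frac{\beta!}{\rho^{|\beta|}}\,\sup_{|z|\leq \rho\sqrt{n}}\big|x^{\star}(f(z))\big| \;\leq\; \frac{\beta!}{\rho^{|\beta|}}\,\|x^{\star}\|_{X^{\star}}\,\sup_{|z|\leq \rho\sqrt{n}}\|f(z)\|_{X}. $$
Applying the pointwise growth estimate of Theorem \ref{thm1} to the last supremum, I obtain
$$ \big|\Lambda_{\beta,x^{\star}}(f)\big| \;\leq\; \frac{\beta!}{\rho^{|\beta|}(1-\rho^{2}n)^{(n+1+\alpha)/p}}\,\|x^{\star}\|_{X^{\star}}\,\|f\|_{p,\alpha,X}, $$
which is the desired boundedness.

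I do not expect a genuine obstacle here; the content is just this reduction together with the elementary Cauchy/pointwise estimate. The only points needing a word of care are the identity $\partial^{\beta}(x^{\star}\circ f)(0)=\langle\partial^{\beta}f(0),x^{\star}\rangle_{X,X^{\star}}$ (proved by induction on $|\beta|$) and the choice of a polydisc sitting inside $\mathbb{B}_{n}$. If one prefers to phrase everything through duality, the same functional can be represented as $\Lambda_{\beta,x^{\star}}(f)=\langle f,g\rangle_{\alpha,X}$ with $g(w)=\frac{\Gamma(n+1+\alpha+|\beta|)}{\Gamma(n+1+\alpha)}\,w^{\beta}x^{\star}$ an $X^{\star}$-valued holomorphic polynomial: this follows from Proposition \ref{prop1} by differentiating the reproducing formula under the integral sign and moving $x^{\star}$ inside the Bochner integral via Lemma \ref{premierlem}, and then $g\in A^{p'}_{\alpha}(\mathbb{B}_{n},X^{\star})$ is a bounded functional on $A^{p}_{\alpha}(\mathbb{B}_{n},X)$ by the duality theorem (Theorem \ref{dual1}). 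Note that reflexivity of $X$ is not actually needed for this lemma; it is merely the standing hypothesis of the section.
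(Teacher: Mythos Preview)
Your proof is correct. Your primary argument differs from the paper's: you bound the functional $\Lambda_{\beta,x^{\star}}$ directly via Cauchy estimates on a polydisc inside $\mathbb{B}_{n}$ together with the pointwise growth bound of Theorem~\ref{thm1}, whereas the paper differentiates the reproducing formula of Proposition~\ref{prop1} under the integral sign to obtain $\partial^{\beta}f_{j}(0)=C\int_{\mathbb{B}_{n}} f_{j}(w)\overline{w}^{\beta}\,d\nu_{\alpha}(w)$ and then reads off $\langle \partial^{\beta}f_{j}(0),x^{\star}\rangle=\langle f_{j},g\rangle_{\alpha,X}$ with $g(w)=x^{\star}w^{\beta}\in A^{p'}_{\alpha}(\mathbb{B}_{n},X^{\star})$---precisely the alternative you sketch at the end. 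Your Cauchy-estimate route is a touch more elementary (it avoids both the reproducing kernel and the duality theorem), while the paper's route has the mild advantage of exhibiting the dual element explicitly. Your remark that reflexivity of $X$ is not actually used is also correct: only the easy direction of the duality (that each $g\in A^{p'}_{\alpha}(\mathbb{B}_{n},X^{\star})$ induces a bounded functional on $A^{p}_{\alpha}(\mathbb{B}_{n},X)$) is needed, and that follows from H\"older's inequality alone.
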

\begin{proof}
Since for each $j \in \mathbb{N},$ $f_{j} \in A^{p}_{\alpha}(\mathbb{B}_{n},X),$ using the reproducing kernel formula  we have that
$$ f_{j}(z) = \displaystyle \int_{\mathbb{B}_n}\dfrac{f_{j}(w)}{(1 - \langle z,w \rangle)^{n+1+\alpha}}\mathrm{d}\nu_{\alpha}(w),\hspace{1cm} z \in \mathbb{B}_{n}.$$ Differentiating both sides of the previous relation with respect to $z,$ we obtain
$$\displaystyle  \partial^{\beta} f_{j}(z) = C(n,\alpha,|\beta|)\int_{\mathbb{B}_n}\dfrac{f_{j}(w)\overline{w}^{\beta}}{(1 - \langle z,w \rangle)^{n+1+\alpha+|\beta|}}\mathrm{d}\nu_{\alpha}(w).$$ Therefore, we have
$$\displaystyle  \partial^{\beta} f_{j}(0) = C(n,\alpha,|\beta|)\int_{\mathbb{B}_n} f_{j}(w)\overline{w}^{\beta}\mathrm{d}\nu_{\alpha}(w).$$ Now, let $x^{\star} \in X^{\star}$ and let us show that $\langle \partial^{\beta} f_{j}(0),x^{\star} \rangle_{X,X^{\star}} \rightarrow 0$ as $j \rightarrow \infty.$ But we have that 
\begin{eqnarray*}
\langle \partial^{\beta} f_{j}(0),x^{\star} \rangle_{X,X^{\star}} & = & \displaystyle C(n,\alpha,|\beta|) \left\langle \int_{\mathbb{B}_n} f_{j}(w)\overline{w}^{\beta}\mathrm{d}\nu_{\alpha}(w),x^{\star} \right\rangle_{X,X^{\star}}\\
& = & \displaystyle \int_{\mathbb{B}_n} \langle f_{j}(w),x^{\star} w^{\beta} \rangle_{X,X^{\star}}\mathrm{d}\nu_{\alpha}(w)\\
& = & \langle f_{j},g \rangle_{\alpha,X} \rightarrow 0~~~~\mbox{as}~~~~ j\rightarrow \infty,
\end{eqnarray*}
with $g(z) = x^{\star}z^{\beta} \in A^{p'}_{\alpha}(\mathbb{B}_{n},X^{\star}).$ Thus, $\langle \partial^{\beta} f_{j}(0),x^{\star} \rangle_{X,X^{\star}} \rightarrow 0$ as $j \rightarrow \infty.$
\end{proof}

We recall that the symbol $b$ used in the following lemma satisfies $\eqref{hypo1}$ and $\eqref{hypimp}.$ 

\begin{lem}\label{derivcompact} Suppose that $X$ is a reflexive complex Banach space and $k$ is a nonnegative integer. If the holomorphic mapping $z \mapsto b(z)$ maps $\mathbb{B}_{n}$ into $\mathcal{K}(\overline{X},Y),$ then
the holomorphic mapping $z \mapsto R^{\alpha,k}b(z)$ also maps $\mathbb{B}_{n}$ into $\mathcal{K}(\overline{X},Y).$
\end{lem}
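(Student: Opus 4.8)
The plan is to reduce the statement to two ingredients already available: the fact that $\mathcal{K}(\overline{X},Y)$ is a norm-closed linear subspace of $\mathcal{L}(\overline{X},Y)$, and the differential representation of $R^{\alpha,k}$ furnished by Proposition~\ref{partialdop}. If $k=0$ then $R^{\alpha,0}$ is the identity and there is nothing to prove, so we may assume $k\geq 1$; note also that since $\alpha>-1$ the quantity $n+\alpha$ is not a negative integer, so Proposition~\ref{partialdop} applies with $N=k$.

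\textbf{Step 1: partial derivatives preserve the subspace.} I first show that if $g\in\mathcal{H}(\mathbb{B}_{n},\mathcal{L}(\overline{X},Y))$ satisfies $g(z)\in\mathcal{K}(\overline{X},Y)$ for all $z\in\mathbb{B}_{n}$, then $\frac{\partial^{|m|}g}{\partial z^{m}}(z)\in\mathcal{K}(\overline{X},Y)$ for every multi-index $m\in\mathbb{N}^{n}$ and every $z\in\mathbb{B}_{n}$. Indeed, for $j\in\{1,\dots,n\}$ the partial derivative $\frac{\partial g}{\partial z_{j}}(z)$ is, by strong holomorphy, the limit in the norm of $\mathcal{L}(\overline{X},Y)$ of the difference quotients $\lambda^{-1}\bigl(g(z+\lambda e_{j})-g(z)\bigr)$ as $\lambda\to 0$. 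Each such quotient lies in $\mathcal{K}(\overline{X},Y)$ because $\mathcal{K}(\overline{X},Y)$ is a linear subspace; since $\mathcal{K}(\overline{X},Y)$ is closed in $\mathcal{L}(\overline{X},Y)$, the limit $\frac{\partial g}{\partial z_{j}}(z)$ also belongs to $\mathcal{K}(\overline{X},Y)$. As $\frac{\partial g}{\partial z_{j}}$ is again an element of $\mathcal{H}(\mathbb{B}_{n},\mathcal{L}(\overline{X},Y))$ with values in $\mathcal{K}(\overline{X},Y)$, the assertion for arbitrary $m$ follows by iteration.

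\textbf{Step 2: conclusion via Proposition~\ref{partialdop}.} Applying Proposition~\ref{partialdop} with $N=k$ to the holomorphic $\mathcal{L}(\overline{X},Y)$-valued function $b$, we obtain, for every $z\in\mathbb{B}_{n}$,
$$R^{\alpha,k}b(z)=\sum_{m\in\mathbb{N}^{n},\ |m|\leq k}p_{m}(z)\,\frac{\partial^{|m|}b}{\partial z^{m}}(z),$$
a finite sum with scalar polynomial coefficients $p_{m}(z)$. By hypothesis $b(z)\in\mathcal{K}(\overline{X},Y)$, so Step~1 gives $\frac{\partial^{|m|}b}{\partial z^{m}}(z)\in\mathcal{K}(\overline{X},Y)$ for each $m$; since $\mathcal{K}(\overline{X},Y)$ is a linear subspace, the finite linear combination $R^{\alpha,k}b(z)$ lies in $\mathcal{K}(\overline{X},Y)$, which is the desired conclusion.

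The only delicate point — and hence the main (minor) obstacle — is the passage in Step~1 from ``$b$ strongly holomorphic into $\mathcal{L}(\overline{X},Y)$ with values in $\mathcal{K}(\overline{X},Y)$'' to ``the strong partial derivatives of $b$ take values in $\mathcal{K}(\overline{X},Y)$'': this rests precisely on $\mathcal{K}(\overline{X},Y)$ being norm-closed, so that a convergent net of difference quotients formed inside the subspace has its limit in the subspace. (An equivalent route is via the homogeneous expansion $b=\sum_{j}b_{j}$: the coefficients of the homogeneous polynomials $b_{j}$ are, up to constants, derivatives of $b$ at $0$, hence compact by the same closedness argument, and then $R^{\alpha,k}b(z)=\sum_{j}\frac{\Gamma(n+1+\alpha)\Gamma(n+1+j+\alpha+k)}{\Gamma(n+1+\alpha+k)\Gamma(n+1+j+\alpha)}b_{j}(z)$ is a locally uniform limit in $\mathcal{L}(\overline{X},Y)$ of $\mathcal{K}(\overline{X},Y)$-valued polynomials, so its values lie in the closed subspace $\mathcal{K}(\overline{X},Y)$.) Note that reflexivity of $X$ is not actually used in this particular lemma.
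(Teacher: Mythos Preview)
Your proof is correct and takes a genuinely different route from the paper. The paper argues via the sequential characterization of compactness available under reflexivity: fixing $z$, it takes a weakly null sequence $\{f_j\}$ in $X$, uses the integral representation $R^{\alpha,k}b(z)\overline{f_j}=\int_{\mathbb{B}_n}\frac{b(w)\overline{f_j}}{(1-\langle z,w\rangle)^{n+1+\alpha+k}}\,d\nu_\alpha(w)$ (valid under the standing hypotheses \eqref{hypo1}, \eqref{hypimp}), and then applies dominated convergence together with $\|b(w)\overline{f_j}\|_Y\to 0$ pointwise (since each $b(w)$ is compact) to conclude $\|R^{\alpha,k}b(z)\overline{f_j}\|_Y\to 0$. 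Your argument instead exploits Proposition~\ref{partialdop} to write $R^{\alpha,k}b(z)$ as a finite scalar-linear combination of partial derivatives of $b$ at $z$, and observes that these derivatives, being norm limits of difference quotients lying in the closed subspace $\mathcal{K}(\overline{X},Y)$, are themselves compact. Your approach is more elementary and, as you correctly point out, uses neither the reflexivity of $X$ nor the integrability assumptions on $b$; the paper's approach, on the other hand, illustrates a technique (passing compactness through an integral via dominated convergence) that recurs elsewhere in the compactness analysis.
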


\begin{proof} Let $z \in \mathbb{B}_{n}.$
Let $\lbrace f_{j} \rbrace$ a sequence of elements of $X$ which converges weakly to $0$ in $X$ as $j$ tends to infinity. Let us prove that $ \lim_{j\rightarrow\infty} \|R^{\alpha,k}b(z)\overline{f_{j}}\|_{Y} = 0.$ We know that the sequence $\lbrace f_j \rbrace$ is strongly bounded in $X.$ Let $j \in \mathbb{N},$ by using $\eqref{hypo1}$ for $z = 0,$ we get that the function $z \mapsto b(z)\overline{f_{j}}\in A^{1}_{\alpha}(\mathbb{B}_{n},Y).$ By the reproducing kernel formula, it follows that
\begin{equation}
b(z)\overline{f_{j}} = \displaystyle \int_{\mathbb{B}_n}\dfrac{b(w)\overline{f_{j}}}{(1 - \langle z,w \rangle)^{n+1+\alpha}}\mathrm{d}\nu_{\alpha}(w).\label{opdiff}
\end{equation}
Applying the partial differential operator $R^{\alpha,k}$ to $\eqref{opdiff},$ we have 
$$R^{\alpha,k}b(z)\overline{f_{j}} = \displaystyle \int_{\mathbb{B}_n}\dfrac{b(w)\overline{f_{j}}}{(1 - \langle z,w \rangle)^{n+1+\alpha+k}}\mathrm{d}\nu_{\alpha}(w).$$ We also have
\begin{eqnarray*}
\displaystyle \dfrac{\|b(w)\overline{f_{j}}\|_{Y}}{|1 - \langle z,w \rangle|^{n+1+\alpha+k}} & \leq & \displaystyle \dfrac{\|b(w)\|_{\mathcal{L}(\overline{X},Y)}\|f_{j}\|_{X}}{(1 - |z|)^{n+1+\alpha+k}}\\
& \leq & \dfrac{C(n+1+\alpha)}{(1-|z|)^{n+1+\alpha+k}}\|b(w)\|_{\mathcal{L}(\overline{X},Y)},
\end{eqnarray*}
and $$ \displaystyle \int_{\mathbb{B}_n}\dfrac{C(n+1+\alpha)}{(1-|z|)^{n+1+\alpha+k}}\|b(w)\|_{\mathcal{L}(\overline{X},Y)}\mathrm{d}\nu_{\alpha}(w) < \infty.$$ Therefore, by applying the dominated convergence theorem, we have that
\begin{eqnarray*}
\limsup_{j\rightarrow\infty}\|R^{\alpha,k}b(z)\overline{f_{j}}\|_{Y} & \leq & \limsup_{j\rightarrow \infty} \displaystyle \int_{\mathbb{B}_n} \dfrac{\|b(w)\overline{f_{j}}\|_{Y}}{|1 - \langle z,w \rangle|^{n+1+\alpha+k}}\mathrm{d}\nu_{\alpha}(w)\\
& = &  \displaystyle \int_{\mathbb{B}_n}\dfrac{\lim_{j\rightarrow\infty}\|b(w)\overline{f_{j}}\|_{Y}}{|1 - \langle z,w \rangle|^{n+1+\alpha+k}}\mathrm{d}\nu_{\alpha}(w) = 0.
\end{eqnarray*}
Thu,s for each $z \in \mathbb{B}_{n}$ $$ \lim_{j\rightarrow\infty}\|R^{\alpha,k}b(z)\overline{f_{j}}\|_{Y} = 0.$$
\end{proof}

The following result will be also important  in the sequel.

\begin{lem}\label{unifbp} Suppose $\beta_{0} \in \mathbb{N}^{n},$ $\lbrace f_j \rbrace$ a sequence of elements of $X$ which converges weakly to $0$ as $j$ tends to infinity. For $z \in \mathbb{B}_{n},$ let $x_{j}(z) = z^{\beta_{0}}f_{j}.$ Then $\lbrace x_{j} \rbrace \subset A^p_{\alpha}(\mathbb{B}_{n},X)$ and $\lbrace x_{j} \rbrace$ converges weakly to $0$ in $A^p_{\alpha}(\mathbb{B}_{n},X).$
\end{lem}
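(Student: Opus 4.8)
The plan is to dispose of the membership assertion directly and then establish weak convergence through duality together with the density of Bergman kernels. Since $x_j(w)=w^{\beta_0}f_j$ is a scalar monomial times a fixed vector, it is a vector-valued holomorphic polynomial and therefore lies in $A^p_\alpha(\mathbb{B}_{n},X)$, with
\[
\|x_j\|_{p,\alpha,X}^p=\|f_j\|_X^p\int_{\mathbb{B}_n}|w^{\beta_0}|^p\,\mathrm{d}\nu_\alpha(w).
\]
A weakly convergent sequence in $X$ is norm-bounded (Banach--Steinhaus), so $M:=\sup_j\|x_j\|_{p,\alpha,X}<\infty$. For the weak convergence, I recall from Theorem~\ref{dual1} that $(A^p_\alpha(\mathbb{B}_{n},X))^\star$ is identified with $A^{p'}_\alpha(\mathbb{B}_{n},X^\star)$ under the pairing $\langle\cdot,\cdot\rangle_{\alpha,X}$, so it suffices to prove $\langle x_j,g\rangle_{\alpha,X}\to 0$ for every $g\in A^{p'}_\alpha(\mathbb{B}_{n},X^\star)$.

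Because $\{x_j\}$ is bounded in $A^p_\alpha(\mathbb{B}_{n},X)$, a standard $3\varepsilon$-argument reduces this to verifying the convergence on a dense subset of $A^{p'}_\alpha(\mathbb{B}_{n},X^\star)$, and by Lemma~\ref{compl2} I may take that subset to be the linear span of the kernels $e_{z,x^\star}(w)=(1-\langle w,z\rangle)^{-n-1-\alpha}x^\star$, $z\in\mathbb{B}_n$, $x^\star\in X^\star$. For a single kernel, I would reproduce the computation from the proof of Lemma~\ref{compl2}: using Proposition~\ref{prop1} applied to the holomorphic function $x_j$ together with Lemma~\ref{premierlem},
\[
\langle x_j,e_{z,x^\star}\rangle_{\alpha,X}=\int_{\mathbb{B}_n}\Big\langle \frac{x_j(w)}{(1-\langle z,w\rangle)^{n+1+\alpha}},\,x^\star\Big\rangle_{X,X^\star}\mathrm{d}\nu_\alpha(w)=\langle x_j(z),x^\star\rangle_{X,X^\star}=z^{\beta_0}\langle f_j,x^\star\rangle_{X,X^\star},
\]
and $\langle f_j,x^\star\rangle_{X,X^\star}=x^\star(f_j)\to 0$ as $j\to\infty$ precisely because $f_j\rightharpoonup 0$ in $X$. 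By linearity the convergence persists for every element of the span; then, given an arbitrary $g$ and $\varepsilon>0$, choosing $h$ in the span with $\|g-h\|_{p',\alpha,X^\star}<\varepsilon$ gives $\limsup_j|\langle x_j,g\rangle_{\alpha,X}|\le M\varepsilon$, and letting $\varepsilon\to 0$ completes the proof.

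I do not expect a genuine obstacle in this argument; the only place needing care is the manipulation of the conjugate-linear conventions of $\langle\cdot,\cdot\rangle_{X,X^\star}$ inside the kernel identity, but this is exactly the computation already performed in the proof of Lemma~\ref{compl2} and can be invoked verbatim. As an alternative to the density reduction one could instead expand $g$ in its homogeneous polynomial series, justify interchanging the summation and the integral as in Lemma~\ref{justF}, and note that $\int_{\mathbb{B}_n}w^{\beta_0}\langle f_j,g_k(w)\rangle_{X,X^\star}\,\mathrm{d}\nu_\alpha(w)$ vanishes for every $k$ when $\beta_0\neq 0$ and equals $\langle f_j,g(0)\rangle_{X,X^\star}\to 0$ when $\beta_0=0$, by rotation invariance of $\nu_\alpha$; I would use the density route as the cleaner of the two.
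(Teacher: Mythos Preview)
Your argument is correct, but it takes a different route from the paper. The paper's proof is more direct: after noting that $\{f_j\}$ is norm-bounded, it simply writes, for an arbitrary $g\in A^{p'}_\alpha(\mathbb{B}_n,X^\star)$,
\[
\langle x_j,g\rangle_{\alpha,X}=\int_{\mathbb{B}_n} z^{\beta_0}\langle f_j,g(z)\rangle_{X,X^\star}\,\mathrm{d}\nu_\alpha(z),
\]
observes the pointwise bound $|z^{\beta_0}\langle f_j,g(z)\rangle_{X,X^\star}|\le C\|g(z)\|_{X^\star}$ with $C=\sup_j\|f_j\|_X$, and applies the dominated convergence theorem directly, since $\langle f_j,g(z)\rangle_{X,X^\star}\to 0$ for each fixed $z$. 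No density reduction, no kernel computation, no $3\varepsilon$.

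Your approach---reduce to a dense spanning set of kernels via Lemma~\ref{compl2}, compute the pairing with a single kernel by the reproducing property, then close with a $3\varepsilon$ argument using uniform boundedness---is entirely valid and is a natural instinct when the dual is large. Here, though, the integrand already admits a pointwise dominating function in $L^1$, so DCT handles the general $g$ in one stroke and the detour through density is unnecessary overhead. Your method would be the right one if, say, the pointwise limit were not obvious or no integrable majorant were available.

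One small correction to your closing aside: the alternative via homogeneous expansion does not make the integral vanish ``for every $k$ when $\beta_0\neq 0$''. By the conjugate-linearity of the pairing in the second slot and orthogonality of monomials, exactly the term with multi-index $\beta_0$ survives, giving a constant times $\langle f_j,\hat g(\beta_0)\rangle_{X,X^\star}\to 0$; the conclusion is the same, but the reasoning you sketched is not quite right.
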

\begin{proof}
Let $j \in \mathbb{N}.$ Since $f_j \rightarrow 0$ weakly in $X$ as $j \rightarrow \infty,$ it follows that $\lbrace f_j \rbrace$ is strongly bounded in $X$ (see \cite{Brezis}). Let $\beta_{0} \in \mathbb{N}^n$ and $x_{j}(z) = z^{\beta_{0}}f_{j}.$ It is clear that $\lbrace x_{j} \rbrace \subset A^{p}_{\alpha}(\mathbb{B}_{n},X).$ For every $g \in A^{p'}_{\alpha}(\mathbb{B}_{n},X^{\star}),$ we have
\begin{eqnarray*}
\langle x_{j},g \rangle_{\alpha,X} & = & \displaystyle \int_{\mathbb{B}_n} \langle x_{j}(z),g(z) \rangle_{X,X^{\star}} \mathrm{d}\nu_{\alpha}(z)\\
& = &  \displaystyle \int_{\mathbb{B}_n}  \langle z^{\beta_{0}}f_{j},g(z) \rangle_{X,X^{\star}}\mathrm{d}\nu_{\alpha}(z)\\
& = &  \displaystyle \int_{\mathbb{B}_n} z^{\beta_{0}} \langle f_{j},g(z) \rangle_{X,X^{\star}}\mathrm{d}\nu_{\alpha}(z).
\end{eqnarray*} 
Since
\begin{eqnarray*}
\displaystyle \left| z^{\beta_{0}} \langle f_{j},g(z) \rangle_{X,X^{\star}} \right| & \leq & \displaystyle  |z^{\beta_{0}} \langle f_{j},g(z) \rangle_{X,X^{\star}}|\\
& \leq &  \|f_{j}\|_{X}\|g(z)\|_{X^{\star}} \\
& \leq &  C\|g(z)\|_{X^{\star}},
\end{eqnarray*}
and $$ \displaystyle \int_{\mathbb{B}_n} \|g(z)\|_{X^{\star}}\mathrm{d}\nu_{\alpha}(z) \leq \left( \int_{\mathbb{B}_n} \|g(z)\|^{p'}_{X^{\star}}\mathrm{d}\nu_{\alpha}(z)\right)^{1/p'} < \infty.$$
By using the dominated convergence theorem and the assumption, it follows that
\begin{eqnarray*}
\displaystyle \limsup_{j\longrightarrow \infty}\langle x_{j},g \rangle_{\alpha,X} & = & \displaystyle \int_{\mathbb{B}_n} z^{\beta_{0}} \lim_{j \longrightarrow \infty} \langle f_{j},g(z) \rangle_{X,X^{\star}} \mathrm{d}\nu_{\alpha}(z) = 0.
\end{eqnarray*}
\end{proof}

\subsection{\textbf{Boundedness of the little Hankel operator with operator-valued symbol on vector-valued Bergman spaces}}

The principal result here is that, the little Hankel operator with operator-valued symbol $h_b$ is a bounded operator form $A^p_{\alpha}(\mathbb{B}_{n},X)$ to $A^q_{\alpha}(\mathbb{B}_{n},Y)$ with $1 < p \leq q < \infty$ if and only if the symbol $b$ belongs to the generalized vector-valued Lipschitz space $\Lambda_{\gamma_{0}}(\mathbb{B}_{n}, \mathcal{L}(\overline{X},Y)),$ where $$\gamma_{0} = (n+1+\alpha)\left( \frac{1}{p} - \frac{1}{q}\right).$$ The result obtained generalize the Oliver's result \cite[Theorem $ 4.2.2$]{Roc}.
In the following lemma, we first prove that the definition of the generalized vector-valued Lipschitz space $\Lambda_{\gamma}(\mathbb{B}_{n},X),$ with $\gamma \geq 0$ is independent of the integer $k$ used.

\begin{lem}\label{prelimn1} Let $f \in \mathcal{H}(\mathbb{B}_{n},X).$ The following conditions are equivalent:
\begin{enumerate}
\item[(a)] There exists a nonnegative integer $k > \gamma$ such that $$ \sup_{z \in \mathbb{B}_{n}}(1 - |z|^2)^{k-\gamma}\|R^{\alpha,k}f(z)\|_{X} < \infty.$$
\item[(b)] For every nonnegative integer $k > \gamma$ we have $$ \sup_{z \in \mathbb{B}_{n}}(1 - |z|^2)^{k-\gamma}\|R^{\alpha,k}f(z)\|_{X} < \infty.$$
\end{enumerate}
\end{lem}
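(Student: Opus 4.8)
The plan is to deduce the statement from the $k$-independence already built into Lemma \ref{lem22} and Corollary \ref{lipsch1}, after identifying $R^{\alpha,k}$ with the operator $L_k$ of Remark \ref{rem1}. First I would record that, on $\mathcal{H}(\mathbb{B}_{n},X)$, one has $L_k=c_k R^{\alpha,k}$ with $c_k=\Gamma(n+1+\alpha+k)/\Gamma(n+1+\alpha)$. This is elementary: both operators act diagonally on homogeneous expansions, multiplying a part $f_j$ of degree $j$ by
$$\prod_{i=0}^{k-1}(n+\alpha+1+j+i)=\frac{\Gamma(n+1+j+\alpha+k)}{\Gamma(n+1+j+\alpha)}$$
in the case of $L_k$ (since $Nf_j=jf_j$) and by $\Gamma(n+1+\alpha)/\Gamma(n+1+\alpha+k)$ times the same quantity in the case of $R^{\alpha,k}$; equivalently, one compares their action on the Bergman kernel via the computation of Remark \ref{rem1} and the Remark preceding Proposition \ref{check2}, and uses continuity on $\mathcal{H}(\mathbb{B}_{n},X)$ together with density of the vector-valued holomorphic polynomials. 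Hence, for every fixed integer $k>\gamma$, finiteness of $\sup_z(1-|z|^2)^{k-\gamma}\|R^{\alpha,k}f(z)\|_X$ is the same as finiteness of $\sup_z(1-|z|^2)^{k-\gamma}\|L_k f(z)\|_X$.

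Granting this, $(b)\Rightarrow(a)$ is trivial (take any integer $k>\gamma$). For $(a)\Rightarrow(b)$, condition $(a)$ says that $\sup_z(1-|z|^2)^{k_0-\gamma}\|L_{k_0}f(z)\|_X<\infty$ for some integer $k_0>\gamma$, which by Corollary \ref{lipsch1} means exactly $f\in\Gamma_{\gamma}(\mathbb{B}_{n},X)$. Fix now an arbitrary integer $k>\gamma$. Applying Lemma \ref{lem22} with the positive sequence $a_j=n+\alpha+1+j$, for which $M_k=L_k$, the forward implication of that lemma --- whose argument runs for every integer $k>\gamma$, the $N^k$-characterization of $\Gamma_{\gamma}$ being $k$-independent --- gives $\sup_z(1-|z|^2)^{k-\gamma}\|L_k f(z)\|_X<\infty$; translating back through $L_k=c_k R^{\alpha,k}$ yields the bound in $(b)$ for this $k$, and $k$ was arbitrary.

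It is also worth noting a direct route to the easier half of $(a)\Rightarrow(b)$, namely $k\ge k_0$: from the composition law $R^{\alpha,k}=R^{\alpha+k_0,\,k-k_0}\circ R^{\alpha,k_0}$ and Proposition \ref{check2} applied with base weight $\alpha+k_0$ and shift $k-k_0>0$ --- legitimate because the bound $\|R^{\alpha,k_0}f(w)\|_X\lesssim(1-|w|^2)^{\gamma-k_0}$ together with $\alpha+\gamma>-1$ puts $R^{\alpha,k_0}f$ in $A^{1}_{\alpha+k_0}(\mathbb{B}_{n},X)$ --- one obtains
$$R^{\alpha,k}f(z)=\int_{\mathbb{B}_n}\frac{R^{\alpha,k_0}f(w)}{(1-\langle z,w\rangle)^{n+1+\alpha+k}}\,\mathrm{d}\nu_{\alpha+k_0}(w),$$
so that $\|R^{\alpha,k}f(z)\|_X\lesssim\int_{\mathbb{B}_n}(1-|w|^2)^{\alpha+\gamma}|1-\langle z,w\rangle|^{-(n+1+\alpha+k)}\,\mathrm{d}\nu(w)\lesssim(1-|z|^2)^{\gamma-k}$ by Theorem \ref{estimint}$(ii)$ with $\alpha'=\alpha+\gamma>-1$ and $\beta'=k-\gamma>0$. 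The genuine obstacle is the opposite regime $\gamma<k<k_0$: an integral representation of this sort can only raise the order of differentiation, not lower it, so one must integrate --- a one-real-variable argument along radii, precisely of the kind performed in the proof of Lemma \ref{lem22}. This is why passing through $\Gamma_{\gamma}(\mathbb{B}_{n},X)$ and invoking Lemma \ref{lem22} and Corollary \ref{lipsch1}, as above, is the clean way to package the proof.
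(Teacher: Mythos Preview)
Your proof is correct and takes a genuinely different route from the paper's. You identify $R^{\alpha,k}$ with $L_k$ up to the constant $c_k=\Gamma(n+1+\alpha+k)/\Gamma(n+1+\alpha)$ and then pass through $\Gamma_\gamma(\mathbb{B}_n,X)$ via Corollary~\ref{lipsch1} and Lemma~\ref{lem22}; this is economical because the $k$-independence is already built into those earlier results. The paper instead argues directly by a one-step induction on $k$: after checking $f\in A^1_\alpha(\mathbb{B}_n,X)$, it uses Proposition~\ref{check2} and Lemma~\ref{check1} to write $R^{\alpha,k+1}f(z)$ as the integral of $R^{\alpha,k}f(w)$ against $(1-\langle z,w\rangle)^{-(n+1+\alpha+k+1)}\,\mathrm{d}\nu_{\alpha+k}(w)$ to go up, and $R^{\alpha,k}f(z)$ as the integral of $R^{\alpha,k+1}f(w)$ against $(1-\langle z,w\rangle)^{-(n+1+\alpha+k)}\,\mathrm{d}\nu_{\alpha+k+1}(w)$ to go down, feeding both into Theorem~\ref{estimint}. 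In particular your side remark about the regime $\gamma<k<k_0$ is not accurate: the paper \emph{does} lower the order of differentiation purely by an integral representation (reproducing against a higher-weight measure via Lemma~\ref{check1}), with no radial integration needed. Your route reuses the Section~2 machinery and is shorter; the paper's is self-contained within the $R^{\alpha,t}$ framework of Section~6 and avoids appealing back to the $N$-based definition of $\Gamma_\gamma$.
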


\begin{proof}
It is clear that $(b) \Rightarrow (a).$ So to complete the proof, we will prove that $(a) \Rightarrow (b).$ Suppose that there exists an integer $k > \gamma$ such that $$c:= \sup_{z \in \mathbb{B}_{n}}(1 - |z|^2)^{k-\gamma}\|R^{\alpha,k}f(z)\|_{X} < \infty.$$ We want to prove that $$ \sup_{z \in \mathbb{B}_{n}}(1 - |z|^2)^{k+1-\gamma}\|R^{\alpha,k+1}f(z)\|_{X} < \infty.$$
Since $c < \infty,$ then $f \in A^{1}_{\alpha}(\mathbb{B}_{n},X).$ Indeed, by \cite[Theorem $3.1.2$]{Roc}, we have that
\begin{eqnarray*}
\|f\|_{1,\alpha,X} & \simeq & \displaystyle \int_{\mathbb{B}_n} (1-|z|^2)^{k}\|R^{\alpha,k}f(z)\|_{X}\mathrm{d}\nu_{\alpha}(z)\\
& = & \displaystyle \int_{\mathbb{B}_n} [(1-|z|^2)^{k-\gamma}\|R^{\alpha,k}f(z)\|_{X}](1-|z|^2)^{\gamma}\mathrm{d}\nu_{\alpha}(z)\\
& \lesssim & \displaystyle c \int_{\mathbb{B}_n} (1 - |z|^2)^{\alpha+\gamma} \mathrm{d}\nu(z)\\
& < & \infty.
\end{eqnarray*}
By using Proposition $\ref{check2},$ we have that
$$ \displaystyle R^{\alpha,k+1}f(z) = \int_{\mathbb{B}_n}\dfrac{f(w)}{(1 - \langle z,w \rangle)^{n+1+\alpha+k+1}}\mathrm{d}\nu_{\alpha}(w).$$ Applyng Lemma $\ref{check1},$ it follows that
$$ \displaystyle R^{\alpha,k+1}f(z) = \int_{\mathbb{B}_n}\dfrac{R^{\alpha,k}f(w)}{(1 - \langle z,w \rangle)^{n+1+\alpha+k+1}}\mathrm{d}\nu_{\alpha+k}(w).$$ Thus, 
\begin{eqnarray*}
\|R^{\alpha,k+1}f(z)\|_{X} & \lesssim & \displaystyle \int_{\mathbb{B}_n} \dfrac{[(1 - |w|^2)^{k-\gamma}\|R^{\alpha,k}f(w)\|_{X}](1 - |w|^2)^{\alpha+\gamma}}{|1 - \langle z,w \rangle|^{n+1+\alpha+\gamma+(k+1-\gamma)}}\mathrm{d}\nu(w)\\
& \lesssim & \dfrac{c}{(1 - |z|^2)^{k+1-\gamma}}.
\end{eqnarray*}
Therefore, we have that $$ \sup_{z \in \mathbb{B}_{n}}(1 - |z|^2)^{k+1-\gamma}\|R^{\alpha,k+1}f(z)\|_{X} \lesssim c < \infty.$$ Also, if $k$ is a nonnegative integer with $k > \gamma$ such that $$c':= \sup_{z \in \mathbb{B}_{n}}(1 - |z|^2)^{k+1-\gamma}\|R^{\alpha,k+1}f(z)\|_{X} < \infty,$$ then $$ \sup_{z \in \mathbb{B}_{n}}(1 - |z|^2)^{k-\gamma}\|R^{\alpha,k}f(z)\|_{X} < \infty.$$ 
Applying Proposition $\ref{check2}$ and Lemma \ref{check1} 
we have that
$$\displaystyle R^{\alpha,k} f(z) = \int_{\mathbb{B}_n} \dfrac{f(w)\mathrm{d}\nu_{\alpha}(w)}{(1 - \langle z,w \rangle)^{n+1+\alpha+k}}  = \int_{\mathbb{B}_n} \dfrac{R^{\alpha,k+1} f(w)\mathrm{d}\nu_{\alpha+k+1}(w)}{(1 - \langle z,w \rangle)^{n+1+\alpha+k}},$$
where $z \in \mathbb{B}_n.$ By using Theorem $\ref{estimint},$ it follows that
\begin{eqnarray*}
\| R^{\alpha,k}f(z)\|_{X} & \lesssim & \displaystyle \int_{\mathbb{B}_n}\dfrac{[(1 - |w|^2)^{k+1-\gamma}\|R^{\alpha,k+1}f(w)\|_{X}](1 - |w|^2)^{\alpha+\gamma}\mathrm{d}\nu(w)}{|1 - \langle z,w \rangle|^{n+1+\alpha+k}}\\
& = & \displaystyle c'\int_{\mathbb{B}_n} \dfrac{(1-|w|^2)^{\alpha+\gamma}\mathrm{d}\nu(w)}{|1 - \langle z,w \rangle|^{n+1+\alpha+\gamma+(k-\gamma)}}\\
& \lesssim &  \dfrac{c'}{(1 - |z|^2)^{k-\gamma}}.
\end{eqnarray*}
Since $z \in \mathbb{B}_n$ is arbitrary, we obtain that  $$ \sup_{z \in \mathbb{B}_{n}}(1 - |z|^2)^{k-\gamma}\|R^{\alpha,k}f(z)\|_{X} \lesssim c' < \infty.$$ The proof of the lemma is complete.
\end{proof}

\begin{prop}\label{closuresubsp} Let $\gamma \geq 0$ and $f \in \Lambda_{\gamma}(\mathbb{B}_{n},X).$ The following conditions are equivalent:
\begin{enumerate}
\item[$(i)$] $ f \in \Lambda_{\gamma,0}(\mathbb{B}_{n},X).$
\item[$(ii)$] $\lim_{s \rightarrow 1^{-}} \|f - f_s \|_{\Lambda_{\gamma}(\mathbb{B}_{n},X)} = 0,$ where $f_s$ is the dilation function defined for $z \in \mathbb{B}_{n}$ by $f_s(z):=f(sz).$
\item[$(iii)$] $f$ belongs to the closure of $\mathcal{P}(\mathbb{B}_{n},X),$ where $\mathcal{P}(\mathbb{B}_{n},X)$ is the space of vector-valued holomorphic polynomials.
\end{enumerate}
\end{prop}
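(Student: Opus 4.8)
The plan is to establish the chain of implications $(ii) \Rightarrow (iii) \Rightarrow (i) \Rightarrow (ii)$, which is the most economical cycle here since the dilation functions $f_s$ are the natural bridge between all three conditions. Throughout, fix a nonnegative integer $k > \gamma$ and work with the norm $\|\cdot\|_{\Lambda_{\gamma}(\mathbb{B}_{n},X)}$ defined via $R^{\alpha,k}$; Lemma~\ref{prelimn1} guarantees this is harmless. The key elementary facts I will use repeatedly are: (1) the operator $R^{\alpha,k}$ commutes with dilations, i.e. $R^{\alpha,k}(f_s) = (R^{\alpha,k}f)_s$, which follows from the definition \eqref{diffop} since dilation acts diagonally on homogeneous expansions and $R^{\alpha,k}$ is a Fourier multiplier in that sense; and (2) for $f$ holomorphic on a neighborhood of $\overline{\mathbb{B}_n}$ (in particular for $f_s$ with $s<1$ and for polynomials), the quantity $(1-|z|^2)^{k-\gamma}\|R^{\alpha,k}f(z)\|_X$ tends to $0$ as $|z|\to 1^-$ because $R^{\alpha,k}f$ stays bounded on $\overline{\mathbb{B}_n}$ while $(1-|z|^2)^{k-\gamma}\to 0$; hence $f_s \in \Lambda_{\gamma,0}(\mathbb{B}_{n},X)$ for every $s<1$, and so is every polynomial.

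For $(ii) \Rightarrow (iii)$: each $f_s$ is holomorphic on $\{|z|<1/s\}$, so its Taylor polynomials converge to $f_s$ uniformly on $\overline{\mathbb{B}_n}$ together with all derivatives; since $R^{\alpha,k}$ is, by Proposition~\ref{partialdop}, a differential operator of order $k$ with polynomial coefficients, the partial sums $S_N(f_s)$ satisfy $R^{\alpha,k}S_N(f_s) \to R^{\alpha,k}f_s$ uniformly on $\overline{\mathbb{B}_n}$, whence $\|f_s - S_N(f_s)\|_{\Lambda_\gamma} \to 0$. Thus each $f_s$ lies in the closure of $\mathcal{P}(\mathbb{B}_{n},X)$, and letting $s\to 1^-$ using $(ii)$ shows $f$ does too. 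For $(iii) \Rightarrow (i)$: $\Lambda_{\gamma,0}(\mathbb{B}_{n},X)$ is a closed subspace of $\Lambda_{\gamma}(\mathbb{B}_{n},X)$ — this is a routine $\varepsilon/3$ argument, approximating $f$ by $g$ with $\|f-g\|_{\Lambda_\gamma}$ small and estimating $(1-|z|^2)^{k-\gamma}\|R^{\alpha,k}f(z)\|_X \le \|f-g\|_{\Lambda_\gamma} + (1-|z|^2)^{k-\gamma}\|R^{\alpha,k}g(z)\|_X$ — and since it contains all polynomials by the observation above, it contains their closure.

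The main obstacle is $(i) \Rightarrow (ii)$. Here I would write, using Proposition~\ref{check2} and Lemma~\ref{check1} (valid since $f \in \Lambda_{\gamma,0} \subset A^1_\alpha$ by the integrability computation in the proof of Lemma~\ref{prelimn1}), a reproducing formula
\begin{equation*}
R^{\alpha,k}(f - f_s)(z) = \int_{\mathbb{B}_n} \frac{R^{\alpha,k}f(w) - (R^{\alpha,k}f)(sw)}{(1-\langle z,w\rangle)^{n+1+\alpha+k}}\, \mathrm{d}\nu_{\alpha+k}(w),
\end{equation*}
so that, multiplying by $(1-|z|^2)^{k-\gamma}$ and inserting and removing the factor $(1-|w|^2)^{k-\gamma}$ against $R^{\alpha,k}f(w)$ and against $(R^{\alpha,k}f)(sw)$, the estimate reduces to controlling
\begin{equation*}
(1-|z|^2)^{k-\gamma}\int_{\mathbb{B}_n} \frac{\varepsilon_s(w)\,(1-|w|^2)^{\alpha+\gamma}}{|1-\langle z,w\rangle|^{n+1+\alpha+k}}\, \mathrm{d}\nu(w),
\end{equation*}
where $\varepsilon_s(w) := (1-|w|^2)^{k-\gamma}\|R^{\alpha,k}f(w)\|_X + (1-|sw|^2)^{k-\gamma}\|(R^{\alpha,k}f)(sw)\|_X$ is uniformly bounded (by $(i)$ and monotonicity) and, crucially, tends to $0$ as $|w|\to 1^-$ uniformly in $s$ near $1$ — again by $(i)$. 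Splitting the integral over $\{|w|\le r\}$ and $\{r<|w|<1\}$: on the outer region one bounds $\varepsilon_s$ by $\sup_{|w|>r}\varepsilon_s(w)$, which is small for $r$ close to $1$, times the uniformly bounded integral $I_{\alpha+\gamma,\,k-\gamma}(z)\,(1-|z|^2)^{k-\gamma} \lesssim 1$ from Theorem~\ref{estimint}(ii); on the inner compact region $\{|w|\le r\}$ the kernel is bounded and one uses instead that $(R^{\alpha,k}f)(s\cdot) \to R^{\alpha,k}f$ uniformly on $\{|w|\le r\}$ as $s\to 1^-$ (standard, since $R^{\alpha,k}f$ is holomorphic hence continuous on compacta), which forces $\varepsilon_s \to 0$ there and the whole inner term to $0$. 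Combining, $\sup_z (1-|z|^2)^{k-\gamma}\|R^{\alpha,k}(f-f_s)(z)\|_X$ can be made arbitrarily small, giving $(ii)$. I expect the delicate point to be making the "$\varepsilon_s \to 0$ uniformly in $s$" claim on the outer annulus fully rigorous — it rests on the fact that $\sup_{|w| > r}(1-|sw|^2)^{k-\gamma}\|(R^{\alpha,k}f)(sw)\|_X \le \sup_{|u| > sr}(1-|u|^2)^{k-\gamma}\|(R^{\alpha,k}f)(u)\|_X$, which is small once $sr$ is close to $1$, uniformly for $s$ in a left-neighborhood of $1$.
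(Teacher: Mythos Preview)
Your argument is correct, with one notational slip: on the inner region $\{|w|\le r\}$ you say ``forces $\varepsilon_s \to 0$ there,'' but $\varepsilon_s(w)$ does \emph{not} tend to $0$ on compacta as $s\to 1^-$ (it tends to $2(1-|w|^2)^{k-\gamma}\|R^{\alpha,k}f(w)\|_X$). What you actually want on the inner region is to revert to the original integrand $\|R^{\alpha,k}f(w) - (R^{\alpha,k}f)(sw)\|_X$, which \emph{does} go to $0$ uniformly on $\{|w|\le r\}$; since the kernel integral over that region is bounded by a constant depending only on $r$ (and $(1-|z|^2)^{k-\gamma}\le 1$), the inner contribution vanishes as $s\to 1^-$. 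With that fix, the argument goes through.

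Your route for $(i)\Rightarrow(ii)$ differs genuinely from the paper's. You pass through the reproducing integral for $R^{\alpha,k}(f-f_s)$ and split over the \emph{integration} variable $w$, then invoke Theorem~\ref{estimint} to absorb the kernel. The paper instead works pointwise in $z$: it inserts $\chi_r(z)R^{\alpha,k}f(z)$, splits the supremum over $\{|z|\le r\}$ and $\{r<|z|<1\}$, and handles each piece directly --- the annular pieces by the little-$o$ condition and the compact piece by uniform convergence of $R^{\alpha,k}f(sz)\to R^{\alpha,k}f(z)$ --- with no integral representation or kernel estimate at all. The paper's approach is more elementary and self-contained; yours is more systematic (the same kernel-splitting template would adapt to other function spaces or weights), but it leans on Proposition~\ref{check2}, Lemma~\ref{check1}, and Theorem~\ref{estimint} as black boxes. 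For $(ii)\Rightarrow(iii)$ and $(iii)\Rightarrow(i)$ the two proofs are essentially the same.
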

\begin{proof} $(i) \Rightarrow (ii).$ Suppose that $\frac{1}{2} < r < s < 1,$ and let $f_{s}(z) = f(sz),~~ z\in \mathbb{B}_{n}.$ By the definition, we have:
\begin{eqnarray*}
\|f - f_{s} \|_{\Lambda_{\gamma}(\mathbb{B}_{n},X)} & = & \displaystyle \sup_{z \in \mathbb{B}_{n}}(1 - |z|^2)^{k-\gamma}\|R^{\alpha,k}(f - f_{s})(z)\|_{X}\\
& = &  \sup_{z \in \mathbb{B}_{n}}(1 - |z|^2)^{k-\gamma}\|(R^{\alpha,k}f)(z) - (R^{\alpha,k}f_{s})(z)\|_{X}\\
& = &  \sup_{z \in \mathbb{B}_{n}}(1 - |z|^2)^{k-\gamma}\|(R^{\alpha,k}f)(z) - (R^{\alpha,k}f)(sz)\|_{X}\\
& = & \sup_{z \in \mathbb{B}_{n}}(1 - |z|^2)^{k-\gamma}\|(R^{\alpha,k}f)(z) - \chi_{r}(z)(R^{\alpha,k}f)(z)\\
& + & \chi_{r}(z)(R^{\alpha,k}f)(z) -  (R^{\alpha,k}f)(sz)\|_{X}\\
& \leq & \sup_{z \in \mathbb{B}_{n}}(1 - |z|^2)^{k-\gamma}\|(R^{\alpha,k}f)(z) - \chi_{r}(z)(R^{\alpha,k}f)(z)\|_{X} \\
& + & \sup_{z \in \mathbb{B}_{n}}(1 - |z|^2)^{k-\gamma}\|\chi_{r}(z)(R^{\alpha,k}f)(z) - (R^{\alpha,k}f)(sz)\|_{X},
\end{eqnarray*}
where $\chi_{r}$ is the characteristic function of the set $\lbrace |z| \leq r \rbrace.$ We first have the following estimate:
$$\sup_{z \in \mathbb{B}_{n}}(1 - |z|^2)^{k-\gamma}\|(R^{\alpha,k}f)(z) - \chi_{r}(z)(R^{\alpha,k}f)(z)\|_{X} \leq $$
\begin{eqnarray*}
&  & \sup_{|z| \leq r}(1 - |z|^2)^{k-\gamma}\|(R^{\alpha,k}f)(z) - \chi_{r}(z)(R^{\alpha,k}f)(z)\|_{X}\\ & + & 
\sup_{r < |z| < 1}(1 - |z|^2)^{k-\gamma}\|(R^{\alpha,k}f)(z) - \chi_{r}(z)(R^{\alpha,k}f)(z)\|_{X}\\
& = & \sup_{r < |z| < 1}(1 - |z|^2)^{k-\gamma}\|(R^{\alpha,k}f)(z)\|_{X}\\
& \leq & \sup_{r^2 < |z| < 1}(1 - |z|^2)^{k-\gamma}\|(R^{\alpha,k}f)(z)\|_{X}.
\end{eqnarray*}
We secondly have the following estimate:
$$\sup_{z \in \mathbb{B}_{n}}(1 - |z|^2)^{k-\gamma}\| \chi_{r}(z)(R^{\alpha,k}f)(z) - (R^{\alpha,k}f)(sz)\|_{X} \leq $$
\begin{eqnarray*}
&  & \sup_{|z| \leq r}(1 - |z|^2)^{k-\gamma}\|\chi_{r}(z)(R^{\alpha,k}f)(z) - (R^{\alpha,k}f)(sz)\|_{X}\\ & + & 
\sup_{r < |z| < 1}(1 - |z|^2)^{k-\gamma}\|\chi_{r}(z)(R^{\alpha,k}f)(z) - (R^{\alpha,k}f)(sz)\|_{X}\\
& = & \sup_{|z| \leq r}(1 - |z|^2)^{k-\gamma}\|(R^{\alpha,k}f)(z) - (R^{\alpha,k}f)(sz)\|_{X}\\ & + & 
\sup_{r < |z| < 1}(1 - |z|^2)^{k-\gamma}\|(R^{\alpha,k}f)(sz)\|_{X}.
\end{eqnarray*}
Using the change of variables $w = sz,$ we then obtain
\begin{eqnarray*}
\sup_{r < |z| < 1}(1 - |z|^2)^{k-\gamma}\|(R^{\alpha,k}f)(sz)\|_{X} & = & \sup_{rs < |w| < s}\left( 1 - \frac{|w|^2}{s^2}\right)^{k-\gamma}\|(R^{\alpha,k}f)(w)\|_{X}\\
& = & \sup_{rs < |w| < s} \frac{1}{s^{2(k-\gamma)}}\left( s^{2} - |w|^2\right)^{k-\gamma}\|(R^{\alpha,k}f)(w)\|_{X}\\
& \leq & 2^{2(k-\gamma)}\sup_{r^2 < |w| < 1} (1 - |w|^2)^{k-\gamma}\|(R^{\alpha,k}f)(w)\|_{X}.
\end{eqnarray*}
It follows by using the assumption that 
\begin{eqnarray*}
\|f - f_{s} \|_{\Lambda_{\gamma}(\mathbb{B}_{n},X)} & \leq & C_{\gamma}\sup_{r^2 < |w| < 1} (1 - |w|^2)^{k-\gamma}\|(R^{\alpha,k}f)(w)\|_{X} \\ & + & 
\sup_{|z| \leq r}(1 - |z|^2)^{k-\gamma}\|(R^{\alpha,k}f)(z) - (R^{\alpha,k}f)(sz)\|_{X},
\end{eqnarray*}
with $C_{\gamma} = 1+2^{2(k-\gamma)}.$ Since $(R^{\alpha,k}f)(sz) \rightarrow (R^{\alpha,k}f)(z)$ in $X$ uniformly on the compact set $\lbrace |z| \leq r \rbrace$ as $s\rightarrow 1^{-},$ we have 
 $$ \lim_{s \rightarrow 1^{-}}\sup_{|z| \leq r}(1 - |z|^2)^{k-\gamma}\|(R^{\alpha,k}f)(z) - R^{\alpha,k}f(sz)\|_{X} = 0.$$ It follows that
 $$\lim_{s \rightarrow 1^{-}} \|f - f_{s} \|_{\Lambda_{\gamma}(\mathbb{B}_{n},X)} \leq C_{\gamma}\limsup_{|w|\rightarrow 1^{-}} (1 - |w|^2)^{k-\gamma}\|(R^{\alpha,k}f)(w)\|_{X} = 0.$$
 
 $(ii) \Rightarrow (iii).$ Given $\epsilon > 0,$ by the assumption, there exists $s_{0} \in (0, 1)$ such that 
\begin{equation}
\|f - f_{s_{0}}\|_{\Lambda_{\gamma}(\mathbb{B}_{n},X)} < \epsilon.\label{krantzz2}
\end{equation}
Further note that $f_{s_{0}} \in \mathcal{H}(\frac{1}{s_{0}}\mathbb{B}_{n},X)$ and $1 < \frac{2}{1+s_{0}} < \frac{1}{s_{0}}.$ From this, and by using Taylor's formula, it follows that for each $m \in \mathbb{N},$ there exists a $X-$valued polynomial $p_m$ such that
$$\lim_{m \rightarrow \infty} \sup_{z \in \frac{2}{1+s_{0}}\overline{\mathbb{B}}_{n} }\|f_{s_{0}}(z) - p_{m}(z)\|_{X} = 0.$$
Therefore, there exists $m_{0} \in \mathbb{N}$ such that 
\begin{equation}
\sup_{z \in \frac{2}{1+s_{0}}\overline{\mathbb{B}}_{n} }\|f_{s_{0}}(z) - p_{m}(z)\|_{X} < \epsilon,\label{krantz1}
\end{equation}
for $m \geq m_{0}.$ By the Cauchy's inequality, there exists a constant $c_{s_{0}} > 0$ such that for each $i = 1,\cdots,n$ we have
\begin{equation}
\sup_{z \in \overline{\mathbb{B}}_{n}} \left\|\dfrac{\partial f_{s_{0}}}{\partial z_{i}} - \dfrac{\partial p_{m}}{\partial z_{i}}\right\|_{X} \leq c_{s_{0}}\sup_{z \in \frac{2}{1+s_{0}}\overline{\mathbb{B}}_{n} }\|f_{s_{0}}(z) - P_{m}(z)\|_{X}.\label{krantz2}
\end{equation}
Suppose $k$ is a nonnegative integer with $k > \gamma.$ By using $\eqref{krantz2}$ and Theorem $\ref{partialdop},$ there is a constant $c = c(s_{0},n,\alpha, k)$ such that
\begin{equation}
\sup_{z \in \overline{\mathbb{B}}_{n}}\|(R^{\alpha,k}f_{s_{0}})(z) - (R^{\alpha,k}p_{m_{0}})(z)\|_{X} \leq c \sup_{z \in \frac{2}{1+s_{0}}\overline{\mathbb{B}}_{n} }\|f_{s_{0}}(z) - P_{m_{0}}(z)\|_{X}.\label{krantz3}
\end{equation} 
It follows by $\eqref{krantz3}$ and $\eqref{krantz1}$ that
\begin{eqnarray*}
\sup_{z \in \mathbb{B}_{n}}(1- |z|^2)^{k-\gamma}\|R^{\alpha,k}(f_{s_{0}} - p_{m_{0}})(z)\|_{X} & \leq & \sup_{z \in \mathbb{B}_{n}}\|(R^{\alpha,k}f_{s_{0}})(z) - (R^{\alpha,k}p_{m_{0}})(z)\|_{X}\\
& \leq & c \sup_{z \in \frac{2}{1+s_{0}}\overline{\mathbb{B}}_{n} }\|f_{s_{0}}(z) - p_{m_{0}}(z)\|_{X}\\
& < & c\epsilon.
\end{eqnarray*}
Thus
\begin{equation}
\|f_{s_{0}} - p_{m_{0}}\|_{\Lambda_{\gamma}(\mathbb{B}_{n},X)} < c\epsilon.\label{krantzz1}
\end{equation}
Using $\eqref{krantzz2}$ and $\eqref{krantzz1},$ it follows that
\begin{eqnarray*}
\|f - p_{m_{0}}\|_{\Lambda_{\gamma}(\mathbb{B}_{n},X)} & \leq & \|f - f_{s_{0}}\|_{\Lambda_{\gamma}(\mathbb{B}_{n},X)}  +  \|f_{s_{0}} - p_{m_{0}}\|_{\Lambda_{\gamma}(\mathbb{B}_{n},X)}\\
& < & \epsilon + c\epsilon = (1+c)\epsilon.
\end{eqnarray*}
$(iii) \Rightarrow (i).$ Let $f$ in the closure of the set of vector-valued polynomial $\mathcal{P}(\mathbb{B}_{n},X),$ in $\Lambda_{\gamma}(\mathbb{B}_{n},X).$ There exists a sequence of vector-valued polynomials $ \lbrace p_m \rbrace$ in $\mathcal{P}(\mathbb{B}_{n},X)$ such that 
\begin{equation}
\lim_{m \rightarrow \infty}\|f - p_{m}\|_{\Lambda_{\gamma}(\mathbb{B}_{n},X)} = 0.\label{krantz4}
\end{equation}
Let us prove that for each $k > \gamma,$ 
$$\lim_{|z| \rightarrow 1^{-}}(1 - |z|^2)^{k-\gamma}\|(R^{\alpha,k}f)(z)\|_{X} = 0.$$ 
 Let $k > \gamma.$ We have that 
\begin{eqnarray*}
\|(R^{\alpha,k}f)(z)\|_{X} & \leq & \|(R^{\alpha,k}f)(z) - (R^{\alpha,k}p_{m})(z)\|_{X} + \|(R^{\alpha,k}p_{m})(z)\|_{X}\\
& \leq & \|(R^{\alpha,k}f)(z) - (R^{\alpha,k}p_{m})(z)\|_{X} + \|R^{\alpha,k}p_{m}\|_{\infty,X},
\end{eqnarray*}
where $\|R^{\alpha,k}p_{m}\|_{\infty,X} = \max_{z \in \mathbb{B}_{n}}\|(R^{\alpha,k}p_{m})(z)\|_{X}.$      
It follows that for each $m \in \mathbb{N},$ we have
\begin{eqnarray*}
(1 - |z|^2)^{k-\gamma}\|(R^{\alpha,k}f)(z)\|_{X} 
& \leq & (1 - |z|^2)^{k-\gamma}\|(R^{\alpha,k}f)(z) - (R^{\alpha,k}p_{m})(z)\|_{X}\\
&&+ (1 - |z|^2)^{k-\gamma}\|R^{\alpha,k}p_{m}\|_{\infty,X} \\
& \leq & \|f - p_{m} \|_{\Lambda_{\gamma}(\mathbb{B}_{n},X)} + (1 - |z|^2)^{k-\gamma}\|R^{\alpha,k}p_{m}\|_{\infty,X}.
\end{eqnarray*}
Letting $|z| \rightarrow 1^{-},$ we obtain that
$$\limsup_{|z| \rightarrow 1^{-}}(1 - |z|^2)^{k-\gamma}\|R^{\alpha,k}f(z)\|_{X} \leq \|f - p_{m} \|_{\Lambda_{\gamma}(\mathbb{B}_{n},X)},$$ 
for each $m \in \mathbb{N}.$ Now, letting $m \rightarrow \infty$ on both sides of the previous inequality, it follows by $\eqref{krantz4}$ that 
$$\limsup_{|z| \rightarrow 1^{-}}(1 - |z|^2)^{k-\gamma}\|R^{\alpha,k}f(z)\|_{X} = 0.$$ The proof of the proposition is complete.
\end{proof}

\begin{rem}
One of the consequences of the previous result is that, given $ \gamma \geq 0,$ the generalized little vector-valued Lipschitz space $\Lambda_{\gamma,0}(\mathbb{B}_{n},X)$ is a closed subspace of the generalized vector-valued Lipschitz space $\Lambda_{\gamma}(\mathbb{B}_{n},X).$
\end{rem}

From now on, we choose $ \gamma_{0} = (n+1+\alpha)\left(\frac{1}{p} - \frac{1}{q}\right),$ with $1 < p \leq q < \infty,$  
and we consider the generalized vector-valued Lipschitz space $\Lambda_{\gamma_{0}}(\mathbb{B}_{n},X).$

\begin{cor}\label{prelimn2} Suppose $1 \leq t < \infty.$ Then $\Lambda_{\gamma_{0}}(\mathbb{B}_{n},X) \subset A^{t}_{\alpha}(\mathbb{B}_{n},X).$ 
\end{cor}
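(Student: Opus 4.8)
The plan is to extract a pointwise growth bound for elements of $\Lambda_{\gamma_0}(\mathbb{B}_n,X)$ from a reproducing-type formula and then integrate; note that $\gamma_0\ge 0$ and $\alpha+\gamma_0>-1$. First I would fix $f\in\Lambda_{\gamma_0}(\mathbb{B}_n,X)$ and an integer $k>\gamma_0$. By Lemma~\ref{prelimn1} the quantity $M:=\sup_{z\in\mathbb{B}_n}(1-|z|^2)^{k-\gamma_0}\|R^{\alpha,k}f(z)\|_X$ is finite and $M\lesssim\|f\|_{\Lambda_{\gamma_0}(\mathbb{B}_n,X)}$; in particular $\|R^{\alpha,k}f(z)\|_X\le M(1-|z|^2)^{\gamma_0-k}$, and, exactly as in the proof of Lemma~\ref{prelimn1}, this forces $f\in A^1_\alpha(\mathbb{B}_n,X)$, which is the standing hypothesis needed for the lemmas used below.

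The key step I would carry out is the identity
$$f(z)=\int_{\mathbb{B}_n}\frac{R^{\alpha,k}f(w)}{(1-\langle z,w\rangle)^{n+1+\alpha}}\,d\nu_{\alpha+k}(w),\qquad z\in\mathbb{B}_n.$$
Fixing $z\in\mathbb{B}_n$ and setting $g(w)=(1-\langle w,z\rangle)^{-(n+1+\alpha)}$, one has $g\in H^\infty(\mathbb{B}_n,\mathbb{C})$ and $\overline{g(w)}=(1-\langle z,w\rangle)^{-(n+1+\alpha)}$, so Proposition~\ref{prop1} gives $f(z)=\int_{\mathbb{B}_n}f(w)\overline{g(w)}\,d\nu_\alpha(w)$, and Lemma~\ref{check1} (with $t=k$) turns the right-hand side into $\int_{\mathbb{B}_n}R^{\alpha,k}f(w)\overline{g(w)}\,d\nu_{\alpha+k}(w)$, which is the asserted formula. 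I expect this to be the only step requiring care, essentially the bookkeeping of $\langle z,w\rangle$ versus $\langle w,z\rangle$ and of the conjugation when invoking Lemma~\ref{check1}.

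Given the formula, Lemma~\ref{vers2}, the bound $\|R^{\alpha,k}f(w)\|_X\le M(1-|w|^2)^{\gamma_0-k}$, and the identity $(1-|w|^2)^{\gamma_0-k}\,d\nu_{\alpha+k}(w)=\frac{c_{\alpha+k}}{c_{\alpha+\gamma_0}}\,d\nu_{\alpha+\gamma_0}(w)$ yield
$$\|f(z)\|_X\lesssim M\int_{\mathbb{B}_n}\frac{d\nu_{\alpha+\gamma_0}(w)}{|1-\langle z,w\rangle|^{n+1+\alpha}}.$$
Theorem~\ref{estimint}, applied with $\alpha$ replaced by $\alpha+\gamma_0$ and $\beta=-\gamma_0\le 0$ — case (iii) if $\gamma_0>0$ (where one in fact gets a bounded integral, i.e. $\Lambda_{\gamma_0}(\mathbb{B}_n,X)\subset H^\infty(\mathbb{B}_n,X)$), case (i) if $\gamma_0=0$ — bounds the last integral by $C\log\left(\frac{e}{1-|z|^2}\right)$, hence $\|f(z)\|_X\lesssim M\log\left(\frac{e}{1-|z|^2}\right)$ on $\mathbb{B}_n$. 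Since $\int_{\mathbb{B}_n}\left(\log\left(\frac{e}{1-|z|^2}\right)\right)^t d\nu_\alpha(z)<\infty$ for every $t\in[1,\infty)$ — a routine computation in polar coordinates using $\alpha>-1$ — it follows that $f\in A^t_\alpha(\mathbb{B}_n,X)$ with $\|f\|_{t,\alpha,X}\lesssim\|f\|_{\Lambda_{\gamma_0}(\mathbb{B}_n,X)}$, which is the claim. The main obstacle is establishing the reproducing formula cleanly; once it is in place the rest is a direct application of the integral estimates already collected.
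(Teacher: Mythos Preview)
Your argument is correct, but it takes a different route from the paper's. The paper invokes the equivalent-norm characterization from \cite[Theorem 3.1.2]{Roc}, namely $\|f\|_{t,\alpha,X}^t \simeq \int_{\mathbb{B}_n}\big[(1-|z|^2)^k\|R^{\alpha,k}f(z)\|_X\big]^t\,d\nu_\alpha(z)$, then bounds the bracketed factor by $\|f\|_{\Lambda_{\gamma_0}}(1-|z|^2)^{\gamma_0}$ and integrates $(1-|z|^2)^{\alpha+\gamma_0 t}$, which is finite since $\alpha+\gamma_0 t>-1$. That is a two-line proof once the equivalent norm is available.

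Your approach instead extracts a \emph{pointwise} growth bound on $f$: via the reproducing formula $f(z)=\int R^{\alpha,k}f(w)(1-\langle z,w\rangle)^{-(n+1+\alpha)}\,d\nu_{\alpha+k}(w)$ and Theorem~\ref{estimint} you obtain $\|f(z)\|_X\lesssim M$ when $\gamma_0>0$ and $\|f(z)\|_X\lesssim M\log\frac{e}{1-|z|^2}$ when $\gamma_0=0$, then integrate. This is longer and, as you note, still leans on the $t=1$ case of the same equivalent-norm result (through the proof of Lemma~\ref{prelimn1}) to justify $f\in A^1_\alpha$ and hence the use of Lemma~\ref{check1}. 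On the other hand it yields extra information the paper's proof does not: the embedding $\Lambda_{\gamma_0}(\mathbb{B}_n,X)\subset H^\infty(\mathbb{B}_n,X)$ for $\gamma_0>0$, and a sharp logarithmic pointwise bound in the Bloch case $\gamma_0=0$. Since you are already willing to cite \cite[Theorem 3.1.2]{Roc} indirectly, the paper's direct route is the more economical choice here.
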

\begin{proof}
Let $k > \gamma_{0}.$ Applying \cite[Theorem $3.1.2$]{Roc}, for $f \in \Lambda_{\gamma_{0}}(\mathbb{B}_{n},X),$ we have that
\begin{eqnarray*}
\|f\|^{t}_{t,\alpha,X} & \simeq & \int_{\mathbb{B}_n} [(1 - |z|^2)^{k}\|R^{\alpha,k}f(z)\|_{X}]^{t}\mathrm{d}\nu_{\alpha}(z)\\
& = & \int_{\mathbb{B}_n} [(1 - |z|^2)^{k-\gamma_{0}}\|R^{\alpha,k}f(z)\|_{X}]^{t}(1 - |z|^2)^{\gamma_{0}t}\mathrm{d}\nu_{\alpha}(z)\\
& \lesssim & \|f\|_{\Lambda_{\gamma_{0}}(\mathbb{B}_{n},X)}\int_{\mathbb{B}_n}(1 - |z|^2)^{\alpha+\gamma_{0}t}\mathrm{d}\nu(z) < \infty.
\end{eqnarray*}
\end{proof}

In what follows, we assume that $X,Y$ are reflexives complex Banach spaces.
We first introduce the following proposition which will be used in the proof of Theorem $\ref{Compactp}.$

\begin{prop}\label{bosss4} Suppose $1 < p \leq q < \infty,$ $0 \leq r < 1$ and $\gamma \in \mathbb{N}^{n}.$ If $a_{\gamma} \in \mathcal{K}(\overline{X},Y),$ then the little Hankel operator $h_{g^{\gamma}_{r}} : A^{p}_{\alpha}(\mathbb{B}_{n},X) \rightarrow A^{q}_{\alpha}(\mathbb{B}_{n},Y)$ is a compact operator, where $g^{\gamma}_{r}(z) = a_{\gamma}(rz)^{\gamma}$ for every $z \in \mathbb{B}_{n}.$
\end{prop}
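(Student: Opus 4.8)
The plan is to reduce everything to the case of a monomial symbol $a_\gamma z^\gamma$ with a compact coefficient $a_\gamma$, and then to use the explicit formula for the little Hankel operator on such a monomial symbol. First I would observe that $g^\gamma_r(z) = r^{|\gamma|} a_\gamma z^\gamma$, so $g^\gamma_r$ is itself a monomial symbol of the form $\tilde a_\gamma z^\gamma$ with $\tilde a_\gamma = r^{|\gamma|} a_\gamma \in \mathcal{K}(\overline{X},Y)$; hence it suffices to treat $h_{a_\gamma z^\gamma}$ for arbitrary $a_\gamma \in \mathcal{K}(\overline{X},Y)$, and the dilation parameter $r$ plays no essential role. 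Since $\gamma\in\mathbb{N}^n$ is fixed, Lemma \ref{bosss2} gives, for $f(z)=\sum_{\beta\in\mathbb{N}^n} c_\beta z^\beta \in A^p_\alpha(\mathbb{B}_n,X)$,
$$ h_{a_\gamma z^\gamma} f(z) = \sum_{\beta \le \gamma} a_\gamma(\overline{c_\beta}) \frac{\gamma!\,\Gamma(n+1+\alpha+|\gamma-\beta|)}{(\gamma-\beta)!\,\Gamma(n+1+\alpha+|\gamma|)} z^{\gamma-\beta}, $$
which is a \emph{finite} sum (only the finitely many multi-indices $\beta$ with $\beta\le\gamma$ contribute). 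So $h_{a_\gamma z^\gamma}$ is a finite-rank-type operator in the following sense: its range is contained in the finite-dimensional-degree space of $Y$-valued polynomials of degree at most $|\gamma|$, and each coefficient map $f \mapsto c_\beta = \frac{1}{\beta!}\partial^\beta f(0)$ composed with the compact operator $\overline{(\cdot)}\mapsto a_\gamma(\cdot)$ is the object to control.

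Next I would prove compactness via the sequential criterion: let $\{f_j\}\subset A^p_\alpha(\mathbb{B}_n,X)$ with $f_j \to 0$ weakly in $A^p_\alpha(\mathbb{B}_n,X)$; it suffices to show $\|h_{a_\gamma z^\gamma} f_j\|_{q,\alpha,Y} \to 0$ (here I use that $A^p_\alpha(\mathbb{B}_n,X)$ is reflexive, so bounded sets are weakly sequentially compact and weak compactness of the operator is equivalent to norm-to-norm compactness on bounded sequences, combined with the usual fact that an operator is compact iff it maps weakly convergent sequences to norm convergent ones). By Lemma \ref{bosss1}, weak convergence $f_j\to 0$ in $A^p_\alpha(\mathbb{B}_n,X)$ forces $\partial^\beta f_j(0) \to 0$ weakly in $X$ for every $\beta$, hence the coefficients $c_\beta^{(j)} := \frac{1}{\beta!}\partial^\beta f_j(0) \to 0$ weakly in $X$, so $\overline{c_\beta^{(j)}} \to 0$ weakly in $\overline{X}$ (the anti-isomorphism $X\cong\overline{X}$ is weak-to-weak continuous). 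Since $a_\gamma\in\mathcal{K}(\overline{X},Y)$ is compact, $\|a_\gamma(\overline{c_\beta^{(j)}})\|_Y \to 0$ for each of the finitely many $\beta\le\gamma$. Therefore, by the triangle inequality applied to the finite sum above and the fact that each $\|z^{\gamma-\beta}\|_{q,\alpha,Y}$ is a finite constant depending only on $\gamma,\beta,\alpha,q$,
$$ \|h_{a_\gamma z^\gamma} f_j\|_{q,\alpha,Y} \le \sum_{\beta\le\gamma} \frac{\gamma!\,\Gamma(n+1+\alpha+|\gamma-\beta|)}{(\gamma-\beta)!\,\Gamma(n+1+\alpha+|\gamma|)}\, \|a_\gamma(\overline{c_\beta^{(j)}})\|_Y\, \|z^{\gamma-\beta}\|_{q,\alpha,Y} \longrightarrow 0. $$

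Before running this argument I should check that $h_{a_\gamma z^\gamma}$ is a well-defined \emph{bounded} operator $A^p_\alpha(\mathbb{B}_n,X)\to A^q_\alpha(\mathbb{B}_n,Y)$ in the first place, i.e. that the hypotheses \eqref{hypo1} and \eqref{hypimp} hold for the symbol $b = a_\gamma z^\gamma$ (this is immediate since $z\mapsto \|a_\gamma z^\gamma\|_{\mathcal{L}(\overline{X},Y)} \le \|a_\gamma\|\,|z|^{|\gamma|}$ is bounded on $\mathbb{B}_n$, so both integrals converge); boundedness then follows either from the formula of Lemma \ref{bosss2} directly (a finite sum of bounded coefficient functionals times fixed polynomials, using the pointwise estimate Theorem \ref{thm1} to bound $\|c_\beta\|_X \lesssim \|f\|_{p,\alpha,X}$) or from Theorem \ref{agene1} since a polynomial symbol trivially lies in every $\Lambda_{\gamma_0}(\mathbb{B}_n,\mathcal{L}(\overline{X},Y))$. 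I expect the only genuinely delicate point to be making rigorous the passage ``$f_j\to 0$ weakly $\Rightarrow$ $c_\beta^{(j)}\to 0$ weakly'' together with the compatibility of weak convergence under the conjugation $X\to\overline{X}$; but the former is exactly Lemma \ref{bosss1}, and the latter is routine because $\overline{x}(x^\star)=\overline{x^\star(x)}$ shows conjugation is an ($\mathbb{R}$-linear) isometric bijection that intertwines the weak topologies. Everything else is bookkeeping with the finite sum.
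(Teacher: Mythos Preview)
Your proposal is correct and follows essentially the same approach as the paper: both use the explicit finite-sum formula from Lemma~\ref{bosss2}, invoke Lemma~\ref{bosss1} to obtain weak convergence of the Taylor coefficients $c_\beta^{(j)}\to 0$ in $X$, and then use compactness of $a_\gamma$ plus the triangle/Minkowski inequality on the finite sum to conclude $\|h_{g_r^\gamma}f_j\|_{q,\alpha,Y}\to 0$. Your additional checks (boundedness of the operator, the hypotheses \eqref{hypo1}--\eqref{hypimp}, and weak-to-weak continuity of conjugation) are reasonable side remarks; the only minor slip is notational---$\|z^{\gamma-\beta}\|_{q,\alpha,Y}$ should be the scalar $L^q_\alpha$-norm of the monomial, not a $Y$-valued norm.
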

\begin{proof}
Let $ \lbrace f_{j} \rbrace$ be a sequence in $A^{p}_{\alpha}(\mathbb{B}_{n},X)$ such that $f_{j} \rightarrow 0$ weakly in $A^{p}_{\alpha}(\mathbb{B}_{n},X)$ as $j$ tends to infinity. We want to prove that $\lim_{j\rightarrow\infty}\| h_{g^{\gamma}_{r}}f_{j}\|_{q,\alpha,Y} = 0.$  Let the Taylor expansion of $f_j$ given by $\displaystyle f_{j}(z) = \sum_{\beta \in \mathbb{N}^{n}} c^{j}_{\beta}z^{\beta} \in A^{p}_{\alpha}(\mathbb{B}_{n},X).$ Since $f_{j} \rightarrow 0$ weakly in $A^{p}_{\alpha}(\mathbb{B}_{n},X),$ applying Lemma $\ref{bosss1},$ using the fact that $c^{j}_{\beta} = \partial^{\beta} f_{j}(0)/\beta!,$ we have that for all $\beta \in \mathbb{N}^{n},$ $c^{j}_{\beta} \rightarrow 0$ weakly in $X$ as $j \rightarrow \infty.$ By Lemma $\ref{bosss2},$ for every $z \in \mathbb{B}_{n},$ we have $$\displaystyle h_{g_{r}^{\gamma}}f_{j}(z) = \sum_{\beta \in \mathbb{N}^{n},\beta \leq \gamma}a_{\gamma}(\overline{c^{j}_{\beta}}) \dfrac{\gamma!\Gamma(n+1+\alpha+|\gamma-\beta|)}{(\gamma-\beta)!\Gamma(n+1+\alpha+|\gamma|)} r^{|\gamma-\beta|} z^{\gamma-\beta}.$$ Therefore, 
\begin{eqnarray*}
\displaystyle \|h_{g_{r}^{\gamma}}f_{j}\|_{q,\alpha,Y} & = & \displaystyle \left( \int_{\mathbb{B}_n} \left\|\sum_{\beta \in \mathbb{N}^{n},\beta \leq \gamma}a_{\gamma}(\overline{c^{j}_{\beta}}) \dfrac{\gamma!\Gamma(n+1+\alpha+|\gamma-\beta|)}{(\gamma-\beta)!\Gamma(n+1+\alpha+|\gamma|)}(rz)^{\gamma-\beta}\right\|^{q}_{Y}\mathrm{d}\nu_{\alpha}(z)\right)^{1/q}\\
\\
& \leq & \displaystyle \left(\int_{\mathbb{B}_n} \left( \sum_{\beta \in \mathbb{N}^{n},\beta \leq \gamma}\|a_{\gamma}(\overline{c^{j}_{\beta}})\|_{Y} \dfrac{\gamma!\Gamma(n+1+\alpha+|\gamma-\beta|)}{(\gamma-\beta)!\Gamma(n+1+\alpha+|\gamma|)}(r|z|)^{|\gamma-\beta|}\right)^{q}\mathrm{d}\nu_{\alpha}(z)\right)^{1/q}\\
\\
& \leq & \displaystyle \sum_{\beta \in \mathbb{N}^{n},\beta \leq \gamma} \left(\int_{\mathbb{B}_n}\left(\|a_{\gamma}(\overline{c^{j}_{\beta}})\|_{Y} \dfrac{\gamma!\Gamma(n+1+\alpha+|\gamma-\beta|)}{(\gamma-\beta)!\Gamma(n+1+\alpha+|\gamma|)}(r|z|)^{|\gamma-\beta|}\right)^q\mathrm{d}\nu_{\alpha}(z)\right)^{\frac 1q}\\
\\
& = & \displaystyle \sum_{\beta \in \mathbb{N}^{n},\beta \leq \gamma} \|a_{\gamma}(\overline{c^{j}_{\beta}})\|_{Y}  \dfrac{\gamma!\Gamma(n+1+\alpha+|\gamma-\beta|)}{(\gamma-\beta)!\Gamma(n+1+\alpha+|\gamma|)} \left(\int_{\mathbb{B}_n} (r|z|)^{|\gamma-\beta|q}  \mathrm{d}\nu_{\alpha}(z)\right)^{\frac 1q}\\
\\
& \lesssim & \displaystyle \sum_{\beta \in \mathbb{N}^{n},\beta \leq \gamma} \dfrac{\gamma!\Gamma(n+1+\alpha+|\gamma-\beta|)}{(\gamma-\beta)!\Gamma(n+1+\alpha+|\gamma|)}\|a_{\gamma}(\overline{c^{j}_{\beta}})\|_{Y},
\end{eqnarray*}
where the third line above is justified by the Minkowsky's inequality for integrals.
Thus,  
\begin{equation}
\displaystyle \|h_{g_{r}^{\gamma}}f_{j}\|_{q,\alpha,Y} \lesssim \sum_{\beta \in \mathbb{N}^{n},\beta \leq \gamma}  \dfrac{\gamma!\Gamma(n+1+\alpha+|\gamma-\beta|)}{(\gamma-\beta)!\Gamma(n+1+\alpha+|\gamma|)} \|a_{\gamma}(\overline{c^{j}_{\beta}})\|_{Y}.\label{convnorm1}
\end{equation}
Now, since $c^{j}_{\beta} \rightarrow 0$ weakly in $X$ as $j\rightarrow \infty,$ it is clear that $\overline{c^{j}_{\beta}} \rightarrow 0$ weakly in $\overline{X}$ as $j \rightarrow \infty.$ By the assumption, we know that $a_{\gamma} \in \mathcal{K}(\overline{X},Y).$ Since $\overline{c^{j}_{\beta}} \rightarrow 0$ weakly in $\overline{X}$ as $j \rightarrow \infty,$ we have that
 $\|a_{\gamma}(\overline{c^{j}_{\beta}})\|_{Y} \rightarrow 0$ as $j \rightarrow \infty.$ It follows that
$$\limsup_{j\rightarrow \infty} \|h_{g_{r}^{\gamma}}f_{j}\|_{q,\alpha,Y} \lesssim \sum_{\beta \in \mathbb{N}^{n},\beta \leq \gamma} \dfrac{\gamma!\Gamma(n+1+\alpha+|\gamma-\beta|)}{(\gamma-\beta)!\Gamma(n+1+\alpha+|\gamma|)} \lim_{j\rightarrow \infty} \|a_{\gamma}(\overline{c^{j}_{\beta}})\|_{Y} = 0.$$
\end{proof}

Let us state Oliver's result on the boundedness of the little Hankel operator with operator-valued symbol between vector-valued Bergman spaces.

\begin{thm}\label{check3} Let $1 < p \leq q < \infty.$ The little Hankel operator $h_{b}: A^{p}_{\alpha}(\mathbb{B}_{n},X) \rightarrow A^{q}_{\alpha}(\mathbb{B}_{n},Y)$ is a bounded operator if and only if $ b \in \mathcal{B}_{\gamma}(\mathbb{B}_{n},\mathcal{L}(\overline{X},Y)),$ where $$ \gamma = 1+(n+1+\alpha)\left( \frac{1}{q} - \frac{1}{p}\right).$$ Moreover $$\|h_b\|_{A^{p}_{\alpha}(\mathbb{B}_{n},X) \rightarrow A^{q}_{\alpha}(\mathbb{B}_{n},Y)} \simeq \|b\|_{\mathcal{B}_{\gamma}(\mathbb{B}_{n},\mathcal{L}(\overline{X},Y))}.$$
\end{thm}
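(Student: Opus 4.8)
The plan is to prove the two implications separately and then assemble the quantitative bounds into the norm equivalence. Throughout write $\gamma_{0}=(n+1+\alpha)\left(\frac1p-\frac1q\right)=1-\gamma\geq 0$, and use the identifications $\mathcal B_{\gamma}(\mathbb{B}_{n},\cdot)=\Gamma_{\gamma_{0}}(\mathbb{B}_{n},\cdot)=\Lambda_{\gamma_{0}}(\mathbb{B}_{n},\cdot)$ (for $0<\gamma_{0}<1$ this is the identity quoted just before \eqref{eq1}, and for $\gamma_{0}=0$ it is $\mathcal B=\Gamma_{0}=\Lambda_{0}$), so that the equivalent-norm descriptions of Corollary~\ref{lipsch1} and Lemma~\ref{prelimn1} apply, and recall from \eqref{diffop} and Remark~\ref{rem1} that $L_{k}=c_{k}R^{\alpha,k}$ on $\mathcal H(\mathbb{B}_{n},\cdot)$ for every positive integer $k$. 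I note first that a symbol $b\in\mathcal B_{\gamma}(\mathbb{B}_{n},\mathcal L(\overline X,Y))$ automatically satisfies the standing hypotheses \eqref{hypo1} and \eqref{hypimp}: by Corollary~\ref{prelimn2} $b\in A^{t}_{\alpha}(\mathbb{B}_{n},\mathcal L(\overline X,Y))$ for all $t\geq 1$, so Theorem~\ref{thm1} gives $\|b(w)\|_{\mathcal L(\overline X,Y)}\lesssim_{\varepsilon}(1-|w|^{2})^{-\varepsilon}$ for every $\varepsilon>0$, and then Theorem~\ref{estimint} makes both integrals finite.

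\emph{Necessity.} Suppose $h_{b}\colon A^{p}_{\alpha}(\mathbb{B}_{n},X)\to A^{q}_{\alpha}(\mathbb{B}_{n},Y)$ is bounded, with norm $\|h_{b}\|$. Fix $w\in\mathbb{B}_{n}$, $x\in X$, and an integer $k>n+1+\alpha$ (so in particular $k>\gamma_{0}$); set $\mu=k-(n+1+\alpha)>0$ and
\[
f(z)=\frac{(1-|w|^{2})^{\mu}}{(1-\langle z,w\rangle)^{n+1+\alpha+\mu}}\,x,\qquad z\in\mathbb{B}_{n}.
\]
Then $f\in H^{\infty}(\mathbb{B}_{n},X)$ and, by Theorem~\ref{estimint}(ii), $\|f\|_{p,\alpha,X}\simeq (1-|w|^{2})^{-(n+1+\alpha)/p'}\|x\|_{X}$. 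Treating the conjugates exactly as in the proof of Theorem~\ref{principe1} and using the reproducing identity of Proposition~\ref{check2} (valid since $z\mapsto b(z)\overline x\in A^{1}_{\alpha}(\mathbb{B}_{n},Y)$ by \eqref{hypo1} at $z=0$) together with Lemma~\ref{premierlem} to pull the evaluation at $\overline x$ through $R^{\alpha,k}$, one gets
\[
h_{b}f(w)=(1-|w|^{2})^{\mu}\,\big(R^{\alpha,k}b\big)(w)\,\overline x .
\]
Applying Theorem~\ref{thm1} at the point $w$ and the boundedness of $h_{b}$,
\[
(1-|w|^{2})^{\mu}\,\big\|\big(R^{\alpha,k}b\big)(w)\,\overline x\big\|_{Y}\;\leq\;\frac{\|h_{b}f\|_{q,\alpha,Y}}{(1-|w|^{2})^{(n+1+\alpha)/q}}\;\lesssim\;\|h_{b}\|\,(1-|w|^{2})^{-(n+1+\alpha)/q-(n+1+\alpha)/p'}\|x\|_{X}.
\]
Since $\|\overline x\|_{\overline X}=\|x\|_{X}$ and $\mu+(n+1+\alpha)/q+(n+1+\alpha)/p'=k-\gamma_{0}$, taking the supremum over unit $x$ yields $\sup_{w\in\mathbb{B}_{n}}(1-|w|^{2})^{k-\gamma_{0}}\|R^{\alpha,k}b(w)\|_{\mathcal L(\overline X,Y)}\lesssim\|h_{b}\|$, i.e.\ $b\in\mathcal B_{\gamma}(\mathbb{B}_{n},\mathcal L(\overline X,Y))$ with $\|b\|_{\mathcal B_{\gamma}}\lesssim\|h_{b}\|$ by Corollary~\ref{lipsch1}.

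\emph{Sufficiency.} Suppose $b\in\mathcal B_{\gamma}(\mathbb{B}_{n},\mathcal L(\overline X,Y))$. By Corollary~\ref{lipsch1} pick an integer $k>\gamma_{0}$ with $\|L_{k}b(w)\|_{\mathcal L(\overline X,Y)}\lesssim\|b\|_{\mathcal B_{\gamma}}(1-|w|^{2})^{\gamma_{0}-k}$. For $f\in H^{\infty}(\mathbb{B}_{n},X)$, Lemma~\ref{corrver} and the reproducing-kernel reformulation used in the proof of Theorem~\ref{principe1} give
\[
h_{b}f(z)=\frac{c_{k}c_{\alpha+k}}{c_{\alpha}}\int_{\mathbb{B}_{n}}\frac{\big(L_{k}b\big)(w)\big(\overline{f(w)}\big)}{(1-\langle z,w\rangle)^{n+1+\alpha}}(1-|w|^{2})^{k}\,\mathrm{d}\nu_{\alpha}(w),
\]
and inserting the bound on $\|L_{k}b(w)\|$ and $\|\overline{f(w)}\|_{\overline X}=\|f(w)\|_{X}$,
\[
\|h_{b}f(z)\|_{Y}\;\lesssim\;\|b\|_{\mathcal B_{\gamma}}\int_{\mathbb{B}_{n}}\frac{(1-|w|^{2})^{\gamma_{0}}\,\|f(w)\|_{X}}{|1-\langle z,w\rangle|^{n+1+\alpha}}\,\mathrm{d}\nu_{\alpha}(w)\;=:\;\|b\|_{\mathcal B_{\gamma}}\,S\big(\|f(\cdot)\|_{X}\big)(z).
\]
It remains to show that the positive operator $S$ is bounded $L^{p}_{\alpha}(\mathbb{B}_{n})\to L^{q}_{\alpha}(\mathbb{B}_{n})$ for $1<p\leq q<\infty$ with the precise exponent $\gamma_{0}=(n+1+\alpha)(\frac1p-\frac1q)$; granting this, $\|h_{b}f\|_{q,\alpha,Y}\lesssim\|b\|_{\mathcal B_{\gamma}}\|f\|_{p,\alpha,X}$, and Lemma~\ref{denslip} extends $h_{b}$ to all of $A^{p}_{\alpha}(\mathbb{B}_{n},X)$ with $\|h_{b}\|\lesssim\|b\|_{\mathcal B_{\gamma}}$. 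Combined with the necessity part this gives $\|h_{b}\|\simeq\|b\|_{\mathcal B_{\gamma}}$.

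\emph{Main obstacle.} The analytic heart is the $L^{p}_{\alpha}\to L^{q}_{\alpha}$ boundedness of $S$. For $p=q$ (i.e.\ $\gamma_{0}=0$) this is exactly the $L^{p}$-boundedness of the positive Bergman operator $P^{+}_{\alpha}$ already invoked in the proof of Theorem~\ref{principe1} (cf.\ \cite{Bekolle}). For $p<q$ it is a Forelli--Rudin / fractional-integration estimate: against Lebesgue measure the kernel is $\dfrac{(1-|w|^{2})^{\alpha+\gamma_{0}}}{|1-\langle z,w\rangle|^{n+1+\alpha}}$, of critical homogeneity, and the very choice of $\gamma_{0}$ is what makes a Schur test with weight $(1-|z|^{2})^{-\delta}$, together with the integral asymptotics of Theorem~\ref{estimint}, close. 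I would carry out this Schur argument directly or cite the corresponding estimate from \cite{Bonami} or \cite{Zhu_functions}. Note that the argument above uses neither the duality theorem nor reflexivity of $X,Y$; an alternative is to run a duality argument against test kernels in $A^{q'}_{\alpha}(\mathbb{B}_{n},Y^{\star})$, but the $L_{k}$-plus-Schur route is cleaner and keeps the statement free of extra hypotheses.
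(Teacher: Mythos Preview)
Your approach is correct but differs from the paper's in the sufficiency direction; since the paper does not prove Theorem~\ref{check3} itself (it is quoted from Oliver) the natural comparison is with the proof of the corrected version, Theorem~\ref{agene11}.

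For the necessity you and the paper do the same thing: test $h_{b}$ on $f(z)=x/(1-\langle z,w\rangle)^{k}$ (your extra factor $(1-|w|^{2})^{\mu}$ is cosmetic), identify $h_{b}f(w)$ with $R^{\alpha,k}b(w)\overline{x}$, and invoke the pointwise estimate of Theorem~\ref{thm1}.

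For the sufficiency the paper does \emph{not} reduce to a positive kernel operator. Instead it pairs $h_{b}f$ with $g\in A^{q'}_{\alpha}(\mathbb{B}_{n},Y^{\star})$, uses Corollary~\ref{brett1} to replace $b$ by $R^{\alpha,k+1}b$, and then applies H\"older in the single integral $\int\|f(z)\|_{X}\|g(z)\|_{Y^{\star}}(1-|z|^{2})^{\gamma_{0}}\,d\nu_{\alpha}$. The off-diagonal gain $p\to q$ comes from the pointwise estimate $\|f(z)\|_{X}^{q-p}\leq\|f\|_{p,\alpha,X}^{q-p}(1-|z|^{2})^{-q\gamma_{0}}$, which exactly cancels the weight $(1-|z|^{2})^{q\gamma_{0}}$; so the paper's argument is entirely self-contained with Theorem~\ref{thm1} and Theorem~\ref{estimint} and never needs an off-diagonal Schur test. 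Your route instead bounds $\|h_{b}f(z)\|_{Y}$ pointwise via Lemma~\ref{corrver} and then must show the Forelli--Rudin type operator
\[
Sg(z)=\int_{\mathbb{B}_{n}}\frac{(1-|w|^{2})^{\gamma_{0}}g(w)}{|1-\langle z,w\rangle|^{n+1+\alpha}}\,d\nu_{\alpha}(w)
\]
is bounded $L^{p}_{\alpha}\to L^{q}_{\alpha}$. This is true (after $(1-|w|^{2})^{\gamma_{0}}\leq 2^{\gamma_{0}}|1-\langle z,w\rangle|^{\gamma_{0}}$ it is the fractional-integration inequality on the ball, provable by an off-diagonal Schur test and available in \cite{Bonami} or \cite{Zhu_functions}), so your argument closes. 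The trade-off: the paper's proof is shorter and uses only tools already developed in the paper, exploiting the holomorphicity of $f$ at the last step; your proof yields the stronger pointwise domination of $\|h_{b}f(z)\|_{Y}$ and avoids the duality pairing with $A^{q'}_{\alpha}(\mathbb{B}_{n},Y^{\star})$, at the cost of importing the off-diagonal kernel estimate.
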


\begin{rem}\label{remark2}
 Suppose $1 < p < q < \infty,$ and  $\gamma = 1 + (n+1+\alpha)\left( \frac{1}{q} - \frac{1}{p}\right).$ Then $\gamma$ is not always positive. Indeed, since $1/q-1/p \in (-1,1),$ then $\gamma \in (-n-\alpha , n+2+\alpha ).$ It follows that when $\gamma \in (-(n+\alpha),0),$ the vector-valued $\gamma$-Bloch space $\mathcal{B}_{\gamma}(\mathbb{B}_{n}, \mathcal{L}(\overline{X},Y))$ is not interesting and does not make sense since the definition of the vector-valued $\gamma$-Bloch space introduced by Oliver only takes into account the case where $\gamma > 0.$ In Theorem $\ref{agene1}$, we correct the problem by replacing the vector-valued $\gamma$-Bloch space with the generalized vector-valued Lipschitz space $\Lambda_{\gamma_{0}}(\mathbb{B}_{n}, \mathcal{L}(\overline{X},Y)),$ where $\gamma_{0}  = (n+1+\alpha)\left(\frac{1}{p}-\frac{1}{q}\right).$ Since $\gamma = 1 - \gamma_{0},$ we see that when $0 < \gamma_{0} < 1,$ we have that $$\mathcal{B}_{\gamma}(\mathbb{B}_{n}, \mathcal{L}(\overline{X},Y)) = \Lambda_{\gamma_{0}}(\mathbb{B}_{n}, \mathcal{L}(\overline{X},Y)).$$
\end{rem}

In what follows, we give the proof of Theorem $\ref{agene1}$ which generalize the Theorem $\ref{check3}$ and correct the mistake mentionned in Remark \ref{remark2}.

\subsection{\textbf{Proof of Theorem $\ref{agene1}$}}

Let us recall the statement of Theorem \ref{agene1}.
\begin{thm}\label{agene11} Suppose $1 < p \leq q <\infty.$   
The little Hankel operator $h_{b}: A^p_{\alpha}(\mathbb{B}_{n},X)\\
\rightarrow A^{q}_{\alpha}(\mathbb{B}_{n},Y)$ is a bounded operator if and only if $b \in \Lambda_{\gamma_{0}}(\mathbb{B}_{n},\mathcal{L}(\overline{X},Y)),$ where $$\gamma_{0} = (n+1+\alpha)\left(\frac{1}{p} - \frac{1}{q}\right).$$  
Moreover, $\|h_{b}\|_{A^p_{\alpha}(\mathbb{B}_{n},X) \rightarrow A^{q}_{\alpha}(\mathbb{B}_{n},Y)} \simeq \|b\|_{ \Lambda_{\gamma_{0}}(\mathbb{B}_{n},\mathcal{L}(\overline{X},Y))}.$
\end{thm}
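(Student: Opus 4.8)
The plan is to prove both implications by testing against the dual, using the identification $(A^q_\alpha(\mathbb{B}_n,Y))^\star=A^{q'}_\alpha(\mathbb{B}_n,Y^\star)$ from Theorem~\ref{dual1} (which needs no reflexivity, so the hypotheses of the statement suffice) together with the pairing formula $\langle h_bf,g\rangle_{\alpha,Y}=\int_{\mathbb{B}_n}\langle b(z)\overline{f(z)},g(z)\rangle_{Y,Y^\star}\,\mathrm{d}\nu_\alpha(z)$ of Lemma~\ref{hank1}, valid for $f\in H^\infty(\mathbb{B}_n,X)$ and $g\in H^\infty(\mathbb{B}_n,Y^\star)$. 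Here $q'$ denotes the conjugate exponent of $q$, and $b$ is assumed to satisfy the standing conditions \eqref{hypo1}, \eqref{hypimp}.

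For the necessity, fix $x\in X$, $y^\star\in Y^\star$ and $w\in\mathbb{B}_n$. Choose an integer $k>\gamma_{0}$ large enough that $s:=\tfrac12\big(k+n+1+\alpha\big)$ satisfies $2s>n+1+\alpha$, $sp>n+1+\alpha$ and $sq'>n+1+\alpha$, and put
$$f(z)=\frac{(1-|w|^2)^{\,s-\frac{n+1+\alpha}{p}}}{(1-\langle z,w\rangle)^{s}}\,x,\qquad g(z)=\frac{(1-|w|^2)^{\,s-\frac{n+1+\alpha}{q'}}}{(1-\langle z,w\rangle)^{s}}\,y^\star.$$
These lie in $H^\infty(\mathbb{B}_n,X)$ and $H^\infty(\mathbb{B}_n,Y^\star)$ respectively, and Theorem~\ref{estimint}(ii) gives $\|f\|_{p,\alpha,X}\lesssim\|x\|_X$ and $\|g\|_{q',\alpha,Y^\star}\lesssim\|y^\star\|_{Y^\star}$. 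Inserting them into Lemma~\ref{hank1}, pulling the (conjugate-linear) scalars through $\langle\cdot,\cdot\rangle_{Y,Y^\star}$ exactly as in the proof of Theorem~\ref{principe1}, and recognising the resulting integral $\int_{\mathbb{B}_n}\langle b(z)\overline{x},y^\star\rangle_{Y,Y^\star}(1-\langle w,z\rangle)^{-2s}\,\mathrm{d}\nu_\alpha(z)$ as $R^{\alpha,k}\big[\langle b(\cdot)\overline{x},y^\star\rangle\big](w)=\langle R^{\alpha,k}b(w)(\overline{x}),y^\star\rangle_{Y,Y^\star}$ by Proposition~\ref{check2} (applied with $t=2s-(n+1+\alpha)=k$), one arrives at
$$\langle h_bf,g\rangle_{\alpha,Y}=(1-|w|^2)^{\,k-\gamma_{0}}\,\langle R^{\alpha,k}b(w)(\overline{x}),y^\star\rangle_{Y,Y^\star}.$$
Bounding the left-hand side by $\|h_b\|\,\|f\|_{p,\alpha,X}\|g\|_{q',\alpha,Y^\star}\lesssim\|h_b\|\,\|x\|_X\|y^\star\|_{Y^\star}$, using $\|x\|_X=\|\overline{x}\|_{\overline{X}}$, taking the supremum over $\|\overline{x}\|_{\overline{X}}=\|y^\star\|_{Y^\star}=1$, and invoking the $k$-independence of the defining seminorm (Lemma~\ref{prelimn1}), we get $b\in\Lambda_{\gamma_{0}}(\mathbb{B}_n,\mathcal{L}(\overline{X},Y))$ with $\|b\|_{\Lambda_{\gamma_{0}}}\lesssim\|h_b\|$.

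For the sufficiency, assume $b\in\Lambda_{\gamma_{0}}(\mathbb{B}_n,\mathcal{L}(\overline{X},Y))$; then $b\in A^{t}_\alpha$ for every $t<\infty$ by Corollary~\ref{prelimn2}. Fix an integer $k>\gamma_{0}$. For $f\in H^\infty(\mathbb{B}_n,X)$, the identity established in the proof of Theorem~\ref{principal2} together with $L_k=c_kR^{\alpha,k}$ (Remark~\ref{rem1}) yields $h_bf(z)=c_k\int_{\mathbb{B}_n}(1-\langle z,w\rangle)^{-(n+1+\alpha)}R^{\alpha,k}b(w)\big(\overline{f(w)}\big)\,\mathrm{d}\nu_{\alpha+k}(w)$; hence, from $\|R^{\alpha,k}b(w)\|_{\mathcal{L}(\overline{X},Y)}\le\|b\|_{\Lambda_{\gamma_{0}}}(1-|w|^2)^{\gamma_{0}-k}$, $\|\overline{f(w)}\|_{\overline{X}}=\|f(w)\|_X$ and $\mathrm{d}\nu_{\alpha+k}=c(1-|w|^2)^{k}\mathrm{d}\nu_\alpha$,
$$\|h_bf(z)\|_Y\lesssim\|b\|_{\Lambda_{\gamma_{0}}}\int_{\mathbb{B}_n}\frac{(1-|w|^2)^{\gamma_{0}}\,\|f(w)\|_X}{|1-\langle z,w\rangle|^{n+1+\alpha}}\,\mathrm{d}\nu_\alpha(w)=\|b\|_{\Lambda_{\gamma_{0}}}\,S\big(\|f(\cdot)\|_X\big)(z),$$
where $S$ is the positive integral operator with kernel $(1-|w|^2)^{\gamma_{0}}|1-\langle z,w\rangle|^{-(n+1+\alpha)}$ against $\mathrm{d}\nu_\alpha$. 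Since $\gamma_{0}=(n+1+\alpha)(1/p-1/q)$ with $1<p\le q<\infty$, a Forelli--Rudin type estimate (see, e.g., \cite{Zhu_functions}, or the estimates used in \cite{Bonami,Pau}) shows that $S$ maps $L^{p}(\mathbb{B}_n,\nu_\alpha)$ boundedly into $L^{q}(\mathbb{B}_n,\nu_\alpha)$, whence $\|h_bf\|_{q,\alpha,Y}\lesssim\|b\|_{\Lambda_{\gamma_{0}}}\|f\|_{p,\alpha,X}$. By density of $H^\infty(\mathbb{B}_n,X)$ in $A^p_\alpha(\mathbb{B}_n,X)$ (Lemma~\ref{denslip}), $h_b$ extends to a bounded operator with $\|h_b\|\lesssim\|b\|_{\Lambda_{\gamma_{0}}}$, and combined with the previous paragraph this gives $\|h_b\|\simeq\|b\|_{\Lambda_{\gamma_{0}}}$.

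I expect the main obstacle to be the $L^{p}_\alpha\to L^{q}_\alpha$ boundedness of $S$: because of the exponent relation $\gamma_{0}=(n+1+\alpha)(1/p-1/q)$ this is a borderline Forelli--Rudin / Hardy--Littlewood--Sobolev estimate, and it is the one genuinely external ingredient. A self-contained alternative, parallel to Section~5, is to observe via Theorem~\ref{estimint} that $S(\|f(\cdot)\|_X)$ is bounded when $\gamma_{0}>0$ and of at most logarithmic growth when $\gamma_{0}=0$, so that $h_bf\in A^q_\alpha(\mathbb{B}_n,Y)$ for $f\in H^\infty(\mathbb{B}_n,X)$, and then to estimate $\|h_bf\|_{q,\alpha,Y}=\sup\{|\langle h_bf,g\rangle_{\alpha,Y}|:g\in H^\infty(\mathbb{B}_n,Y^\star),\ \|g\|_{q',\alpha,Y^\star}\le1\}$ directly: write $\langle h_bf,g\rangle_{\alpha,Y}=\int_{\mathbb{B}_n}\langle R^{\alpha,k}b(z)\overline{f(z)},g(z)\rangle_{Y,Y^\star}\,\mathrm{d}\nu_{\alpha+k}(z)$ (Corollary~\ref{brett1}), apply Hölder's inequality with exponents $p,p'$, and note that the pointwise bound $\|g(z)\|_{Y^\star}\le\|g\|_{q',\alpha,Y^\star}(1-|z|^2)^{-(n+1+\alpha)/q'}$ of Theorem~\ref{thm1} absorbs the surplus weight $(1-|z|^2)^{\gamma_{0}p'}$ exactly (the relevant exponents cancel precisely because of the definition of $\gamma_0$). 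Either way one must keep careful track of the twisted scalar multiplication on $\overline{X}$ and $Y^\star$ when moving factors through the pairing $\langle\cdot,\cdot\rangle_{Y,Y^\star}$, but this is routine as in Sections~4 and~5.
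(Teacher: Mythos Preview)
Your proposal is correct, and the overall architecture matches the paper's. There are, however, some genuine differences in execution worth pointing out.

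For the necessity, the paper takes a slightly more direct route: it tests with a single function $f(w)=x/(1-\langle w,z\rangle)^{k}$ (no $g$), computes $h_bf(z)=R^{\alpha,k}b(z)(\overline{x})$ directly via Proposition~\ref{check2}, and then uses the pointwise estimate of Theorem~\ref{thm1} for $h_bf$ in $A^q_\alpha$ rather than pairing against a dual element. Your two-test-function approach via Lemma~\ref{hank1} is perfectly valid and leads to the same identity $(1-|w|^2)^{k-\gamma_0}\langle R^{\alpha,k}b(w)\overline{x},y^\star\rangle$, just with the roles of the $A^p$-test function and the pointwise bound split symmetrically between $f$ and $g$.

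For the sufficiency, your primary route (pointwise domination of $\|h_bf(z)\|_Y$ by $\|b\|_{\Lambda_{\gamma_0}}\cdot S(\|f(\cdot)\|_X)(z)$ and then an $L^p_\alpha\to L^q_\alpha$ bound for the positive Forelli--Rudin operator $S$) is not the paper's. The paper avoids this external endpoint estimate entirely: it stays on the dual side, writes $\langle h_bf,g\rangle_{\alpha,Y}=\int\langle R^{\alpha,k+1}b(z)\overline{f(z)},g(z)\rangle\,\mathrm{d}\nu_{\alpha+k+1}$ via Corollary~\ref{brett1}, applies H\"older with exponents $q,q'$, and uses the pointwise estimate on $f$ (so $\|f(z)\|_X^{q}=\|f(z)\|_X^{p}\|f(z)\|_X^{q-p}$ with the second factor controlled by Theorem~\ref{thm1}). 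This is exactly the dual of your ``self-contained alternative'' (you propose H\"older with $p,p'$ and the pointwise bound on $g$; the exponent check $\gamma_0 p'=(p'-q')(n+1+\alpha)/q'$ is just the mirror of the paper's $q\gamma_0=(q-p)(n+1+\alpha)/p$). So your alternative and the paper's proof are essentially the same argument, and your caution about the borderline nature of the $L^p_\alpha\to L^q_\alpha$ bound for $S$ is well placed: the paper never needs it.
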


\begin{proof}
Let $p'$ and $q'$ such that $1/p +1/p' = 1$ and $1/q + 1/q' = 1.$ We first assume that $h_{b}$ is a bounded operator from $A^p_{\alpha}(\mathbb{B}_{n},X)$ to $A^q_{\alpha}(\mathbb{B}_{n},Y)$ with norm $\|h_b\| = \|h_b\|_{A^p_{\alpha}(\mathbb{B}_{n},X) \rightarrow A^q_{\alpha}(\mathbb{B}_{n},Y)}.$
Let $x \in X$ and $k> (n+1+\alpha)/p.$ Let $z \in \mathbb{B}_{n}$ and put 
$$ f(w) = \dfrac{x}{(1-\langle w,z \rangle)^{k}},\hspace{1cm} w \in \mathbb{B}_{n}.$$
Since  $k> (n+1+\alpha)/p,$ by Theorem $\ref{estimint},$ we have that $f \in A^p_{\alpha}(\mathbb{B}_{n},X)$ and
$$ \|f\|_{p,\alpha,X} \lesssim \dfrac{\|x\|_{X}}{(1-|z|^2)^{k-(n+1+\alpha)/p}}.$$ By \cite[Proposition $2.1.3$ ]{Roc}, we have that 
\begin{eqnarray*}
h_{b}f(z) & = & \displaystyle \int_{\mathbb{B}_n}\dfrac{b(w)(\overline{f(w)})}{(1 - \langle z,w \rangle)^{n+1+\alpha}}\mathrm{d}\nu_{\alpha}(w)\\
& = & \displaystyle \int_{\mathbb{B}_n}\dfrac{b(w)(\overline{x})}{(1 - \langle z,w \rangle)^{n+1+\alpha+k}}\mathrm{d}\nu_{\alpha}(w)\\
& = & R^{\alpha,k}b(z)(\overline{x}).
\end{eqnarray*}
It follows by Theorem $\ref{thm1}$ that 
\begin{eqnarray*}
\|R^{\alpha,k}b(z)(\overline{x})\|_{Y} & = & \|h_{b}f(z)\|_{Y}\\
& \leq & \dfrac{\|h_{b}f \|_{q,\alpha,Y}}{(1-|z|^2)^{(n+1+\alpha)/q}}\\ 
&\leq & \dfrac{\|h_{b}\|\|f\|_{p,\alpha,X}}{(1-|z|^2)^{(n+1+\alpha)/q}} \\
& \lesssim & \dfrac{\|h_{b}\|\|x\|_{X}}{(1-|z|^2)^{k+(n+1+\alpha)(1/q-1/p)}}\\
& = & \dfrac{\|h_{b}\|\|x\|_{X}}{(1-|z|^2)^{k-\gamma_{0}}}.
\end{eqnarray*}
Since $x \in X$ is arbitrary and $\|x\|_{X} = \|\overline{x}\|_{\overline{X}}$ we get that 
$$ \|R^{\alpha,k}b(z)\|_{\mathcal{L}(\overline{X},Y)} \lesssim \dfrac{\|h_b\|}{(1-|z|^2)^{k-\gamma_{0}}}.$$
Thus $$ \sup_{z \in \mathbb{B}_{n}} (1-|z|^2)^{k-\gamma_{0}}\|R^{\alpha,k}b(z)\|_{\mathcal{L}(\overline{X},Y)} \lesssim \|h_b\|.$$ By Lemma $\ref{prelimn1}$ this means that $b \in \Lambda_{\gamma_{0}}(\mathbb{B}_{n},\mathcal{L}(\overline{X},Y))$ and $\|b\|_{\Lambda_{\gamma_{0}}(\mathbb{B}_{n},\mathcal{L}(\overline{X},Y))} \lesssim \|h_{b}\|.$\\
Conversely, assume that $b \in \Lambda_{\gamma_{0}}(\mathbb{B}_{n},\mathcal{L}(\overline{X},Y)).$
Let $f \in A^p_{\alpha}(\mathbb{B}_{n},X),$ $g \in A^{q'}_{\alpha}(\mathbb{B}_{n},Y^{\star})$ and $k > \gamma_{0}.$ By Corollary $\ref{prelimn2},$ we have that $$b \in \Lambda_{\gamma_{0}}(\mathbb{B}_{n},\mathcal{L}(\overline{X},Y)) \subset A^{p'}_{\alpha}(\mathbb{B}_{n},\mathcal{L}(\overline{X},Y)),$$ so by \cite[Lemma $4.1.1$]{Roc}, Corollary $\ref{brett1},$  and Lemma $\ref{prelimn1}$ it follows that 
\begin{eqnarray*}
|\langle h_{b}f,g \rangle_{\alpha,Y}| & = & \displaystyle \left|\int_{\mathbb{B}_{n}} \langle b(z)\overline{f(z)},g(z) \rangle_{Y} \mathrm{d}\nu_{\alpha}(z)\right| \\
& = & \displaystyle \left| \int_{\mathbb{B}_n} \langle R^{\alpha,k+1}b(z)\overline{f(z)},g(z) \rangle_{Y} {d}\nu_{\alpha+k+1}(z)\right| \\
&\lesssim & \displaystyle  \int_{\mathbb{B}_n} \|R^{\alpha,k+1}b(z)\|_{\mathcal{L}(\overline{X},Y)}\|\overline{f(z)}\|_{\overline{X}}\|g(z)\|_{Y^{\star}}(1-|z|^2)^{k+1+\alpha}\mathrm{d}\nu(z)\\
& \lesssim & \displaystyle \|b\|_{\Lambda_{\gamma_{0}}(\mathbb{B}_{n},\mathcal{L}(\overline{X},Y))} \int_{\mathbb{B}_n} \|f(z)\|_{X}\|g(z)\|_{Y^{\star}}(1-|z|^2)^{\alpha+\gamma_{0}}\mathrm{d}\nu(z).
\end{eqnarray*}
By H\"older's inequality the last integral is less than or equal to
$$\displaystyle \left( \int_{\mathbb{B}_{n}}\|f(z)\|^q_{X}(1-|z|^2)^{\alpha+q\gamma_{0}}\mathrm{d}\nu(z) \right)^{1/q} \left( \int_{\mathbb{B}_{n}}\|g(z)\|^{q'}_{Y^{\star}}(1-|z|^2)^{\alpha}\mathrm{d}\nu(z) \right)^{1/q'}.$$ 
 
For $q=p$, we have $\gamma_0=0$ and thus 
$$|\langle h_{b}f,g \rangle_{\alpha,Y}|\lesssim \|b\|_{\Lambda_{\gamma_{0}}(\mathbb{B}_{n},\mathcal{L}(\overline{X},Y))} \|f\|_{p,\alpha,X} \|g\|_{p^\prime,\alpha,Y}.$$

For $q-p > 0,$ using Theorem $\ref{thm1},$ we have 
$$
\|f(z)\|^q_{X} = \|f(z)\|^p_{X} \|f(z)\|^{q-p}_{X} \leq \dfrac{\|f(z)\|^p_{X}\|f\|_{p,\alpha,X}^{q-p} }{(1-|z|^2)^{(q-p)(n+1+\alpha)/p}} = \\ \dfrac{\|f(z)\|^p_{X}\|f\|^{q-p}_{p,\alpha,X}}{(1-|z|^2)^{q\gamma_{0}}}.$$
It follows that
\begin{eqnarray*}
\displaystyle \left( \int_{\mathbb{B}_{n}}\|f(z)\|^q_{X}(1-|z|^2)^{\alpha+q\gamma_{0}}\mathrm{d}\nu(z) \right)^{1/q} & \leq & \|f\|^{1-p/q}_{p,\alpha,X} \left( \int_{\mathbb{B}_{n}}\|f(z)\|^{p}_{X}\dfrac{(1-|z|^2)^{\alpha+q\gamma_{0}}}{(1-|z|^2)^{q\gamma_{0}}}\mathrm{d}\nu(z) \right)^{1/q} \\
& = & \|f\|_{p,\alpha,X}.
\end{eqnarray*}
Therefore, by duality, we obtain that
$$\|h_{b}\|_{A^{p}_{\alpha}(\mathbb{B}_{n},X)\rightarrow A^{q}_{\alpha}(\mathbb{B}_{n},Y)} = \sup_{\|f\|_{p,\alpha,X} = 1; \|g\|_{q',\alpha,Y^{\star}}=1}\left| \langle h_{b}f,g \rangle_{\alpha,Y} \right| \lesssim \|b\|_{\Lambda_{\gamma_{0}}(\mathbb{B}_{n},\mathcal{L}(\overline{X},Y))}.$$
\end{proof}

\subsection{\textbf{Proof of Theorem $\ref{Compactp}$}}

We are now ready to give the proof of the main result in this section that is  Theorem $\ref{Compactp}$ that we recall here.

\begin{thm}\label{Compactp1} Let $X$ and $Y$ be two reflexive complex Banach spaces. Suppose that $1 < p \leq q < \infty,$ and $\alpha >-1$ The little Hankel operator $h_b : A^{p}_{\alpha}(\mathbb{B}_{n},X) \longrightarrow A^{q}_{\alpha}(\mathbb{B}_{n},Y)$ is a compact operator if and only if $$ b \in \Lambda_{\gamma_{0},0}(\mathbb{B}_{n},\mathcal{K}(\overline{X},Y)),$$ where $\Lambda_{\gamma_{0},0}(\mathbb{B}_{n},\mathcal{K}(\overline{X},Y))$ denotes the generalized little vector-valued Lipschitz space and $\gamma_{0} = (n+1+\alpha)\left( \frac{1}{p} - \frac{1}{q}\right),$ see \eqref{eq1}.
\end{thm}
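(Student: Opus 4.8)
The plan is to prove the two implications separately, using the boundedness characterization of Theorem \ref{agene1}, the polynomial description of $\Lambda_{\gamma_0,0}$ from Proposition \ref{closuresubsp}, and the fact that, since $X$ and $Y$ are reflexive and $1<p\le q<\infty$, the spaces $A^p_\alpha(\mathbb{B}_n,X)$ and $A^q_\alpha(\mathbb{B}_n,Y)$ are reflexive; consequently $h_b$ is compact if and only if $h_bf_j\to 0$ in $A^q_\alpha(\mathbb{B}_n,Y)$ whenever $f_j\rightharpoonup 0$ weakly in $A^p_\alpha(\mathbb{B}_n,X)$. Throughout, the symbol is assumed to satisfy \eqref{hypo1}.

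For the \emph{sufficiency}, I would start from $b\in\Lambda_{\gamma_0,0}(\mathbb{B}_n,\mathcal{K}(\overline{X},Y))$ and apply Proposition \ref{closuresubsp} with the Banach space $\mathcal{K}(\overline{X},Y)$ to write $b=\lim_m p_m$ in the $\Lambda_{\gamma_0}$-norm, where the $p_m$ are holomorphic polynomials with coefficients in $\mathcal{K}(\overline{X},Y)$. Theorem \ref{agene1} then gives $\|h_b-h_{p_m}\|=\|h_{b-p_m}\|\lesssim\|b-p_m\|_{\Lambda_{\gamma_0}(\mathbb{B}_n,\mathcal{L}(\overline{X},Y))}\to 0$, so it suffices to show each $h_{p_m}$ is compact and then use that the space of compact operators $A^p_\alpha(\mathbb{B}_n,X)\to A^q_\alpha(\mathbb{B}_n,Y)$ is norm-closed. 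Writing $p_m=\sum_{|\gamma|\le N_m}a^m_\gamma z^\gamma$ with $a^m_\gamma\in\mathcal{K}(\overline{X},Y)$ and using linearity of $b\mapsto h_b$, this reduces to the compactness of a single $h_{a_\gamma z^\gamma}$ with $a_\gamma$ compact: given $f_j=\sum_\beta c^j_\beta z^\beta$ with $f_j\rightharpoonup 0$ in $A^p_\alpha(\mathbb{B}_n,X)$, Lemma \ref{bosss1} gives $c^j_\beta\rightharpoonup 0$ in $X$, hence in $\overline{X}$, and Lemma \ref{bosss2} exhibits $h_{a_\gamma z^\gamma}f_j$ as a polynomial of degree $\le|\gamma|$ whose coefficients are fixed scalar multiples of $a_\gamma(\overline{c^j_\beta})$, which tend to $0$ in $Y$ because $a_\gamma$ is compact; so $\|h_{a_\gamma z^\gamma}f_j\|_{q,\alpha,Y}\to 0$. (This is Proposition \ref{bosss4} with $r=1$, whose proof uses nothing about $r<1$.) A finite sum of compact operators being compact, this half is essentially bookkeeping.

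For the \emph{necessity}, assuming $h_b$ compact, I would fix an integer $k>\max\{\gamma_0,(n+1+\alpha)/p\}$ and use the normalized test functions $f_{z,x}(w)=(1-|z|^2)^{k-(n+1+\alpha)/p}(1-\langle w,z\rangle)^{-k}x$ with $\|x\|_X=1$. By Theorem \ref{estimint} these are uniformly bounded in $A^p_\alpha(\mathbb{B}_n,X)$, and, exactly as in the proof of Theorem \ref{agene1} (via Proposition \ref{check2}), $h_bf_{z,x}(z)=(1-|z|^2)^{k-(n+1+\alpha)/p}R^{\alpha,k}b(z)(\overline{x})$. The first point to justify carefully is that for any sequence $z_j$ with $|z_j|\to 1^-$ and any $x_j$ with $\|x_j\|_X=1$ one has $f_{z_j,x_j}\rightharpoonup 0$ in $A^p_\alpha(\mathbb{B}_n,X)$: since $k>(n+1+\alpha)/p$ these functions tend to $0$ uniformly on compact subsets, hence all their Taylor coefficients tend to $0$ in $X$, and by reflexivity together with Lemma \ref{bosss1} any weak subsequential limit must then be $0$. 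Compactness now forces $\|h_bf_{z_j,x_j}\|_{q,\alpha,Y}\to 0$, and the pointwise estimate of Theorem \ref{thm1} at $z=z_j$ yields $(1-|z_j|^2)^{k-\gamma_0}\|R^{\alpha,k}b(z_j)(\overline{x_j})\|_Y\le\|h_bf_{z_j,x_j}\|_{q,\alpha,Y}\to 0$; choosing $x_j$ almost attaining the operator norm of $R^{\alpha,k}b(z_j)$ and using $\|x_j\|_X=\|\overline{x_j}\|_{\overline{X}}$ gives $\lim_{|z|\to 1^-}(1-|z|^2)^{k-\gamma_0}\|R^{\alpha,k}b(z)\|_{\mathcal{L}(\overline{X},Y)}=0$, i.e.\ $b\in\Lambda_{\gamma_0,0}(\mathbb{B}_n,\mathcal{L}(\overline{X},Y))$, with Lemma \ref{prelimn1} making the value of $k$ immaterial.

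It remains to see that $b$ is $\mathcal{K}(\overline{X},Y)$-valued. Here I would apply $h_b$ to constant functions: if $\overline{\xi_j}\rightharpoonup 0$ in $\overline{X}$, equivalently $\xi_j\rightharpoonup 0$ in $X$, then the constant functions $F_j\equiv\xi_j$ converge weakly to $0$ in $A^p_\alpha(\mathbb{B}_n,X)$ by Lemma \ref{unifbp} with $\beta_0=0$, and since $w\mapsto b(w)\overline{\xi_j}$ lies in $A^1_\alpha(\mathbb{B}_n,Y)$ by \eqref{hypo1} at the origin, the reproducing formula of Proposition \ref{prop1} gives $h_bF_j=b(\cdot)\overline{\xi_j}$; compactness and Theorem \ref{thm1} then force $\|b(z_0)\overline{\xi_j}\|_Y\to 0$ for each $z_0$, so $b(z_0)\in\mathcal{K}(\overline{X},Y)$. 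Since $\mathcal{K}(\overline{X},Y)$ is closed in $\mathcal{L}(\overline{X},Y)$ and $R^{\alpha,k}$ is a differential operator with polynomial coefficients (Proposition \ref{partialdop}), $R^{\alpha,k}b$ is also $\mathcal{K}(\overline{X},Y)$-valued, so the two pieces combine into $b\in\Lambda_{\gamma_0,0}(\mathbb{B}_n,\mathcal{K}(\overline{X},Y))$. I expect the main obstacle to be precisely the weak-nullity verifications for the various test sequences in $A^p_\alpha(\mathbb{B}_n,X)$ (where reflexivity of $X$ and Lemma \ref{bosss1} are essential) and the care needed to upgrade the scalar, $x$-dependent estimate into a genuine operator-norm bound on $R^{\alpha,k}b$.
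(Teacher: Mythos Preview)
Your proof is correct and, in the necessity direction, noticeably more economical than the paper's. The sufficiency is essentially identical in spirit: the paper inserts an intermediate dilation step $b\mapsto b_r$ before passing to Taylor polynomials and invoking Proposition~\ref{bosss4}, whereas you go straight from $b$ to polynomials via Proposition~\ref{closuresubsp}(iii); both routes reduce to compactness of $h_{a_\gamma z^\gamma}$ for compact $a_\gamma$.

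The genuine differences are in the ``only if'' half. For the little-$o$ condition, the paper pairs $h_b x_z$ against a second family $y_z^\star\in A^{q'}_\alpha(\mathbb{B}_n,Y^\star)$, which forces it to split the kernel exponent via an auxiliary parameter $\beta$ and to invoke Lemma~\ref{derivcompact} and Kakutani's theorem to select norm-attaining $x_0(z)\in X$ and $y_0^\star(z)\in Y^\star$. You bypass all of this by reading off $h_bf_{z,x}(z)=(1-|z|^2)^{k-(n+1+\alpha)/p}R^{\alpha,k}b(z)(\overline{x})$ and applying the pointwise estimate of Theorem~\ref{thm1}; this is cleaner and needs neither Lemma~\ref{derivcompact} nor the reflexivity of $Y$ at that step. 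For $\mathcal{K}(\overline{X},Y)$-valuedness, the paper first shows each Taylor coefficient $\hat b(\beta_0)$ is compact (via $x_j(z)=z^{\beta_0}f_j$, Lemma~\ref{justF}, and a carefully chosen $y_j^\star$), then recovers $b(z)\in\mathcal{K}$ as a norm limit; your argument with constant test functions and $h_bF_j=b(\cdot)\overline{\xi_j}$ from the reproducing formula is more direct.

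One small remark: your justification that $f_{z_j,x_j}\rightharpoonup 0$ (``uniform convergence on compacta $\Rightarrow$ Taylor coefficients $\to 0$ $\Rightarrow$ any weak limit is $0$ by reflexivity and Lemma~\ref{bosss1}'') is correct but telegraphic. The cleanest way to phrase it, and the one the paper uses for its own test family $x_z$, is to test against the dense set $\{e_{w,a^\star}\}$ of Lemma~\ref{compl2}, which immediately gives $\langle f_{z_j,x_j},e_{w,a^\star}\rangle_{\alpha,X}=\langle f_{z_j,x_j}(w),a^\star\rangle_{X,X^\star}\to 0$; together with the uniform norm bound this yields weak convergence to $0$ without passing through subsequences.
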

\begin{proof} 
First assume that $b \in \Lambda_{\gamma_{0},0}(\mathbb{B}_{n}, \mathcal{K}(\overline{X},Y))$ and denote by $b_{r}(z):= b(rz)$ with $z\in \mathbb{B}_{n}$ and $0 < r < 1.$ 
Since $b \in \Lambda_{\gamma_{0},0}(\mathbb{B}_{n}, \mathcal{K}(\overline{X},Y)),$ by Theorem $\ref{agene1},$ we have that $$\|h_{b}\|_{A^p_{\alpha}(\mathbb{B}_{n},X) \rightarrow A^q_{\alpha}(\mathbb{B}_{n},Y)} \lesssim  \|b \|_{\Lambda_{\gamma_{0}}(\mathbb{B}_{n}, \mathcal{L}(\overline{X},Y))}.$$ Therefore, we have
$$\|h_{b}-h_{b_{r}}\|_{A^p_{\alpha}(\mathbb{B}_{n},X) \rightarrow A^q_{\alpha}(\mathbb{B}_{n},Y)} \lesssim  \|b - b_{r} \|_{\Lambda_{\gamma_{0}}(\mathbb{B}_{n}, \mathcal{L}(\overline{X},Y))}.$$
By using Proposition $\ref{closuresubsp},$ we have that 
$$\displaystyle \lim_{r \rightarrow 1^{-}} \|b - b_r \|_{\Lambda_{\gamma_{0}}(\mathbb{B}_{n}, \mathcal{L}(\overline{X},Y))} = 0,$$ so to prove that $h_b$ is a compact operator, it suffices to prove that $h_{b_{r}}$ is a compact operator.
Since $b_r$ is analytic on a neighbourhood of $\overline{\mathbb{B}}_n,$ it can be approximated by its Taylor polynomial in the Bloch norm. Thus, 
\begin{equation}
\lim_{N \rightarrow \infty} \|b_{r} - P_{N,r} \|_{\Lambda_{\gamma_{0}}(\mathbb{B}_{n}, \mathcal{L}(\overline{X},Y))} = 0,\label{covergpol}
\end{equation}
with $\displaystyle P_{N,r}(z) = \sum_{\beta \in \mathbb{N}^{n},|\beta| \leq N} \hat{b}(\beta) r^{|\beta|}z^{\beta},$ where $\hat{b}(\beta) \in \mathcal{K}(\overline{X},Y)$ are the Taylor coefficients of $b.$ We also have by Theorem $\ref{agene1}$ that
$$ \|h_{b_{r}}-h_{P_{N,r}}\|_{A^p_{\alpha}(\mathbb{B}_{n},X) \rightarrow A^q_{\alpha}(\mathbb{B}_{n},Y)} \lesssim  \|b_{r} - P_{N,r} \|_{\Lambda_{\gamma_{0}}(\mathbb{B}_{n}, \mathcal{L}(\overline{X},Y))}.$$
So by $\eqref{covergpol},$ to prove that $h_{b_{r}}$ is a compact operator, it is enough to prove that $h_{P_{N,r}}$ is a compact operator. Since $P_{N,r}$ is a polynomial, it is enough to do the proof for monomials of the form $\hat{b}(\beta) r^{|\beta|}z^{\beta},$ with $\beta \in \mathbb{N}^{n},$ $z \in \mathbb{B}_{n}$ and $\hat{b}(\beta) \in K(\overline{X},Y).$ Thus, according to Proposition $\ref{bosss4},$ the proof of this part is complete.\\
Conversely, for the ''only if part'', let us assume that $$h_{b}: A^p_\alpha(\mathbb{B}_{n},X) \longrightarrow A^q_\alpha(\mathbb{B}_{n},Y)$$ is a compact operator. 
Since $h_b$ is compact, $h_b$ is then bounded and Theorem $\ref{agene1}$ yields $$b \in  \Lambda_{\gamma_{0}}(\mathbb{B}_{n},\mathcal{L}(\overline{X},Y)).$$
We shall first prove that the Taylor coefficients $\hat{b}(\beta),$ $\beta \in \mathbb{N}^{n}$ of $b$ belongs to $\mathcal{K}(\overline{X},Y).$ 
Let $\lbrace f_j \rbrace \subset X$ such that $f_{j}\longrightarrow 0$ weakly in $X$ as $j\longrightarrow \infty,$
fix $\beta_{0} \in \mathbb{N}^{n},$ and let $x_{j}(z) = z^{\beta_{0}}f_{j}.$ By Lemma $\ref{unifbp},$ we have $\lbrace x_{j} \rbrace \subset A^p_{\alpha}(\mathbb{B}_{n},X)$ and $\lbrace x_{j} \rbrace$ converges weakly to $0$ in $A^p_{\alpha}(\mathbb{B}_{n},X).$ 
Since $$\|\hat{b}(\beta_{0})\overline{f_j}\|_{Y} = \sup_{\|y^{\star}\|_{Y^{\star}} = 1} |\langle \hat{b}(\beta_{0})\overline{f_{j}},y^{\star} \rangle_{Y,Y^{\star}}|$$ and $Y$ is reflexive, by the Kakutani's theorem \cite[Theorem $3.17$]{Brezis} there exists $y^{\star}_{j} \in Y^{\star}$ with $\|y^{\star}_{j}\|_{Y^{\star}} = 1$ such that  $$\|\hat{b}(\beta_{0})\overline{f_j}\|_{Y} = |\langle \hat{b}(\beta_{0})\overline{f_{j}},y^{\star}_{j} \rangle_{Y,Y^{\star}}|.$$
But $y^{\star}_{j} \in A^{p'}_{\alpha}(\mathbb{B}_{n},Y^{\star}).$ By Lemma $\ref{hank1},$ we have
\begin{eqnarray*}
|\langle h_{b}x_{j} , y^{\star}_{j} \rangle_{\alpha,Y}| & = & \displaystyle \left|\int_{\mathbb{B}_{n}} \langle b(z)\overline{x_{k}}(z),y^{\star}_{j} \rangle_{Y,Y^{\star}}\mathrm{d}\nu_{\alpha}(z)\right|\\
\\
& = & \left|\displaystyle \int_{\mathbb{B}_{n}} \overline{z}^{\beta_{0}}\langle \sum_{\beta \in \mathbb{N}^{n}} z^{\beta} \hat{b}(\beta)\overline{f_{j}},y^{\star}_{j} \rangle_{Y,Y^{\star}}\mathrm{d}\nu_{\alpha}(z)\right|\\
\\
& = & \displaystyle \left|\sum_{\beta \in \mathbb{N}^{n}}\langle \hat{b}(\beta)\overline{f_{j}},y^{\star}_{j} \rangle_{Y,Y^{\star}}\int_{\mathbb{B}_{n}}z^{\beta}\overline{z}^{\beta_{0}}\mathrm{d}\nu_{\alpha}(z)\right|\\
& = & \displaystyle |\langle \hat{b}(\beta_{0})\overline{f_{j}},y^{\star}_{j} \rangle_{Y,Y^{\star}}|\int_{\mathbb{B}_{n}}|z^{\beta_{0}}|^{2}\mathrm{d}\nu_{\alpha}(z)\\
& = & \dfrac{\beta_{0}!\Gamma(n+\alpha+1)}{\Gamma(n+|\beta_{0}|+\alpha+1)}|\langle \hat{b}(\beta_{0})\overline{f_{j}},y^{\star}_{j} \rangle_{Y,Y^{\star}}|\\
& = & \dfrac{\beta_{0}!\Gamma(n+\alpha+1)}{\Gamma(n+|\beta_{0}|+\alpha+1)}\|\hat{b}(\beta_{0})\overline{f_j}\|_{Y},
\end{eqnarray*}
where Fubini's theorem is justified by Lemma $\ref{justF}$ with  $\lbrace x_{j} \rbrace \subset H^{\infty}(\mathbb{B}_{n},X).$  
Since $h_b$ is compact and $\lbrace x_{j} \rbrace$ converges weakly to $0$ as $j$ tends to infinity, we have that $\lbrace h_{b}x_{j} \rbrace$ converges strongly to $0$ as $j$ tends to infinity, therefore one gets that $$\lim_{j\rightarrow \infty} \langle h_{b}x_{j} , y^{\star}_{j} \rangle_{\alpha,Y} = 0.$$ Thus
$$\lim_{j \rightarrow \infty}\dfrac{\beta_{0}!\Gamma(n+\alpha+1)}{\Gamma(n+|\beta_{0}|+\alpha+1)}\|\hat{b}(\beta_{0})\overline{f_j}\|_{Y} = 0.$$
We then obtain $$\lim_{j \rightarrow \infty} \|\hat{b}(\beta_{0})\overline{f_j}\|_{Y} = 0.$$ 
In fact, we have shown that $\hat{b}(\beta_{0})$ belongs to $\mathcal{K}(\overline{X},Y)$ and as $\beta_{0}$ is arbitrary, this holds for all $\beta \in \mathbb{N}^{n}.$
Let $1 < t < \infty.$ Since $b \in \Lambda_{\gamma_{0}}(\mathbb{B}_{n}, \mathcal{L}(\overline{X},Y)),$ we have that $b \in A^t_\alpha(\mathbb{B}_{n},\mathcal{L}(\overline{X},Y))$ and 
$$\lim_{N \rightarrow \infty} \int_{\mathbb{B}_n} \| b(w) - \sum_{|\beta| \leq N} \hat{b}(\beta)w^{\beta} \|^t_{\mathcal{L}(\overline{X},Y))} \mathrm{d}\nu_{\alpha}(w) = 0.$$ Let $ z \in \mathbb{B}_n.$ There exists a constant $C_{z} > 0$ such that 
$$\| b(z) - \sum_{|\beta| \leq N} \hat{b}(\beta)z^{\beta} \|^t_{\mathcal{L}(\overline{X},Y))} \leq C_{z} \int_{\mathbb{B}_n} \| b(w) - \sum_{|\beta| \leq N} \hat{b}(\beta)w^{\beta} \|^t_{\mathcal{L}(\overline{X},Y))} \mathrm{d}\nu_{\alpha}(w).$$ Thus,
$$\lim_{N \rightarrow \infty} \| b(z) - \sum_{|\beta| \leq N} \hat{b}(\beta)z^{\beta} \|_{\mathcal{L}(\overline{X},Y))} = 0.$$ Since $z \in \mathbb{B}_n$ is arbitrary, we deduce that $b(z) \in \mathcal{K}(\overline{X},Y),$ for each $z \in \mathbb{B}_{n}.$ It remains to show that $b$ satisfy the \textgravedbl little $\gamma_{0}$- Lipschitz\textacutedbl condition.
Let $x \in X$ and $y^{\star} \in Y^{\star}.$ Since $b \in  \Lambda_{\gamma_{0}}(\mathbb{B}_{n},\mathcal{L}(\overline{X},Y)),$ then the mapping $z \mapsto \langle b(z)\overline{x},y^{\star} \rangle_{Y,Y^{\star}}$ belongs to $A^1_\alpha(\mathbb{B}_{n},\mathbb{C}).$ By  using the reproducing kernel formula, it follows that
\begin{equation}
\displaystyle \langle  b(z)\overline{x},y^{\star} \rangle_{Y,Y^{\star}} = \int_{\mathbb{B}_{n}}  \dfrac{\langle b(w)\overline{x},y^{\star} \rangle_{Y,Y^{\star}}}{(1 - \langle z,w \rangle)^{n+1+\alpha}}\mathrm{d}\nu_{\alpha}(w).\label{agene2}
\end{equation} 
Let $k > \gamma_{0}.$ Applying the operator $R^{\alpha,k}$ in $\eqref{agene2},$ we obtain that
\begin{equation}
\displaystyle \langle R^{\alpha,k} b(z)\overline{x},y^{\star} \rangle_{Y,Y^{\star}} = \int_{\mathbb{B}_{n}} \dfrac{\langle b(w)\overline{x},y^{\star} \rangle_{Y,Y^{\star}}}{(1 - \langle z,w \rangle)^{n+1+\alpha+k}}\mathrm{d}\nu_{\alpha}(w).\label{hank4}
\end{equation}
Let $z \in \mathbb{B}_{n}.$ Since $\|R^{\alpha,k} b(z) \|_{\mathcal{L}(\overline{X},Y)} = \sup_{\|x\|_{X} = 1}\|R^{\alpha,k}b(z)(\overline{x})\|_{Y},$ and by Lemma $\ref{derivcompact},$ the operator $R^{\alpha,k}b(z)$ is compact. So there exists $x_{0}(z) \in X$ with $\|x_{0}(z)\|_{X} = 1$ and 
$$\|R^{\alpha,k}b(z) \|_{\mathcal{L}(\overline{X},Y)} = \|R^{\alpha,k}b(z)\overline{x_{0}(z)}\|_{Y}.$$ Also
$$\|R^{\alpha,k}b(z)\overline{x_{0}(z)}\|_{Y} = \sup_{\|y^{\star}\|_{Y^{\star}} = 1} |\langle R^{\alpha,k} b(z)\overline{x_{0}(z)},y^{\star} \rangle_{Y,Y^{\star}}|.$$ Since $Y$ is reflexive, it follows by the Kakutani's theorem \cite[Theorem $3.17$]{Brezis} that there exists $y^{\star}_{0}(z) \in Y^{\star}$ with $\|y_{0}^{\star}(z)\|_{Y^{\star}} = 1$ such that
\begin{equation}
\|R^{\alpha,k}b(z) \|_{\mathcal{L}(\overline{X},Y)} = \|R^{\alpha,k}b(z)(\overline{x_{0}(z)})\|_{Y} = |\langle R^{\alpha,k} b(z)\overline{x_{0}(z)}, y_{0}^{\star}(z) \rangle_{Y,Y^{\star}}|.\label{hank6}
\end{equation}  
By $\eqref{hank4}$ and $\eqref{hank6}$ we get 
\begin{eqnarray*}
(1 - |z|^2)^{k-\gamma_{0}}\| R^{\alpha,k}b(z)\|_{\mathcal{L}(\overline{X},Y)} & = & \displaystyle \left|\int_{\mathbb{B}_n} \langle b(w)\overline{x_{0}(z)},y^{\star}_{0}(z) \rangle_{Y,Y^{\star}} \dfrac{(1-|z|^2)^{k-\gamma_{0}}}{(1 - \langle z,w \rangle)^{n+1+\alpha+k}}\mathrm{d}\nu_{\alpha}(w)\right|\\
 \\
& = & |\langle h_{b}x_{z}, y_{z}^{\star} \rangle_{\alpha,Y}|,
\end{eqnarray*}
with
$$ x_{z}(w) = \dfrac{x_{0}(z)(1 - |z|^2)^{\beta - (n+1+\alpha)/p}}{(1 - \langle w,z \rangle)^{\beta}}, \hspace{1cm} w \in \mathbb{B}_{n}$$ and
$$y_{z}^{\star}(w) = \dfrac{y^{\star}_{0}(z)(1 - |z|^2)^{k + (n+1+\alpha)/q- \beta}}{(1 - \langle w,z \rangle)^{n+1+\alpha+k-\beta}}, \hspace{1cm} w \in \mathbb{B}_{n},$$ where $\beta$ is chosen such that $$(n+1+\alpha)/p < \beta < k+ (n+1+\alpha)/q .$$ By Theorem $\ref{estimint},$ we have 
$x_{z} \in A^p_{\alpha}(\mathbb{B}_{n},X),$ $y^{\star}_{z} \in A^{q'}_{\alpha}(\mathbb{B}_{n},Y^{\star}),$ and
$$\sup_{z \in \mathbb{B}_{n}} \|x_{z} \|_{p,\alpha,X} < \infty,\hspace{1cm}\sup_{z \in \mathbb{B}_{n}} \|y^{\star}_{z}\|_{q', \alpha, Y^{\star}} < \infty.$$ Let us prove that
\begin{equation}
 x_{z} \longrightarrow 0~~\mbox{weakly in}~~A^p_\alpha(\mathbb{B}_{n},X) ~~\mbox{as}~~|z| \longrightarrow 1^{-}.\label{hank8}
\end{equation}
Since $$ \sup_{z \in \mathbb{B}_{n}}\|x_{z}\|_{p,\alpha,X} < \infty,$$ to prove $\eqref{hank8},$ by Lemma $\ref{compl2},$ it suffices to prove that 
$$\langle x_{z},e_{w,a^{\star}} \rangle_{\alpha,X} \longrightarrow 0~~\mbox{as}~~|z| \longrightarrow 1^{-},$$ 
where for each $a^{\star} \in X^{\star}$ and $w \in \mathbb{B}_{n}$, we have
$$ e_{w,a^{\star}}(\zeta) = \dfrac{1}{(1 - \langle \zeta,w \rangle)^{n+1+\alpha}}a^{\star}, \hspace{1cm} \zeta \in \mathbb{B}_n.$$
By using the definition of $e_{w,a^{\star}}$ and the reproducing kernel formula, it follows that
\begin{eqnarray*}
\displaystyle \langle x_{z},e_{w,a^{\star}} \rangle_{p,\alpha,X} & = & \displaystyle \int_{\mathbb{B}_{n}} \langle x_{z}(\zeta),e_{w,a^{\star}}(\zeta) \rangle_{X,X^{\star}}\mathrm{d}\nu_{\alpha}(\zeta)\\
\\
& = & \displaystyle \int_{\mathbb{B}_{n}} \langle x_{z}(\zeta),\dfrac{1}{(1-\langle \zeta,w \rangle)^{n+1+\alpha}}a^{\star} \rangle_{X,X^{\star}}\mathrm{d}\nu_{\alpha}(\zeta)\\
\\
& = & \displaystyle \big\langle \int_{\mathbb{B}_{n}}\dfrac{ x_{z}(\zeta)}{(1-\langle w,\zeta \rangle)^{n+1+\alpha}}\mathrm{d}\nu_{\alpha}(\zeta),a^{\star} \big\rangle_{X,X^{\star}}\\
\\
& = & \displaystyle \langle x_{z}(w),a^{\star} \rangle_{X,X^{\star}}.
\end{eqnarray*}
Therefore, we have
\begin{eqnarray*}
|\langle x_{z},e_{w,a^{\star}} \rangle_{p,\alpha,X}| & = & |\langle x_{z}(w),a^{\star} \rangle_{X,X^{\star}}|\\
\\
& = & \left|\dfrac{(1-|z|^2)^{\beta-(n+1+\alpha)/p}}{(1-\langle w,z \rangle)^{\beta}} \langle x_{0}(z),a^{\star} \rangle_{X,X^{\star}} \right| \\
\\
& \leq & \dfrac{(1-|z|^2)^{\beta-(n+1+\alpha)/p}}{(1- |w|)^{\beta}}\|a^{\star}\|_{X^{\star}} \longrightarrow 0
\end{eqnarray*}
as $|z| \longrightarrow 1^{-}.$  
By using $\eqref{hank8},$ the compactness of $h_{b}$ and the fact that $$\sup_{z \in \mathbb{B}_{n}}\|y_{z}^{\star}\|_{q',\alpha,Y^{\star}}< \infty,$$ it follows that
$$\lim_{|z|\rightarrow 1^{-}} (1 - |z|^2)^{k-\gamma_{0}}\|R^{\alpha,k} b(z)\|_{\mathcal{L}(\overline{X},Y)} = \lim_{|z|\rightarrow 1^{-}} |\langle h_{b}x_{z}, y_{z}^{\star} \rangle_{\alpha,Y}| = 0,$$ which completes the proof of the theorem.
\end{proof}

\medskip

\vspace{0,5cm}
\end{document}